\newcommand\s[1]{$\smash{#1}$}
\let\oldll\ll
\let\oldgg\gg
\renewcommand{\ll}{\oldll}
\renewcommand{\gg}{\oldgg}
\newcommand{\beq}{\begin{equation}}
\newcommand{\eeq}{\end{equation}}
\newtheorem{thm}{Theorem}[section]
\newtheorem{lem}[thm]{Lemma}
\newtheorem{cor}[thm]{Corollary}
\newtheorem{ppn}[thm]{Proposition}
\newtheorem{cnd}[thm]{Condition}
\theoremstyle{definition}
\newtheorem{dfn}[thm]{Definition}
\newtheorem{rmk}[thm]{Remark}
\DeclareMathOperator{\Var}{Var}
\DeclareMathOperator{\Cov}{Cov}
\DeclareMathOperator{\diag}{diag}
\DeclareMathOperator{\ch}{ch}
\DeclareMathOperator{\sh}{sh}
\DeclareMathOperator{\tnh}{th}
\DeclareMathOperator{\atnh}{ath}
\DeclareMathOperator{\ash}{arsh}
\DeclareMathOperator{\sgn}{sgn}
\DeclareMathOperator{\Real}{Re}
\DeclareMathOperator{\Imag}{Im}
\DeclareMathOperator{\spn}{span}
\newcommand{\f}{\frac}
\renewcommand{\emptyset}{\varnothing}
\newcommand{\set}[1]{\{#1\}}
\newcommand{\Ind}[1]{\mathbf{1}\{#1\}}
\newcommand{\R}{\mathbb{R}}
\renewcommand{\log}{\ln}
\newcommand{\ent}{\mathcal{H}}
\newcommand{\pd}{\partial}
\newcommand{\sfont}[1]{\textsf{\itshape #1}}
\newcommand{\usf}[1]{\textup{\textsf{#1}}}
\newcommand{\st}{\usf{t}} 
\renewcommand{\sqrt}[1]{(#1)^{1/2}}
\newcommand{\cE}{\mathcal{E}} 
\newcommand{\bPhi}{\Psi} 
\newcommand{\Id}{\textsf{I}} 
\newcommand{\ltwo}[1]{\|#1\|_2}
\newcommand{\linf}[1]{\|#1\|_\infty}
\newcommand{\iii}{\mathfrak{i}} 
\newcommand{\eigmin}{\textsf{e}_{\min}}
\newcommand{\eigmax}{\textsf{e}_{\max}}
\newcommand{\giv}{\hspace{1pt}|\hspace{1pt}}
\newcommand{\givBig}{\hspace{1pt}\Big|\hspace{1pt}}
\newcommand{\givbigg}{\hspace{1pt}\bigg|\hspace{1pt}}
\newcommand{\E}{\mathbb{E}}
\newcommand{\oP}{\mathbb{P}}
\newcommand{\tP}{\check{\mathbb{P}}}
\newcommand{\oQ}{\mathbf{P}}
\newcommand{\tQ}{\check{\mathbf{P}}}
\newcommand{\tQE}{\check{\mathbf{E}}}
\newcommand{\tp}{\check{p}}
\newcommand{\op}{p}
\newcommand{\LDQ}{\bar{\mathbf{P}}}
\newcommand{\LDE}{\bar{\mathbf{E}}}
\newcommand{\bA}{\bm{A}}
\newcommand{\bB}{\bm{B}}
\newcommand{\bC}{\bm{C}}
\newcommand{\bD}{\bm{D}}
\newcommand{\bE}{\bm{E}}
\newcommand{\bG}{\bm{G}}
\newcommand{\bI}{\bm{I}}
\newcommand{\bL}{\bm{L}}
\newcommand{\bM}{\bm{M}}
\newcommand{\bN}{\bm{N}}
\newcommand{\bR}{\bm{R}}
\newcommand{\bT}{\bm{T}}
\newcommand{\bV}{\bm{V}}
\newcommand{\bW}{\bm{W}}
\newcommand{\bX}{\bm{X}}
\newcommand{\bY}{\bm{Y}}
\newcommand{\bGa}{\bm{\Gamma}}
\newcommand{\bXi}{\bm{\Xi}}
\newcommand{\bSi}{\bm{\Sigma}}
\newcommand{\bZ}{\bm{Z}}
\newcommand{\vv}{\sfont{V}}
\newcommand{\row}{\sfont{R}}
\newcommand{\precol}{\sfont{C}}
\newcommand{\col}{\sfont{D}}
\newcommand{\colone}{\sfont{D}_\usf{1}}
\newcommand{\all}{\sfont{RCP}}
\newcommand{\allone}{\sfont{RCP}_\usf{1}}
\newcommand{\con}{\sfont{RC}}
\newcommand{\adm}{\sfont{A}}
\newcommand{\prf}{\sfont{P}}
\newcommand{\qrf}{\sfont{Q}}
\newcommand{\admone}{\adm_\usf{1}}
\newcommand{\admtwo}{\adm_\usf{2}}
\newcommand{\prfone}{\prf_\usf{1}}
\newcommand{\qrfone}{\qrf_\usf{1}}
\newcommand{\prftwo}{\prf_\usf{2}}
\newcommand{\qrftwo}{\qrf_\usf{2}}
\newcommand{\sat}{\sfont{S}}
\newcommand{\bsat}{\sfont{B}}
\newcommand{\bsatone}{\sfont{B}_{\usf{1}}}
\newcommand{\psatone}{\sfont{Q}_{\usf{1}}}
\colorlet{DB}{blue!50!black} 
\colorlet{Re}{red!70!gray} 
\colorlet{Bl}{blue!60!cyan!70!black} 
\colorlet{Pu}{purple!50!blue} 
\colorlet{Gr}{green!50!black} 
\colorlet{Or}{orange!50!yellow!80!red} 
\newcommand{\MVEC}[1]{\mathbf{#1}}
\newcommand{\bc}{\MVEC{c}}
\newcommand{\bh}{\MVEC{h}}
\newcommand{\bn}{\MVEC{n}}
\newcommand{\bx}{\MVEC{x}}
\newcommand{\zroM}{\MVEC{0}}
\newcommand{\oneM}{\MVEC{1}}
\newcommand{\hbe}{\hat{\MVEC{e}}}
\newcommand{\bz}{\MVEC{z}}
\newcommand{\XX}{\MVEC{X}}
\newcommand{\bss}{\MVEC{s}}
\newcommand{\dkappa}{\MVEC{\bm{\kappa}}^\Delta}
\newcommand{\bxi}{{\bm{\xi}}}
\newcommand{\bze}{{\bm{\zeta}}}
\newcommand{\bnu}{{\bm{\nu}}}
\newcommand{\btau}{\bm{\tau}}
\newcommand{\bom}{\bm{\omega}}
\newcommand{\cJ}{{\mathcal{J}}}
\newcommand{\cK}{{\mathcal{K}}}
\newcommand{\NVEC}[1]{\mathbf{#1}}
\newcommand{\bmag}{\NVEC{m}}
\newcommand{\br}{\NVEC{r}}
\newcommand{\dbe}{\dot{\NVEC{e}}}
\newcommand{\by}{\NVEC{y}}
\newcommand{\bj}{\NVEC{j}}
\newcommand{\bk}{\NVEC{k}}
\newcommand{\bv}{\NVEC{v}}
\newcommand{\tbv}{\tilde{\NVEC{v}}}
\newcommand{\bw}{\NVEC{w}}
\newcommand{\bH}{\NVEC{H}}
\newcommand{\oneN}{\NVEC{1}}
\newcommand{\AAA}{\mathscr{A}}
\newcommand{\BB}{\mathscr{B}}
\newcommand{\Filt}{\mathscr{F}}
\newcommand{\bgFilt}{\Filt^\textup{bg}}
\newcommand{\HH}{\mathscr{H}}
\newcommand{\II}{\mathscr{I}}
\newcommand{\KK}{\mathscr{K}}
\newcommand{\LL}{\mathscr{L}}
\newcommand{\MM}{\mathscr{M}}
\newcommand{\GG}{\mathscr{G}}
\newcommand{\PP}{\mathscr{P}}
\newcommand{\QQ}{\mathscr{Q}}
\newcommand{\SE}{\mathscr{S}}
\newcommand{\WW}{\mathscr{W}}
\newcommand{\err}{\usf{err}}
\newcommand{\cst}{\usf{cst}}
\newcommand{\barcst}{\overline{\textsf{cs}}\textsf{t}}
\newcommand{\ta}{\textup{a}}
\newcommand{\tb}{\textup{b}}
\newcommand{\aaa}{\mathfrak{a}}
\newcommand{\bbb}{\mathfrak{b}}
\newcommand{\pp}{\mathfrak{p}}
\newcommand{\sss}{\mathfrak{s}}
\newcommand{\uu}{\mathfrak{u}}
\newcommand{\xx}{\mathfrak{x}}
\newcommand{\zz}{\mathfrak{z}}
\newcommand{\frH}{\mathfrak{H}}
\newcommand{\yy}{\mathfrak{y}}
\newcommand{\bbH}{\mathbb{H}} 
\newcommand{\bq}{q_{\text{\textbullet}}}
\newcommand{\bpsi}{\psi_{\text{\textbullet}}}
\newcommand{\sq}{q_\star}
\newcommand{\spsi}{\psi_\star}
\newcommand{\TAPq}{q_1}
\newcommand{\TAPpsi}{\psi_0}
\newcommand{\Nall}{N_\textup{all}}
\newcommand{\aall}{\alpha_\textup{all}}
\newcommand{\kall}{\kappa_\textup{all}}
\newcommand{\DELTA}{\delta}
\newcommand{\EPS}{\epsilon}
\newcommand{\ETA}{\eta}
\newcommand{\bhJ}{\hat{J}_{\text{\textbullet}}}
\newcommand{\lm}{\lambda}
\newcommand{\lmin}{\lambda_{\min}}
\newcommand{\lmsmall}{l_0}
\newcommand{\lmbig}{l_1}
\newcommand{\alval}{0.833078599}
\newcommand{\auval}{0.833078600}
\newcommand{\qlval}{0.56394907949}
\newcommand{\qluval}{0.56394907950}
\newcommand{\qulval}{0.56394908029}
\newcommand{\quval}{0.56394908030}
\newcommand{\psilval}{2.5763513100}
\newcommand{\psiluval}{2.5763513103}
\newcommand{\psiulval}{2.5763513221}
\newcommand{\psiuval}{2.5763513224}
\newcommand{\lbd}{{\textup{lb}}}
\newcommand{\ubd}{{\textup{ub}}}
\newcommand{\lu}{{\textup{l},\textup{u}}}
\newcommand{\ul}{{\textup{u},\textup{l}}}
\newcommand{\asat}{\alpha_\star}
\newcommand{\albd}{\alpha_\lbd}
\newcommand{\aubd}{\alpha_\ubd}
\newcommand{\qlbd}{q_\lbd}
\newcommand{\qubd}{q_\ubd}
\newcommand{\psilbd}{\psi_\lbd}
\newcommand{\psiubd}{\psi_\ubd}
\newcommand{\gamlbd}{\gamma_\lbd}
\newcommand{\gamubd}{\gamma_\ubd}
\newcommand{\gamlu}{\gamma_\lu}
\newcommand{\gamul}{\gamma_\ul}
\newcommand{\xilbd}{\xi_{\lbd,z}}
\newcommand{\xiubd}{\xi_{\ubd,z}}
\newcommand{\zelbd}{\zeta_{\lbd,z}}
\newcommand{\zeubd}{\zeta_{\ubd,z}}
\newcommand{\HHlbd}{\HH_{\star,\lbd}}
\newcommand{\qul}{q_\ul}
\newcommand{\qlu}{q_\lu}
\newcommand{\psiul}{\psi_\ul}
\newcommand{\psilu}{\psi_\lu}
\newcommand{\Out}{\textup{out}}
\newcommand{\Inn}{\textup{in}}
\newcommand{\llm}{\lm_\lbd}
\newcommand{\ulm}{\lm_\ubd}
\newcommand{\olm}{\lm_\Out}
\newcommand{\ilm}{\lm_\Inn}
\newcommand{\ltau}{\tau_\lbd}
\newcommand{\utau}{\tau_\ubd}
\newcommand{\otau}{\tau_\Out}
\newcommand{\itau}{\tau_\Inn}
\newcommand{\PGG}{\mathscr{S}_{\textit{\texttt{P}}}}
\newcommand{\QGG}{\mathscr{S}_{\textit{\texttt{Q}}}}
\newcommand{\bPGG}{\mathcal{S}_{\textit{\texttt{P}}}}
\newcommand{\bPGGubd}{\mathcal{S}_{\textit{\texttt{P}},\ubd}}
\newcommand{\bQGGubd}{\mathcal{S}_{\textit{\texttt{Q}},\ubd}}
\title{Capacity lower bound for the Ising perceptron}
\author[J.\ Ding]{Jian Ding$^\star$}
\author[N.\ Sun]{Nike Sun$^\circ$}
\thanks{$^\star$Wharton Statistics Department, University of Pennsylvania. \newline\indent $^\circ$Mathematics Department, Massachusetts Institute of Technology; Statistics Department, University of California at Berkeley.}
\begin{document}

{\raggedright\begin{abstract} We consider the Ising perceptron with gaussian disorder, which is equivalent to the discrete cube $\{-1,+1\}^N$ intersected by $M$ random half-spaces. The perceptron's \emph{capacity} is $\alpha_N \equiv M_N/N$ for the largest integer $M_N$ such that the intersection in nonempty. It is conjectured by Krauth and M\'ezard (1989) that the (random) ratio $\alpha_N$ converges in probability to an explicit constant $\alpha_\star \doteq 0.83$. Kim and Roche (1998) proved the existence of a positive constant $\gamma$ such that $\gamma\le \alpha_N \le 1-\gamma$ with high probability; see also Talagrand (1999). In this paper we show that the Krauth--M\'ezard conjecture $\alpha_\star$ is a lower bound with positive probability, under the condition that an explicit univariate function $\SE_\star(\lambda)$ is maximized at $\lambda=0$. Our proof is an application of the second moment method to a certain slice of perceptron configurations, as selected by the so-called TAP (Thouless, Anderson, and Palmer, 1977) or AMP (approximate message passing) iteration, whose scaling limit has been characterized by Bayati and Montanari (2011) and Bolthausen (2012). For verifying the condition on $\SE_\star(\lambda)$ we outline one approach, which is implemented in the current version using (nonrigorous) numerical integration packages. In a future version of this paper we intend to complete the verification by implementing a rigorous numerical method.\end{abstract}}

\maketitle

\section{Introduction}\label{s:intro}

\noindent We consider the \emph{Ising perceptron under gaussian disorder}, defined as follows. Let $(g_{\mu,i})_{\mu\ge1,i\ge1}$ be an array of i.i.d.\ 
standard gaussian random variables (zero mean, unit variance). Fix any real number $\kappa$ (our main result is for $\kappa=0$). For any integer $N\ge1$,
we define $M_N\equiv M_N(\kappa)$ to be the largest integer $M$ such that 
	\beq\label{e:perceptron.zerotemp}
	\bigcap_{\mu=1}^M
	\bigg\{J\in\set{-1,+1}^N : 
	\sum_{i=1}^N
	\f{g_{\mu,i} J_i}{N^{1/2}}
	\ge\kappa\bigg\} \ne \emptyset\,.\eeq
Krauth and M\'ezard \cite{krauthmezard1989} conjectured that as $N\uparrow\infty$ the ratio $M_N(\kappa)/N$ converges to an explicit constant $\asat(\kappa)$, which for $\kappa=0$ is roughly $0.83$. This was one of several works in the statistical physics literature analyzing various perceptron models via the ``replica'' or ``cavity'' heuristics \cite{gardner1987maximum, gardner1988space, gd1988optimal, mezard1989space}. In particular, in the variant where $J$ ranges not over $\set{-1,+1}^N$ but over the entire sphere of radius $N^{1/2}$, the analogous threshold was computed by Gardner and Derrida \cite{gd1988optimal}. Another common variation is to take $g_{\mu,i}\in\set{-1,+1}$ to be i.i.d.\ symmetric random signs (\emph{Bernoulli disorder}). The conjectured thresholds differ between the Ising \cite{krauthmezard1989} and spherical \cite{gd1988optimal} models, but do not depend on whether the disorder is Bernoulli or gaussian. While the classical problem is under Bernoulli disorder, we have chosen to work under gaussian disorder to remove some technical difficulties.

For the \emph{spherical} perceptron, very sharp rigorous results have been obtained, including a proof of the predicted threshold for all nonnegative $\kappa$ under Bernoulli disorder \cite{shcherbina2003rigorous}. For the \emph{Ising} perceptron, much less has been proved. One can introduce a parameter $\beta\ge0$ and define the associated positive-temperature partition function $Z_{\kappa,\beta}$ (see \eqref{e:perceptron.Z.beta} below). The replica calculation extends to a prediction \cite{gd1988optimal,krauthmezard1989} for the limit of $N^{-1}\log Z_{\kappa,\beta}$ as $N\uparrow\infty$ with $M/N\to\alpha$. This formula has been proved to be correct at sufficiently high temperature (small $\beta$) under Bernoulli disorder \cite{talagrand2000intersecting}. For the original model \eqref{e:perceptron.zerotemp}, corresponding to zero temperature or $\beta=\infty$, the best rigorous result to date is that for $\kappa=0$ there exists positive $\gamma$ such that $\gamma\le M_N/N \le 1-\gamma$ with high probability \cite{kim1998covering, talagrand1999intersecting} as $N\uparrow\infty$ (see also \cite{stojnic2013discrete} for some work on general $\kappa$). Our main result is the following:

\begin{thm}\label{t:main}
Consider the Ising perceptron at $\kappa=0$ under gaussian disorder. Under Condition~\ref{c:G}, we have
	\[\liminf_{N\uparrow\infty}
	\oP\bigg( \f{M_N}{N} \ge \alpha\bigg)
	>0\]
for any $\alpha<\asat$, where $\asat$ is the prediction of Krauth and M\'ezard \cite{krauthmezard1989}, defined formally by Proposition~\ref{p:gardner}.
\end{thm}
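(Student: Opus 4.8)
The plan is to run the second moment method on a carefully chosen random subset of perceptron solutions, rather than on all of $\{-1,+1\}^N$ in the intersection \eqref{e:perceptron.zerotemp}; a naive second moment on the full solution set fails because of the well-known clustering/condensation phenomenon for the Ising perceptron. The key idea, signaled in the abstract, is to use the TAP/AMP fixed point: run the Bolthausen--Bayati--Montanari iteration to produce a reference point $\bmag\in[-1,+1]^N$ (the conjectured magnetization profile), and then count only those $J\in\{-1,+1\}^N$ that lie in a thin ``slice'' around $\bmag$ --- e.g.\ with prescribed overlap $\langle J,\bmag\rangle/N$ and prescribed values of the relevant preactivation statistics. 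Concretely, fix $M=\lfloor\alpha N\rfloor$ with $\alpha<\asat$, let $Z$ be the number of solutions in this slice, and aim to show $\E[Z]^2 \gtrsim \E[Z^2]$ along a subsequence, whence $\oP(Z\ge1)\ge \E[Z]^2/\E[Z^2]$ is bounded below by the Paley--Zygmund inequality; since $\{Z\ge1\}\subseteq\{M_N/N\ge\alpha\}$, the theorem follows.

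The steps, in order. First, I would set up the AMP iteration and invoke its state evolution (Bayati--Montanari, Bolthausen) to get precise Gaussian-limit descriptions of the coordinates of $\bmag$ and of the residuals $\sum_i g_{\mu,i}\bmag_i/N^{1/2}$ jointly with the disorder; this is where the parameters $q_\star$ and $\psi_\star$ in the paper's notation enter, and where the Krauth--M\'ezard constant $\asat$ of Proposition~\ref{p:gardner} is matched. Second, compute the first moment $\E[Z]$: this is a large-deviations / saddle-point computation over the empirical distribution of $(J_i,\bmag_i)$ and of the $M$ preactivations, conditioned on $\bmag$ being (approximately) a TAP fixed point; the exponential rate should be nonnegative exactly because $\alpha<\asat$, with the rate vanishing as $\alpha\uparrow\asat$. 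Third, and this is the crux, compute the second moment $\E[Z^2]$ by analyzing pairs $(J,J')$ in the slice with overlap parameter $\lambda=\langle J,J'\rangle/N - (\text{expected value})$: one writes $\E[Z^2]=\sum_\lambda \exp\{N\SE_\star(\lambda)+o(N)\}$ for an explicit function $\SE_\star(\lambda)$, and the whole argument reduces to showing $\SE_\star$ is maximized at $\lambda=0$ (the ``replica-symmetric'' / no-overlap point), where $\SE_\star(0)=2\cdot(\text{first moment rate})$. This is precisely Condition~\ref{c:G}, which we assume; granting it, the contribution near $\lambda=0$ dominates, a local central-limit / Laplace expansion around $\lambda=0$ controls the polynomial prefactor, and one gets $\E[Z^2]=O(\E[Z]^2)$.

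The main obstacle is the second-moment computation and, within it, the behavior of $\SE_\star(\lambda)$: even granting Condition~\ref{c:G} as an input, one must (i) correctly identify the conditional law of a pair of slices --- i.e.\ handle the fact that conditioning on $\bmag$ being an AMP fixed point correlates the two copies through the shared disorder, which makes $\SE_\star$ genuinely different from the annealed two-copy rate --- and (ii) justify the passage from the state-evolution limit (which is asymptotic, for fixed iteration count, then $N\to\infty$, then iteration count $\to\infty$) to the finite-$N$ moment bounds, which requires quantitative concentration for AMP iterates and careful control of the $o(N)$ errors uniformly in $\lambda$. A secondary but real difficulty is that the conclusion is only ``with positive probability,'' not w.h.p.: the Paley--Zygmund bound gives a constant lower bound on $\oP(Z\ge1)$, and upgrading to high probability would require an additional concentration argument for $\log Z$ (e.g.\ via a conditional/Talagrand concentration inequality) that we do not attempt here. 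I would also expect to spend real effort verifying that the AMP-selected reference point genuinely sidesteps the condensation obstruction --- heuristically, the slice isolates a single ``pure state,'' so within-slice overlaps concentrate --- but making this rigorous is exactly what the $\SE_\star(\lambda)$ analysis accomplishes.
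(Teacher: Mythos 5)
Your proposal follows essentially the same route as the paper: a conditional second moment method (Paley--Zygmund) applied to a restricted partition function counting solutions in a TAP/AMP-selected slice, with state evolution supplying the limiting exponents and Condition~\ref{c:G} guaranteeing that the pair-overlap exponent is dominated by $\lambda=0$. The only points your outline underplays are that the moments must be computed \emph{conditionally} on the $\sigma$-field $\Filt$ generated by the AMP iterates, with Paley--Zygmund applied on the positive-probability event that the conditional first moment is exponentially large, and that $\lambda=1$ is also a zero of the pair exponent --- its contribution is of first-moment order $\exp\{N\GG_\star(\alpha)\}$ and is absorbed only because $\GG_\star(\alpha)>0$ for $\alpha<\asat$.
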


\begin{cnd} \label{c:G} The function $\SE_\star$ defined by \eqref{e:GG.condition}  satisfies $\SE_\star(\lm)<0$ for all $\lm\notin\set{0,1}$.
\end{cnd}

\noindent The proof of Theorem~\ref{t:main} occupies Sections~\ref{s:tap}--\ref{s:conclusion}. An approximate upper bound on $\SE_\star$ is shown by the red filled circles in Figure~\ref{f:HPA}. In Sections~\ref{s:quant.bounds}--\ref{s:specialfns} we outline a verification of Condition~\ref{c:G} using (nonrigorous) numerical integration packages; we intend to replace this with a rigorous verification in a future version. As noted above, the choice of gaussian disorder is a simplification of the Bernoulli disorder case which does not affect the predicted threshold. We expect that the arguments of \cite{kim1998covering, talagrand1999intersecting} can be easily modified to give $\gamma\le M_N/N \le 1-\gamma$ with high probability in this setting, but will not recover the value $\asat$.

We next give the formal definition of the Krauth--M\'ezard threshold $\asat$ for the Ising perceptron. Let us write $\varphi$ for the standard gaussian density, and $\bPhi$ for the complementary gaussian distribution function:
	\[\varphi(x)
	\equiv \f{\exp(-x^2/2)}{\sqrt{2\pi}}\,,\quad
	\bPhi(x)\equiv \int_x^\infty \varphi(u)\,du\,.\]
We give the following expressions for general $\kappa\in\R$ (although we focus on $\kappa=0$). Let
	\beq\label{e:def.F}
	\cE(x)\equiv \f{\varphi(x)}{\bPhi(x)}\,,\quad
	\varphi_\xi(x)\equiv 
	\f{\varphi(x) \Ind{x\ge\xi}}{\bPhi(\xi)}
		\,,\quad
	F_q(x)\equiv\f{\cE}{\sqrt{1-q}}
		\bigg(\f{\kappa-x}{\sqrt{1-q}}\bigg)\,.\eeq
For $q\in[0,1)$ and $\psi\ge0$ define
	\beq\label{e:q.psi.recursion}
	P(\psi)\equiv
	\int\tnh(\psi^{1/2}z)^2\,\varphi(z)\,dz\,,\quad
	R_\kappa(q,\alpha)\equiv
	\alpha\int F_q(q^{1/2}z)^2 \varphi(z)\,dz\,.\eeq
For $q\in[0,1)$ and $\psi\ge0$ let
	\beq\label{e:GG}\GG_\kappa(\alpha,q,\psi)
	\equiv
	-\f{\psi(1-q)}{2}
	+ \int\log(2\ch(\psi^{1/2}z))\varphi(z)\,dz
	+ \alpha\int\log\bPhi\bigg(
	\f{\kappa-q^{1/2}z}{\sqrt{1-q}}
	\bigg)\,\varphi(z)\,dz\,.\eeq
For $\kappa=0$ we abbreviate 
$R(q,\alpha)\equiv R_0(q,\alpha)$ and $\GG(\alpha,q,\psi)\equiv\GG_0(\alpha,q,\psi)$. The following (proved in Section~\ref{s:conclusion}) gives our formal characterization of the threshold $\asat$:

\begin{ppn}\label{p:gardner}
For $\kappa=0$, let $\albd,\aubd,\qlbd,\qubd$ be as defined by \eqref{e:numbers}. For any $\alpha\in(\albd,\aubd)$, it holds that
	\beq\label{e:at}
	\sup_{q\in (\qlbd,\qubd)}
	\bigg\{
	 \f{dP(R(q,\alpha))}{dq}
	 \bigg\}
	 <1\,,\eeq
and there is a unique $\sq\equiv\sq(\alpha)$ in the interval $(\qlbd,\qubd)$ that satisfies the fixed-point equation $\sq = P(R(\sq,\alpha))$. Denote $\spsi(\alpha)\equiv R(\sq(\alpha),\alpha)$. On the interval $(\albd,\aubd)$ the function $\GG_\star(\alpha) \equiv \GG(\alpha,\sq(\alpha),\spsi(\alpha))$ is well-defined and strictly decreasing, with a unique root $\alpha_\star\in(\albd,\aubd)$.
\end{ppn}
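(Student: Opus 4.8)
The plan is to treat the statement as a collection of elementary analytic facts about the explicit functions $P$, $R$, $\GG$ restricted to the rectangle $(\albd,\aubd)\times(\qlbd,\qubd)$, and to verify them by differentiation together with the monotonicity structure of $\cE$ and $\bPhi$. First I would record the basic calculus identities: $\cE(x)=\varphi(x)/\bPhi(x)$ satisfies $\cE'=\cE(\cE-x)$ and is strictly increasing with $\cE(x)-x>0$, while $P(\psi)=\int\tnh(\psi^{1/2}z)^2\varphi(z)\,dz$ is smooth, strictly increasing on $[0,\infty)$ with $P(0)=0$, $P'(0)=1$, and $P$ strictly concave (the last from differentiating under the integral and integrating by parts in $z$). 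Likewise $q\mapsto R(q,\alpha)=\alpha\int F_q(q^{1/2}z)^2\varphi(z)\,dz$ is smooth on $[0,1)$ for fixed $\alpha$, and one computes $\pd_q R$ and $\pd_\alpha R$ explicitly; the key sign facts needed are that $R(q,\alpha)>0$ for $\alpha>0$ and that these partials are bounded on the compact $q$-range in question.

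Next I would establish \eqref{e:at}. Writing $g(q)\equiv P(R(q,\alpha))$ we have $g'(q)=P'(R(q,\alpha))\,\pd_q R(q,\alpha)$, and the claim $\sup_{q\in(\qlbd,\qubd)}g'(q)<1$ is a finite numerical inequality once the defining numbers \eqref{e:numbers} are in hand; I expect the bounds $\albd,\aubd,\qlbd,\qubd$ to have been chosen precisely so that a crude bound on $P'$ (using $P'(\psi)\le1$ together with a quantitative gap coming from $\spsi$ being bounded away from $0$) times a bound on $\pd_q R$ closes this. The contraction bound \eqref{e:at} immediately gives, via the intermediate value theorem applied to $q\mapsto P(R(q,\alpha))-q$ on $(\qlbd,\qubd)$ (checking the sign of this function at the two endpoints from \eqref{e:numbers}), existence and uniqueness of the fixed point $\sq(\alpha)$; moreover, since $g'<1$ on the interval, the implicit function theorem shows $\alpha\mapsto\sq(\alpha)$ is continuously differentiable there, with $\sq'(\alpha)=\dfrac{P'(\spsi)\,\pd_\alpha R}{1-P'(\spsi)\,\pd_q R}>0$, and $\spsi(\alpha)=R(\sq(\alpha),\alpha)$ is smooth as well.

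Then I turn to $\GG_\star(\alpha)=\GG(\alpha,\sq(\alpha),\spsi(\alpha))$. The point is that $(\sq,\spsi)$ is by construction a stationary point of $(q,\psi)\mapsto\GG(\alpha,q,\psi)$: one checks $\pd_\psi\GG=-\tfrac{1-q}{2}+\tfrac12\int\tnh(\psi^{1/2}z)^2\varphi(z)\,dz\cdot\psi^{-1}\cdot\psi=\ldots$ vanishes exactly when $q=P(\psi)$, and $\pd_q\GG$ vanishes exactly when $\psi=R(q,\alpha)$ — these are the two halves of the fixed-point system, so both partials vanish at $(\sq(\alpha),\spsi(\alpha))$. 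Hence by the envelope theorem $\GG_\star'(\alpha)=\pd_\alpha\GG(\alpha,\sq(\alpha),\spsi(\alpha))=\int\log\bPhi\!\big(-\sq^{1/2}z/\sqrt{1-\sq}\,\big)\varphi(z)\,dz$, which is strictly negative since $\log\bPhi<\log1=0$ pointwise; this gives strict decrease on $(\albd,\aubd)$. Finally, existence and uniqueness of the root $\alpha_\star$ follows from strict monotonicity plus a sign change of $\GG_\star$ across $(\albd,\aubd)$, which is again a numerical check at the endpoints using \eqref{e:numbers}.

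The main obstacle is not conceptual but quantitative: everything hinges on the rectangle $(\albd,\aubd)\times(\qlbd,\qubd)$ being small enough that the crude derivative bounds actually yield \eqref{e:at} and the endpoint sign changes, and on confirming that the fixed point and root genuinely lie in the open interior rather than on the boundary. I would organize this as a short lemma giving explicit upper bounds for $|\pd_q R|$, $|\pd_\alpha R|$ and a lower bound for $1-P'(\spsi)$ valid on the rectangle, deferring the actual numerics for the constants in \eqref{e:numbers} to the later quantitative sections; the envelope-theorem computation of $\GG_\star'$ and the pointwise bound $\log\bPhi<0$ are then what make the decreasing-and-unique-root conclusion essentially automatic.
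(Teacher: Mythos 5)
Your proposal follows essentially the same route as the paper: the AT bound \eqref{e:at} is reduced to a product of a bound on $P'$ over the relevant $\psi$-range (which the paper gets from $\psi$ being bounded away from $0$ via a computer-assisted range lemma) and a bound on $\pd_q R$; the fixed point and the root of $\GG_\star$ come from monotonicity plus numerically verified sign changes at the endpoints of \eqref{e:numbers}; and the strict decrease of $\GG_\star$ is exactly the paper's stationarity/envelope argument, with $\pd_q\GG=0$ hinging on the same gaussian integration-by-parts identity $I=\psi(1-q)$ and $\pd_\psi\GG=0$ on $q=P(\psi)$. The only caveat is that the deferred numerics are not as crude as "$P'\le1$ plus a gap" — the paper needs $P'\le0.08$ against $\pd_q R\le12$ — but you correctly identify that this is where the quantitative work lives.
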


\noindent The map $q\mapsto P(R(q,\alpha))$ is shown in Figure~\ref{f:qrecursion} for $\alpha\doteq0.833$. We comment that condition~\eqref{e:at} is essentially the \emph{Almeida--Thouless} (AT) condition \cite{AT1978stability} for this model. We will prove Theorem~\ref{t:main} via the ``second moment method'' (or Paley--Zygmund inequality): for any nonnegative random variable $X$,
	\beq\label{e:paley.zygmund}
	\f{(\E X)^2}{\E(X^2)}
	=\f{ \E(X\Ind{X>0})^2}{\E(X^2)}
	\le \f{\E(X^2) \oP(X>0)}{\E(X^2)}
	=\oP(X>0)\,,\eeq
where the intermediate bound follows from the Cauchy--Schwarz inequality. Now, for $(g_{\mu,i})_{\mu\ge1,i\ge1}$ an array of i.i.d.\ standard gaussians, let $\bG_{M\times N}$ be the submatrix indexed by $1\le \mu\le M$ and $1\le i\le N$.
For the Ising perceptron under gaussian disorder, the partition function at inverse temperature $\beta$ is given by 
	\beq\label{e:perceptron.Z.beta}
	Z_{\kappa,\beta} 
	\equiv Z_{\kappa,\beta}(\bG_{M\times N})
	\equiv\sum_{J\in\set{-1,+1}^N}
	\exp\bigg\{
	\sum_{\mu=1}^M
	-\beta\mathbf{1}\bigg\{
	\sum_{i=1}^N
	\f{g_{\mu,i} J_i}{N^{1/2}}
	<\kappa\bigg\}\bigg\}\,.\eeq
We are interested in the zero-temperature limit, $Z_\kappa\equiv Z_{\kappa,\infty}$. Thus $M_N(\kappa)$ is the largest $M$ for which $Z_\kappa(\bG_{M\times N})$ is nonzero. For the case $\kappa=0$ we abbreviate $Z\equiv Z_0$. In the proof we will introduce parameters $\DELTA,t,\EPS$ where $\DELTA,\EPS$ are positive constants and $t$ is a positive integer. We write $\ETA$ for an error term depending on $M,N,\DELTA,t,\EPS$ such that
	\beq\label{e:ETA}
	\adjustlimits 
	\limsup_{\DELTA\downarrow0}
	\limsup_{t\uparrow\infty}
	\limsup_{\EPS\downarrow 0} 
	\limsup_{\substack{N\uparrow\infty,\\
		M/N\to\alpha}}
		|\ETA_{M,N,\DELTA,t,\EPS}|=0\,.\eeq
If a bound has multiple distinct error terms we indicate this by writing $\ETA$, $\acute{\ETA}$, etc.

\begin{thm}\label{t:second}Consider the Ising perceptron at $\kappa=0$ under gaussian disorder, with partition function $Z\equiv Z(\bG_{M\times N})$. We can define a collection of (integer-valued) random variables $\bZ_{\DELTA,t,\EPS}\equiv \bZ_{\DELTA,t,\EPS}(\bG_{M\times N})$,
and a $\sigma$-field $\Filt\equiv\Filt_{M,N,\DELTA,t}$, such that the following hold: $0\le\bZ_{\DELTA,t,\EPS}\le Z$ with probability one; we have
	\beq\label{e:first.mmt.gg}\adjustlimits
	\liminf_{\DELTA\downarrow0}
	\liminf_{t\uparrow\infty}
	\liminf_{\substack{N\uparrow\infty,\\
		M/N\to\alpha}}
	\oP\bigg(
	\f1N\log\E\Big(
	\bZ_{\DELTA,t,\EPS}
	\givBig\Filt\Big)
	\ge\GG_\star(\alpha)- \ETA
	\bigg)>0\eeq
for all $\alpha \in (\albd,\aubd)$;
and under Condition~\ref{c:G} there exists
$\alpha'\in(\albd,\asat)$ such that for all $\alpha\in(\alpha',\asat)$ we have
	\beq\label{e:second.mmt.bound}
	\E\Big( (\bZ_{\DELTA,t,\EPS})^2\givBig\Filt\Big)
	\le \cst
	\bigg\{\E\Big(\bZ_{\DELTA,t,\EPS}\givBig\Filt\Big)
	\bigg\}^2 + \bigg(
	\exp\bigg\{
	2 N \GG_\star(\alpha)
	-\f{N}{\cst}
	\bigg\}
	+\exp\bigg\{
	N \GG_\star(\alpha) 
	\bigg\}\bigg)
	\exp(N\acute{\ETA})\eeq
where $\cst \equiv\cst_{M,N,\DELTA,t,\EPS}$ are $\Filt_{M,N,\DELTA,t}$-measurable and stochastically bounded in the limit $N\uparrow\infty$ with $M/N\to\alpha$.
\end{thm}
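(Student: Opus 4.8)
The plan is to realize $\bZ_{\DELTA,t,\EPS}$ as the number of genuine perceptron solutions lying in a thin slice around the output of a finite-depth AMP (equivalently TAP) iteration, and to take $\Filt$ to be the $\sigma$-field generated by that iteration. Concretely, run the AMP recursion for this model (Bolthausen's construction \cite{bolthausen2012}, whose scaling limit is governed by the state-evolution theory of Bayati and Montanari \cite{bayatimontanari2011}) for $t$ steps, producing magnetization iterates $\bmag^1,\dots,\bmag^t\in[-1,1]^N$ and cavity-field iterates $\hat z^1,\dots,\hat z^t\in\R^M$; let $\Filt=\Filt_{M,N,\DELTA,t}$ be the $\sigma$-field generated by these vectors, equivalently by the orthogonal projection of $\bG$ onto the span of the associated $t$ message directions on each side. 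State evolution gives, in the limit $N\uparrow\infty$ followed by $t\uparrow\infty$, that the empirical law of $(\bmag^t_i)_i$ converges to that of $\tnh(\spsi^{1/2}Z)$ with $Z$ standard gaussian, that of $(\hat z^t_\mu)_\mu$ to standard gaussian, and hence $N^{-1}\ltwo{\bmag^t}^2\to\sq(\alpha)$ with $\spsi=\spsi(\alpha)=R(\sq,\alpha)$, as in Proposition~\ref{p:gardner}. Take $\bZ_{\DELTA,t,\EPS}$ to count $J\in\set{-1,+1}^N$ such that: (i) $J$ is a solution, i.e.\ $N^{-1/2}(\bG J)_\mu\ge0$ for all $\mu\le M$ --- this alone forces $0\le\bZ_{\DELTA,t,\EPS}\le Z$; (ii) the empirical law of the coordinates $(J_i,\bmag^t_i)$ lies within $\DELTA$, in a fixed Wasserstein-type metric, of the planted coupling determined by $(\sq,\spsi)$, which in particular pins $N^{-1}\<J,\bmag^t\>$ near $\sq$; and (iii) an $\EPS$-level truncation discarding an $\EPS$-fraction of the most constraining rows (those with $\hat z^t_\mu$ very negative), so that the relevant gaussian integrands stay uniformly controlled. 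Clauses (ii)--(iii) cost only $\ETA$ on the exponential scale, and $\EPS$ is sent to zero last.

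The first technical ingredient is the conditional law of $\bG$ given $\Filt$: Bolthausen's conditioning lemma, adapted to the perceptron AMP, represents $\bG\mid\Filt$ as a gaussian whose mean is an explicit linear function of the iterates and whose covariance is the tensor projection onto the orthogonal complement of the message spans. Consequently, for any $J$ in the slice, the rescaled field vector $N^{-1/2}\bG J$ is, conditionally on $\Filt$, approximately gaussian with mean $\approx\sq^{1/2}\hat z^t$ and coordinatewise variance $\approx1-\sq$ --- exactly the combination inside $\bPhi$ in \eqref{e:GG}. The conditional first moment then follows by summing the conditional solution-probability over the slice: the count of configurations with the prescribed magnetization profile contributes, after the Legendre duality producing the tilt $\spsi$, the entropy $\int\log(2\ch(\spsi^{1/2}z))\varphi(z)\,dz-\spsi(1-\sq)/2$, while the event that all $M$ constraints hold contributes $\alpha\int\log\bPhi(-\sq^{1/2}z/\sqrt{1-\sq})\varphi(z)\,dz$; together these give $\tfrac1N\log\E(\bZ_{\DELTA,t,\EPS}\giv\Filt)=\GG_\star(\alpha)-\ETA$ on an event of probability bounded away from $0$. (That event may in fact have probability tending to one, but only its positivity matters, since \eqref{e:paley.zygmund} produces a positive lower bound on $\oP(\bZ_{\DELTA,t,\EPS}>0\giv\Filt)$ and never one tending to one.)

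For the conditional second moment, write $\E(\bZ_{\DELTA,t,\EPS}^2\giv\Filt)=\sum_{J,J'}\oP(J,J'\in\text{slice}\giv\Filt)$ and stratify the pair sum by a scalar excess-overlap parameter $\lm\in[0,1]$ interpolating between $J,J'$ being as decorrelated as two independent slice members ($\lm=0$) and $J=J'$ ($\lm=1$); using the conditional law above, the $\lm$-stratum equals, on the exponential scale and after dividing by $\exp(2N\GG_\star(\alpha))$, the quantity $\exp(N\SE_\star(\lm)+N\ETA)$, with $\SE_\star$ the two-replica rate of \eqref{e:GG.condition} entering Condition~\ref{c:G}. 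Split $[0,1]$ into three regimes. (a) Near $\lm=0$: here $\SE_\star(0)=0$, and the Almeida--Thouless inequality \eqref{e:at} of Proposition~\ref{p:gardner} translates into strict negativity of the Hessian of $\SE_\star$ at $0$, so a Laplace/Varadhan expansion of the $\lm$-integral about $0$ produces a convergent gaussian integral and the term $\cst\{\E(\bZ_{\DELTA,t,\EPS}\giv\Filt)\}^2$. (b) For $\lm$ in a compact subset of $(0,1)$ bounded away from both endpoints: Condition~\ref{c:G} gives $\SE_\star(\lm)\le-1/\cst<0$, contributing $\exp\{2N\GG_\star(\alpha)-N/\cst\}$. (c) Near $\lm=1$: a separate local analysis --- using that $\SE_\star$ has no interior maximum on $(0,1)$ together with the fact that the diagonal $J=J'$ alone contributes $\E(\bZ_{\DELTA,t,\EPS}\giv\Filt)\approx\exp(N\GG_\star(\alpha))$ --- shows the near-diagonal pairs are dominated by the diagonal and contribute $\exp\{N\GG_\star(\alpha)\}$. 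Summing the three regimes and absorbing the error terms into a single $\acute{\ETA}$ yields \eqref{e:second.mmt.bound}; the constants $\cst$ are $\Filt$-measurable because assembled from the AMP iterates, and stochastically bounded because those iterates have uniform gaussian tails.

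I expect the main obstacle to lie entirely in the second-moment step, in two related places. First, reducing the a priori infinite-dimensional two-replica large-deviation problem --- which constrains the joint empirical distribution of the triples $(J_i,J'_i,\bmag^t_i)$ and of the two conditional field vectors --- to the single explicit exponent $\SE_\star(\lm)$ of \eqref{e:GG.condition}: the inner optimizations over magnetization and field profiles must be solved in closed form, and the normalization checked so that $\SE_\star(0)=0$ holds identically (not merely up to $\ETA$), since that identity is exactly what makes regime (a) reproduce $\{\E(\bZ_{\DELTA,t,\EPS}\giv\Filt)\}^2$. Second, showing that the scalar AT inequality \eqref{e:at}, i.e.\ $dP(R(q,\alpha))/dq<1$ at $q=\sq(\alpha)$, is precisely the statement that the stability operator at the $\lm=0$ saddle is a strict contraction, so that the Laplace expansion in regime (a) is legitimate and produces a finite constant rather than a divergence --- and, relatedly, handling the degenerate boundary-maximum behaviour near $\lm=1$, where the naive saddle-point method breaks down and diagonal dominance must be argued directly. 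The conditioning lemma and the first-moment computation, while they require care with the rank-$t$ corrections, are by now fairly routine applications of the AMP machinery.
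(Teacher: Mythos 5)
Your overall architecture matches the paper's: condition on a finite-depth TAP/AMP iteration, count solutions in a thin slice around $\bmag^{(t)}$, compute the conditional first moment from the conditional gaussian law of $\bG$, and stratify the second moment by an overlap parameter with a CLT regime near $\lm=0$ and a large-deviations regime elsewhere governed by $\SE_\star$. However, there are three concrete gaps.

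First, your construction omits the device that makes the conditional structure well-defined and sharp. The paper splits $\bG=(\bE,\hat{\bE})$, runs TAP only on $\bE$, and uses a Kim--Roche-type argument (Proposition~\ref{p:kr}) to build $\hat{J}=\bhJ$ from the reserved columns so that $(\bE\bmag+\hat{\bE}\bhJ)/N^{1/2}\ge(\kappa+2\DELTA^{1/2})\oneM$; only with this margin does the perturbed TAP fixed-point equation \eqref{e:fixed.pt} admit a solution $\bh$ (Lemma~\ref{l:kr} requires the right side to exceed $\kappa$), and the entire moment analysis (the tilt $\bT$, the vectors $\bxi^J,\bze^J$, the profile restriction) is built on this exact fixed point, with the random perturbation $\dkappa$ added to make the change-of-basis matrix $\bXi$ nondegenerate (Corollary~\ref{c:Xi.nondegenerate}). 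This is also precisely why \eqref{e:first.mmt.gg} is only a positive-probability statement: $\bhJ$ exists only with probability $\ge1/10$. Working directly with the raw AMP iterate $\bh^{(t)}$, as you propose, leaves $O(N^{1/2}o_t(1))$ discrepancies that are harmless at the exponential scale but fatal in the CLT regime: there, for $|\lm|\lesssim N^{-1/2}$, one needs the linear-in-$\lm$ terms of $\log\oP(\sat_{J,\EPS}\sat_K\giv\Filt)$ and of $\log\#\bbH_\EPS(\lm)$ to cancel to $O(\cst)$ (not $O(N\ETA)$), which is exactly what the exact fixed point delivers in Corollary~\ref{c:PP.small.lm}. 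Your statement that the $\lm$-stratum equals $\exp(N\SE_\star(\lm)+N\ETA)$ is therefore too coarse to run the Laplace expansion at $\lm=0$.

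Second, your clause (iii) --- discarding an $\EPS$-fraction of the most constraining rows --- is either a relaxation of the satisfiability event (in which case $\bZ_{\DELTA,t,\EPS}\le Z$ fails) or not what you intend. The paper instead \emph{restricts} the event by requiring the empirical profile of the conditional fields $(\bh,\bnu)$ to be near a product measure (Definition~\ref{d:profile}); this restriction is not cosmetic, as it is used crucially in Lemma~\ref{lem-CLT-approximation} to identify the large-deviation exponents $\II_s(\lm)$ and $\BB(\lm,s)$ for general $\lm$. Third, you assert that the AT condition \eqref{e:at} yields $\SE_\star''(0)<0$; the paper does not prove this implication but instead verifies $\PP''(0)+\HH''(0)<0$ by a separate quantitative/numerical argument (Proposition~\ref{p:grid.result}\ref{p:grid.result.c}), combined with $\AAA''(0)\le0$. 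As written, that step of your regime (a) is unsupported.
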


\noindent The chief innovation of this paper is the design of the conditioning $\sigma$-field $\Filt$ and the random variables $\bZ_{\DELTA,t,\EPS}$, given in  Section~\ref{s:tap} (with $\bZ_{\DELTA,t,\EPS}$ defined explicitly by \eqref{e:def.restricted.part.fn}). 
The main technical work is in the conditional moment analysis for $\bZ_{\DELTA,t,\EPS}$, which occupies Sections~\ref{s:tilt}--\ref{s:conclusion}. We will see that $\bZ_{\DELTA,t,\EPS}(\bG_{M\times N})$ in fact depends on $\bG_{M\times N}$ together with an extra small random perturbation $\dkappa$ (appearing in \eqref{e:fixed.pt}). The role of $\dkappa$ is purely technical: it smooths a certain distribution and can only decrease the partition function, so it has no effect on the main result. We include $\dkappa$ in the definition of $\Filt$, but otherwise will often suppress it from the notation.

\begin{proof}[Proof of Theorem~\ref{t:main} assuming Theorem~\ref{t:second}] We have $M_N \ge N\alpha$ if and only if $Z(\bG_{\lfloor N\alpha \rfloor \times N})$ is positive, so
	\[\oP\bigg(\f{M_N}{N} \ge \alpha \,\bigg|\,
		\Filt\bigg)
	=\oP\Big(
	Z(\bG_{\lfloor N\alpha \rfloor \times N})
	>0
	\,\Big|\,\Filt
	\Big)
	\ge\oP\Big(
		\bZ_{\DELTA,t,\EPS}
		(\bG_{\lfloor N\alpha \rfloor \times N})
		>0
		\,\Big|\,\Filt\Big)\,.\]
Abbreviate $\bZ_{\DELTA,t,\EPS}\equiv \bZ_{\DELTA,t,\EPS}(\bG_{\lfloor N\alpha \rfloor \times N})$. For $\alpha'<\alpha<\asat$ we have $\GG_\star(\alpha)>0$, so we can choose $\DELTA,t,\EPS$ such that
	\beq\label{e:first.mmt.large.evt}
	\liminf_{\substack{
		N\uparrow\infty,\\
		M/N\to\alpha
		}}\bigg\{
	\min\bigg\{
	\GG_\star(\alpha),\f1{\cst}
	\bigg\}-2 \ETA_{M,N,\DELTA,t,\EPS}
	-\acute{\ETA}_{M,N,\DELTA,t,\EPS}
	\bigg\}>0\eeq
for $\ETA$ as in \eqref{e:first.mmt.gg} and $\acute{\ETA}$ as in \eqref{e:second.mmt.bound}. On the event $N^{-1}\log\E(\bZ_{\DELTA,t,\EPS}\givBig\Filt) \ge \GG_\star(\alpha)-\ETA$, combining with \eqref{e:paley.zygmund} and \eqref{e:second.mmt.bound} gives
	\[\oP\Big(
	\bZ_{\DELTA,t,\EPS}>0
	\,\Big|\,\Filt\Big)
	\ge \f1{\cst +
	\exp(N(2\ETA+\acute{\ETA}))
	[ \exp(-N/\cst)+\exp(-N\GG_\star(\alpha)) ]
	} = \f1{\cst+o_N(1)}\]
where the term $o_N(1)$ tends to zero in the limit $N\to\infty$, with $M/N\to\alpha$, for any choice of $\DELTA,t,\EPS$ such that \eqref{e:first.mmt.large.evt} holds.
Combining with \eqref{e:first.mmt.gg} gives 
	\begin{align*}
	&\liminf_{N\uparrow\infty}
	\oP\bigg(\f{M_N}{N}\ge \alpha\bigg)
	\ge\liminf_{N\uparrow\infty}
	\oP(\bZ_{\DELTA,t,\EPS}
		(\bG_{\lfloor N\alpha
			\rfloor \times N})>0)\\
	&\qquad\ge
	\liminf_{N\uparrow\infty} \E\bigg(
		\f1{\cst + o_N(1)}
	\mathbf{1}\bigg\{
	\f1N\log\E\Big(
	\bZ_{\DELTA,t,\EPS}
		(\bG_{\lfloor N\alpha\rfloor\times N})
	\givBig\Filt\Big)
	\ge\GG_\star(\alpha)-\ETA
	\bigg\}\bigg)>0\,,\end{align*}
concluding the proof.\end{proof}

\noindent The proof of Theorem~\ref{t:second} occupies essentially the entirety of this paper. In Section~\ref{s:tap} we define the $\sigma$-fields $\Filt$ and the random variables $\bZ_{\DELTA,t,\EPS}$, and give the proof outline which is then implemented in the remainder of the paper. 

\begin{rmk}\label{r:kappa} Although our main result is for $\kappa=0$, this assumption is used only in a few steps which will be explicitly indicated. Otherwise we write most steps of the proof for general $\kappa\in\R$, assuming only that we have a fixed point $\sq = P(R_\kappa(\sq,\alpha))$ such that condition~\eqref{e:at} is satisfied for a range $(\qlbd,\qubd)\ni \sq$. We indicate the dependence on $\kappa,\alpha$ by writing $\sq\equiv q_{\star\kappa}(\alpha)$, $\spsi\equiv \psi_{\star\kappa}(\alpha) \equiv R_\kappa(\sq,\alpha)$, and $\GG_{\star\kappa}(\alpha)\equiv \GG_\kappa(\alpha,q_{\star\kappa}(\alpha),\psi_{\star\kappa}(\alpha))$.
\end{rmk}

\begin{rmk}\label{r:sk} A closely related conditional second moment approach is implemented in an independent work \cite{bolthausensk} to compute the free energy of the Sherrington--Kirkpatrick (SK) spin-glass model \cite{sk} (allowing an external field) at high temperature. For SK and some related spin-glass models, a very powerful framework has been extensively developed (see \cite{MR1930572,MR1894111,MR2195134,MR2731561,MR3052333} and refs.\ therein) that extends to the more difficult low-temperature regime; but the approach of \cite{bolthausensk} offers an appealing alternative at high temperature. We point out that \cite{bolthausensk} computes conditional moments of the \emph{unrestricted} SK partition function, yielding tight lower \emph{and upper} bounds, but again only at very high temperature. The main challenge of the current paper is to prove an analogous lower bound as \cite{bolthausensk}, but at zero temperature where the second moment method tolerates much less error, and furthermore for a model with a more complicated (nonlinear) Hamiltonian.
\end{rmk}

\noindent A matching upper bound to Theorem~\ref{t:main} remains for us the most natural and interesting open question. Beyond this, we refer to intriguing experimental investigations \cite{baldassi2016local, baldassi2016unreasonable} which suggest further avenues for investigation in the Ising perceptron model.

\subsection*{Acknowledgements} We are extremely grateful to Andrea Montanari who generously discussed this problem with us on many occasions, and shared ideas that became essential to the proof. We also wish to thank Erwin Bolthausen for sharing with us the manuscripts of his related work. Riccardo Zecchina, Carlo Baldassi, and Lenka Zdeborov\'a introduced us to this problem, and we are grateful for their encouragement to work on it. The perceptron model was also brought up at an American Institute of Mathematics workshop in June 2017, and we thank the other participants in the discussions there: Dimitris Achlioptas, Nick Cook, Reza Gheissari, Aukosh Jagannath, Florent Krzakala, Will Perkins, Eliran Subag, and Yumeng Zhang. Finally, it is a pleasure to acknowledge the hospitality of our colleagues at the National University of Singapore and at the Centre de Recherches Math\'ematiques, where parts of this work were completed. J.D.\ is supported by NSF grant DMS-1757479 and an Alfred Sloan fellowship. N.S.\ is supported by NSF grant DMS-1752728.

\section{TAP iteration and conditioning scheme}
\label{s:tap}

\noindent In the proof of Theorem~\ref{t:second}, a small fraction of the columns of $\bG$ play a special role, and will be fully revealed in the preliminary setup. The subsequent moment calculation (occupying most of the paper) is based on the randomness in the remaining majority of columns, which are only partially revealed in the preliminary phase. For this reason it is convenient to slightly adjust the notation: we recast $N$ as $\Nall$, and instead use $N$ for the portion of $\bG$ that plays the main role in the second moment. To be precise, for the remainder of the paper we let $M=\lfloor \Nall \aall \rfloor$ and 
	\beq\label{e:g.kr}
	\bG= \bG_{M\times \Nall}
	= \begin{pmatrix} 
	g_{1,1} & \cdots & g_{1,N}
		& g_{1,N+1}
		& \cdots & g_{1,\Nall}\\
	\vdots& & \vdots
		& \vdots &&\vdots \\
	g_{M,1} & \cdots & g_{M,N}
		& g_{M,N+1}
		& \cdots & g_{M,\Nall}\\
	\end{pmatrix}
	= \begin{pmatrix} 
	\bE & \hat{\bE}
	\end{pmatrix}\eeq
where $\bE$ is $M\times N$, $\hat{\bE}$ is $M\times \hat{N}$, and $\Nall = N+\hat{N}$ with $\hat{N}/N = \hat{\alpha}(\DELTA)\to0$ as $\DELTA\downarrow0$. We write 
	\[J_\textup{all}
	\equiv \begin{pmatrix} J \\ \hat{J}\end{pmatrix}
	\in\set{-1,+1}^{\Nall}\]
where $J\in\set{-1,+1}^N$ and $\hat{J}\in\set{-1,+1}^{\hat{N}}$. We then also relabel $\kappa$ as $\kall$, so that the perceptron partition function $Z_{\kall}(\bG)$ counts elements $J_\textup{all} \in \set{-1,+1}^{\Nall}$ satisfying 
	\beq\label{e:perceptron.sat.all}	
	\f{\bG J_\textup{all}}
		{\sqrt{\Nall}}
	= \f{\bE J + \hat{\bE}\hat{J}}
		{\sqrt{\Nall}}
	\ge\kall\,.\eeq
For most of the proof it will be more convenient to normalize by $N^{1/2}$ rather than $\sqrt{\Nall}$. We therefore let
	\[\alpha \equiv \f{M}{N}
	= \f{\aall-o_N(1)}
		{1-\hat{\alpha}(\DELTA)}\,,\quad
	\kappa \equiv 
	\kall \bigg(
	\f{\Nall}{N}\bigg)^{1/2}
	= \f{\kall}
	{\sqrt{1- \hat{\alpha}(\DELTA)}}\]
(so for our main result $\kappa=\kall=0$). In this section we formally define the $\sigma$-field $\Filt$ of Theorem~\ref{t:second}, and prove some results in preparation for the second moment analysis.

\subsection{TAP iteration and state evolution}\label{ss:tap} The conditioning $\sigma$-field $\Filt$ of Theorem~\ref{t:second} is based on the so-called TAP (Thouless--Anderson--Palmer \cite{tap1977solution}) or AMP (approximate message passing) iteration, which we now review. 

\begin{rmk}\label{r:fixed.pt.adjusted} 
To restate Remark~\ref{r:kappa} in our new notation,
we assume that we have 
$\sq \equiv q_{\star\kall}(\aall)$ satisfying the fixed-point equation $\sq = P(R_{\kall}(\sq,\aall))$, such that
for some range $(\qlbd,\qubd)\ni\sq$ we have
	\beq\label{e:at.all}
	\sup_{q\in (\qlbd,\qubd)}
	\bigg\{
	 \f{dP(R_{\kall}
	 	(q,\aall))}{dq}
	 \bigg\}
	 <1\,.\eeq
Write $\spsi\equiv\psi_{\star\kall}(\aall)\equiv R_{\kall}(\sq,\aall)$. We will arrange (in Proposition~\ref{p:kr}) to have $\hat{N}/N=\hat{\alpha}(\DELTA)=o_{1/\DELTA}(1)$, where we use $o_{1/\DELTA}(1)$ to indicate a quantity that tends to zero in the limit $\DELTA\downarrow0$. As a result, by continuity considerations, for all sufficiently small $\DELTA$ there will be a value $\TAPq \equiv q_{\star\kappa}(\alpha)= \sq + o_{1/\DELTA}(1)$
satisfying $\TAPq=P(R_\kappa(\TAPq,\alpha))$
and 
	\beq\label{e:at.N}
	\sup_{q\in (\qlbd,\qubd)}
	\bigg\{
	\f{dP(R_\kappa(q,\alpha))}{dq}
	\bigg\}
	<1\eeq
(cf.~\eqref{e:at}~and~\eqref{e:at.all}).
This assumption holds for the rest of the paper, even when not explicitly stated. Let $\TAPpsi\equiv R_\kappa(\TAPq,\alpha)$.
\end{rmk}

\noindent For $\ell\ge1$, $\bx\in\R^\ell$ and $f:\R\to\R$ we write $f(\bx)$ for the vector obtained by coordinatewise application of $f$, that is, $f(\bx)\equiv (f(x_i))_{i\le \ell}\in\R^\ell$. Let $F\equiv F_{\TAPq}$ as defined by \eqref{e:def.F}. Initialize $\bn^{(0)}\equiv\zroM\in\R^M$ and $\bmag^{(1)}\equiv \sqrt{\TAPq}\oneN\in\R^N$. Then use the TAP equations
	\begin{alignat}{4}
	\label{e:tap.n}
	\bn^{(s)}
	\equiv F(\bh^{(s)})
	&\equiv F\bigg(
		\f{\bE \bmag^{(s)}}{N^{1/2}}
		-b^{(s)} \bn^{(s-1)}
		\bigg)\,,\quad
	&& b^{(s)} &&= \f{(\oneN,\tnh'(\bH^{(s)}))}{N}
		= 1-\f{\ltwo{\bmag^{(s)}}^2}{N}\,,\\
	\bmag^{(s+1)}
	\equiv \tnh(\bH^{(s+1)})
	&\equiv \tnh\bigg(
	\f{\bE^\st \bn^{(s)}}{N^{1/2}}- d^{(s)} \bmag^{(s)}
	\bigg)\,,\quad
	&& d^{(s)} &&= \f{(\oneM,F'(\bh^{(s)}))}{N}\,,
	\label{e:tap.m}
	\end{alignat}
to define the sequence $\bn^{(1)},\bmag^{(2)},\bn^{(2)},\ldots,\bmag^{(t)},\bn^{(t)}$. For all $s\ge1$ denote $q_s\equiv \ltwo{\bmag^{(s)}}^2/N$ and $\psi_s\equiv\ltwo{\bn^{(s)}}^2/N$, and note that $b^{(s)}=1-q_s$. We hereafter abbreviate $\bmag\equiv\bmag^{(t)}$, $\bH\equiv\bH^{(t)}$, and $\bq\equiv q_t$.
 
\begin{rmk}\label{r:tap} For a bounded number $t$ of iterations, the $N\uparrow\infty$ distributional limit of TAP has been rigorously characterized in terms of a ``state evolution'' recursion \cite{bayati2011dynamics,bolthausencmp}. For a special case of AMP, finite-sample results were obtained more recently, allowing even for $t$ growing slowly with $N$ \cite{rush2016finite}. In this work we only require some results from the earlier works \cite{bayati2011dynamics, bolthausencmp}, which we informally summarize as follows:
\begin{enumerate}[a.]
\item\label{r:tap.a} For large $N$ and large $t$ the vectors $\bn^{(t)}\equiv F(\bh^{(t)})$ and $\bn^{(t-1)}\equiv F(\bh^{(t-1)})$ are close in $\ell^2$, and likewise the vectors $\bmag^{(t)}\equiv\tnh(\bH^{(t)})$ and $\bmag^{(t-1)}\equiv\tnh(\bH^{(t-1)})$ are close in $\ell^2$.
\item\label{r:tap.b} For any $s$, the empirical profile of $\bh^{(s)}$ resembles a gaussian distribution with variance $\TAPq$, while the empirical profile of $\bH^{(s)}$ resembles a gaussian distribution with variance $\TAPpsi$, where $\TAPq,\TAPpsi$ are as defined by Remark~\ref{r:fixed.pt.adjusted}.
\item\label{r:tap.c} For any fixed $t$, the matrix of inner products among $\bh^{(1)},\ldots,\bh^{(t)}$ converges to a nondegenerate limit as $N\uparrow\infty$, as does the matrix of inner products among $\bH^{(1)},\ldots,\bH^{(t)}$.
\end{enumerate}
The formal statements will be reviewed below as required.\end{rmk}

\noindent With $\bG_{M\times \Nall}$ decomposed as in \eqref{e:g.kr}, we first prove the following:

\begin{ppn}\label{p:kr} For any small positive $\DELTA$ there is a decomposition \eqref{e:g.kr} with $\hat{N}/N = \hat{\alpha}(\DELTA) \le \DELTA/\exp(1/\DELTA^{2/3})$, and a large enough constant $t(\DELTA)$, such that the following holds: if $\bmag\equiv\bmag^{(t)}$ is defined by $t$ iterations of the TAP equations on $\bE$
for $t\ge t(\DELTA)$, then with probability at least $1/10$ there exists $\hat{J} \in\set{-1,+1}^{\hat{N}}$ satisfying
	\beq\label{e:hJ.greater.than.kappa}
	\f{1}{N^{1/2}}
	\bG_{M\times \Nall}
	\begin{pmatrix}
	\bmag\\\hat{J}
	\end{pmatrix}
	=\f{\bE\bmag+\hat{\bE}\hat{J}}
		{N^{1/2}}
	\ge \Big(
	\kappa + 2\DELTA^{1/2}\Big)
	\oneM\eeq
coordinatewise.
\end{ppn}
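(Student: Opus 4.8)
The plan is to use the TAP vector $\bmag\equiv\bmag^{(t)}$ as a ``pre-solution'' on the first $N$ columns, and then show that the remaining $\hat N$ columns give enough extra freedom to push all $M$ constraints above the margin $\kappa + 2\DELTA^{1/2}$. The starting point is the state evolution description of Remark~\ref{r:tap}: for $t\ge t(\DELTA)$ large, the field $\bh\equiv\bh^{(t)} = \bE\bmag/N^{1/2} - (1-\bq)\bn^{(t-1)}$ has empirical profile close to a centered gaussian of variance $\TAPq$, and $\bq\to\TAPq$, $\psi_t\to\TAPpsi$. In particular $\bn^{(t)}=F(\bh^{(t)})$ and $\bn^{(t-1)}$ are $\ell^2$-close (part~\ref{r:tap.a}), so $\bE\bmag/N^{1/2}$ is, up to a small $\ell^2$ error, equal to $\bh + (1-\TAPq)F(\bh)$. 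Recalling $F=F_{\TAPq}=\cE(\,\cdot\,)/\sqrt{1-\TAPq}$ evaluated at $(\kappa-\,\cdot\,)/\sqrt{1-\TAPq}$ and that $\cE(x)=\varphi(x)/\bPhi(x)$, one checks that the map $x\mapsto x + (1-\TAPq)F_{\TAPq}(x)$ sends a point at ``height'' $x$ to a value whose distance below $\kappa$ is governed by the Gaussian tail; the upshot is that, for a typical coordinate $\mu$, the quantity $(\bE\bmag)_\mu/N^{1/2}$ is either already comfortably $\ge\kappa$ or lies within $O(1)$ below $\kappa$, and the fraction of coordinates where it falls short of $\kappa+2\DELTA^{1/2}$ is some explicit $p=p(\DELTA)$, with $p(\DELTA)\to 0$ as $\DELTA\downarrow 0$ (the fixed point $\TAPq=P(R_\kappa(\TAPq,\alpha))$ is exactly what makes the deficits concentrated near zero). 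So the ``bad'' rows — those $\mu$ with $(\bE\bmag)_\mu/N^{1/2} < \kappa + 2\DELTA^{1/2}$ — number at most roughly $pM \le pN$, and each needs a boost of at most $O(\DELTA^{1/2}) + O(|h_\mu-\kappa|)$ from the $\hat\bE$ part.

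Next I would make the boosting precise. Conditionally on $\bE$ (hence on $\bmag$, $\bh$, and the set $B$ of bad rows with $|B|\le pN$), the matrix $\hat\bE$ is an independent $M\times\hat N$ array of i.i.d.\ standard gaussians. For a fixed sign vector $\hat J\in\{-1,+1\}^{\hat N}$, $\hat\bE\hat J/N^{1/2}$ has i.i.d.\ $N(0,\hat N/N)=N(0,\hat\alpha)$ coordinates. The requirement \eqref{e:hJ.greater.than.kappa} is that for every $\mu\le M$,
\[
(\hat\bE\hat J)_\mu/N^{1/2} \ge (\kappa+2\DELTA^{1/2}) - (\bE\bmag)_\mu/N^{1/2} =: \theta_\mu,
\]
where $\theta_\mu\le 0$ outside $B$ and $\theta_\mu$ is at most some $\DELTA$-dependent constant $C(\DELTA)$ (coming from the gaussian tail of $\bh$, with only exponentially few coordinates exceeding any fixed level) on $B$. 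So we must find $\hat J$ with $(\hat\bE\hat J)_\mu/N^{1/2}\ge \theta_\mu$ for all $\mu$; for the $M-|B|$ good rows this is satisfied by a positive fraction of all $\hat J$ (it is asking a mean-zero gaussian to exceed a nonpositive number), while the $\le pN$ bad rows each impose $(\hat\bE\hat J)_\mu \ge N^{1/2}\theta_\mu$ with $\theta_\mu\le C(\DELTA)$. This is precisely a (random) Ising perceptron feasibility problem with $\hat N$ ``spins'', $M$ ``constraints'' of which only $|B|\le pN$ are genuinely restrictive, and margins bounded by $C(\DELTA)$. The key quantitative lever is the choice $\hat N/N = \hat\alpha(\DELTA)\le \DELTA/\exp(1/\DELTA^{2/3})$ together with $t(\DELTA)$ large: one wants $\hat N$ large enough in absolute terms (it does grow linearly in $N$) that a first-moment/second-moment or direct union-bound argument over $\hat J\in\{-1,+1\}^{\hat N}$ shows a feasible $\hat J$ exists with probability bounded away from $0$, and the probability $1/10$ in the statement is just a convenient explicit constant obtained after taking $t$ and then the construction parameters suitably.

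For the existence of $\hat J$ I would run a second-moment computation (Paley--Zygmund, as in \eqref{e:paley.zygmund}) on the count $Y\equiv\#\{\hat J\in\{-1,+1\}^{\hat N}: \hat\bE\hat J/N^{1/2}\ge \theta\,\text{coordinatewise}\}$, conditionally on $\bE$. The first moment is $\E[Y\mid\bE] = 2^{\hat N}\prod_{\mu\le M}\bPhi(N^{1/2}\theta_\mu/\hat N^{1/2})$; the good rows contribute factors $\ge 1/2$ each, and the $\le pN$ bad rows contribute $\bPhi(\cdot)\ge \bPhi(C(\DELTA)/\hat\alpha^{1/2})$, so $\log\E[Y\mid\bE] \ge \hat N\log 2 - O(pN) - pN\cdot\Theta(C(\DELTA)^2/\hat\alpha)$; the point of $\hat\alpha(\DELTA)$ being doubly-exponentially small in $1/\DELTA$ while $p=p(\DELTA)$ only polynomially small is that $\hat N\log 2$ dominates and $\E[Y\mid\bE]\to\infty$. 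For the second moment one expands $\E[Y^2\mid\bE]$ over pairs $(\hat J,\hat J')$ with overlap $\rho$, where $(\hat\bE\hat J,\hat\bE\hat J')$ is a bivariate gaussian with correlation $\rho$; the near-independent pairs ($\rho\approx 0$) reproduce $(\E[Y\mid\bE])^2$, and one must check the highly-correlated pairs don't dominate — this is the standard second-moment estimate for the perceptron in the ``sparse-constraint'' regime, and here it is made easy because the effective constraint density $|B|/\hat N \le pN/\hat N$ can be made as small as we like by choosing $\hat\alpha(\DELTA)$ small relative to $p(\DELTA)$. Getting $Y\ge 1$ with probability $\ge 1/10$ then follows.

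\textbf{Main obstacle.} The crux is the first step: translating the state-evolution statements of Remark~\ref{r:tap} into a \emph{quantitative} bound on the fraction $p(\DELTA)$ of rows where $(\bE\bmag)_\mu/N^{1/2}$ falls below $\kappa+2\DELTA^{1/2}$, and on the tail of the deficits $\theta_\mu$ on those rows, with errors controlled in the nested limit $N\to\infty$ then $t\to\infty$ then $\DELTA\to 0$. One has to combine: (i) the $\ell^2$-closeness of $\bn^{(t)},\bn^{(t-1)}$ and of $\bmag^{(t)},\bmag^{(t-1)}$ (part~\ref{r:tap.a}), which is only an $\ell^2$ statement and must be upgraded via Markov to control all but an $o(1)$-fraction of coordinates; (ii) the gaussianity of the empirical profile of $\bh^{(t)}$ (part~\ref{r:tap.b}); and (iii) the fixed-point identity $\TAPq=P(R_\kappa(\TAPq,\alpha))$, which is what forces the TAP field, after applying the ``resolvent'' map $x\mapsto x+(1-\TAPq)F_{\TAPq}(x)$, to sit essentially at or above $\kappa$ with only a vanishing-as-$\DELTA\to0$ slack needed. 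Once that deterministic/quantitative picture is in hand the $\hat\bE$-boosting is a routine conditional second-moment argument, so I expect essentially all the real work (and all the bookkeeping with the error terms $\ETA$) to live in controlling the TAP output on $\bE$.
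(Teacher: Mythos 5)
Your first step (controlling the TAP output on $\bE$) is essentially the content of the paper's Lemma~\ref{l:kr.base.case}, but your second step diverges from the paper and, as written, does not work.

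\textbf{Gap 1: the second moment over $\hat J$ fails.} Conditioning on $\bE$ does nothing to tame the fluctuations of $\hat\bE$, which is independent of $\bE$. For a pair $\hat J,\hat J'$ with overlap $\rho$, each row contributes a factor $\oP_\rho(X\ge\tilde\theta_\mu, X'\ge\tilde\theta_\mu)/\bPhi(\tilde\theta_\mu)^2 = \exp\{\rho\,\cE(\tilde\theta_\mu)^2+O(\rho^2)\}$ with $\tilde\theta_\mu=\theta_\mu/\hat\alpha^{1/2}$, so $\log\big(\E[Y^2\giv\bE]/\E[Y\giv\bE]^2\big)$ picks up a term of order $S^2/(2\hat N)$ with $S=\sum_\mu\cE(\tilde\theta_\mu)^2$. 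The rows with $\tilde\theta_\mu\gtrsim -O(1)$ number $\Theta(N)$ (of order $M e^{-c/\DELTA}$ — small relative to $M$, but still linear in $N$ for fixed $\DELTA$), so $S=\Theta(N)$ and $S^2/\hat N=\Theta(N)$: the ratio blows up like $e^{\Theta(N)}$. This positive linear-in-$\rho$ drift is exactly the obstruction that defeats the vanilla second moment for the (asymmetric) Ising perceptron at \emph{every} positive constraint density; "sparse constraints" does not rescue it, and it is the reason the whole paper is organized around a conditional second moment. The paper avoids any second moment here: it adapts Kim--Roche's \emph{adaptive} construction, revealing the columns of $\hat\bE$ in $O(\log N)$ blocks of prescribed sizes $N_s$ and choosing each block of signs by a weighted majority against the current deficit vector $\bdef^{(s-1)}$, with an induction showing $\ltwo{\bdef^{(s)}}$ contracts geometrically with probability $1-(2/3)^s/2$; the product of these probabilities gives the $1/10$.

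\textbf{Gap 2: the quantitative comparison between $p(\DELTA)$ and $\hat\alpha(\DELTA)$ is backwards.} You assert that the argument works because $\hat\alpha$ is much smaller than $p$. In fact each bad row costs a factor $\bPhi(\theta_\mu/\hat\alpha^{1/2})\approx\exp(-\theta_\mu^2/(2\hat\alpha))$ in the first moment, so the total cost is $\approx\exp(-\sum_\mu(\theta_\mu)_+^2/(2\hat\alpha))$, which must be beaten by the gain $2^{\hat N}=e^{\hat\alpha N\log 2}$; this forces $\sum_\mu(\theta_\mu)_+^2\ll\hat\alpha^2 N$, i.e.\ the deficits must be much \emph{smaller} than $\hat\alpha$-type quantities, not larger. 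This is precisely why Lemma~\ref{l:kr.base.case} drives the deficit $\ell^2$ norm down to $M^{1/2}e^{-c/\DELTA}$ (exponentially small in $1/\DELTA$) while $\hat\alpha(\DELTA)=\DELTA e^{-\DELTA^{-2/3}}$ is only stretched-exponentially small (not "doubly exponential"); the exponent $2/3<1$ is chosen exactly so that $e^{-c/\DELTA}\ll\hat\alpha(\DELTA)$ for every $c>0$. A "polynomially small" $p(\DELTA)$ would make even your first moment tend to zero.
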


\noindent The proof of Proposition~\ref{p:kr} is an adaptation of the argument of \cite{kim1998covering}, and is deferred to Section~\ref{ss:kr}. Its purpose is explained by the following lemma:

\begin{lem}\label{l:kr}
For any $y>\kappa$ there is a unique solution $h\in\R$ to the equation
$h+(1-\bq)F_{\bq}(h) = y$.

\begin{proof} We rely on some basic properties of the function $\cE(x)$ given in Lemma~\ref{l:EE}. 
Consider the function
	\[L_{\bq}(h)
	\equiv h+(1-\bq)F_{\bq}(h)
	= h+\sqrt{1-\bq}\cE\bigg(
		\f{\kappa-h}{\sqrt{1-\bq}}
		\bigg) \ge\kappa\]
where the last inequality holds because $\cE(x)\ge x$ for all $x\in\R$. Since $\cE(x)$ is always positive, we have $L_{\bq}(h)> h$, therefore $L_{\bq}(h)\to\infty$ as $h\to\infty$. On the other hand, as $x\to\infty$ we have $\cE(x)-x \asymp 1/x$, which implies that as $h\to-\infty$ we have $L_{\bq}(h)-\kappa \asymp 1/|h|$. Finally, since $\cE'(x)\in(0,1)$ for all $x\in\R$, we have $dL_{\bq}/dh \in(0,1)$ for all $h\in\R$. It follows that $h\mapsto L_{\bq}(h)$ is a strictly increasing map from $\R$ \emph{onto} $(\kappa,\infty)$, so a unique solution to the equation $L_{\bq}(h) = y$ exists provided $y>\kappa$.
\end{proof}
\end{lem}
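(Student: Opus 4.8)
The plan is to view the equation as $L_{\bq}(h)=y$ for the single real function
\[
L_{\bq}(h)\equiv h+(1-\bq)F_{\bq}(h)
= h+\sqrt{1-\bq}\,\cE\bigg(\f{\kappa-h}{\sqrt{1-\bq}}\bigg),
\]
where $F_{\bq}$ is as in \eqref{e:def.F}, and to show that $L_{\bq}$ is a continuous strictly increasing bijection from $\R$ onto $(\kappa,\infty)$. Existence and uniqueness of a solution for $y>\kappa$ then follow immediately, as does nonexistence for $y\le\kappa$.

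First I would invoke the standard properties of the Gaussian hazard rate $\cE$, which are the content of Lemma~\ref{l:EE}: (i) $\cE>0$ everywhere; (ii) $\cE(x)\ge x$ everywhere; (iii) $\cE(x)-x\asymp 1/x$ as $x\to+\infty$; and (iv) $\cE'(x)\in(0,1)$ everywhere. From (ii) one gets $L_{\bq}(h)\ge\kappa$ for all $h$; from (i) one gets $L_{\bq}(h)>h$, hence $L_{\bq}(h)\to\infty$ as $h\to+\infty$; from (iii), as $h\to-\infty$ the argument $(\kappa-h)/\sqrt{1-\bq}$ tends to $+\infty$, and subtracting off its leading term yields $L_{\bq}(h)-\kappa\asymp 1/|h|\to0$. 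Differentiating $L_{\bq}$ and using (iv) gives $L_{\bq}'(h)=1-\cE'((\kappa-h)/\sqrt{1-\bq})\in(0,1)$, so $L_{\bq}$ is smooth and strictly increasing. Together these show that $L_{\bq}$ is a strictly increasing homeomorphism of $\R$ onto $(\kappa,\infty)$: its infimum is $\kappa$ (not attained) and its supremum is $+\infty$, and by the intermediate value theorem it hits every value strictly between them exactly once. Hence for $y>\kappa$ the equation $L_{\bq}(h)=y$ has a unique solution.

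I do not expect any genuine obstacle; this is a soft monotonicity argument, and the real work is entirely in the auxiliary Lemma~\ref{l:EE}. The one place that calls for attention is the limit $h\to-\infty$: it is \emph{not} enough to know $\cE(x)>x$, which only gives $\liminf_{h\to-\infty}L_{\bq}(h)\ge\kappa$; one needs the sharp first-order asymptotic (iii) to conclude that $L_{\bq}$ actually decreases all the way down to $\kappa$, so that the range is precisely $(\kappa,\infty)$ and the hypothesis $y>\kappa$ is sharp. That asymptotic — equivalently the expansion $\bPhi(x)=\varphi(x)\big(1/x-1/x^3+O(1/x^5)\big)$ as $x\to+\infty$ — is the only quantitative ingredient, and it is handled once and for all in Lemma~\ref{l:EE}.
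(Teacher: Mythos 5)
Your proof is correct and follows essentially the same route as the paper's: the same function $L_{\bq}$, the same four properties of $\cE$ from Lemma~\ref{l:EE} (positivity, $\cE(x)\ge x$, the tail asymptotic $\cE(x)-x\asymp 1/x$, and $\cE'\in(0,1)$), and the same conclusion that $L_{\bq}$ is a strictly increasing bijection of $\R$ onto $(\kappa,\infty)$. Your remark that the sharp asymptotic (iii) is needed to pin down the infimum of the range as exactly $\kappa$ is a correct and worthwhile observation, though the paper makes the same use of it implicitly.
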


\noindent Define $\bhJ$ to be the lexicographically minimal element of $\set{-1,+1}^{\hat{N}}$ satisfying
	\beq\label{e:def.bhJ}
	\f1{N^{1/2}}\bG_{M\times \Nall}
	\begin{pmatrix}\bmag \\ \bhJ\end{pmatrix}
	=
	\f{\bE\bmag +\hat{\bE}\bhJ}{N^{1/2}}
	\ge\Big(
	\kappa + 2\DELTA^{1/2}\Big)\oneM\eeq
coordinatewise. Let $\dkappa$ be an independent random vector sampled uniformly from the cube $[0,\DELTA/\exp(1/\DELTA^2)]^M$, and solve for $\bh\in\R^N$ such that
	\beq\label{e:fixed.pt}
	\f{\bE\bmag + \hat{\bE}\bhJ}{N^{1/2}}
	-\dkappa
	= \bh + (1-\bq)F_{\bq}(\bh) \,.\eeq
Note that \eqref{e:fixed.pt} is equivalent to
	\beq\label{e:fixed.pt.n}
	\bn \equiv F_{\bq}(\bh)
	= F_{\bq}\bigg( \f{\bE\bmag
	+ \hat{\bE}\bhJ}{N^{1/2}}
		-\dkappa
		-(1-\bq) \bn \bigg)\,.\eeq
We recognize \eqref{e:fixed.pt.n} as the TAP equation \eqref{e:tap.n} with a perturbation that will have an essential role in the proof. Let
	\beq\label{eq-def-H-circ}
	\bmag^{(t+1)}
	\equiv \tnh(\bH^{(t+1)})
	\equiv\tnh\bigg( \f{\bE^\st\bn}{N^{1/2}}
		- d^{(t)} \bmag\bigg)\,.\eeq
We emphasize that the construction of $\bmag^{(t+1)}$ differs from that of the previous $\bmag^{(s)}$, since we have passed through the perturbed equation
\eqref{e:fixed.pt.n}. We continue to denote $q_s\equiv\ltwo{\bmag^{(s)}}^2/N$ and $\bq\equiv q_t$, and we also let $\bpsi \equiv \ltwo{\bn}^2/N$. The conditioning $\sigma$-field in Theorem~\ref{t:second} is given by $\Filt\equiv\Filt_{M,\Nall,\DELTA,t,\EPS}\equiv
\sigma(\textsf{DATA}_{M,\Nall,\DELTA,t})$ where
	\beq\label{e:filt.t}
	\textsf{DATA}\equiv
	\textsf{DATA}_{M,\Nall,\DELTA,t}
	\equiv \Big(\bn^{(1)},\bmag^{(2)},\bn^{(2)},
	\ldots,\bmag^{(t)}\equiv\bmag,\bn^{(t)},
	\bmag^{(t+1)}, \hat{\bE},
	\dkappa\Big)\,.\eeq
Note that $\Filt$ is contained in $\sigma(\bG_{M\times \Nall},\dkappa)$.

\begin{rmk}\label{r:cst}
Throughout this paper we write $\cst\equiv\cst_{M,\Nall,\DELTA,t,\EPS}$ to denote a collection of random variables that is measurable with respect to
$\Filt_{M,\Nall,\DELTA,t}$, and remains
stochastically bounded as $\Nall\uparrow\infty$
with $M/\Nall\to\alpha$ for fixed $\DELTA,t,\EPS$: that is to say,
	\[\limsup_{\substack{\Nall\uparrow\infty,\\
		M/\Nall \to\alpha}}
	\oP\Big( \cst_{M,\Nall,\DELTA,t,\EPS} \ge C\Big) 
	\downarrow0\]
as $C\uparrow\infty$, for any $\alpha\in(\albd,\aubd)$. In our usage, the value of $\cst$ may change from one occurrence to the next, as long as the stochastic boundedness is maintained. If a result depends on multiple choices of $\cst$ simultaneously we will indicate this by writing $\cst$, $\cst'$, and so on (as in the proof of Corollary~\ref{c:adm.density.largerevt}). To indicate dependence on any other parameter, say $\gamma$, we shall write $\cst_\gamma\equiv \cst_{M,\Nall,\DELTA,t,\EPS,\gamma}$.
\end{rmk}

\subsection{Restricted partition function}
\label{sec-restricted-partition-function}

We next make a convenient change of coordinates. We let $(\br^{(s)})_{1\le s\le t}$ be an orthonormal basis for $\spn((\bmag^{(s)})_{1\le s\le t-1},\bmag)$ where $\bmag\equiv\bmag^{(t)}$. Likewise let $(\bc^{(s)})_{1\le s\le t}$ be an orthonormal basis for $\spn((\bn^{(s)})_{1\le s\le t-1},\bn)$ where $\bn$ is the solution of \eqref{e:fixed.pt}. From now on we specify
	\beq\label{e:basis.fixed}
	\br^{(t)}\equiv \f{\bmag}{\sqrt{N\bq}}\,,\quad
	\bc^{(t)}\equiv\f{\bn}{\sqrt{N\bpsi}}\,.\eeq
Explicitly, define the $N\times t$ matrix
	\beq\label{e:bM.before.change.basis}
	\bM\equiv\begin{pmatrix}
	\displaystyle
	\f{\bmag}{\sqrt{N\bq}}
	&\displaystyle\f{\bmag^{(t-1)}}
		{\sqrt{Nq_{t-1}}}
	&\cdots
	&\displaystyle
		\f{\bmag^{(1)}}{\sqrt{N\TAPq}}
	\end{pmatrix}\,,\eeq
and apply the Gram--Schmidt procedure to obtain $\bM=\bR\bGa$ where $\bR$ is $N\times t$ with $\bR^\st\bR=\Id_{t\times t}$ while $\bGa$ is the $t\times t$ matrix containing the change-of-basis coefficients. (In the usual notation of QR factorization, $\bR$ corresponds to ``Q'' while $\bGa$ corresponds to ``R.'') It follows from \cite[Lemma 1(a)]{bayati2011dynamics} that as $N\uparrow\infty$ with $t$ fixed, we have $\bGa$ converging to a constant matrix; and by \cite[Lemma 1(g)]{bayati2011dynamics} the limiting matrix is invertible (cf.\ Remark~\ref{r:tap}\ref{r:tap.c}). The columns of $\bR=\bM\bGa^{-1}$ give the desired orthonormal basis $(\br^{(t)},\ldots,\br^{(1)})$. To define $\bc^{(s)}$ we instead consider the $M\times t$ matrix
	\beq\label{e:bN.before.change.basis}
	\bN
	\equiv
	\begin{pmatrix}
	\displaystyle\f{\bn}{\sqrt{N\bpsi}} &
	\displaystyle\f{\bn^{(t-1)}}
		{\sqrt{N\psi_{t-1}}}
	& \cdots &
	\displaystyle 
		\f{\bn^{(1)}}
		{\sqrt{N\psi_1}}
	\end{pmatrix}\,,\eeq
and obtain the QR factorization $\bN=\bC\bXi$ where $\bC$ is $M\times t$ with $\bC^\st\bC=\Id_{t\times t}$ while $\bXi$ is the $t\times t$ change-of-basis matrix. Since $\bn$ is obtained by \eqref{e:fixed.pt.n} rather than by the TAP iteration, the result of \cite{bayati2011dynamics} does not give convergence of $\bXi$ in the limit $N\uparrow\infty$. Nevertheless, it follows from the additional randomness introduced by $\dkappa$ that $\bXi$ is bounded and nondegenerate, in the sense that all its eigenvalues are bounded away from zero and infinity in the limit $N\uparrow\infty$. We will assume this fact for now, deferring the formal proof to Corollary~\ref{c:Xi.nondegenerate}. Note that both sets of basis vectors, $(\br^{(s)})_{1\le s\le t}$ and $(\bc^{(s)})_{1\le s\le t}$, are measurable with respect to the $\sigma$-field $\Filt
\equiv\sigma(\textsf{DATA})$ as defined by \eqref{e:filt.t}.

We now define the restricted partition function $\bZ_{\DELTA,t,\EPS}(\bG_{M\times \Nall}) \le Z_{\kall}(\bG_{M\times \Nall})$. The general idea is to first restrict to a certain (affine) slice of the discrete cube selected by the TAP iteration, then to restrict the perceptron satisfiability condition \eqref{e:perceptron.sat.all} by imposing additional constraints on the vector $\bG J_\textup{all}$. The details are as follows. Let $\DELTA>0$, and decompose $\bG_{M\times \Nall}$ as in \eqref{e:g.kr} with $\hat{N}/N=\hat{\alpha}(\DELTA)$. Let $t\ge t(\DELTA)$ as specified by Proposition~\ref{p:kr}. On the $M\times N$ matrix $\bE$, run the TAP equations \eqref{e:tap.n}~and~\eqref{e:tap.m} for $t$ iterations; then define $\bhJ$ to be the lexicographically minimal element of $\set{-1,+1}^{\hat{N}}$ satisfying \eqref{e:def.bhJ}. From now on we only consider spin configurations of the form $J_\textup{all} = (J,\bhJ)$. If there exists no $\bhJ$ satisfying \eqref{e:def.bhJ} then we simply set $\bZ_{\DELTA,t,\EPS}\equiv0$. We also restrict to elements $J\in\set{-1,+1}^N$ which ``resemble samples from $\bmag$'' in the following manner:

\begin{dfn}[restrictions in discrete cube]
\label{d:cube} Conditional on $\Filt_{M,\Nall,\DELTA,t}$, we define $\bbH_{M,\Nall,\DELTA,t}$ to be the set of spin configurations $J\in\set{-1,+1}^N$ such that the orthogonal projection of $J$ onto the span of the vectors $\bmag^{(1)},\ldots,\bmag^{(t)}$ and $\bH^{(2)},\ldots,\bH^{(t)},\bH^{(t+1)}$ is very close to $\bmag^{(t)}\equiv\bmag$. Formally, we fix a positive absolute constant $C$ and say that $J\in\bbH_{M,\Nall,\DELTA,t}$ if $\bj\equiv J/N^{1/2}$ can be decomposed as
	\beq\label{e:restrict.j} 
	\bj = \sqrt{q_{J,t}}\br^{(t)}
		+\sum_{s=1}^{t-1}
		\gamma_{J,s}\br^{(s)}
	 + \sqrt{1-q_J} \bv\eeq
such that $|\gamma_{J,s}|\le C/N$ for all $1\le s\le t-1$, $|q_J-\bq|\le C/N$, and $\bv$ is a unit vector which is exactly orthogonal to $\spn(\bmag^{(1)},\ldots,\bmag^{(t)})$, and also is nearly orthogonal to $\spn(\bH^{(2)},\ldots,\bH^{(t+1)})$ in the sense that
	\beq\label{e:restrict.j.appx}
	\max\bigg\{ |(\bv,\bH^{(1)})|\,,
	\ldots\,,|(\bv,\bH^{(t+1)})|\bigg\}
	\le \f{C}{N^{1/2}}\,.\eeq
Assuming that $1/\EPS$ is a positive integer, we let $\bbH_\EPS\equiv\bbH_{M,\Nall,\DELTA,t,\EPS}$ be the subset of $J\in\bbH_{M,\Nall,\DELTA,t}$ which additionally satisfy that for each integer $1\le\ell\le1/\EPS$ we have
	\[\bigg|\f{\#\set{i : (\ell-1)\EPS 
	\le m_i \le \ell\EPS, J_i=1}}
	{\#\set{i : (\ell-1)\EPS \le m_i \le \ell\EPS}}
	-\ell\EPS\bigg|
	\le2\EPS\,,\]
where $m_i$ is the $i$-th entry of the vector $\bmag$ from \eqref{e:fixed.pt}. We shall abbreviate $\bbH\equiv\bbH_{M,\Nall,\DELTA,t}$ and $\bbH_\EPS\equiv\bbH_{M,\Nall,\DELTA,t,\EPS}$.
\end{dfn}

\noindent We then restrict the satisfiability event as follows:

\begin{dfn}[profile truncation]\label{d:profile}
For $J\in\set{-1,+1}^N$ we define the basic satisfiability event as
	\beq\label{e:basic.sat.evt}
	\sat_{J,\DELTA,t}\equiv \bigg\{
	\f{\bE J + \hat{\bE}\bhJ}{N^{1/2}}
	\ge\kappa\oneM
	+\dkappa\bigg\}\,.\eeq
(Note $\sat_{J,\DELTA,t}$ implies $\bG J_\textup{all}/\sqrt{\Nall}\ge\kappa\oneM$.) We then define a more restricted event $\sat_{J,\DELTA,t,\EPS}\subseteq\sat_{J,\DELTA,t}$ as follows. Let 
	\beq\label{e:def.zeta.J}
	\bxi\equiv
	\f{\kappa\oneM-\bh}{\sqrt{1-\bq}}\,,\quad
	\bze^J\equiv
	\f{\sqrt{q_{J,t}}(1-\bq)\bn}
	{\sqrt{\bq}\sqrt{1-q_J}}\,,\quad
	\bnu\equiv \bE\bv+\bze^J\,.\eeq
Assuming that $1/\EPS$ is a positive integer, let $\sat_{J,\DELTA,t,\EPS}$ be the event that
\begin{enumerate}[(i)]
\item\label{d:profile.i} $\sat_{J,\DELTA,t}$ occurs;
\item\label{d:profile.ii} The vector $\bnu$ satisfies the empirical moment bound
	\[\f1N\sum_{\mu=1}^M(\nu_\mu)^{20}
	\le \f{2}{N}\sum_{\mu=1}^M
		\int \nu^{20}\,
		\varphi_{\xi_\mu}(\nu)\,d\nu\,;\]
\item\label{d:profile.iii} For each pair of integers
$1\le\ell,\ell'\le1/\EPS$ we have
	\[\bigg|
	\f{
	\#\set{\mu : (\ell-1)\EPS
	\le \bPhi(h_\mu/\sqrt{\bq})
	\le \ell\EPS,
	(\ell'-1)\EPS\le\bPhi(\nu_\mu)/\bPhi(\xi_\mu)
	\le \ell'\EPS
	}}
	{
	\#\set{\mu : (\ell-1)\EPS
	\le \bPhi(h_\mu/\sqrt{\bq}) \le \ell\EPS
	}} - \EPS
	\bigg|\le \EPS^{10}\,.\]
\end{enumerate}
(Informally speaking, we wish to restrict to the event that the empirical distribution of $(\bh,\bnu)$ is close to the measure on $\R^2$ specified by the density function
	\[\f1{\sqrt{\bq}}
	\varphi\bigg(\f{h}{\sqrt{\bq}}\bigg)
	\varphi_{\xi_{\bq,z}}(\nu)
	\,.\]
This is formalized by conditions \eqref{d:profile.ii}~and~\eqref{d:profile.iii}.)
\end{dfn}

\noindent We will prove Theorem~\ref{t:second} for the restricted perceptron partition function
(with $M=\lfloor\Nall\aall\rfloor$)
	\beq\label{e:def.restricted.part.fn}
	\bZ_{\DELTA,t,\EPS}
	\equiv
	\bZ_{\DELTA,t,\EPS}
	(\bG_{M\times \Nall})
	\equiv \sum_{J\in\bbH_{M,\Nall,\DELTA,t,\EPS}}
	\mathbf{1}\Big\{
	\sat_{J,\DELTA,t,\EPS}
		\Big\}\,,\eeq
which is integer-valued with $0\le \bZ_{\DELTA,t,\EPS}(\bG_{M\times \Nall}) \le Z_{\kall}(\bG_{M\times \Nall})$. We shall always take $\EPS\downarrow0$ followed by $t\uparrow\infty$ followed by $\DELTA\downarrow0$ while keeping $\kall$ and $\aall$ fixed. For this reason we often suppress dependence on $\kall$ and $\aall$ in order to simplify the notation. We will also often abbreviate $\sat_J\equiv \sat_{J,\DELTA,t}$ and $\sat_{J,\EPS}\equiv \sat_{J,\DELTA,t,\EPS}$.

\subsection{Proof strategy} As above, fix $\kall$, $\aall$, $\sq\equiv q_{\star\kall}(\aall)$, $\spsi\equiv \psi_{\star\kall}(\aall)$. To compute the (conditional) second moment of \eqref{e:def.restricted.part.fn}, we take a second $K\in\bbH_\EPS$, so that $\bk\equiv K/N^{1/2}$ has an analogous decomposition as \eqref{e:restrict.j},
	\beq\label{e:restrict.k}
	\bk = \sqrt{q_{K,t}} \br^{(t)}
	+\sum_{s=1}^{t-1} \gamma_{K,s} \br^{(s)}
	+ \sqrt{1-q_K} \tbv\,.\eeq
Define the \textbf{overlap} between $J$ and $K$ as $\lm\equiv\lm_{J,K}\equiv(\bv,\tbv)$. Let
	\beq\label{e:c.lambda}
	\bw\equiv \f{\tbv-\lm\bv}{\sqrt{1-\lm^2}}\,,\quad
	c_\lm\equiv
	\f{1-\lm}{\sqrt{1-\lm^2}}
	=\f{\sqrt{1-\lm}}{\sqrt{1+\lm}}\,.\eeq
We will find below (in Proposition~\ref{p:cube.ld}) that there is a constant $\lmin \in (-1,0)$, explicitly defined by \eqref{e:lmin}, such that
	\[\min_{J,K\in\bbH_\EPS}
	\bigg\{
	\lm_{J,K}\bigg\}\ge \lmin-\ETA\]
where $\ETA$ is some error tending to zero in the manner of \eqref{e:ETA}. For any $\lmin-\ETA\le\lm\le1$ we let
	\begin{align*}
	\bbH(\lm)\equiv \bbH_{M,\Nall,\DELTA,t}(\lm)
	&\equiv \bigg\{
	(J,K) \in (\bbH_{M,\Nall,\DELTA,t})^2
	: \lm-\f1N\le\lm_{J,K} \le\lm+\f1N
	\bigg\}\,,\\
	\bbH_\EPS(\lm)
	\equiv \bbH_{M,\Nall,\DELTA,t,\EPS}(\lm)
	&\equiv \bigg\{
	(J,K) \in (\bbH_{M,\Nall,\DELTA,t,\EPS})^2
	: \lm-\f1N\le\lm_{J,K} \le\lm+\f1N
	\bigg\}\,,
	\end{align*}
so $\set{-1,+1}^{2N}\supseteq\bbH^2\supseteq\bbH(\lm)\supseteq \bbH_\EPS(\lm)$. In order to prove Theorem~\ref{t:second}, we first consider
the events $\sat_{J,\EPS}\equiv\sat_{J,\DELTA,t,\EPS}$ and $\sat_K\equiv\sat_{K,\DELTA,t}$
 for a fixed pair $(J,K)\in\bbH(\lm)$ 
(for this part of the calculation, the further restriction from $\bbH$ to $\bbH_\EPS$ is not needed). We condition on a background $\sigma$-field $\bgFilt$ as discussed in Remark~\ref{r:cond}, and hereafter suppress it from the notation, so that $\oP(\cdot)$ means $\oP(\cdot\,|\,\bgFilt)$. We will compute $\oP(\sat_{J,\EPS})$ and prove an upper bound on $\oP(\sat_K\giv\sat_{J,\EPS})$; this occupies Sections~\ref{s:tilt}~to~\ref{s:adm}. In Section~\ref{s:cube} we compute $\#\bbH_\EPS$ and prove an upper bound on $\#\bbH_\EPS(\lm)$ (at this point the restriction from $\bbH$ to $\bbH_\EPS$ becomes important). We shall see the formula \eqref{e:GG} arise from the limits
	\begin{align}\label{e:HH}
	\f{\log\#\bbH_\EPS}{N}
	\longrightarrow
	\HH_\star\equiv\HH_{\star\kall}(\aall)
	&\equiv
	-\psi(1-q)+
	\int\log(2\ch(\psi^{1/2}z))\varphi(z)\,dz
	\,\bigg|_{q=\sq,\psi=\spsi}\,,\\
	\label{e:PP}
	\f{\log \oP(\sat_{J,\EPS})}{N}
	\longrightarrow
	\PP_\star
	\equiv
	\PP_{\star\kall}(\aall)
	&\equiv 
	\f{\psi(1-q)}{2}
	+
	\alpha\int\log\bPhi\bigg(
	\f{{\kall}
	-q^{1/2}z}{\sqrt{1-q}}
	\bigg)\,\varphi(z)\,dz
	\,\bigg|_{q=\sq,\psi=\spsi}\,,
	\end{align}
where convergence holds in the limit $\Nall\uparrow\infty$ followed by $t\uparrow\infty$ followed by $\DELTA\downarrow0$, for any fixed $\EPS>0$. Combining gives the first moment estimate \eqref{e:first.mmt.gg}, since $\GG_\star=\HH_\star+\PP_\star$. We will see below that the (restricted) first moment \eqref{e:first.mmt.gg} is relatively straightforward, whereas the second moment bound \eqref{e:second.mmt.bound} requires significantly more involved calculations. We will introduce here the function $\SE_\star(\lm)$, and leave most of its interpretation and discussion to later sections. Let $P_{H,D}$ be the probability distribution on $\set{-1,+1}^2$ given by
	\beq\label{e:P.H.D.intro}
	P_{H,D}=\f14
	\begin{pmatrix}
	(1+m)^2+D&1-m^2-D\\
	1-m^2-D&(1-m)^2+D
	\end{pmatrix}
	\,\bigg|_{m=\tnh H}
	\,.\eeq
(We assume $D$ is such that $P_{H,D}$ is a nonnegative measure.) Let $\Gamma(H,D)$ be the (Shannon) entropy of $P_{H,D}$, so $0\le \Gamma(H,D) \le \log 4$.
For $H\in\R$ and $A\in(0,\infty)$ let
	\beq\label{e:D.H.intro}
	D_H(A)
	\equiv \f{(A^2-1)(1-m^2)^2}
		{ (
		\sqrt{A^2 + m^2- (Am)^2}
		+1)^2}
		\,\bigg|_{m=\tnh H}\,.\eeq
We integrate over the distribution of $H$ to define
	\beq\label{e:def.ell.A}
	\ell(A)
	\equiv \int \f{D_{\psi^{1/2} z}(A)}{1-q}
	\varphi(z)\,dz
	\,\bigg|_{q=\sq,\psi=\spsi}\,.\eeq
We show in Section~\ref{s:cube} that the function $\ell$ is strictly increasing on $A\in(0,\infty)$, sandwiched by boundary values
	\begin{align}\label{e:lmin}
	\ell(0)
	&=-\int \f{(1-|\tnh(\psi^{1/2}z)|)^2}{1-q}
	\,\varphi(z)\,dz
	\equiv \lmin \in (-1,0)\,,\\
	\ell(\infty)
	&= \int\f{1-\tnh(\psi^{1/2}z)^2}
		{1-q}\,\varphi(z)\,dz=1\,,
		\nonumber\end{align}
where the last equality is by \eqref{e:q.psi.recursion}.
See Figure~\ref{f:ell.of.tau} (where we have chosen what turns out to be a nice parametrization, $A=A(\tau)\equiv \exp(2\atnh(\tau))$).
The inverse $\lm\mapsto \ell^{-1}(\lm) \equiv A(\lm)$ is therefore well-defined for $\lmin\le\lm\le1$. Let
	\beq\label{e:HH.pair}
	\HH(\lm)
	\equiv
	- 2\HH_\star +
	\int \Gamma\Big(\psi^{1/2}z,
		D_{\psi^{1/2}z}(A(\lm))
		\Big)\varphi(z)\,dz
	\,\bigg|_{q=\sq,\psi=\spsi}\eeq
We will find in Section~\ref{s:cube} that
$\HH(0)=0$, $\HH(1)=-\HH_\star$, and
	\beq\label{e:pairs.ld.intro}
	\adjustlimits
	\limsup_{\DELTA\downarrow0}
	\limsup_{t\uparrow\infty}
	\limsup_{\EPS\downarrow0}
	\limsup_{\Nall\uparrow\infty} 
	\f{\log\#\bbH_\EPS(\lm)}
		{\Nall}
	\le 2\HH_\star+\HH(\lm)\,.\eeq
(see Proposition~\ref{p:cube.ld}).
See Figure~\ref{f:HH}. Next let
	\beq\label{e:def.xi}
	\xi_{q,z} \equiv \f{\kall
		-q^{1/2}z}{\sqrt{1-q}}\,,\eeq
and note that $\xi_{q,z}$ appears in the above definition \eqref{e:PP} of $\PP_\star$. Recalling the definition of $\varphi_\xi$ from \eqref{e:def.F}, let
	\beq\label{e:II}\II_s(\lm)
	\equiv\aall
	\int\int\log
	\bPhi\bigg(
	\f{\xi_{q,z}-\lm\nu}{\sqrt{1-\lm^2}}
	- \f{\cE(\xi_{q,z}) \cdot s}
		{\psi^{1/2}\sqrt{1-q}}
	\bigg)\,
	\varphi_{\xi_{q,z}}(\nu)\,d\nu
	\,\varphi(z)\,dz
	\,\bigg|_{q=\sq,\psi=\spsi}\,.\eeq
Abbreviate $\II(\lm)\equiv\II_0(\lm)$ and let
	\beq\label{e:PP.pair}
	\PP(\lm)
	\equiv-\PP_\star+
	\f{\psi(1-q)(1-\lm)}{2(1+\lm)}
	+\II(\lm)
	\,\bigg|_{q=\sq,\psi=\spsi}\,.\eeq
Then $\PP(0)=0$ and $\PP(1)=-\PP_\star$
(with precise estimates near $\lm\in\set{0,1}$ in Section~\ref{s:quant.bounds}); see Figure~\ref{f:PP}. Let
	\beq\label{e:BB}
	\BB(\lm,s)
	\equiv \f{s^2}{2}
	-\psi^{1/2}\sqrt{1-q} s
	\bigg( \f{1-\lm}{1+\lm}\bigg)^{1/2}
	+ \II_s(\lm) - \II(\lm)
	\,\bigg|_{q=\sq,\psi=\spsi}\,.\eeq
and define $\AAA(\lm) \equiv \inf_s\BB(\lm,s)$;
see Figure~\ref{f:AA}. We remark that $\AAA(\lm) \le \BB(\lm,0)=0$, and
	\[
	\f{\pd\BB(0,s)}{\pd s}\bigg|_{s=0}
	=-\psi^{1/2}\sqrt{1-q}
	+\aall \int
	\f{\cE(\xi_{q,z})^2}{\psi^{1/2}\sqrt{1-q}}
	\,\varphi(z)\,dz
	\,\bigg|_{q=\sq,\psi=\spsi}
	=0\,,
	\]
where the last equality is by \eqref{e:q.psi.recursion}. We will see in Lemma~\ref{l:adm.mgf} below that $\BB(\lm,s)$ arises as a limit of cumulant-generating functions, and is therefore convex in $s$. It follows that $s=0$ is a minimizer of $\BB(0,s)$, and so $\AAA(0)=0$. Since $\AAA\le0$ everywhere it follows that $\AAA'(0)=0$ and $\AAA''(0)\le0$. We will find in Sections~\ref{s:tilt}~through~\ref{s:adm} that
	\beq\label{e:PP.AA.ld.intro}
	\adjustlimits
	\limsup_{\DELTA\downarrow0}
	\limsup_{t\uparrow\infty}
	\limsup_{\EPS\downarrow0}
	\limsup_{\Nall\uparrow\infty} 
	\f{\log\oP(\sat_{J,\EPS},\sat_K)}{\Nall}
	\le 2\PP_\star +\PP(\lm) + \AAA(\lm)\,.\eeq
Let $\SE(\lm)\equiv \HH(\lm)+\PP(\lm)+\AAA(\lm)$;
see Figure~\ref{f:HPA}. Although we have suppressed it from the above notation, we now recall that the functions $\SE(\lm)$, $\HH(\lm)$, $\PP(\lm)$, $\AAA(\lm)$ all depend on the parameters
$\kall$ and $\aall$. We make the dependence explicit by writing $\SE(\lm)\equiv\SE_{\kall,\aall}(\lm)$ and so on. Then, from \eqref{e:pairs.ld.intro} and \eqref{e:PP.AA.ld.intro} it is easy to derive that
	\beq\label{e:informal.statement.SE}
	\limsup_{\DELTA\downarrow0}
	\limsup_{t\uparrow\infty}
	\limsup_{\EPS\downarrow0}
	\limsup_{\Nall\uparrow\infty} 
	\f{\log \E(\bZ_{\DELTA,t,\EPS}
		(\bG_{\lfloor N\alpha
		\rfloor \times N})^2)}{N_{\textup{all}}}
	\le 2 \GG_{\star\kall}(\aall)
	+\SE_{\kall,\aall}(\lm)\eeq
with $\GG_\star=\HH_\star+\PP_\star$ as above. 
(The detailed derivation of \eqref{e:informal.statement.SE} is given in the proof of Theorem~\ref{t:second}, in Section~\ref{s:conclusion}.)
Condition~\ref{c:G} refers to the function
	\begin{align}\nonumber
	\SE_\star(\lm)
	\equiv \SE_{0,\asat}(\lm)
	&=\bigg\{ -2\HH_\star
	+\int \Gamma\Big(\psi^{1/2}z,
		D_{\psi^{1/2}z}(A(\lm))
		\Big)\varphi(z)\,dz
	-\PP_\star+\f{\psi(1-q)(1-\lm)}{2(1+\lm)}\\
	&\qquad+\inf_s\bigg\{
	\f{s^2}{2}
	-\psi^{1/2}\sqrt{1-q} s
	\bigg( \f{1-\lm}{1+\lm}\bigg)^{1/2}
	+ \II_s(\lm)
	\bigg\}
	\bigg\}
	\,\bigg|_{\substack{
	\kall=0,
	\aall=\asat,\\
	q=\sq(\asat),\psi=\spsi(\asat)}}
	\label{e:GG.condition}
	\end{align}
for $\asat$ as given by Proposition~\ref{p:gardner}. It is not difficult to see from \eqref{e:first.mmt.gg} and \eqref{e:informal.statement.SE} that Condition~\ref{c:G} is certainly a necessary condition for the result of Theorem~\ref{t:second}.

\begin{figure}[!ht]\centering
\begin{subfigure}[b]{0.48\textwidth}\centering
\includegraphics[height=1.8in]{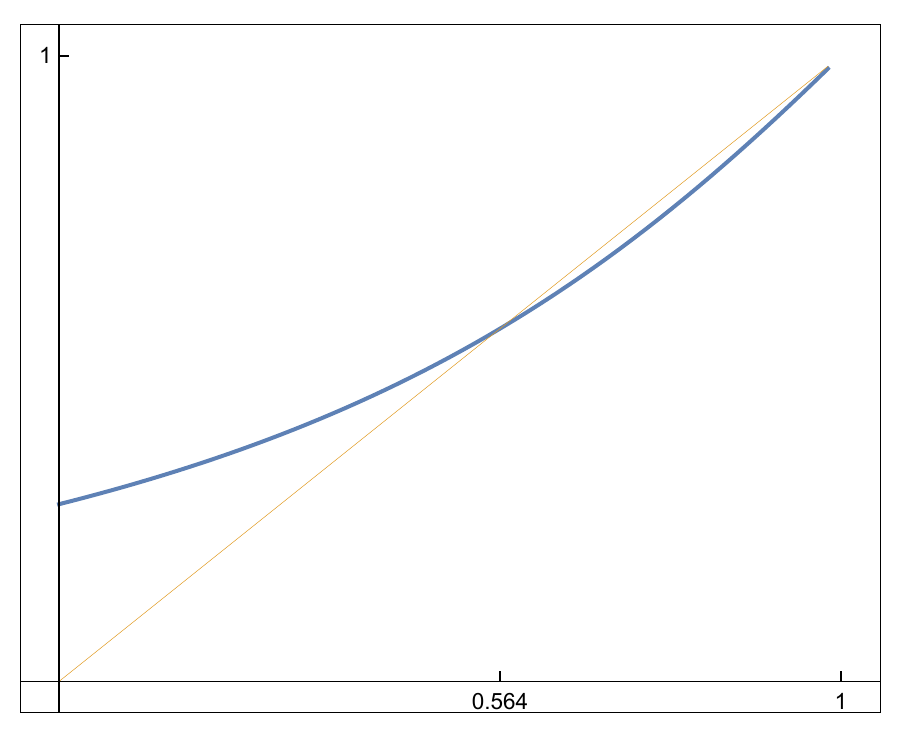}
\caption{For $\alpha\doteq0.833$ the function $q\mapsto P(R(q,\alpha))$ of \eqref{e:q.psi.recursion}, with fixed point $\sq(\alpha)\doteq 0.564$. The figure also indicates that condition~\eqref{e:at} (the Almeida--Thouless condition) is satisfied at $\alpha\doteq0.833$, since the slope of the recursion at $\sq$ is strictly less than one.}
\label{f:qrecursion}
\end{subfigure}\quad
\begin{subfigure}[b]{0.48\textwidth}\centering
\includegraphics[height=1.8in]{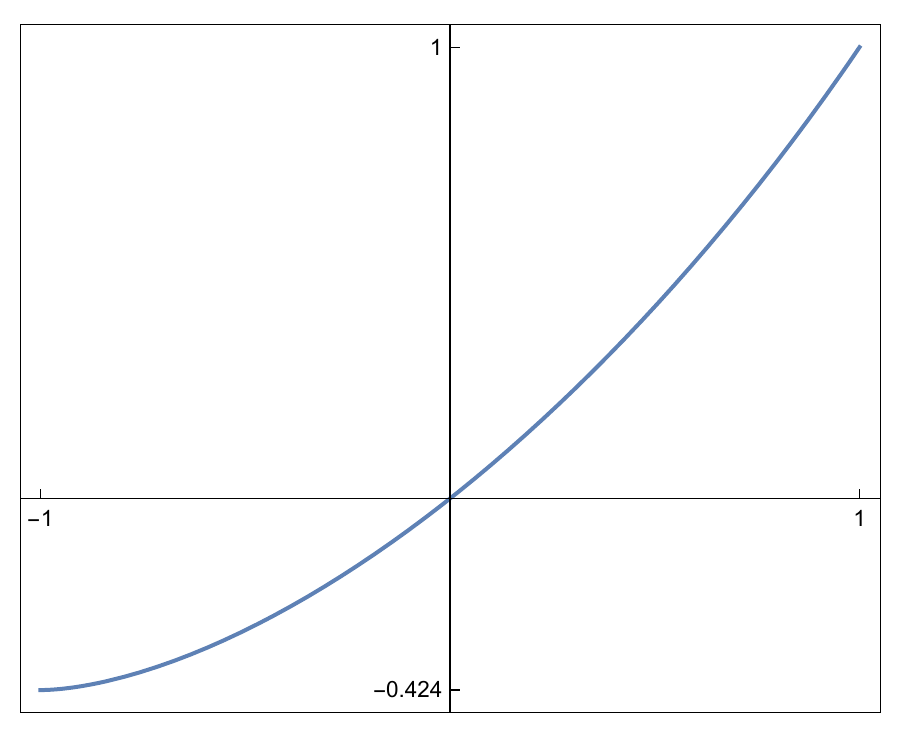}
\caption{For $\alpha\doteq0.833$ the map $\tau \mapsto \ell(A(\tau))$ for $A(\tau)\equiv \exp(2\atnh(\tau))$ and $\ell$ as defined by \eqref{e:def.ell.A}. The map is strictly increasing, sandwiched between boundary values
$\lmin\doteq -0.424$ (whose exact formula is given by \eqref{e:lmin}) and $\ell(\infty)=1$.}
\label{f:ell.of.tau}
\end{subfigure}
\end{figure}

\begin{figure}[!ht]\centering
\begin{subfigure}[b]{0.48\textwidth}\centering
\includegraphics[width=\textwidth]{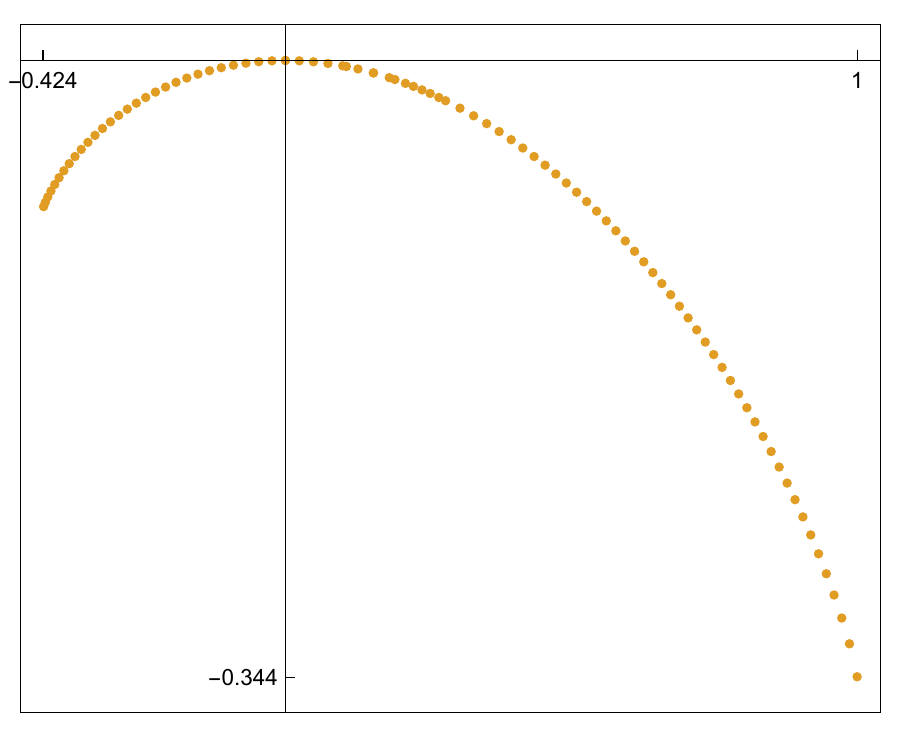}
\caption{$\HH(\lm)$ as in \eqref{e:HH.pair} and \eqref{e:pairs.ld}.\\
$\HH(0)$ and $\HH(1)=-\HH_\star \doteq -0.344$.}
\label{f:HH}
\end{subfigure}\quad
\begin{subfigure}[b]{0.48\textwidth}\centering
\includegraphics[width=\textwidth]{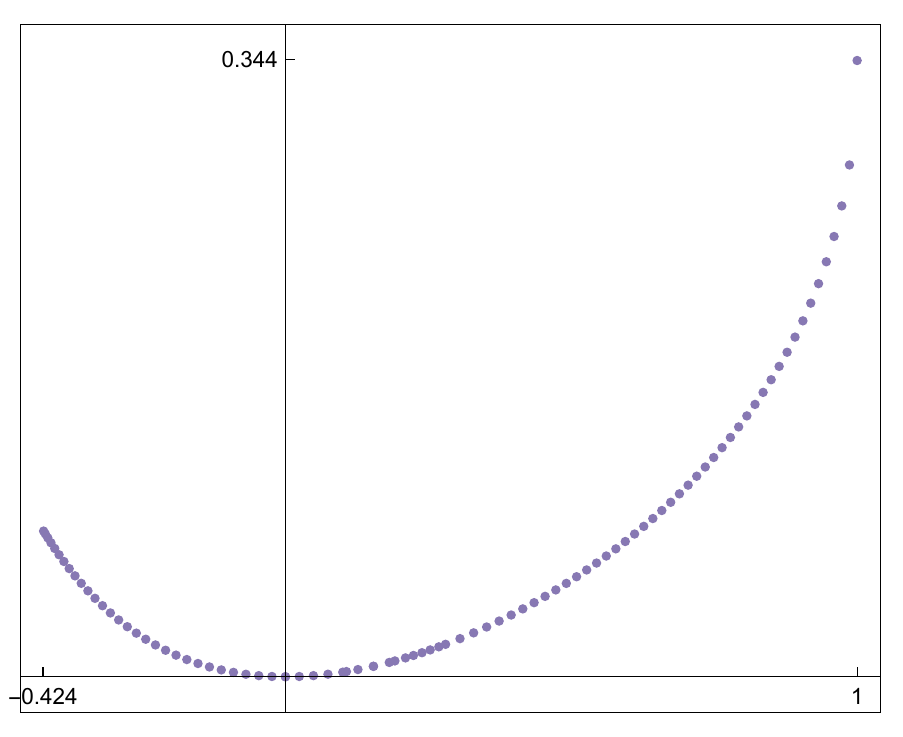}
\caption{$\PP(\lm)$ as in \eqref{e:PP.pair}
and \eqref{e:conv.to.PP.pair}.\\
$\PP(0)=0$ and $\PP(1)=-\PP_\star\doteq 0.344$}
\label{f:PP}
\end{subfigure}
\bigskip\bigskip

\begin{subfigure}[b]{0.48\textwidth}\centering
\includegraphics[width=\textwidth]{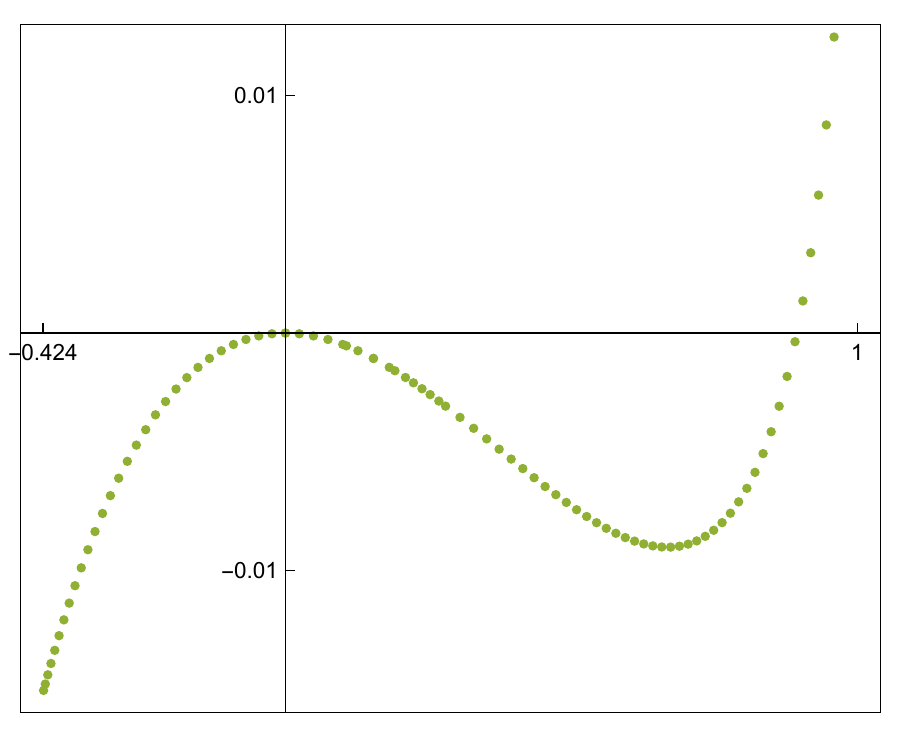}
\caption{$\BB(\lm,s)$ from \eqref{e:BB} with $s=-0.3\lm$\\
(an upper bound $\AAA(\lm)$ as appears in \eqref{e:adm.givensr.ld}).}
\label{f:AA}
\end{subfigure}
\quad
\begin{subfigure}[b]{0.48\textwidth}\centering
\includegraphics[width=\textwidth]{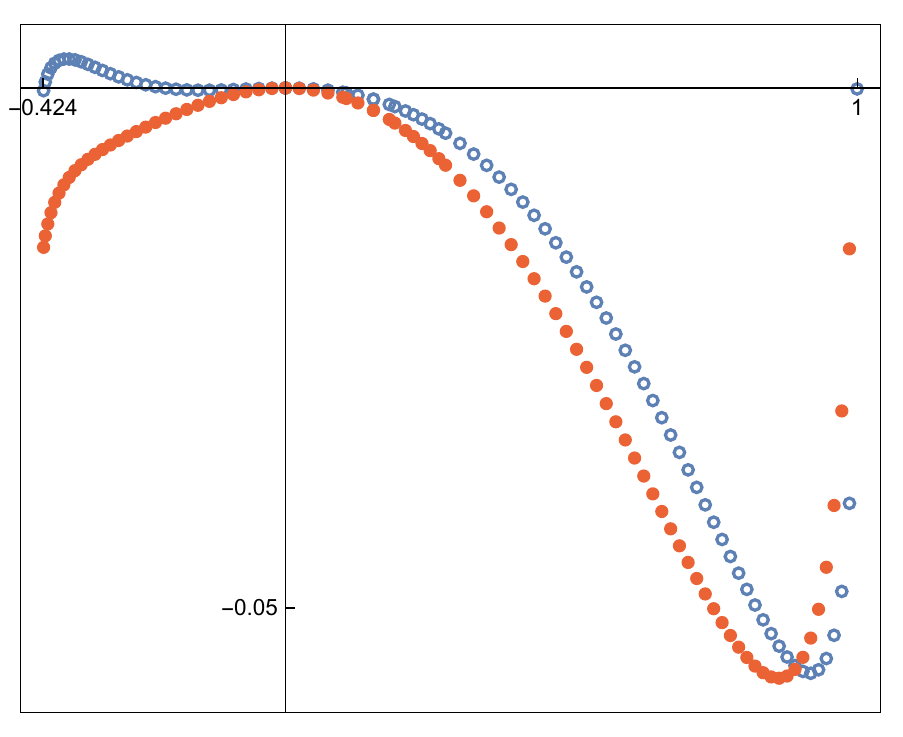}
\caption{
$\HH(\lm)+\PP(\lm)$ marked by blue open circles;\\
$\HH(\lm)+\PP(\lm)+\BB(\lm,-0.3\lm)$ marked by red filled points.}
\label{f:HPA}
\end{subfigure}
\caption{For $\alpha\doteq0.833$ the functions that arise in the second moment calculation.}
\end{figure}

\begin{rmk}\label{r:cond}
We now make some comments on our conditioning scheme. We decompose $\bG_{M\times \Nall}$ as in \eqref{e:g.kr}, and run the TAP equations
\eqref{e:tap.n}~and~\eqref{e:tap.m}
 on $\bE$ with deterministic initial starting vectors $\bn^{(0)}\in\R^M$ and $\bmag^{(1)}\in\R^N$. For $s\ge1$ we obtain $\bn^{(s)}$ as a function of $\bE\bmag^{(s)}$ and $\bn^{(s-1)}$; then $\bmag^{(s+1)}$ as a function of $\bE^\st\bn^{(s)}$ and $\bmag^{(s)})$; and so on, up to $\bmag^{(t)}\equiv\bmag$ and $\bn^{(t)}$. We define $\bhJ$ in \eqref{e:def.bhJ} as a measurable function of $\bE\bmag,\hat{\bE}$. We then sample an independent vector $\dkappa$, and obtain $\bn$ in \eqref{e:fixed.pt.n} as a function of $\bE\bmag$, $\hat{\bE}\bhJ$, and $\dkappa$. Lastly we obtain $\bmag^{(t+1)}$ in \eqref{eq-def-H-circ} as a function of $\bE^\st\bn$ and $\bmag$. This defines for us $\Filt=\sigma(\textsf{DATA})$ with $\textsf{DATA}$ as in \eqref{e:filt.t}. Conditional on $\Filt$, the matrix $\bE$ has the law of a standard gaussian in $\R^{M\times N}$ subject to the equations
	\beq\label{e:tap.as.lin.constraints.row}
	\f{\bE\bmag^{(s)}}{N^{1/2}}
	= \bh^{(s)}+b^{(s)}\bn^{(s-1)}\,,\quad
	\f{\bE^\st\bn^{(s)}}{N^{1/2}}
	= \bH^{(s+1)}+d^{(s)}\bmag^{(s)}\eeq
for $1\le s\le t-1$, together with
(see \eqref{e:fixed.pt.n} and \eqref{eq-def-H-circ})
	\beq\label{e:tap.as.lin.constraints.col}
	\f{\bE\bmag+\hat{\bE}\bhJ}{N^{1/2}}
		-\dkappa
	= \bh+(1-\bq)\bn\,,\quad
	\f{\bE^\st\bn}{N^{1/2}}
	= \bH^{(t+1)} + d^{(t)}\bmag\,.\eeq
Let $\mathbb{Q}$ denote the marginal law of the sequence $\textsf{DATA}$. For the remainder of this paper, we \emph{first} sample $\textsf{DATA}\sim\mathbb{Q}$, and make all calculations conditional on a background $\sigma$-field $\bgFilt\equiv\sigma(\textsf{DATA})$. We then let $\bE$ be a standard gaussian in $\R^{M\times N}$, \emph{independent of} $\bgFilt$ until further notice. We shall subsequently reintroduce constraints on $\bE$ in a way that is equivalent to conditioning on $\Filt$.
\end{rmk}

\subsection{Fixed point existence}\label{ss:kr} We
conclude this section with the proof of Proposition~\ref{p:kr}.

\begin{lem}\label{l:kr.base.case}
For any $\DELTA>0$ there exists $t = t(\DELTA)$ such that $\bmag\equiv\bmag^{(t)}$ satisfies
	\beq\label{e:kr.base.case}
	\bigg\|\bigg(
	\DELTA^{1/2}+\kappa
	- \f{\bE\bmag}{N^{1/2}}
	\bigg)_+
	\bigg\|_2
	\le \f{M^{1/2}}{\exp(c/\DELTA)}\eeq
with high probability for large enough $N$, where $c$ is a positive constant depending only on $\kappa$ for $\alpha\in(\albd,\aubd)$.

\begin{proof} From the TAP equations \eqref{e:tap.as.lin.constraints.row} we have
	\beq\label{e:tap.at.t.rewrite}
	\f{\bE\bmag}{N^{1/2}}
	=\bh^{(t)} + b^{(t)}\bn^{(t-1)}
	=L_{\bq}(\bh^{(t)})
	+(1-\bq) \Big(\bn^{(t-1)}-\bn^{(t)}\Big)\eeq
for $L_{\bq}$ as in the proof of Lemma~\ref{l:kr}. It follows from the results of \cite{bayati2011dynamics,bolthausencmp} that $\bn^{(t)}$ converges in $\ell^2$ to a fixed point (Remark~\ref{r:tap}\ref{r:tap.a}), in the sense that
	\[\adjustlimits\lim_{t\uparrow\infty}
	\limsup_{N\uparrow\infty}
	\f{\ltwo{\bn^{(t)}-\bn^{(t-1)}}}{N^{1/2}}=0\]
in probability. Consequently, for any fixed $\DELTA$, we can choose $t=t(\DELTA)$ large enough such that
	\beq\label{eq-difference-two-Fs}
	\f{\ltwo{\bn^{(t)}-\bn^{(t-1)}}}{N^{1/2}}
	\le \f1{\exp(1/\DELTA^2)}\eeq
for all $t\ge t(\DELTA)$. As explained in the proof of Lemma~\ref{l:kr}, the function $L_{\bq}$ defines a strictly increasing map from $\R$ onto $(\kappa,\infty)$, with $L_{\bq}(h)-\kappa \asymp 1/|h|$ in the limit $h\to-\infty$. This implies that $L_{\bq}(h) \le \kappa +\DELTA^{1/2}$ only if $h \le -c'/\DELTA^{1/2}$ for some positive constant $c'$ (depending only on $\kappa$). If $Z$ is a standard gaussian, then the chance of $\sqrt{\TAPq} Z \le -c'/\DELTA^{1/2}$ is upper bounded by $\exp(-\Omega(1/\DELTA))$. Now recall that the empirical profile of $\bh^{(t)}$ is close to the distribution of $\sqrt{\TAPq} Z$: this was informally stated by Remark~\ref{r:tap}\ref{r:tap.b}, and a formal statement is given by \cite[Lemma 1(c)]{bayati2011dynamics}. Their result yields (possibly enlarging $t(\DELTA)$ as needed) that
	\beq\label{e:ht.far.tail}
	\f1M \sum_{\mu=1}^M
	\mathbf{1}\Big\{L_{\bq}(h^{(t)}_\mu)
		\le \kappa + \DELTA^{1/2}\Big\}
	\le
	\f1M \sum_{\mu=1}^M (h^{(t)}_\mu)^2 
	\mathbf{1}\bigg\{
	h^{(t)}_\mu
	\le -\f{c'}{\DELTA^{1/2}}
	\bigg\}
	\le \f1{\exp(2c/\DELTA)}\eeq
with high probability, for a positive constant $c$ depending only on $\kappa$ 
for $\alpha\in(\albd,\aubd)$. Consequently
	\[\bigg\|\Big(
	\DELTA^{1/2} + \kappa
	- L_{\bq}(\bh^{(t)})
	\Big)_+
	\bigg\|_2
	\le \f{M^{1/2} \DELTA^{1/2}}
	{\exp(c/\DELTA)}\,,\]
and combining with \eqref{eq-difference-two-Fs} yields the claimed bound.
\end{proof}
\end{lem}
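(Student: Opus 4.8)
The plan is to reduce the estimate to a one-dimensional statement about the empirical profile of the TAP field $\bh^{(t)}$, and then read that statement off from state evolution. By \eqref{e:tap.n} at the final step we have $\bn^{(t)}=F_{\TAPq}(\bh^{(t)})$ and $\bE\bmag^{(t)}/N^{1/2}=\bh^{(t)}+(1-q_t)\bn^{(t-1)}$, so
\[\f{\bE\bmag}{N^{1/2}}
=L(\bh^{(t)})
+(1-q_t)\big(\bn^{(t-1)}-\bn^{(t)}\big)
+(\TAPq-q_t)\bn^{(t)}\,,\qquad
L(h)\equiv h+(1-\TAPq)F_{\TAPq}(h)\,,\]
where $L$ is deterministic and, by the argument of Lemma~\ref{l:kr} (with $\bq$ there replaced by $\TAPq$), is a strictly increasing bijection of $\R$ onto $(\kappa,\infty)$ with $L(h)-\kappa\asymp 1/|h|$ as $h\to-\infty$. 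The two correction terms are handled by the AMP fixed-point convergences: for each fixed (large) $t$, state evolution gives $q_t\to\TAPq$ and Remark~\ref{r:tap}\ref{r:tap.a} gives $\ltwo{\bn^{(t)}-\bn^{(t-1)}}/N^{1/2}\to0$ in probability, while $\ltwo{\bn^{(t)}}=O(N^{1/2})$; so choosing $t=t(\DELTA)$ large enough makes the combined $\ell^2$ norm of the two corrections at most $N^{1/2}\exp(-1/\DELTA^2)$ with high probability, which is negligible against $M^{1/2}/\exp(c/\DELTA)$ once $N\le CM$ (valid since $\alpha$ is bounded away from $0$ on $(\albd,\aubd)$). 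By the triangle inequality and $(a+b)_+\le a_++|b|$, it therefore suffices to bound $\|(\DELTA^{1/2}+\kappa-L(\bh^{(t)}))_+\|_2$.

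For this I would use only the deterministic facts about $L$ just quoted, together with $L\ge\kappa$ (from $\cE\ge0$; see Lemma~\ref{l:EE}). Since $L(h)-\kappa\asymp 1/|h|$, there is a constant $c'=c'(\kappa)$ such that $L(h)\le\kappa+\DELTA^{1/2}$ forces $h\le-c'/\DELTA^{1/2}$; moreover $(\DELTA^{1/2}+\kappa-L(h^{(t)}_\mu))_+\le\DELTA^{1/2}$ pointwise because $L\ge\kappa$. Hence, coordinatewise, $(\DELTA^{1/2}+\kappa-L(h^{(t)}_\mu))_+\le\DELTA^{1/2}\Ind{h^{(t)}_\mu\le-c'/\DELTA^{1/2}}$, and
\[\Big\|\big(\DELTA^{1/2}+\kappa-L(\bh^{(t)})\big)_+\Big\|_2^2
\le\DELTA\cdot\#\big\{\mu\le M: h^{(t)}_\mu\le-c'/\DELTA^{1/2}\big\}\,.\]

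It remains to bound the tail count, and this is where state evolution enters. On the support of the indicator we have $1\le(\DELTA/(c')^2)(h^{(t)}_\mu)^2$, so the count is at most $(\DELTA/(c')^2)\sum_{\mu\le M}(h^{(t)}_\mu)^2\Ind{h^{(t)}_\mu\le-c'/\DELTA^{1/2}}$. By Remark~\ref{r:tap}\ref{r:tap.b} — formally the convergence of pseudo-Lipschitz empirical averages in \cite[Lemma 1(c)]{bayati2011dynamics}, applied after a routine truncation and Lipschitz smoothing of the indicator — for each fixed $t$ the normalized sum $\tfrac1M\sum_{\mu\le M}(h^{(t)}_\mu)^2\Ind{h^{(t)}_\mu\le-c'/\DELTA^{1/2}}$ converges in probability to $\E[\TAPq Z^2\Ind{\sqrt{\TAPq}\,Z\le-c'/\DELTA^{1/2}}]$ with $Z$ standard gaussian. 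This gaussian tail moment equals $\exp(-\Omega(1/\DELTA))$, with the $\Omega(\cdot)$ constant depending only on $\TAPq$ — hence only on $\kappa$, uniformly for $\alpha\in(\albd,\aubd)$, since $\TAPq=q_{\star\kappa}(\alpha)$ varies continuously there and stays bounded away from $1$. Possibly enlarging $t(\DELTA)$ so that the convergence has taken hold, we obtain $\#\{\mu\le M:h^{(t)}_\mu\le-c'/\DELTA^{1/2}\}\le M\exp(-2c/\DELTA)$ with high probability; combined with the two displays this gives $\|(\DELTA^{1/2}+\kappa-L(\bh^{(t)}))_+\|_2\le M^{1/2}\DELTA^{1/2}\exp(-c/\DELTA)$, and with the first-paragraph correction and $\DELTA^{1/2}\le 1$ it yields \eqref{e:kr.base.case} (after possibly shrinking $c$).

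The argument poses no essential difficulty; the work is bookkeeping, in aligning the $\DELTA$-dependence of two separate error sources. The AMP slack $\ltwo{\bn^{(t)}-\bn^{(t-1)}}$ and the deviation $|q_t-\TAPq|$ are controlled by state evolution only qualitatively in $t$ (no explicit rate), so $t=t(\DELTA)$ must be chosen a posteriori to drive them below $\exp(-1/\DELTA^2)$, which one then verifies is dominated by the gaussian tail $\exp(-\Omega(1/\DELTA))$ produced in the last step; and one must check that the constant $c$ can be taken uniform over $\alpha\in(\albd,\aubd)$, which is where continuity of $\alpha\mapsto q_{\star\kappa}(\alpha)$ and its boundedness away from $1$ are used. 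The only other point meriting a line of care is that the TAP identity delivers $\bh^{(t)}+(1-q_t)F_{\TAPq}(\bh^{(t)})$, which agrees with the deterministic map $L=L_{\TAPq}$ only up to the vector $(\TAPq-q_t)\bn^{(t)}$, of $\ell^2$ norm $O(|q_t-\TAPq|\,N^{1/2})$ — negligible for $t$ large, so the passage to $L$ is itself only an exponentially small perturbation.
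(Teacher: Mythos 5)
Your proposal is correct and follows essentially the same route as the paper's proof: rewrite $\bE\bmag/N^{1/2}$ via the TAP identity as the monotone map $L$ applied to $\bh^{(t)}$ plus $\ell^2$-small corrections controlled by Remark~\ref{r:tap}\ref{r:tap.a}, then use $L\ge\kappa$ and $L(h)-\kappa\asymp 1/|h|$ to reduce to the tail count $\#\{\mu: h^{(t)}_\mu\le -c'/\DELTA^{1/2}\}$, which is bounded through the second-moment indicator comparison and \cite[Lemma 1(c)]{bayati2011dynamics}. The only cosmetic difference is that you work with the deterministic $L_{\TAPq}$ and explicitly carry the extra correction $(\TAPq-q_t)\bn^{(t)}$, whereas the paper uses $L_{\bq}$ with the empirical $\bq$; both corrections are absorbed by the same state-evolution facts.
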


\noindent We next prove Proposition~\ref{p:kr} by adapting an argument of \cite{kim1998covering}.

\newcommand{\bdef}{\MVEC{f}}

\begin{proof}[Proof of Proposition~\ref{p:kr}]
Recall \eqref{e:g.kr} that we decompose $\bG\equiv\bG_{M\times \Nall}$ into submatrices $\bE$ and $\hat{\bE}$. Denote $\bE_{\mu,i}\equiv g_{\mu,i}$ and $\hat{\bE}_{\mu,i}\equiv \hat{g}_{\mu,i}$, and recall these are i.i.d.\ standard gaussian random variables. We construct $\hat{J}$ in stages $1\le s\le s_\textup{end}$ where $s_\textup{end} =O(\log N)$. We decompose the coordinates $1\le i\le \hat{N}$ into consecutive blocks of cardinality
	\beq\label{e:kr.Ns}
	N_s\equiv
	\f{M\DELTA/4^s}
	{4s+2\,\exp(1/\DELTA^{3/4}) + 16}\,.\eeq
At stage $s$, we choose each of the next $N_s$ spins of $\hat{J}$ according to a \emph{weighted} majority of the corresponding column in $\hat{\bE}$, where the weights are determined by what was revealed in the previous stages. More precisely, the coordinates involved in stages $r$ through $s$ (inclusive) are denoted
	\[r:s \equiv \bigg\{
	\sum_{u=1}^{r-1}N_u\,,
		\ldots,
	 \sum_{u=1}^s N_u\bigg\}\,.\]
We write $\hat{\bE}(r:s)$ for the submatrix of $\hat{\bE}$ with column indices in $r:s$, and $\hat{J}(r:s) \equiv (\hat{J}_i)_{i\in r:s}$. With this notation, the portion of $\bE\bmag^{(t)} + \hat{\bE}\hat{J} $ that is fixed upon completion of stage $s$ is
	\[\bz^{(s)}
	\equiv \bE\bmag^{(t)}
	+ \hat{\bE}(1:s) \hat{J}(1:s)
	\equiv \bE\bmag^{(t)}
		+\bigg( \sum_{i\in 1:s}
			\hat{g}_{\mu,i}
			\hat{J}_i\bigg)_{\mu\le M}
	\in \R^M\,.\]
For $s\ge0$ let us define
	\[T_s\equiv N^{1/2} \kappa 
	+ (M\DELTA)^{1/2}
	\bigg(1 + \f1{2^s}\bigg)\,,\quad
	\bdef^{(s)}
	\equiv \Big(T_s\oneM-\bz^{(s)}\Big)_+
	\in\R^M\,.\]
We use $\bdef^{(s-1)}$ to measure the ``deficit'' after stage $s-1$, and determine the weights in stage $s$:
	\[\hat{J}(s:s)
	\equiv \sgn\bigg\{
	\Big(\hat{\bE}(s:s) \Big)^\st \bdef^{(s-1)}\bigg\}
	=\bigg(
	\sgn\bigg\{
	\sum_{\mu=1}^M 
	\hat{g}_{\mu,i}(\xi^{(s-1)})_\mu
	\bigg\}
	\bigg)_{i\in s:s}\,.\]
Abbreviate $\hat{\Filt}_s=\sigma(\bE\bmag^{(t)},\hat{\bE}(1:s))$. We will prove inductively that
	\beq\label{e:kr.induction}
	\oP\bigg(\ltwo{\bdef^{(s)}} \le
		\f{N_{s+1}}{5}
	\,\bigg|\, \hat{\Filt}_{s-1} \bigg)
	\ge 1- \f{(2/3)^s}{2}\eeq
for $N_{s+1}$ as defined by \eqref{e:kr.Ns}. Note that the base case
	\[\ltwo{\bdef^{(0)}}
	\le\f{N_1}{5}
	=\f{M\DELTA/40}
		{10 + \exp(1/\DELTA^{3/4})}\]
follows from Lemma~\ref{l:kr.base.case}. Now take $s\ge1$, abbreviate $\bdef\equiv\bdef^{(s-1)}$, and suppose inductively that $\ltwo{\bdef}\le N_s/5$ holds. All subsequent calculations are conditional on $\hat{\Filt}_{s-1}$, which we suppress from the notation. To bound the output of stage $s$, let us first fix any 
 $\mu\le M$ and $i\in s:s$. Write $\mathbf{P} \equiv \oP(\cdot\,|\,\hat{\Filt}_{s-1},\hat{g}_{\mu,i})$ and write $\mathbf{E}$ for expectation under $\mathbf{P}$. Then, writing $Z$ for an independent standard gaussian random variable, we have
	\begin{align*}
	m_{\mu,i}
	&\equiv \mathbf{E}
		\bigg(\hat{J}_i
		\cdot \sgn \hat{g}_{\mu,i}
		\bigg)
	=2\mathbf{P}_{\mu,i}\bigg(
	\sgn\bigg\{\sum_{\mu'\le M}
		\xi_{\mu'} \hat{g}_{\mu',i}
	\bigg\}
	= \sgn \hat{g}_{\mu,i}
	\bigg)-1\\
	&= 2\mathbf{P}_{\mu,i}\bigg(0\le Z
	\le \f{ |\hat{g}_{\mu,i}|
		(\xi_\mu / \ltwo{\bdef})}
		{	(1
		- (\xi_\mu/\ltwo{\bdef})^2)^{1/2} }
	\bigg)\end{align*}
for all $i\in s:s$. In particular,
$m_{\mu,i}$ always lies between zero and one, and can be lower bounded by
	\[m_{\mu,i} 
	\ge 2\,
	\mathbf{P}
	\bigg( 0 \le Z \le
		\f{|\hat{g}_{\mu,i}|\xi_\mu}
			{\ltwo{\bdef}}
		\bigg)
	\ge\f{2\,\xi_\mu |\hat{g}_{\mu,i}|}{\ltwo{\bdef}}
		\varphi(\hat{g}_{\mu,i})\,.\]
Averaging this bound over the law of $\hat{g}_{\mu,i}$ gives
	\beq\label{e:kr.bias.lbd}
	\E\Big( \hat{g}_{\mu,i} \hat{J}_i \Big)
	=\E\Big( |\hat{g}_{\mu,i}| m_{\mu,i}
	 \Big)
	\ge \f{2\xi_\mu }
			{\ltwo{\bdef}}
	\E\bigg(
	|\hat{g}_{\mu,i}|^2
	\varphi(\hat{g}_{\mu,i})
	\bigg)
	\ge \f{\xi_\mu}{5\ltwo{\bdef}}\eeq
for all $i\in s:s$. In addition,
with $Z$ denoting an independent standard gaussian as above, we have
	\beq\label{e:kr.exp.bound}
	\f{\E \exp(\gamma \hat{g}_{\mu,i} \hat{J}_i)}
	{\exp \{\gamma \E ( \hat{g}_{\mu,i} \hat{J}_i)\}}
	=\f1{\exp(\gamma m_{\mu,i})}
	\bigg(
	\E \exp(\gamma Z) + m_{\mu,i}
	\f{\E \exp(\gamma|Z|)-\E \exp(-\gamma|Z|)
	}{2}\bigg)
	\le \exp(\gamma^2/2)\,,\eeq
where the last bound holds for all $0\le \gamma \le \gamma_{\max}$ with $\gamma_{\max}$ a small absolute constant --- this follows by noting that for 
sufficiently small $\gamma$ we have
	\begin{align*}
	&1+ m_{\mu,i}
	\f{\E \exp(\gamma|g|)-\E \exp(-\gamma|g|)}{2}
	=1+m_{\mu,i}\bigg(\gamma \E(|g|) + O(\gamma^2)
		\bigg)\\
	&\qquad=1+m_{\mu,i}
	\bigg(\f{2^{1/2}\gamma}{\pi^{1/2}} + O(\gamma^2)\bigg)
	\le \exp(\gamma m_{\mu,i})\,.\end{align*}
Denote $\tau_s\equiv T_{s-1}-T_s= (M\DELTA)^{1/2}/2^s$, and define the error vector
	\beq\label{e:kr.err}
	\err \equiv \err^{(s)}
	\equiv \bz^{(s-1)}
	-\bz^{(s)}
	+ \f{N_s \bdef}{5\ltwo{\bdef}}
	- \tau_s\oneM
	\in\R^M\,.\eeq
Each individual entry $\err_\mu$ is the sum of $N_s$ i.i.d.\ random variables (although there is some dependency among the different entries of $\err$). It follows from \eqref{e:kr.bias.lbd} that
	\[-\E\err 
	= \bigg(\sum_{i\in s:s}\E \Big(
		\hat{g}_{\mu,i} \hat{J}_i\Big)
		\bigg)_{\mu\le M}
		-\f{N_s \bdef}{5\ltwo{\bdef}}
		+\tau_s\oneM
		\ge\tau_s\oneM\,.\]
It follows from \eqref{e:kr.exp.bound} that
for all $0\le \gamma\le\gamma_{\max}$, 
	\beq\label{e:kr.gaussian.mgf}
	\E \exp\bigg\{
	\gamma \Big( \err_\mu-\E \err_\mu\Big)
	\bigg\} 
	= \prod_{i\in s:s}
	\E \exp \bigg\{\gamma \Big(
		\hat{g}_{\mu,i} \hat{J}
		-\E(\hat{g}_{\mu,i} \hat{J})
		\Big)\bigg\}
	\le \exp\bigg\{ \f{N_s\gamma^2}{2}
	\bigg\}\,.\eeq
We then use the definition \eqref{e:kr.err} of $\err$ to rewrite
	\[T_s\oneM-\bz^{(s)}
	=\bigg(T_{s-1}-\bz^{(s-1)}
		-\f{N_s\bdef}{5\ltwo{\bdef}}
		\bigg)
		+\err
	\le \bdef\bigg( 1 - \f{N_s}{5\ltwo{\bdef}} \bigg)
		+\err
	\le \err\,,\]
where the last bound holds since we have 
$N_s \ge 5\ltwo{\bdef}$ by the inductive hypothesis. Taking the positive part of each entry gives $\bdef^{(s)}\le (\err)_+$, therefore
	\[\E\Big(
	[(\xi^{(s)})_\mu]^2\Big)
	= 
		\int_0^\infty \oP(\err_\mu \ge r^{1/2})\,dr
	\le \inf_{\gamma\ge0}
	\int_0^\infty \f{\E
		\exp( \gamma 
		(\err_\mu- \E\err_\mu)
		)}
	{ \exp(\gamma
	( r^{1/2} - \E\err_\mu))}
	\,dr\]
By \eqref{e:kr.gaussian.mgf} together with the earlier observation $-\E\err_\mu \ge\tau_s$, the above is
	\[\le
	\inf_{0\le \gamma\le\gamma_{\max}}
	\f{\exp(N_s\gamma^2/2)}
		{ \exp(-\gamma\E\err_\mu) }
	\int_0^\infty \f{dr}{\exp(\gamma r^{1/2})}
	= \inf_{0\le \gamma\le\gamma_{\max}}
	\f{2 \exp(N_s\gamma^2/2 )
	}{\gamma^2 \exp(\gamma \tau_s)}\,.\]
Setting $\gamma\equiv\tau_s/N_s$ (we shall check below that $\gamma\le\gamma_{\max}$ for $s\le s_\textup{end}$) and
 summing over $\mu$ gives
	\[\oP\bigg(
	\ltwo{\bdef^{(s)}} \ge \f{N_{s+1}}{5} \bigg)
	\le
	\f{\E( \ltwo{\bdef^{(s)}}^2)}
		{(N_{s+1}/5)^2 }
	\le \f{50 M (N_s/N_{s+1})^2 / 
		(\tau_s)^2}{
		\exp( (\tau_s)^2 / (2 N_s))}\,.\]
Recall from \eqref{e:kr.Ns} that 
$N_s = (M\DELTA/4^s)/(4s+2\,\exp(1/\DELTA^{3/4})+16)$,
while $\tau_s=(M\DELTA)^{1/2}/2^s$.
Thus
	\begin{align*}
	\oP\bigg(
	\ltwo{\bdef^{(s)}} \ge \f{N_{s+1}}{5} \bigg)
	&\le \bigg(\f{6(s+1)
	+2\,\exp(1/\DELTA^{3/4})+16}
		{4s
		+2\,\exp(1/\DELTA^{3/4})
		+16}\bigg)^2
	\f{50 \cdot 16 \cdot 4^s / \DELTA}
		{\exp(2s+
		\exp(1/\DELTA^{3/4})
		+8 )\}}\\
	&= \bigg(1 + \f{O(1)}{\exp(1/\DELTA^{3/4})}\bigg)
	\f{50 \cdot 16}{\exp(8)}
	\f{4^s}{\exp(2s)}
	\le \f{(2/3)^s}2\,.\end{align*}
To conclude, we note that the required condition
	\[\gamma
	= \f{\tau_s}{N_s}
	= \f{4s + 2\,\exp(1/\DELTA^{3/4})
	 + 16}{\tau_s}
	\le \gamma_{\max}\]
holds as long as $s$ is small relative to $\tau_s = (M\DELTA)^{1/2}/2^s$. For instance, we can certainly continue the induction up to the largest integer $s_\textup{end}$ such that 
$2^{s_\textup{end}} \le (M\DELTA)^{1/4}$, which gives $s_\textup{end}\asymp \log M\ll (M\DELTA)^{1/4} \le \tau_{s_\textup{end}}$ as desired. The induction will end (with positive probability) at
	\[\ltwo{\bdef^{(s_\textup{end})}}^2
	\le \f{N_{s_\textup{end}+1}}{5}
	\le \f{M\DELTA/4^{s_\textup{end}+1}}
		{4s_\textup{end} + 
		2\,\exp(1/\DELTA^{3/4}) + 16}
	\le \f{(M\DELTA)^{1/4}}{4}\,.\]
On the other hand, any entry of $\bz^{(s_\textup{end})}$ which is smaller than $(M\DELTA)^{1/2}/2$ must contribute at least $M\DELTA/4$ to $\ltwo{\bdef^{(s_\textup{end})}}^2$. Since $M\DELTA$ is large, we obtain a contradiction unless 
$\bz^{(s_\textup{end})}$ has no such entry, that is to say,
$\bz^{(s_\textup{end})} \ge (M\DELTA)^{1/4}/4$. The probability of this event is lower bounded by
	\[\prod_{j=0}^\infty
	\bigg(1 - \f{(2/3)^s}{2}\bigg)
	\ge \f1{10}\,.\]
The result follows by taking $1:s_\textup{end} = \set{1,\ldots,\hat{N}}$.
\end{proof}

\begin{cor}\label{c:kr}
In the setting of Proposition~\ref{p:kr},
it holds for all $t\ge t(\DELTA)$ that
	\[\f{\ltwo{\bn-\bn^{(t)}}
	+\ltwo{\bh-\bh^{(t)}}}{N^{1/2}}
	\le \f{1}{\exp(1/\DELTA^{1/3})}\,.\]

\begin{proof}
For any fixed $\hat{J}\in\set{-1,+1}^{\hat{N}}$, we have
	\[\ltwo{\hat{\bE}\hat{J}}^2
	= \sum_{\mu=1}^M\bigg(
	\sum_{i\le\hat{N}}
	\hat{g}_{\mu,i} \hat{J}_i\bigg)^2
	\stackrel{\textup{d}}{=}
	\hat{N}\sum_{\mu=1}^M
	(g_\mu)^2\]
where the $g_\mu$ are i.i.d.\ standard gaussians, and $\stackrel{\textup{d}}{=}$ indicates equality in law. Therefore
	\[ 
	\oP\bigg( \f{\ltwo{\hat{\bE}\hat{J}}}{N}
	\ge \f{\DELTA}{\exp(1/\DELTA^{1/2})}\bigg)
	\le \exp\bigg\{
		-N\exp\bigg( \f{\Omega(1)}{\DELTA^{2/3}}
		\bigg)
	\bigg\}\,,\]
using the bound on $\hat{N}/N$ from Proposition~\ref{p:kr}. It follows by a union bound that with high probability we have
	\beq\label{e:union.bound.hat.J}
	\f{\ltwo{\hat{\bE}\hat{J}}}{N}
	\le \f{\DELTA}{\exp(1/\DELTA^{1/2})}\eeq
simultaneously for \emph{all} $\hat{J}\in\set{-1,+1}^{\hat{N}}$, in particular for $\hat{J}=\bhJ$. Recall from~\eqref{e:fixed.pt}~and~\eqref{e:tap.at.t.rewrite} that	
	\[ L_{\bq}(\bh^{(t)})
		+(1-\bq)\Big(
		\bn^{(t-1)}-\bn^{(t)}
		\Big)
	=\f{\bE\bmag}{N^{1/2}}
	=L_{\bq}(\bh)
	+\dkappa - \f{\hat{\bE}\bhJ}{N^{1/2}}\,.\]
Rearranging the above gives
	\beq\label{e:diff.tildeF.error.bound}
	\f{\ltwo{L_{\bq}(\bh)-L_{\bq}(\bh^{(t)})}}
		{N^{1/2}}
	=\f1{N^{1/2}}
	\bigg\| (1-\bq)\Big(
		\bn^{(t-1)}-\bn^{(t)}
		\Big)
		+\f{\hat{\bE}\bhJ}{N^{1/2}}
		-\dkappa
		\bigg\|_2
	\le \f{1}{\exp(1/\DELTA^{1/2})}
	\,,\eeq
where the last bound follows by combining \eqref{eq-difference-two-Fs}, \eqref{e:union.bound.hat.J}, and the definition of $\dkappa$ (as given immediately prior to~\eqref{e:fixed.pt}). To deduce a bound on $\ltwo{\bh-\bh^{(t)}}$, we will use a well-known expansion (see \cite{MR2390935} and refs.\ therein)
	\[\cE(x) 
	= x+\f1{x}-\f{2}{x^3}+\f{O(1)}{x^5}\]
which holds in the limit $x\to+\infty$.
It follows that $\cE'(x) = 1 - 1/x^2 + O(1)/x^4$, so
as $h\to-\infty$ we have
	\beq\label{e:tildeF.deriv.lbd}
	\f{dL_{\bq}}{dh}
	= 1 -\cE\bigg(
		\f{\kappa-h}{\sqrt{1-\bq}}
		\bigg)
	\asymp \f1{h^2}
	\,.\eeq
The definition \eqref{e:fixed.pt} implies
$L_{\bq}(\bh)\ge(\kappa+\DELTA^{1/2})\oneM$ coordinatewise, therefore $\bh \ge -(c/\DELTA^{1/2})\oneM$ coordinatewise for some constant $c$. Let $\bh' \equiv \max\set{\bh^{(t)}, -c/\DELTA^{1/2}}$. Then
	\[\f{\ltwo{\bh-\bh'}}{N^{1/2}}
	\le \f{\ltwo{L_{\bq}(\bh)-L_{\bq}(\bh')}}
		{N^{1/2} \min\set{ |dL_{\bq}/dh|
			: h \ge -c/\DELTA^{1/2}
		} }
	\le \f{O(1/\DELTA)}
		{\exp(1/\DELTA^{1/2})}\]
by combining \eqref{e:diff.tildeF.error.bound} with \eqref{e:tildeF.deriv.lbd}. Combining with
\eqref{e:ht.far.tail} gives the final claim.
\end{proof}
\end{cor}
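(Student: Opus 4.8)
The plan is to compare the perturbed fixed-point equation \eqref{e:fixed.pt}--\eqref{e:fixed.pt.n} defining $(\bh,\bn)$ with the unperturbed TAP iterate at step $t$, to show the two agree up to a term that is negligible on the root-mean-square scale $\ltwo{\cdot}/N^{1/2}$, and then to invert the monotone-but-degenerate map $L_{\bq}$ from the proof of Lemma~\ref{l:kr}; the estimate for $\ltwo{\bn-\bn^{(t)}}$ then follows for free from Lipschitz continuity of $F_{\bq}$. Concretely, \eqref{e:fixed.pt} reads $L_{\bq}(\bh)=(\bE\bmag+\hat{\bE}\bhJ)/N^{1/2}-\dkappa$ while \eqref{e:tap.at.t.rewrite} reads $L_{\bq}(\bh^{(t)})=\bE\bmag/N^{1/2}-(1-\bq)(\bn^{(t-1)}-\bn^{(t)})$, so subtracting gives
\[L_{\bq}(\bh)-L_{\bq}(\bh^{(t)})=(1-\bq)\big(\bn^{(t-1)}-\bn^{(t)}\big)+\f{\hat{\bE}\bhJ}{N^{1/2}}-\dkappa\,.\]
I would bound each right-hand term in $\ltwo{\cdot}/N^{1/2}$: the first is $\le\exp(-1/\DELTA^2)$ for $t\ge t(\DELTA)$ by \eqref{eq-difference-two-Fs}; the third is of order $\DELTA\exp(-1/\DELTA^2)$ since $\dkappa$ lies in a cube of side $\DELTA/\exp(1/\DELTA^2)$; and for the middle term the crucial input is $\hat N/N\le\DELTA/\exp(1/\DELTA^{2/3})$ (Proposition~\ref{p:kr}), which makes $\ltwo{\hat{\bE}\hat{J}}^2\stackrel{\textup{d}}{=}\hat N\sum_{\mu\le M}g_\mu^2$ a sum of $M$ i.i.d.\ chi-square variables for each fixed $\hat J$, so that a Gaussian tail bound together with a union bound over the $2^{\hat N}$ sign patterns $\hat J\in\set{-1,+1}^{\hat N}$ gives $\ltwo{\hat{\bE}\bhJ}/N\le\DELTA/\exp(1/\DELTA^{1/2})$ with high probability. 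Altogether $\ltwo{L_{\bq}(\bh)-L_{\bq}(\bh^{(t)})}/N^{1/2}\le\exp(-1/\DELTA^{1/2})$.

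The second and main step is to invert $L_{\bq}$. The difficulty is that $dL_{\bq}/dh=1-\cE'\big((\kappa-h)/\sqrt{1-\bq}\big)$ decays like $1/h^2$ as $h\to-\infty$ (from the classical expansion $\cE(x)=x+1/x+O(x^{-3})$, hence $\cE'(x)=1-1/x^2+O(x^{-4})$), so $L_{\bq}^{-1}$ is far from Lipschitz near $\kappa$. The saving feature is that the construction of $\bhJ$ keeps $\bh$ itself away from $-\infty$: from \eqref{e:def.bhJ}, after subtracting the negligible $\dkappa$, we have $L_{\bq}(\bh)\ge(\kappa+\DELTA^{1/2})\oneM$ coordinatewise, and since $L_{\bq}(h)-\kappa\asymp1/|h|$ at $-\infty$ this forces $\bh\ge-(c/\DELTA^{1/2})\oneM$ for an absolute constant $c$, on which half-line $dL_{\bq}/dh\ge c'\DELTA$ for a constant $c'>0$. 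Setting $\bh'\equiv\max\set{\bh^{(t)},-c/\DELTA^{1/2}}$ coordinatewise, monotonicity of $L_{\bq}$ and $\bh\ge-c/\DELTA^{1/2}$ give $\ltwo{L_{\bq}(\bh)-L_{\bq}(\bh')}\le\ltwo{L_{\bq}(\bh)-L_{\bq}(\bh^{(t)})}$, so the bi-Lipschitz bound yields $\ltwo{\bh-\bh'}/N^{1/2}\le O(1/\DELTA)\exp(-1/\DELTA^{1/2})$; and $\ltwo{\bh'-\bh^{(t)}}$, supported on coordinates where $h^{(t)}_\mu<-c/\DELTA^{1/2}$ and there at most $|h^{(t)}_\mu|$, is exactly the far-tail mass controlled by \eqref{e:ht.far.tail} (after enlarging $t(\DELTA)$ if needed), hence $\ltwo{\bh'-\bh^{(t)}}/N^{1/2}\le\exp(-\Omega(1/\DELTA))$. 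Adding, and using that $\exp(-1/\DELTA^{1/2})$ beats any power of $\DELTA$, we obtain $\ltwo{\bh-\bh^{(t)}}/N^{1/2}\le\exp(-1/\DELTA^{1/3})$ once $\DELTA$ is small.

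Finally, since $\bn=F_{\bq}(\bh)$ by \eqref{e:fixed.pt.n} and $\bn^{(t)}=F_{\bq}(\bh^{(t)})$ up to the $o_{1/\DELTA}(1)$ gap between $q_t$ and $\TAPq$ (which only affects constants), and $|F_{\bq}'|=|\cE'|/(1-\bq)=O(1)$ because $\bq$ stays bounded away from $1$, we get $\ltwo{\bn-\bn^{(t)}}\le O(1)\ltwo{\bh-\bh^{(t)}}$, and the two estimates combine to give the claim (absorbing the constant into the exponent for small $\DELTA$). I expect the inversion step to be the main obstacle: one must not lose control in the far-left tail where $L_{\bq}$ flattens, and the resolution — exploiting that the inequality $L_{\bq}(\bh)\ge(\kappa+\DELTA^{1/2})\oneM$ built into the definition of $\bhJ$ already confines $\bh$ to a region where $L_{\bq}$ is uniformly bi-Lipschitz with modulus of order $\DELTA$, while handling the wilder tail of the \emph{unperturbed} $\bh^{(t)}$ separately through state evolution — is precisely what dictates the final exponent $1/\DELTA^{1/3}$ (a deliberate weakening from $1/\DELTA^{1/2}$ to absorb the $O(1/\DELTA)$ amplification). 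The dimension count for $\ltwo{\hat{\bE}\bhJ}$ and the bookkeeping of exponents are routine by comparison.
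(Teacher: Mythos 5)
Your proposal is correct and follows essentially the same route as the paper's proof: the union bound over all $2^{\hat N}$ sign patterns to control $\ltwo{\hat{\bE}\bhJ}$, the subtraction of the perturbed and unperturbed equations to bound $\ltwo{L_{\bq}(\bh)-L_{\bq}(\bh^{(t)})}$, the truncation $\bh'=\max\set{\bh^{(t)},-c/\DELTA^{1/2}}$ combined with the lower bound $dL_{\bq}/dh\gtrsim\DELTA$ on that half-line, and the far-tail estimate \eqref{e:ht.far.tail} for the remaining coordinates. You in fact spell out two steps the paper leaves implicit (the explicit bound on $\ltwo{\bh'-\bh^{(t)}}$ via \eqref{e:ht.far.tail}, and the Lipschitz continuity of $F_{\bq}$ to transfer the estimate to $\bn-\bn^{(t)}$).
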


\section{Change of measure}\label{s:tilt}

\noindent Let $\hbe_\mu$ be the $\mu$-th coordinate vector in $\R^M$, and $\dbe_i$ the $i$-th coordinate vector in $\R^N$. For any $\bx\in\R^M$ let $\bX_i$ denote the $M\times N$ matrix with $\bx$ in column $i$ and all other columns zero, $\bX_i\equiv \bx(\dbe_i)^\st$. For any $\by\in\R^N$ let $\bY_\mu$ denote the $M\times N$ matrix with $\by$ in row $\mu$ and all other rows zero, $\bY_\mu\equiv \hbe_\mu \by^\st$. Note
	\[(\bE,\bX_i)_{i\le N} 
	= \bE^\st\bx\,,\quad
	(\bE,\bY_\mu)_{\mu\le M}=\bE\by\,.\]
Recalling \eqref{e:restrict.j} and \eqref{e:restrict.k}, let $\bV_\mu\equiv\hbe_\mu\bv^\st$, and define similarly $\tilde{\bV}_\mu$ and $\bW_\mu$.

\subsection{Orthogonalized TAP equations} 

Recall that $(\br^{(s)})_{1\le s\le t}$ is an orthonormal basis for $\spn((\bmag^{(s)})_{1\le s\le t-1},\bmag)$, obtained by the change of coordinates (QR factorization) $\bM=\bR\bGa$ for $\bM$ as in \eqref{e:bM.before.change.basis}. Similarly, $(\bc^{(s)})_{1\le s\le t}$ is an orthonormal basis for $\spn((\bn^{(s)})_{1\le s\le t-1},\bn)$, obtained by the change of coordinates
$\bN=\bC\bXi$ for $\bN$ as in \eqref{e:bN.before.change.basis}. The constraints
\eqref{e:tap.as.lin.constraints.row} and \eqref{e:tap.as.lin.constraints.col} 
can be rewritten as
	{\setlength{\jot}{0pt}
	\begin{align*}
	\label{e:all.row.constraints}
	\tag{{\footnotesize$\color{DB}\usf{ROW}$}}
	\bE \br^{(s)}
	= \bx^{(s)} \
	&\textup{ for } 1\le s\le t\,,\\
	\label{e:all.col.constraints}
	\tag{{\footnotesize$\color{DB}\usf{COL}$}}
	\bE^\st\bc^{(s)} 
	=\by^{(s)} \
	&\textup{ for } 1\le s\le t\,,
	\end{align*}}%
where $\bx^{(s)}$ and $\by^{(s)}$ are defined by the same change of coordinates:
	\begin{align}\label{e:explicit.x}
	\begin{pmatrix}
	\bx^{(t)} & \cdots & \bx^{(1)}
	\end{pmatrix}
	&= 
	\begin{pmatrix}
	\displaystyle
	\f{\bh+(1-\bq)\bn}{\sqrt{\bq}}
	& \displaystyle\f{\bh^{(t-1)}
	+b^{(t-1)}\bn^{(t-2)}}{\sqrt{q_{t-1}}}
	& \cdots
	& \displaystyle
	\f{\bh^{(1)}+b^{(1)}\bn^{(0)}}{\sqrt{\TAPq}}
	\end{pmatrix}
	\bGa^{-1}\,,\\
	\label{e:explicit.y}
	\begin{pmatrix}
	\by^{(t)} & \cdots & \by^{(1)}
	\end{pmatrix}
	&=\begin{pmatrix}
	\displaystyle
	\f{\bH^{(t+1)}+d^{(t)}\bmag^{(t)}}
		{\sqrt{\bpsi}}
	&\displaystyle
	\f{\bH^{(t)}+d^{(t-1)}\bmag^{(t-1)}}
		{\sqrt{\psi_{t-1}}}
	& \cdots
	& \displaystyle
	\f{\bH^{(2)}+d^{(1)}\bmag^{(1)}}{\sqrt{
	\psi_1}}
	\end{pmatrix}
	\bXi^{-1}\,.
	\end{align}
Let $\bR^{(s)}_\mu\equiv\hbe_\mu(\br^{(s)})^\st$
and $\bC^{(s)}_i\equiv \bc^{(s)}(\dbe_i)^\st$, and define the subspaces
	{\setlength{\jot}{0pt}
	\begin{align*}
	\vv_\row &\equiv \spn(
	(\bR^{(s)}_\mu)_{1\le s\le t,\mu\le M})\,,\\
	\vv_\precol
	&\equiv\spn( (
	\bC^{(s)}_i
	)_{1\le s\le t,i\le N})\,.\end{align*}
	}%
For $\bv$ as given by \eqref{e:restrict.j}, define
$\bV^{(s)}\equiv \bc^{(s)} \bv^\st$,
and similarly $\tilde{\bV}^{(s)}$ and $\bW^{(s)}$. Observe that $\bV^{(s)},\tilde{\bV}^{(s)},\bW^{(s)}$ are all orthogonal to $\vv_\row$ and lie in $\vv_\precol$. For any $\bE$ satisfying the column constraints \eqref{e:all.col.constraints}, we must have
	{\setlength{\jot}{0pt}\begin{alignat*}{2}
	\tag{{\footnotesize$\color{DB}
	\usf{ADM}_\textsf{1}$}}
	\label{e:def.admiss.one}
	(\bE, \bV^{(s)})
	=(\bc^{(s)})^\st\bE\bv
	=(\by^{(s)},\bv) \
	&\textup{ for } 1\le s\le t\,,\\
	\tag{{\footnotesize$\color{DB}
	\usf{ADM}_\textsf{2}$}}
	(\bE,\tilde{\bV}^{(s)})
	=(\bc^{(s)})^\st\bE\tbv
	=(\by^{(s)},\tbv) \
	&\textup{ for } 1\le s\le t\,.
	\label{e:def.admiss.two}
	\end{alignat*}}%
We refer to \eqref{e:def.admiss.one} and \eqref{e:def.admiss.two}
 as the \emph{admissibility} constraints. Define the following subspaces of $\R^{M\times N}$:
	{\setlength{\jot}{0pt}\begin{alignat}{2}
	\label{e:v.prfone}
	\vv_{\admone} &=
	\spn( (\bV^{(s) })_{1\le s\le t} )\,,\quad
	& \vv_{\prfone} &=\spn((\bV_\mu)_{\mu\le M})
	= \vv_{\admone} \obot \vv_{\qrfone}\,,\\
	\nonumber
	\vv_{\admtwo} &=
	\spn( (\tilde{\bV}^{(s) })_{1\le s\le t} )\,,\quad
	& \vv_{\prftwo} 
	&=\spn((\tilde{\bV}_\mu)_{\mu\le M})
	= \vv_{\admtwo} \obot \vv_{\qrftwo}\,,
	\end{alignat}}%
We use $\obot$ to refer to the orthogonal sum of two subspaces, so $\vv_{\qrfone}$ is by definition the orthogonal complement of $\vv_{\admone}$ inside $\vv_{\prfone}$. 
For any subspaces $\vv_\sfont{X}$, $\vv_\sfont{Y}$
we abbreviate their direct (not necessarily orthogonal) sum as
	\[\vv_\sfont{XY}\equiv\vv_\sfont{X}\oplus\vv_\sfont{Y}\,.\]
Let $\vv_\adm\equiv \vv_{\admone\admtwo}$, $\vv_\qrf\equiv \vv_{\qrfone\qrftwo}$, and $\vv_\prf\equiv \vv_{\prfone\prftwo}= \vv_{\adm\qrf}$. 

\begin{lem}[subspace decompositions]
\label{l:extra.orth} 
Note $\vv_{\admone}\subseteq \vv_\adm\subseteq \vv_\con$. Let $\vv_{\colone}$ and $\vv_\col$ be defined by the relations
	\[\vv_\row\obot \vv_{\admone}\obot \vv_{\colone}
	=\vv_\con =
	\vv_\row\obot \vv_\adm \obot \vv_\col\,.\]
We then have the additional relations
	\[\vv_\row\obot \vv_{\prfone}\obot\vv_{\colone}
	=\vv_{\allone}\,,\quad
	\vv_\row\obot \vv_\prf\obot\vv_\col
	= \vv_\all\,.\]

\begin{proof}
We prove the claim for $\vv_\col$ only, as the claim for $\vv_{\colone}$ follows by a similar argument. Let $\bA^{(s)}_i$ be the projection of $\bC^{(s)}_i$ onto the orthogonal complement of $\vv_\row\obot \vv_\adm$,
	\[\bA^{(s)}_i
	=\bC^{(s)}_i
	-\sum_{r=1}^t
	\bigg\{\sum_{\mu=1}^M
		\Big(\bC^{(s)}_i,\bR^{(r)}_\mu\Big)\bR^{(r)}_\mu
		+
		\Big(\bC^{(s)}_i,\bV^{(r)}\Big)\bV^{(r)}
		+\Big(\bC^{(s)}_i,\bW^{(r)}\Big)\bW^{(r)}
		\bigg\}\,.\]
One can check that $(\bC^{(s)}_i,\bV^{(r)})=0$ for $r\ne s$, while
	\[\Big(\bC^{(s)}_i,\bV^{(s)}\Big)\bV^{(s)}
	=\sum_{j\le N} v_j
		\Big(\bC^{(s)}_i,\bC^{(s)}_j\Big) \bV^{(s)}
	= v_i \bV^{(s)}
	= \sum_{\mu=1}^M
		\Big(\bC^{(s)}_i,\bV_\mu\Big) \bV_\mu\]
(and similarly with $\bW$ in place of $\bV$). It follows that
	\[\bA^{(s)}_i
	=\bC^{(s)}_i
	-\sum_{\mu=1}^M\bigg\{ \sum_{r=1}^t
	\bigg[
	\Big(\bC^{(s)}_i,\bR^{(r)}_\mu\Big)\bR^{(r)}_\mu
	+
	\Big(\bC^{(s)}_i,\bV_\mu\Big)\bV_\mu
	+
	\Big(\bC^{(s)}_i,\bW_\mu\Big)\bW_\mu\bigg]
	\bigg\}\,,\]
from which we immediately see that each $\bA^{(s)}_i$ is in fact orthogonal to $\vv_\row\obot \vv_\prf$. The space $\vv_\col$ is spanned by the $\bA^{(s)}_i$, and it follows that
$\vv_\row\obot \vv_\prf\obot \vv_\col=\vv_\all$.
\end{proof}
\end{lem}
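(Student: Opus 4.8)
The plan is to reduce both identities to a single orthogonality statement: that the complement $\vv_\col$ (resp.\ $\vv_{\colone}$), defined only as the orthogonal complement of $\vv_\row\obot\vv_\adm$ (resp.\ $\vv_\row\obot\vv_{\admone}$) inside $\vv_\con$, is in fact already orthogonal to the strictly larger space $\vv_\prf$ (resp.\ $\vv_{\prfone}$). Everything else is soft. The rank-one structure of the generators gives $\vv_\row\perp\vv_\prf$ at once: the relevant inner products $(\bR^{(s)}_\mu,\bV_{\mu'})$ and $(\bR^{(s)}_\mu,\bW_{\mu'})$ reduce to $(\br^{(s)},\bv)$ and $(\br^{(s)},\bw)$, which vanish since $\bv,\bw$ are orthogonal to $\spn(\bmag^{(1)},\ldots,\bmag^{(t)})$. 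One also has $\vv_\col\perp\vv_\row$ by construction, $\vv_\adm\subseteq\vv_\prf$, and $\vv_\col\subseteq\vv_\con\subseteq\vv_\all$. So the entire content is that replacing $\vv_\adm$ by the bigger space $\vv_\prf$ leaves the complement undisturbed.

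For the key claim I would exhibit $\vv_\col$ via an explicit spanning set and then check the orthogonality directly. Let $\bA^{(s)}_i$ be the orthogonal projection of $\bC^{(s)}_i\equiv\bc^{(s)}(\dbe_i)^\st$ onto $(\vv_\row\obot\vv_\adm)^\perp$. As $\vv_\con$ is spanned by the $\bR^{(r)}_\mu\in\vv_\row$ together with the $\bC^{(s)}_i$, and the former are killed by this projection, the family $(\bA^{(s)}_i)$ spans $\vv_\col$. The crucial point is that the $\vv_\prf$-component of each $\bC^{(s)}_i$ already lies inside $\vv_\adm$: using that $\{\bV_\mu,\bW_\mu\}_{\mu\le M}$ is an orthonormal basis of $\vv_\prf$ (orthonormality uses $\ltwo{\bv}=\ltwo{\bw}=1$ and $(\bv,\bw)=0$, the latter being exactly how $\bw$ is chosen in \eqref{e:c.lambda}), a direct rank-one computation gives
	\[\sum_{\mu\le M}\bigl[(\bC^{(s)}_i,\bV_\mu)\bV_\mu+(\bC^{(s)}_i,\bW_\mu)\bW_\mu\bigr]
	= v_i\,\bc^{(s)}\bv^\st+w_i\,\bc^{(s)}\bw^\st
	= v_i\bV^{(s)}+w_i\bW^{(s)}\in\vv_\adm\,.\]
Thus the orthogonal projection of $\bC^{(s)}_i$ onto $\vv_\prf$ lands in $\vv_\adm$, hence equals its projection onto $\vv_\adm$; since $\vv_\row\perp\vv_\prf$, the element $\bA^{(s)}_i$ — obtained by deleting the $\vv_\row$- and $\vv_\adm$-components of $\bC^{(s)}_i$ — equally deletes its $\vv_\prf$-component and is therefore orthogonal to $\vv_\row\obot\vv_\prf$. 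As the $\bA^{(s)}_i$ span $\vv_\col$, this yields $\vv_\col\perp\vv_\prf$. (Equivalently one may run the bookkeeping in the orthonormal frame $\{\bV^{(r)},\bW^{(r)}\}_{1\le r\le t}$ of $\vv_\adm$: the cross-terms $(\bC^{(s)}_i,\bV^{(r)})$ and $(\bC^{(s)}_i,\bW^{(r)})$ vanish for $r\ne s$, and the surviving $s$-term resums to the displayed expression.)

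With $\vv_\col\perp\vv_\prf$ in hand the assembly is routine. Since $\vv_\row,\vv_\prf,\vv_\col$ are pairwise orthogonal, $\vv_\row\obot\vv_\prf\obot\vv_\col$ is a genuine orthogonal sum, and it sits inside $\vv_\all$ because each of its three summands does. For the reverse inclusion, $\vv_\all$ is spanned by $\vv_\row$, $\vv_\prf$, and the $\bC^{(s)}_i$; writing $\bC^{(s)}_i=\bA^{(s)}_i+(\bC^{(s)}_i-\bA^{(s)}_i)$ puts the first term in $\vv_\col$ and the second — which, by the previous step, is the projection of $\bC^{(s)}_i$ onto $\vv_\row\obot\vv_\prf$ — in $\vv_\row+\vv_\prf$, so $\vv_\all\subseteq\vv_\row\obot\vv_\prf\obot\vv_\col$ and equality holds. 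The statement $\vv_\row\obot\vv_{\prfone}\obot\vv_{\colone}=\vv_{\allone}$ follows by the identical argument with $\vv_{\admone},\vv_{\prfone},\vv_{\allone}$ in place of $\vv_\adm,\vv_\prf,\vv_\all$ and only the $\bV$-generators (no $\bW$), the projection identity collapsing to $\sum_\mu(\bC^{(s)}_i,\bV_\mu)\bV_\mu=v_i\bV^{(s)}\in\vv_{\admone}$.

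The one place that needs care is the projection identity in the display. Because the overlap $\lm=(\bv,\tbv)$ is generically nonzero, $\vv_\adm=\vv_{\admone}\oplus\vv_{\admtwo}$ is \emph{not} an orthogonal sum — indeed $(\bV^{(r)},\tilde{\bV}^{(r)})=\lm$ — so the computation must be organized against the orthonormalized frame $\{\bV^{(r)},\bW^{(r)}\}$, which is precisely the reason $\bw$ is introduced in \eqref{e:c.lambda}, and one has to check that the off-diagonal cross-terms $(\bC^{(s)}_i,\bV^{(r)})$ with $r\ne s$ vanish so that the $s$-term resums cleanly. This is elementary linear algebra with rank-one matrices, but it is the one spot where a normalization or index slip would pass unnoticed.
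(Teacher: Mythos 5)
Your proof is correct and follows essentially the same route as the paper's: you project each generator $\bC^{(s)}_i$ of $\vv_\precol$ onto $(\vv_\row\obot\vv_\adm)^\perp$ and use the rank-one identity $\sum_\mu[(\bC^{(s)}_i,\bV_\mu)\bV_\mu+(\bC^{(s)}_i,\bW_\mu)\bW_\mu]=v_i\bV^{(s)}+w_i\bW^{(s)}$ to conclude that the $\vv_\adm$- and $\vv_\prf$-projections of $\bC^{(s)}_i$ coincide, which is exactly the paper's computation read in the opposite direction. Your explicit treatment of the final two inclusions and your remark that the non-orthogonality of $\vv_{\admone}\oplus\vv_{\admtwo}$ forces the use of the $\{\bV^{(r)},\bW^{(r)}\}$ frame are both accurate and merely make explicit what the paper leaves implicit.
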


\subsection{Change of measure}

Recall from Remark~\ref{r:cond} that $\bE$ is a standard gaussian random variable in $\R^{M\times N}$ which is \emph{independent of} the background $\sigma$-field $\bgFilt$. Let $\oP$ denote the law of $\bE$ conditional on $\bgFilt$ (which, by independence, is the same as the unconditional law). We will make calculations under a tilted measure $\tP$, defined by
	\beq\label{e:tilt}
	\f{d\tP}{d\oP}
	=\exp\bigg\{(\bE,\bT)-\f{\|\bT\|^2}{2}\bigg\}\eeq
for $\bT$ to be specified below. Let $\tp$ denote the density of $\bE$ under $\tP$. 
 We indicate projections of $\bE$ onto subspaces by subscripts, e.g.\ the projection of $\bE$ onto $\vv_\row$ is denoted $\bE_\row$. We shall prove the following:

\begin{ppn}[change of measure formula]
\label{p:tilt} For any $\bE_{\row\prf}$-measurable event $\bsat$, and for any $\bT\in \vv_\con$,
	\begin{equation*}\label{e:changeofmsr}
	\tag{{\footnotesize$\color{magenta}
	\usf{S}|\usf{CR}$}}
	\oP(\bsat\giv E_\con)
	=\f{\tP(\bsat\giv E_\row)
		\tp_{\adm|\row\bsat}
		(E_\adm\giv E_\row)}
	{\tp_{\adm}(E_\adm)}\end{equation*}
where $\tp_{\adm|\row\bsat}(E_\adm\giv E_\row)$ is the density of $\bE_\adm$ under $\tP(\cdot\giv\bE_\row=E_\row,\bE_{\row\prf}\in\bsat)$. Likewise, for any $\bE_{\row\prfone}$-measurable event $\bsatone$,
	\begin{equation*}\label{e:changeofmsr.first}
	\tag{{\footnotesize$\color{magenta}
	\usf{S}_\textsf{1}|\usf{CR}$}}
	\oP(\bsatone\giv E_\con)
	=\f{\tP(\bsatone\giv E_\row)
	\tp_{\admone|\row\bsatone}
	(E_{\admone}\giv E_\row)}
	{\tp_{\admone}(E_{\admone})}
	\end{equation*}
where $\tp_{\admone|\row\bsatone}(E_{\admone}\giv E_\row)$ is the density of $\bE_{\admone}$ under $\tP(\cdot\giv\bE_\row=E_\row,\bE_{\row\prfone}\in\bsatone)$.\end{ppn}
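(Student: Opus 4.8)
The plan is to reduce both identities, $(\usf{S}|\usf{CR})$ and $(\usf{S}_\textsf{1}|\usf{CR})$, to a short computation with conditional gaussian densities; the only structural ingredient is the family of orthogonal decompositions recorded in Lemma~\ref{l:extra.orth}. Everything is done conditionally on $\bgFilt$, so the subspaces $\vv_\row,\vv_\adm,\vv_\prf,\vv_\col,\ldots$ are deterministic; under $\oP$ the matrix $\bE$ is a standard gaussian on $\R^{M\times N}$, and under $\tP$ (which is a probability measure, by the gaussian moment generating function) we have $\bE\sim N(\bT,\Id)$, so that, as $\bT\in\vv_\con$, only the $\vv_\con$-component of $\bE$ feels the tilt.

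First I would fix coordinates. Since $\vv_\adm\subseteq\vv_\prf$ and $\vv_\prf\perp\vv_\row$ by Lemma~\ref{l:extra.orth}, the subspace $\vv_\row\obot\vv_\adm$ lies inside $\vv_{\row\prf}$, which by the same lemma equals the orthogonal sum $\vv_\row\obot\vv_\prf$; let $\WW$ be the orthogonal complement of $\vv_\row\obot\vv_\adm$ inside $\vv_{\row\prf}$, so that $\vv_{\row\prf}=\vv_\row\obot\vv_\adm\obot\WW$. Any $w\in\WW$ is orthogonal to $\vv_\row$ and lies in $\vv_\row\obot\vv_\prf$, hence lies in $\vv_\prf$; thus $\WW\subseteq\vv_\prf$, and therefore $\WW$ is orthogonal to $\vv_\row$, to $\vv_\adm$, and---using $\vv_\prf\perp\vv_\col$ from Lemma~\ref{l:extra.orth}---to $\vv_\col$. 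In particular $\WW\perp\vv_\con=\vv_\row\obot\vv_\adm\obot\vv_\col$, so $\bT\perp\WW$; a dimension count also gives $\vv_\prf=\vv_\adm\obot\WW$ and hence $\vv_\all=\vv_\row\obot\vv_\adm\obot\WW\obot\vv_\col$. I then decompose $\bE=\bE_\row+\bE_\adm+\bE_\WW+\bE_\col+\bE_\bot$ along the orthogonal decomposition of $\R^{M\times N}$ into $\vv_\row$, $\vv_\adm$, $\WW$, $\vv_\col$, and the orthogonal complement of $\vv_\all$ (the last carrying $\bE_\bot$). Under $\oP$ these five components are independent standard gaussians on their respective subspaces; under $\tP$ they remain independent, with $\bE_\row,\bE_\adm,\bE_\col$ shifted by the corresponding projections of $\bT$, but $\bE_\WW$ (and $\bE_\bot$) keeping exactly its $\oP$-law---a standard gaussian on $\WW$---precisely because $\bT\perp\WW$. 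Two facts to record: $\bsat$, being $\bE_{\row\prf}$-measurable, is a function of $(\bE_\row,\bE_\adm,\bE_\WW)$ alone; and $\bE_\adm$ is independent of $\bE_\row$ under both $\oP$ and $\tP$.

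Next I would evaluate the two sides. Under $\oP$, conditioning on $\bE_\con$ freezes $(\bE_\row,\bE_\adm,\bE_\col)$ while $\bE_\WW$ stays an independent standard gaussian on $\WW$; since $\bsat$ involves only $(\bE_\row,\bE_\adm,\bE_\WW)$, the quantity $\oP(\bsat\giv E_\con)$ depends on $E_\con$ only through $(E_\row,E_\adm)$ and equals the probability that $\bsat$ holds when $(\bE_\row,\bE_\adm)$ is set to $(E_\row,E_\adm)$ and $\bE_\WW$ is drawn as a standard gaussian on $\WW$. On the right-hand side, Bayes' rule under $\tP$ together with the $\tP$-independence of $\bE_\adm$ and $\bE_\row$ gives
	\[\tp_{\adm|\row\bsat}(E_\adm\giv E_\row)
	=\f{\tP(\bsat\giv\bE_\row=E_\row,\bE_\adm=E_\adm)\,\tp_\adm(E_\adm)}{\tP(\bsat\giv\bE_\row=E_\row)}\,,\]
so after cancelling $\tP(\bsat\giv E_\row)$ and $\tp_\adm(E_\adm)$ the right-hand side of $(\usf{S}|\usf{CR})$ reduces to $\tP(\bsat\giv\bE_\row=E_\row,\bE_\adm=E_\adm)$. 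But under $\tP$, conditionally on $(\bE_\row,\bE_\adm)$ the component $\bE_\WW$ is again a standard gaussian on $\WW$---this is the step that uses $\bT\perp\WW$---so this last quantity is given by the same expression as $\oP(\bsat\giv E_\con)$, proving $(\usf{S}|\usf{CR})$. The identity $(\usf{S}_\textsf{1}|\usf{CR})$ is obtained by the identical argument with $(\bsat,\adm,\prf,\col,\all)$ replaced throughout by $(\bsatone,\admone,\prfone,\colone,\allone)$, using instead the relations $\vv_\row\obot\vv_{\admone}\obot\vv_{\colone}=\vv_\con$, $\vv_\row\obot\vv_{\prfone}\obot\vv_{\colone}=\vv_{\allone}$, and $\vv_{\admone}\subseteq\vv_\con$ from Lemma~\ref{l:extra.orth}.

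I expect the only real difficulty to be bookkeeping: checking that the various subspaces assemble into a single orthogonal decomposition in which $\bT$ sits inside $\vv_\con$---hence is orthogonal to the residual subspace $\WW$ on which $\bsat$ depends---and keeping careful track of which conditional densities are taken with respect to $\oP$ and which with respect to $\tP$. One should also note that all the conditional probabilities and densities above are well-defined, the gaussians being nondegenerate on their subspaces, with the usual convention that the formulas are read on the event $\tP(\bsat\giv\bE_\row=E_\row)>0$, off of which both sides vanish.
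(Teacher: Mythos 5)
Your proof is correct and is essentially the paper's argument: both hinge on the orthogonal decompositions of Lemma~\ref{l:extra.orth} and on the fact that $\bT\in\vv_\con$, so that the tilt does not affect the residual component of $\bE_{\row\prf}$ orthogonal to $\vv_\row\obot\vv_\adm$ (your $\WW$, the paper's $\qrf$ coordinates). Your use of Bayes' rule to identify the right-hand side with $\tP(\bsat\giv\bE_\row=E_\row,\bE_\adm=E_\adm)$ is just a reorganization of the paper's density-ratio manipulation in \eqref{e:def.adm.cond.density}.
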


\begin{proof} We prove \eqref{e:changeofmsr} only, as \eqref{e:changeofmsr.first} follows similarly. Write $\op$ and $\tp$ for densities under $\oP$ and $\tP$ respectively.
From Lemma~\ref{l:extra.orth} we have the orthogonal decompositions
	\[\vv_\row\obot \vv_\adm \obot \vv_\col=\vv_\con\,,\quad
	\vv_\row\obot \vv_\prf \obot \vv_\col=\vv_\all\,.\]
Recall also that $V_\prf=V_{\adm\qrf}$.
Therefore the left-hand side of \eqref{e:changeofmsr} can be written as
	\[\oP(\bsat\giv E_\con)
	= \f1{\op_{\row\adm\col}(E_{\row\adm\col})}
	\int \Ind{E_{\row\adm\qrf}\in\bsat}
	\op_{\row\adm\qrf\col}(E_{\row\adm\qrf\col})
		\,dE_\qrf\,.\]
The right-hand side above is unaffected if we replace $\op$ with $\tp$, because $\bT\in\vv_\con$ and so the Radon--Nikodym derivative \eqref{e:tilt} takes the same fixed value in both numerator and denominator. After replacing $\op$ with $\tp$, we apply the orthogonality result of Lemma~\ref{l:extra.orth} to simplify
	\beq\label{e:def.adm.cond.density}
	\oP(\bsat\giv E_\con)
	= \f{\tP(\bsat\giv E_\row)}
	{\tp_\adm(E_\adm)}
	\int\f{\Ind{E_{\row\adm\qrf}\in\bsat}
	\tp_{\adm\qrf}(E_{\adm\qrf})}
		{\tP(\bsat\giv E_\row)}
		\,dE_\qrf
	= \f{\tP(\bsat\giv E_\row)
		\tp_{\adm|\row\bsat}(E_\adm\giv E_\row)}
	{\tp_\adm(E_\adm)}\,,\eeq
where the last equality is by definition of $\tp_{\adm|\row\bsat}(E_\adm\giv E_\row)$. This proves \eqref{e:changeofmsr}.
\end{proof}

\noindent For $E\in\R^{M\times N}$ satisfying the equations \eqref{e:all.row.constraints} and \eqref{e:all.col.constraints} (and hence also satisfying \eqref{e:def.admiss.one} and \eqref{e:def.admiss.two}) we will have the projections $E_\row$, $E_\precol$, $E_\adm$, $E_\col$ constrained to fixed values, hereafter denoted $\bar{E}_\row$, $\bar{E}_\precol$, $\bar{E}_\adm$, $\bar{E}_\col$: for instance,
	\beq\label{e:def.Ebar.adm}
	\bar{E}_\adm
	= \sum_{s=1}^t \bigg\{
	(\bv,\by^{(s)})\bV^{(s)}
	+(\bw,\by^{(s)})\bW^{(s)}
	\bigg\}\,.\eeq
Recalling Definition~\ref{d:cube}, we now fix a pair
$(J,K)\in\bbH(\lm)\equiv\bbH_{M,\Nall,\DELTA,t}(\lm)$, and proceed to calculate the terms of Proposition~\ref{p:tilt} with $E_{\row\precol}=\bar{E}_{\row\precol}$ and
	{\setlength{\jot}{0pt}\begin{align}\nonumber
	\bsatone
	&=\sat_{J,\EPS}
	\equiv \sat_{J,\DELTA,t,\EPS}\,,\\
	\bsat
	&=\set{\bsatone,\sat_K}
	\equiv \set{\bsatone,\sat_{K,\DELTA,t}}\,.
	\label{e:events.B}
	\end{align}}%
This calculation will occupy Sections~\ref{s:row}~and~\ref{s:adm}.

\section{Probabilities given row constraints}
\label{s:row}

\noindent As above, fix $\kall$, $\aall$, $\sq\equiv q_{\star\kall}(\aall)$, $\spsi\equiv \psi_{\star\kall}(\aall)$, and $(J,K)\in\bbH(\lm)$. Recall from \eqref{e:PP} and \eqref{e:PP.pair} the definitions of $\PP_\star$ and $\PP(\lm)$. Now recall Proposition~\ref{p:tilt} and \eqref{e:events.B}. The main result of this section is that
	\begin{equation*}\label{e:conv.to.PP}
	\tag{{\footnotesize$\color{Pu}\usf{P}_\usf{1}
		|\usf{R}$}}
	\adjustlimits \limsup_{\DELTA\downarrow0} 
	\limsup_{t\uparrow\infty}
	\limsup_{\Nall\uparrow\infty}
	\bigg\{
	\f1{\Nall}
	\log\f{\tP(\sat_{J,\EPS}
		\giv\bar{E}_\row)}
	{\tp_{\admone}(\bar{E}_{\admone})}
	-\PP_\star\bigg\}=0
	\end{equation*}
for any fixed positive $\EPS$, and
moreover that
	\begin{equation*}
	\label{e:conv.to.PP.pair}
	\tag{{\footnotesize$\color{Pu}\usf{P}|\usf{R}^\usf{LD}$}}
	\adjustlimits
	\limsup_{\DELTA\downarrow0}
	\limsup_{t\uparrow\infty}
	\limsup_{\EPS\downarrow0} 
	\limsup_{\Nall\uparrow\infty}
	\bigg\{
	\f1{\Nall}
		\log\f{\tP(\sat_K\giv
		\bar{E}_\row,\sat_{J,\EPS})
	\,\tp_{\admone}(\bar{E}_{\admone})^2}
	{\tP(\sat_{J,\EPS}\giv\bar{E}_\row)
	\,\tp_{\adm}(\bar{E}_{\adm})}
	-2\PP_\star -
	\PP(\lm)
	\bigg\}
	\le 0\,.\end{equation*}
The combination of \eqref{e:conv.to.PP}~and~\eqref{e:conv.to.PP.pair}, proved in Corollary~\ref{c:PP}, will be sufficient for most $\lm
$. For small $\lm$ we will require a more precise estimates, given as
\eqref{e:PP.cst} and
\eqref{e:PP.clt.regime} in Corollary~\ref{c:PP.small.lm}.

\subsection{Admissibility given row constraints}

We begin by evaluating the probability to satisfy the admissibility constraints, which we recall are orthogonal to the row constraints. Recall $(J,K)\in\bbH(\lm)$, so we have the decompositions \eqref{e:restrict.j} and \eqref{e:restrict.k} and the overlap $\lm\equiv\lm_{J,K}$ from \eqref{e:c.lambda}. From now on, in the change of measure \eqref{e:tilt} we fix
	\beq\label{e:form.of.T}\bT
	=-\Big( N\bpsi(1-\bq)\Big)^{1/2} \bc
	\Big(\vartheta_1 \bv +\vartheta_2 \bw\Big)^\st\,.\eeq
where the $\vartheta_i$ satisfy the equations
	\beq\label{e:def.tilt}
	\vartheta_1
	=\bigg(
	\f{q_{J,t}(1-\bq)}{\bq(1-q_J)}
	\bigg)^{1/2}\,,\quad
	\lm\vartheta_1+\sqrt{1-\lm^2}\vartheta_2
	=\bigg(
	\f{q_{K,t}(1-\bq)}{\bq(1-q_K)}
	\bigg)^{1/2}\,.\eeq
Recalling \eqref{e:def.zeta.J}, we comment that $\bT\bv=-\bze^J$ and $\bT\tbv=-\bze^K$, so
$\bnu=(\bE-\bT)\bv$; these facts will be used later. We first give the computation for the probability density under $\tP$ on the admissibility constraints:

\begin{ppn}\label{p:adm}
Let $(J,K)\in\bbH(\lm)$. If $\bT$ is given by \eqref{e:form.of.T}, then
	\begin{equation*}\label{e:adm.givenr.first}
	\tag{{\footnotesize$\color{Re}
		\usf{A}_\textsf{1}|\usf{R}$}}
	\f1{\tp_{\admone}(\bar{E}_{\admone})}
	= \exp\bigg\{
	\f{N\bpsi(1-\bq)}{2}
	+O(\cst)\bigg\}\,.\end{equation*}
Additionally, with $c_\lm$ as in \eqref{e:c.lambda}, we have
	\begin{align*}
	\f1{\tp_\adm(\bar{E}_\adm)}
	&=
	\f1{\tp_{\admone}(\bar{E}_{\admone})}
	\exp\bigg\{ \f{N\bpsi(1-\bq)(c_\lm)^2}{2}
		+\f{O(\cst)}{\sqrt{1-\lm^2}}
		+\f{O(\cst)}{N(1-\lm^2)}
	\bigg\}\\
	&=\exp\bigg\{
		\f{N\bpsi(1-\bq)}{1+\lm} 
		+\f{O(\cst)}{\sqrt{1-\lm^2}}
		+\f{O(\cst)}{N(1-\lm^2)}
		\bigg\}\,.
	\tag{{\footnotesize$\color{Re}\usf{A}|\usf{R}$}}
	\label{e:adm.givenr}
	\end{align*}
Finally, we have the non-asymptotic bound
	\beq\label{e:adm.givenr.trivial.ubd}
	\f{\tp_{\admone}(\bar{E}_{\admone})}
		{\tp_\adm(\bar{E}_\adm)}
	\le \f1{(2\pi)^{t/2}}\,.\eeq
which holds for any $\lm$.
\end{ppn}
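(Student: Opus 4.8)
The plan is to exploit that, under $\tP$, the matrix $\bE$ is Gaussian with mean $\bT$ and identity (Frobenius) covariance, so its orthogonal projection onto $\vv_{\admone}$ (resp.\ $\vv_\adm$) is a Gaussian vector whose coordinates in the orthonormal family $\{\bV^{(s)}\}_{s\le t}$ (resp.\ $\{\bV^{(s)},\bW^{(s)}\}_{s\le t}$, orthonormal because the $\bc^{(s)}$ are orthonormal and $\bv,\bw$ form an orthonormal pair) are independent unit-variance Gaussians with respective means $(\bT,\bV^{(s)})$ and $(\bT,\bW^{(s)})$. Hence $\tp_\adm(\bar E_\adm)$ is a product of $2t$ one-dimensional Gaussian densities, evaluated at the coordinate values imposed by \eqref{e:def.admiss.one}--\eqref{e:def.admiss.two} and \eqref{e:def.Ebar.adm}, namely $(\bar E_\adm,\bV^{(s)})=(\by^{(s)},\bv)$ and $(\bar E_\adm,\bW^{(s)})=(\by^{(s)},\bw)$; and $\tp_{\admone}(\bar E_{\admone})$ is the product of the first $t$ of these factors.

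First I would compute the two ingredients. Since $(\bv,\bw)=0$ and $\ltwo{\bv}=\ltwo{\bw}=1$ we have $\bT\bv=-\sqrt{N\bpsi(1-\bq)}\,\vartheta_1\bc$ and $\bT\bw=-\sqrt{N\bpsi(1-\bq)}\,\vartheta_2\bc$; as $\bc\equiv\bc^{(t)}$ is the last member of the orthonormal family $(\bc^{(s)})$, the only nonzero mean-shifts are $(\bT,\bV^{(t)})=-\sqrt{N\bpsi(1-\bq)}\,\vartheta_1$ and $(\bT,\bW^{(t)})=-\sqrt{N\bpsi(1-\bq)}\,\vartheta_2$. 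Writing $\by^{(s)}$ via \eqref{e:explicit.y} as a linear combination---with coefficients bounded by the nondegeneracy of $\bXi$ (Corollary~\ref{c:Xi.nondegenerate})---of the vectors $(\bH^{(u+1)}+d^{(u)}\bmag^{(u)})/\sqrt{\psi_u}$, and using that $\bv$ and $\tbv$ are exactly orthogonal to every $\bmag^{(u)}$ while $|(\bv,\bH^{(u+1)})|,|(\tbv,\bH^{(u+1)})|\le C/N^{1/2}$ by \eqref{e:restrict.j.appx} (with $\psi_u$ bounded away from zero), I get $(\by^{(s)},\bv)=O(\cst/N^{1/2})$ and, via $\bw=(\tbv-\lm\bv)/\sqrt{1-\lm^2}$ with $|\lm|\le1$, $(\by^{(s)},\bw)=O(\cst/N^{1/2})/\sqrt{1-\lm^2}$. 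Finally, membership $(J,K)\in\bbH(\lm)$ forces $q_{J,t}=q_J-\sum_{s<t}\gamma_{J,s}^2=\bq+O(\cst/N)$ and similarly $q_K,q_{K,t}=\bq+O(\cst/N)$, so by \eqref{e:def.tilt}, $\vartheta_1=1+O(\cst/N)$ and $\sqrt{1-\lm^2}\,\vartheta_2=1-\lm+O(\cst/N)$, i.e.\ $\vartheta_2=c_\lm+O(\cst/N)/\sqrt{1-\lm^2}$.

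Assembling the product of Gaussian densities is then bookkeeping. For $\tp_{\admone}(\bar E_{\admone})$, the $t-1$ factors with $s<t$ each equal $(2\pi)^{-1/2}\exp(O(\cst/N))$, while the $s=t$ factor equals $(2\pi)^{-1/2}\exp(-\tfrac12[(\by^{(t)},\bv)+\sqrt{N\bpsi(1-\bq)}\,\vartheta_1]^2)=(2\pi)^{-1/2}\exp(-\tfrac12[N\bpsi(1-\bq)+O(\cst)])$ after expanding the square and inserting $\vartheta_1^2=1+O(\cst/N)$; absorbing the deterministic constant $(2\pi)^{t/2}$ into $O(\cst)$ yields \eqref{e:adm.givenr.first}. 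Because the $\bV$- and $\bW$-coordinates are independent, $\tp_\adm(\bar E_\adm)$ factors as $\tp_{\admone}(\bar E_{\admone})$ times the density, at the point with coordinates $(\by^{(s)},\bw)$, of an identity-covariance Gaussian on $\spn(\bW^{(1)},\dots,\bW^{(t)})$; repeating the single-factor computation for this second density, its $s=t$ factor now produces $N\bpsi(1-\bq)\vartheta_2^2=N\bpsi(1-\bq)(c_\lm)^2+O(\cst)/\sqrt{1-\lm^2}+O(\cst)/(N(1-\lm^2))$, which gives the first display of \eqref{e:adm.givenr}; the second follows from $1+(c_\lm)^2=2/(1+\lm)$. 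Finally, \eqref{e:adm.givenr.trivial.ubd} is immediate from the same factorization, since that second density is an identity-covariance $t$-dimensional Gaussian density and hence everywhere at most $(2\pi)^{-t/2}$.

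The fiddliest point is the error tracking as $\lm\to1$: one must verify that every contribution collapses into exactly the two prescribed forms $O(\cst)/\sqrt{1-\lm^2}$ and $O(\cst)/(N(1-\lm^2))$, and in particular that the cross term $2\sqrt{N\bpsi(1-\bq)}\,\vartheta_2\,(\by^{(t)},\bw)$ does not generate a worse power of $(1-\lm^2)^{-1}$---which holds precisely because $\vartheta_2$ is itself $O(1)$ plus a $(1-\lm^2)^{-1/2}$-small correction. Everything else---the Gaussianity of $\bE$ under $\tP$, the orthonormality of the $\bV^{(s)},\bW^{(s)}$, and the boundedness of $\bXi^{-1}$ and of $\psi_u$, $\bpsi$, $\bq(1-\bq)$ away from degeneracy (the last from Corollary~\ref{c:kr} together with the state evolution of Remark~\ref{r:tap})---is either established earlier or routine.
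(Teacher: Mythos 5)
Your proposal is correct and follows essentially the same route as the paper: both express $\tp_{\admone}(\bar E_{\admone})$ and $\tp_\adm(\bar E_\adm)/\tp_{\admone}(\bar E_{\admone})$ as products of one-dimensional Gaussian densities over the orthonormal families $\{\bV^{(s)}\}$ and $\{\bW^{(s)}\}$, observe that only the $s=t$ mean-shifts $(\bT,\bV^{(t)})=-\sqrt{N\bpsi(1-\bq)}\,\vartheta_1$ and $(\bT,\bW^{(t)})=-\sqrt{N\bpsi(1-\bq)}\,\vartheta_2$ are nonzero, bound $(\by^{(s)},\bv)$ and $(\by^{(s)},\bw)$ via \eqref{e:explicit.y}, \eqref{e:restrict.j.appx}, and the nondegeneracy of $\bXi$, and do the same bookkeeping with $\vartheta_1=1+O(\cst/N)$ and $\vartheta_2=c_\lm+O(\cst/N)/\sqrt{1-\lm^2}$. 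For the final claim you, like the paper, actually establish $\tp_\adm(\bar E_\adm)/\tp_{\admone}(\bar E_{\admone})\le(2\pi)^{-t/2}$ (the ratio in \eqref{e:adm.givenr.trivial.ubd} as printed appears to be inverted), which is exactly what is used downstream.
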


\begin{proof}
By definition of $\tP$ we have
	\begin{align}\nonumber
	\tp_{\admone}(\bar{E}_{\admone})
	&= \prod_{s=1}^t
	\varphi\Big( (\bv,\by^{(s)})-(\bT,\bV^{(s)}) 
		\Big)
	\,,\\
	\f{\tp_\adm(\bar{E}_\adm)}
		{\tp_{\admone}(\bar{E}_{\admone})}
	&=
	\prod_{s=1}^t
	\varphi\Big( (\bw,\by^{(s)})-(\bT,\bW^{(s)})
	\Big)\,,
	\label{e:adm.exact.dens}
	\end{align}
where the last equality immediately implies \eqref{e:adm.givenr.trivial.ubd}.
Recall that we specified $\bc^{(t)}=\bc$, so
$\bV^{(t)}=\bc\bv^\st$ and $\bW^{(t)}=\bc\bw^\st$, and
\eqref{e:form.of.T} can be rewritten as
	\[\bT=
	-\Big( N\bpsi(1-\bq)\Big)^{1/2}
	\Big(\vartheta_1 \bV^{(t)} +\vartheta_2 \bW^{(t)}\Big)^\st
	\,.\]
With $c_\lm$ as in \eqref{e:c.lambda}, assumption \eqref{e:restrict.j} implies
	\[\vartheta_1=1+\f{O(\cst)}{N}\,,\quad
	\vartheta_2
	=c_\lm+\f{O(\cst/N)}{\sqrt{1-\lm^2}}\,.\]
Next, from condition~\eqref{e:restrict.j.appx} we have that the scalar products products $(\bv,\bH^{(s)})$ (for $1\le s\le t-1$) and $(\bv,\bH^{(t+1)})$ are all bounded in absolute value by $C/N^{1/2}$. Recall that by definition $(\bmag^{(s)},\bv)=0$ for all $1\le s\le t$. Since $\bXi$ has all eigenvalues lower bounded by $1/\cst$ (see Corollary~\ref{c:Xi.nondegenerate}) we conclude via \eqref{e:explicit.y} that $|(\bv,\by^{(s)}| \le \cst /N^{1/2}$ for all $1\le s\le t$. Likewise we have $|(\tilde{\bv},\by^{(s)}| \le \cst /N^{1/2}$, so
	\[|(\bw,\by^{(s)})|
	= \bigg|
	\bigg( \f{\tilde{\bv}-\lm\bv
	}{\sqrt{1-\lm^2}},\by^{(s)} \bigg)
	\bigg|
	 \le \f{\cst/N^{1/2}}{\sqrt{1-\lm^2}}\]
for all $1\le s\le t$.
Combining the above estimates gives
	\begin{align*}
	\bigg\{(\bv,\by^{(s)})
		-(\bT,\bV^{(s)})\bigg\}^2
	&= \begin{cases}
	(\bv,\by^{(s)})^2
	= O(\cst/N)
	& \textup{for }1\le s\le t-1\,, \\
	O(\cst) + N\bpsi(1-\bq)
	&\textup{for }s=t\,,
	\end{cases}\\
	\bigg\{(\bw,\by^{(s)})
		-(\bT,\bW^{(s)})\bigg\}^2
	&= \begin{cases}\displaystyle
	(\bw,\by^{(s)})^2
	= \f{O(\cst)}{N(1-\lm^2)}
	& \textup{for }1\le s\le t-1\,, \\
	\displaystyle
	\f{O(\cst)}{N(1-\lm^2)})+
	\f{O(\cst)}{\sqrt{1-\lm^2}} + N\bpsi(1-\bq)
	&\textup{for }s=t\,.
	\end{cases}\end{align*}
Substituting into \eqref{e:adm.exact.dens} gives the claims \eqref{e:adm.givenr.first}
and \eqref{e:adm.givenr}.
\end{proof}

\subsection{Satisfiability given row constraints}
 Let
	\beq\label{e:def.xi.J}
	\bxi^J
	\equiv
	\f1{\sqrt{1-q_J}}\bigg(
	\kappa\oneM
	- \f{\sqrt{q_{J,t}}}{\sqrt{\bq}} \bh
	- \sum_{s=1}^{t-1} \gamma_{J,s} \bx^{(s)}
	-\bigg(1-\f{\sqrt{q_{J,t}}}{\sqrt{\bq}}\bigg)
	 \bigg(\f{\hat{\bE}\bhJ}{N^{1/2}}-\dkappa\bigg)
	\bigg)\eeq
and let $\bxi^K$ be analogously defined. Recall from \eqref{e:v.prfone} that $\bE_{\prfone}$ is the projection of $\bE$ onto the space $V_{\prfone}$ spanned by the elements $\bV_\mu\equiv\hbe_\mu\bv^\st$ for $\mu\le M$:
	\[\bE_{\prfone}
	= \sum_{\mu=1}^M
	(\bE,\bV_\mu)\bV_\mu
	= \sum_{\mu=1}^M
	\Big((\hbe_\mu)^\st\bE\bv\Big)
	\bV_\mu\,.\]
That is to say, the coordinates of $\bE_{\prfone}$ relative to the basis $(\bV_\mu)_{\mu\le M}$ is precisely $\bE\bv$. 

\begin{ppn}\label{p:basic.sat.giv.row}
Fix $(J,K)\in\bbH(\lm)$ and consider the events defined by \eqref{e:events.B}. Suppose in the change of measure \eqref{e:tilt} that we take $\bT$ as defined \eqref{e:form.of.T}. Then,
for $\bxi^J$ defined by \eqref{e:def.xi.J}, we have
	\begin{equation*}\label{e:sgivenr.first}
	\tag{{\footnotesize$\color{Bl}
		\usf{S}_\textsf{1}|\usf{R}$}}
	\f{\tP(\sat_{J,\EPS}\giv\bar{E}_\row)}
		{1-o_N(1)}
	= \tP(\sat_J\giv\bar{E}_\row)
	= \exp\bigg\{
	\Big(\oneM,\log\bPhi(\bxi^J)\Big)
	\bigg\}\,.\end{equation*}
Let $\bnu\equiv \bE\bv+\bze^J$ as in \eqref{e:def.zeta.J} (recall $\bE\bv$ is equivalent to $\bE_{\prfone}$, so $\bnu$ is equivalent to $\bE_{\prfone}$ plus a deterministic shift). Then
	\begin{equation*}\label{e:sat.givenr}
	\tag{{\footnotesize$\color{Bl}
		\usf{S}|\usf{R}$}}
	{\tP(\sat_K\giv\bar{E}_\row,\sat_{J,\EPS})}
	=
	\int
	\tp_{\prfone|\row\bsatone}
	(\bE_{\prfone}\giv\bar{E}_\row)
	\exp\bigg\{
	\bigg(\oneM,\log\bPhi\bigg(
		\f{\bxi^K-\lm\bnu}
	{\sqrt{1-\lm^2}}
	\bigg)\bigg)
	\bigg\}\,d\bE_{\prfone}\end{equation*}
where $\tp_{\prfone|\row\bsatone}(\cdot\giv\bar{E}_\row)$ denotes density under $\tP(\cdot\giv\bar{E}_\row,\bE_{\row\prfone}\in
\sat_{J,\EPS})$.
\end{ppn}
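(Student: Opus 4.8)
The plan is to reduce both assertions to elementary Gaussian conditioning, once the satisfiability events are rewritten as coordinatewise linear inequalities. Throughout, all probabilities are conditional on $\bgFilt$ as in Remark~\ref{r:cond}, so by \eqref{e:tilt} we may write $\bE=\bE_0+\bT$ under $\tP$ with $\bE_0$ a standard Gaussian in $\R^{M\times N}$ and $\bT$ as in \eqref{e:form.of.T}. The vectors $\bv$ and $\bw$ are unit vectors, orthogonal to one another and (by the decompositions \eqref{e:restrict.j}, \eqref{e:restrict.k}) orthogonal to $\spn(\bmag^{(1)},\ldots,\bmag^{(t)})=\vv_\row$; hence $\bnu\equiv\bE_0\bv=(\bE-\bT)\bv$ and $\bE_0\bw$ are independent standard Gaussian vectors in $\R^M$, and both remain so after conditioning on $\bE_\row=\bar E_\row$, which fixes only the $\bE_0\br^{(s)}$. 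This is the only probabilistic input; the rest is bookkeeping. Writing $\bj=J/N^{1/2}$ via \eqref{e:restrict.j} and using the row constraints \eqref{e:all.row.constraints} together with $\br^{(t)}=\bmag/\sqrt{N\bq}$ and the perturbed fixed point \eqref{e:fixed.pt} (which gives $\sqrt{\bq}\,\bx^{(t)}=\bh+(1-\bq)\bn-\hat{\bE}\bhJ/N^{1/2}+\dkappa$), the event $\sat_J$ becomes $\sqrt{1-q_J}\,\bE\bv\ge\kappa\oneM+\dkappa-\hat{\bE}\bhJ/N^{1/2}-\sqrt{q_{J,t}}\,\bx^{(t)}-\sum_{s<t}\gamma_{J,s}\bx^{(s)}$. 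Substituting $\bE\bv=\bnu-\bze^J$ (using $\bT\bv=-\bze^J$) and cancelling the $\bn$-terms via the definition \eqref{e:def.zeta.J} of $\bze^J$, this reduces exactly to $\bnu\ge\bxi^J$ coordinatewise, with $\bxi^J$ as in \eqref{e:def.xi.J}; since $\bnu$ is standard Gaussian given $\bar E_\row$ this yields $\tP(\sat_J\giv\bar E_\row)=\prod_{\mu\le M}\bPhi(\xi^J_\mu)=\exp\{(\oneM,\log\bPhi(\bxi^J))\}$.

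For the ratio $\tP(\sat_{J,\EPS}\giv\bar E_\row)/\tP(\sat_J\giv\bar E_\row)=1-o_N(1)$, I would show conditions \eqref{d:profile.ii} and \eqref{d:profile.iii} hold with $\tP(\cdot\giv\bar E_\row,\sat_J)$-probability $1-o_N(1)$. Given $\sat_J$ the coordinates $\nu_\mu$ are independent with $\nu_\mu\sim\varphi_{\xi^J_\mu}$ (a standard Gaussian conditioned to exceed $\xi^J_\mu$, in the notation of \eqref{e:def.F}); moreover $(J,K)\in\bbH(\lm)$ forces $|q_J-\bq|,|q_{J,t}-\bq|,|\gamma_{J,s}|=O(\cst/N)$, so $\xi^J_\mu=\xi_\mu+O(\cst/N)$. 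Condition \eqref{d:profile.ii} is then a law-of-large-numbers statement for $N^{-1}\sum_\mu\nu_\mu^{20}$ about its mean $N^{-1}\sum_\mu\int\nu^{20}\varphi_{\xi^J_\mu}(\nu)\,d\nu$, which lies strictly below the stated threshold (the factor $2$ supplying the slack). Condition \eqref{d:profile.iii} follows because $\bPhi(\nu_\mu)/\bPhi(\xi^J_\mu)$ is uniform on $[0,1]$, i.i.d.\ across $\mu$ and independent of $\bgFilt$; by the state-evolution description of the empirical profile of $\bh$ (\cite[Lemma~1(c)]{bayati2011dynamics}, Remark~\ref{r:tap}\ref{r:tap.b}) together with Corollary~\ref{c:kr} and $\bq=\TAPq+o_{1/\DELTA}(1)$, the profile of $\bPhi(h_\mu/\sqrt{\bq})$ is close to uniform on $[0,1]$, so each $\EPS$-bin of $h$-values contains $\asymp\EPS M$ indices, within which the fraction falling in any $\EPS$-sub-bin of $\bPhi(\nu_\mu)/\bPhi(\xi_\mu)$ concentrates at $\EPS$ with fluctuation $O((\EPS M)^{-1/2})\ll\EPS^{10}$ for $N$ large. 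Taking a union over the $O(\EPS^{-2})$ bins gives \eqref{e:sgivenr.first}.

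For \eqref{e:sat.givenr}, apply the reduction of the first paragraph with $K,\tbv,\bze^K,\bxi^K$ replacing $J,\bv,\bze^J,\bxi^J$ (using $\bT\tbv=-\bze^K$, which follows from \eqref{e:def.tilt}): $\sat_K$ is equivalent to $\bE_0\tbv\ge\bxi^K$ coordinatewise, and since $\tbv=\lm\bv+\sqrt{1-\lm^2}\,\bw$ we have $\bE_0\tbv=\lm\bnu+\sqrt{1-\lm^2}\,\bE_0\bw$, so $\sat_K$ is equivalent to $\bE_0\bw\ge(\bxi^K-\lm\bnu)/\sqrt{1-\lm^2}$ coordinatewise. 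The conditioning on $\bar E_\row$ and the event $\sat_{J,\EPS}$ are $\bE_{\row\prfone}$-measurable — functions of the $\bE_0\br^{(s)}$ and of $\bnu=\bE_0\bv$ only — while $\bE_0\bw$ is independent of all of these, hence is still standard Gaussian in $\R^M$ under $\tP(\cdot\giv\bar E_\row,\sat_{J,\EPS},\bnu)$. Therefore $\tP(\sat_K\giv\bar E_\row,\sat_{J,\EPS},\bnu)=\prod_\mu\bPhi((\xi^K_\mu-\lm\nu_\mu)/\sqrt{1-\lm^2})$, and integrating over the conditional law of $\bnu$ given $(\bar E_\row,\sat_{J,\EPS})$ — which is $\tp_{\prfone|\row\bsatone}(\cdot\giv\bar E_\row)$, since $\bnu$ and $\bE_{\prfone}$ differ only by the deterministic shift $\bze^J$ — yields \eqref{e:sat.givenr}.

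The conceptual content is light, so the main obstacle is the bookkeeping in the two reductions: tracking the change-of-basis matrices $\bGa,\bXi$, the unit vectors $\bv,\tbv,\bw$, the tilt $\bT$, and the several thresholds, and checking that the linear inequalities for $\sat_J$ and $\sat_K$ collapse \emph{exactly} to $\bnu\ge\bxi^J$ and $\bE_0\bw\ge(\bxi^K-\lm\bnu)/\sqrt{1-\lm^2}$ — this is precisely where the choices \eqref{e:form.of.T}–\eqref{e:def.tilt} of the tilt are used, and where a sign or normalization slip would break the argument. A secondary, routine point is to supply the concentration estimates for \eqref{d:profile.ii}–\eqref{d:profile.iii} uniformly over the $\bgFilt$-measurable data, which rests on the profile facts from \cite{bayati2011dynamics} and Corollary~\ref{c:kr}.
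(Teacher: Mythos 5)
Your proposal is correct and follows essentially the same route as the paper's own proof: the same rewriting of $\sat_J$ and $\sat_K$ as coordinatewise inequalities via \eqref{e:restrict.j}, \eqref{e:restrict.k}, the cancellations from $\bT\bv=-\bze^J$, $\bT\tbv=-\bze^K$, and the independence of $\bE_0\bv$ and $\bE_0\bw$ from the row data, followed by integration over the conditional law of $\bnu$. The only difference is that you spell out the law-of-large-numbers step for $\tP(\sat_{J,\EPS}\giv\bar E_\row,\sat_J)=1-o_N(1)$, which the paper states in one line.
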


\begin{proof}
Recall from \eqref{e:restrict.j} that we decomposed 
$\bj\equiv J/N^{1/2}$ as
	\[\bj = \sqrt{q_{J,t}}\br^{(t)}
		+\sum_{s=1}^{t-1}
		\gamma_{J,s}\br^{(s)}
	 + \sqrt{1-q_J} \bv\]
where the $\gamma_{J,s}$ are $O(1/N)$ while $q_{J,t}$, $q_J$ lie within $O(1/N)$ of $\bq$. Recall from \eqref{e:basis.fixed} that we fixed $\br^{(t)}= \bmag/\sqrt{N\bq}$, and from \eqref{e:fixed.pt} that we chose $\bn\equiv F_{\bq}(\bh)$ to satisfy
	\[\f{\bE\bmag+\bE\bhJ}{N^{1/2}}
		-\dkappa
	= \bh+(1-\bq)\bn \,.\]
Consequently, the law under $\tP(\cdot\giv\bar{E}_\row)$ of $\bE\bj + \hat{\bE}\bhJ/N^{1/2}-\dkappa$ agrees with the law under $\oP$ of
	\begin{align*}
	\cJ
	&\equiv 
	\f{\sqrt{q_{J,t}}}{\sqrt{\bq}}
	\bigg(
	\bh+(1-\bq)\bn+\dkappa
	-\f{\hat{\bE}\bhJ}{N^{1/2}}
	\bigg)
	+\sum_{s=1}^t \gamma_{J,s} \bx^{(s)}
	+\sqrt{1-q_J} \Big(\bT\bv + \bE\bv\Big)
	+\f{\hat{\bE}\bhJ}{N^{1/2}}
	-\dkappa \\
	&=\f{\sqrt{q_{J,t}}}{\sqrt{\bq}}\bh
	+\sum_{s=1}^t \gamma_{J,s} \bx^{(s)}
	+\bigg(1-\f{\sqrt{q_{J,t}}}{\sqrt{\bq}}\bigg)
	 \bigg(\f{\hat{\bE}\bhJ}{N^{1/2}}-\dkappa\bigg)
	+\sqrt{1-q_J}
	\bigg(\Big(\bze^J + \bT\bv\Big)
	+ \bE\bv\bigg)\,,\end{align*}
for $\bze^J$ as defined by \eqref{e:def.zeta.J}.
The definition of $\bT$ \eqref{e:form.of.T} implies $\bT\bv=-\bze^J$. Rearranging gives
	\[\f{\kappa\oneM-\cJ}{\sqrt{1-q_J}}
	= \bxi^J - \bE\bv\]
for $\bxi^J$ as defined by \eqref{e:def.xi.J}. Recalling from \eqref{e:basic.sat.evt} the definition of $\sat_J\equiv\sat_{J,\DELTA,t}$, we have
	\[\tP\Big(\sat_J\,\Big|\,\bar{E}_\row\Big)
	= \oP\Big(\cJ\ge\kappa\oneM\Big)
	= \oP\Big(\bE\bv\ge\bxi^J\Big)
	=\exp\bigg\{
	\Big(\oneM,\log\bPhi(\bxi^J)\Big)
	\bigg\}\]
which is the second equality in \eqref{e:sgivenr.first}. The first equality in \eqref{e:sgivenr.first} follows by the law of large numbers: under the measure $\tP(\cdot\giv\bar{E}_\row,\sat_J)$, the event $\sat_{J,\EPS}$ holds with probability $1-o_N(1)$ for any $\EPS>0$. Similarly, using
the expansion of $\bk\equiv K/N^{1/2}$ from
 \eqref{e:restrict.k}, the law under $\tP(\cdot\giv\bar{E}_\row,\bE_{\prfone})$ of 
 $\bE\bk+\hat{\bE}\bhJ-\dkappa$ agrees with the law under $\oP$ of
	\begin{align*}
	\cK
	&\equiv 
	\f{\sqrt{q_{K,t}}}{\sqrt{\bq}}\bh
	+ \sum_{s=1}^t \gamma_{J,s} \bx^{(s)}
	+ \bigg(1-\f{\sqrt{q_{K,t}}}{\sqrt{\bq}}\bigg)
	 \bigg(\f{\hat{\bE}\bhJ}{N^{1/2}}-\dkappa\bigg)\\
	&\qquad + \sqrt{1-q_K} \bigg(
	\Big(\bze^K+ \bT\tbv\Big)+
	\lm\bnu + \sqrt{1-\lm^2} \bE\bw
	\bigg)\,,\end{align*}
since $\bT\bv=-\bze^J$ implies $\bnu=(\bE-\bT)\bv$. The definition of $\bT$ also implies $\bT\tilde{\bv}=-\bze^K$, and rearranging gives
	\[\f{\kappa\oneM-\cK}{\sqrt{1-q_K}}
	= \bxi^K
	-\lm\bnu
	-\sqrt{1-\lm^2}\bE\bw\,.\]
Consequently, for any realization of $\bE_{\prfone}$,
	\beq\label{e:bsattwo.calc}
	\tP(\sat_K
	\giv\bar{E}_\row,\bE_{\prfone})
	=\oP\bigg(\bE\bw\ge
	\f{\bxi^K-\lm\bnu}{\sqrt{1-\lm^2}}
	\givbigg \bnu\bigg)
	= \prod_{\mu=1}^M
	\bPhi\bigg(
	\f{\xi^K_\mu-\lm\nu_\mu}{\sqrt{1-\lm^2}}
	\bigg)\,.\eeq
The result \eqref{e:sat.givenr} follows by integrating \eqref{e:bsattwo.calc} over the law of $\bE_{\prfone}$.
\end{proof}

\noindent The main results of this section are summarized by the next two corollaries:

\begin{cor}\label{c:PP}
The limits \eqref{e:conv.to.PP} and \eqref{e:conv.to.PP.pair} hold.
\end{cor}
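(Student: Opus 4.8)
The plan is to assemble \eqref{e:conv.to.PP} and \eqref{e:conv.to.PP.pair} from the exact formulas of Propositions~\ref{p:adm} and \ref{p:basic.sat.giv.row}, identifying every resulting limit through the state-evolution description of the TAP iteration. For \eqref{e:conv.to.PP}: by \eqref{e:sgivenr.first} and \eqref{e:adm.givenr.first},
\[
\f1{\Nall}\log\f{\tP(\sat_{J,\EPS}\giv\bar E_\row)}{\tp_{\admone}(\bar E_{\admone})}
= \f{N\bpsi(1-\bq)}{2\Nall} + \f1{\Nall}\Big(\oneM,\log\bPhi(\bxi^J)\Big) + \f{O(\cst)+o_N(1)}{\Nall}.
\]
Here $O(\cst)/\Nall\to0$ in probability since $\cst$ is stochastically bounded, $N/\Nall = 1/(1+\hat{\alpha}(\DELTA))\to1$, and $\bq=q_t$, $\bpsi$ converge in probability to $\sq$, $\spsi$ in the iterated limit: $q_t,\psi_t$ converge to the finite-$t$ state-evolution values by \cite{bayati2011dynamics,bolthausencmp}, $\bpsi\approx\psi_t$ by Corollary~\ref{c:kr}, and the state-evolution fixed points $\TAPq,\TAPpsi$ converge to $\sq,\spsi$ as $\DELTA\downarrow0$ by Remark~\ref{r:fixed.pt.adjusted}. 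Finally $\bxi^J=\bxi+O(\cst)/N$ with $\bxi=(\kappa\oneM-\bh)/\sqrt{1-\bq}$ and $\bh\approx\bh^{(t)}$ (Corollary~\ref{c:kr}), whose empirical profile converges to $\mathcal{N}(0,\sq)$; applying \cite[Lemma~1(c)]{bayati2011dynamics} to $h\mapsto\log\bPhi((\kall-h)/\sqrt{1-\sq})$, which is pseudo-Lipschitz of order $2$ because its derivative $\cE((\kall-h)/\sqrt{1-\sq})/\sqrt{1-\sq}$ grows at most linearly, yields $\frac1M\sum_\mu\log\bPhi(\xi^J_\mu)\to\int\log\bPhi(\xi_{\sq,z})\varphi(z)\,dz$, hence $\frac1\Nall(\oneM,\log\bPhi(\bxi^J))\to\aall\int\log\bPhi(\xi_{\sq,z})\varphi(z)\,dz$. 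Summing the three limits gives $\PP_\star$ by \eqref{e:PP}, which is \eqref{e:conv.to.PP}.

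For \eqref{e:conv.to.PP.pair}, write $\tP(\sat_K\giv\bar E_\row,\sat_{J,\EPS}) = \tP(\sat_{J,\EPS},\sat_K\giv\bar E_\row)/\tP(\sat_{J,\EPS}\giv\bar E_\row)$, so that the ratio in question equals
\[
\f{\tP(\sat_{J,\EPS},\sat_K\giv\bar E_\row)\,\tp_{\admone}(\bar E_{\admone})^2}
{\tP(\sat_{J,\EPS}\giv\bar E_\row)^2\,\tp_{\adm}(\bar E_{\adm})}.
\]
The new ingredient is the $\sat_K$-factor. Starting from \eqref{e:sat.givenr}, the point is that on the event $\sat_{J,\EPS}$ the conditions \eqref{d:profile.ii}--\eqref{d:profile.iii} of Definition~\ref{d:profile} pin the empirical joint law of $(\bh,\bnu)$ to the density $\frac1{\sqrt\bq}\varphi(h/\sqrt\bq)\,\varphi_{\xi_{\bq,z}}(\nu)$ up to an $\EPS$-dependent discretization error; consequently, since $\bxi^K=\bxi+O(\cst)/N$ and $\bxi$ agrees to negligible order with $(\xi_{\bq,h_\mu/\sqrt\bq})_\mu$, the exponent $(\oneM,\log\bPhi((\bxi^K-\lm\bnu)/\sqrt{1-\lm^2}))$ equals $\Nall(\II(\lm)+o_N(1)+o_\EPS(1))$ \emph{uniformly} over $\bnu$ in the support of $\tp_{\prfone|\row\bsatone}(\cdot\giv\bar E_\row)$ --- this is again a pseudo-Lipschitz-order-$2$ estimate, now for $(h,\nu)\mapsto\log\bPhi((\xi_{\sq,h/\sqrt\sq}-\lm\nu)/\sqrt{1-\lm^2})$, with the pinned empirical distribution playing the role of a law of large numbers and with condition~\eqref{d:profile.ii} controlling the right tail. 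Since $\tp_{\prfone|\row\bsatone}(\cdot\giv\bar E_\row)$ is a probability density, \eqref{e:sat.givenr} then gives $\tP(\sat_K\giv\bar E_\row,\sat_{J,\EPS}) = \exp\{\Nall(\II(\lm)+o_N(1)+o_\EPS(1))\}$. Feeding this together with \eqref{e:sgivenr.first}, \eqref{e:adm.givenr.first}, \eqref{e:adm.givenr} and the state-evolution limits of the first paragraph into the displayed expression --- the error terms $O(\cst)/(\Nall(1-\lm^2))$ and $O(\cst)/(\Nall^2(1-\lm^2))$ of \eqref{e:adm.givenr} vanishing for each fixed $\lm$ bounded away from $1$, and the residual neighborhood of $\lm=1$ (and, if needed, of $\lmin$) being covered by the finer estimates of Corollary~\ref{c:PP.small.lm} --- a short computation using $(c_\lm)^2=(1-\lm)/(1+\lm)$ together with \eqref{e:PP} and \eqref{e:PP.pair} collapses everything to the bound in \eqref{e:conv.to.PP.pair}.

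I expect the main obstacle to be exactly the uniform control of $(\oneM,\log\bPhi((\bxi^K-\lm\bnu)/\sqrt{1-\lm^2}))$ over the \emph{entire} support of the conditional law $\tp_{\prfone|\row\bsatone}(\cdot\giv\bar E_\row)$: one has to convert the \emph{a priori} only empirical-distribution constraints \eqref{d:profile.ii}--\eqref{d:profile.iii} of $\sat_{J,\EPS}$ into a deterministic $\Nall$-scale estimate for that exponent, robust to the fact that $\log\bPhi$ is unbounded (decaying like $-x^2/2$) in the right tail --- which is precisely why the $20$th-moment truncation \eqref{d:profile.ii} and the tail control of $\bh$ afforded by Corollary~\ref{c:kr} and \cite{bayati2011dynamics} enter. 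The remaining difficulty is bookkeeping: keeping the several mutually nearly-cancelling error terms ($O(\cst)$, $o_N(1)$, $o_\EPS(1)$, $\hat{\alpha}(\DELTA)$) straight through the iterated $\adjustlimits\limsup_{\DELTA\downarrow0}\limsup_{t\uparrow\infty}\limsup_{\EPS\downarrow0}\limsup_{\Nall\uparrow\infty}$.
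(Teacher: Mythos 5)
Your route is the paper's for \eqref{e:conv.to.PP} and for \eqref{e:conv.to.PP.pair} away from $\lm=1$: combine \eqref{e:adm.givenr.first} with \eqref{e:sgivenr.first} (resp.\ \eqref{e:adm.givenr} with \eqref{e:sat.givenr}) and identify the surviving empirical sums through state evolution. What you spell out by hand --- the pseudo-Lipschitz estimate for $\log\bPhi$, the reduction of $\bxi^J,\bxi^K$ to $\bxi$ and then to $\bxi^{(t)}$ via Corollary~\ref{c:kr}, and the use of the profile restrictions (Definition~\ref{d:profile} parts \eqref{d:profile.ii} and \eqref{d:profile.iii}) to make the exponent $(\oneM,\log\bPhi((\bxi^K-\lm\bnu)/\sqrt{1-\lm^2}))$ deterministic at scale $\Nall$ uniformly over the support of $\bnu$ --- is precisely the content of Lemma~\ref{lem-CLT-approximation}, which the paper's proof simply invokes. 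You have correctly located the crux (the uniformity over $\sat_{J,\EPS}$ and the role of the $20$th-moment truncation in controlling the unbounded tail of $\log\bPhi$).

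The one step that would fail is your treatment of $\lm$ near $1$. Corollary~\ref{c:PP.small.lm} is a small-$|\lm|$ statement: \eqref{e:PP.clt.regime} and the accompanying second-derivative bound are asserted only for $|\lm|\le\lmsmall$, a small constant near zero, so that corollary gives nothing in a neighborhood of $\lm=1$ --- which is exactly where the error terms $O(\cst)/\sqrt{1-\lm^2}$ and $O(\cst)/(N(1-\lm^2))$ of \eqref{e:adm.givenr} blow up and the main estimate breaks down. The paper instead disposes of $\lmbig\le\lm\le1$ with the crude non-asymptotic bound \eqref{e:adm.givenr.trivial.ubd}: since $\tP(\sat_K\giv\bar{E}_\row,\sat_{J,\EPS})\le1$, one gets
\[
\f{\tP(\sat_K\giv\bar{E}_\row,\sat_{J,\EPS})}
{\tp_{\adm}(\bar{E}_{\adm})/\tp_{\admone}(\bar{E}_{\admone})}
\le \f{\tp_{\admone}(\bar{E}_{\admone})}{\tp_{\adm}(\bar{E}_{\adm})}
\le \f1{(2\pi)^{t/2}}\,,
\]
whose normalized $\log$ has nonpositive $\limsup$; combining with \eqref{e:conv.to.PP} and letting $\lmbig\uparrow1$ closes that regime. (No separate argument is needed near $\lmin$, since $\lmin>-1$ keeps $1-\lm^2$ bounded below there.) With that substitution your proof matches the paper's; as proposed, the endgame near $\lm=1$ rests on a tool that does not apply.
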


\begin{cor}\label{c:PP.small.lm} It holds uniformly over $J\in\bbH$ that
	\begin{equation*}\label{e:PP.cst}
	\tag{\footnotesize$\color{Pu}\usf{P}_\usf{1}
		|\usf{R}^\asymp$}
	\log \f{\tP(\sat_{J,\EPS}\giv\bar{E}_\row)}
	{\tp_{\admone}(\bar{E}_{\admone})}
	=
	\Big(\oneM,\log\bPhi(\bxi)\Big)
	+ \f{N\bpsi(1-\bq)}{2} + O(\cst)\,.
	\end{equation*}
There exists $\rho\equiv\rho_{M,N_\textup{all},\DELTA,t,\EPS}$ (depending on $J$ and $K$) 
and $\lmsmall>0$ such that for small $\lm$,
	\begin{equation*}
	\label{e:PP.clt.regime}
	\tag{\footnotesize$\color{Pu}\usf{P}
		|\usf{R}^\usf{CLT}$}
	\log\f{\tP(\sat_K\giv
		\bar{E}_\row,\sat_{J,\EPS})
	\,\tp_{\admone}(\bar{E}_{\admone})^2}
	{\tP(\sat_{J,\EPS}\giv\bar{E}_\row)
	\,\tp_{\adm}(\bar{E}_{\adm})}
	= \f{N \rho \lm^2}{2}
	+O\bigg( \cst\Big( 
		1+N|\lm|^3\Big)\bigg)\,.
	\end{equation*}
In addition, for a small positive $\lmsmall$, it holds uniformly over all $|\lm|\le\lmsmall$ and all $(J,K)\in\bbH(\lm)$ that
	\beq\label{e:conv.to.PP.second.deriv}
	\limsup_{\DELTA\downarrow0}
	\limsup_{t\uparrow\infty}
	\limsup_{\EPS\downarrow0}
	\limsup_{\Nall\uparrow\infty}
	\rho_{M,N_\textup{all},\DELTA,t,\EPS}
	\le \PP''(0)
	= 2\spsi(1-\sq)
	+\f{d^2\II(\lm)}{d\lm^2}\,\bigg|_{\lm=0} \,.\eeq
For a more explicit calculation of
$\PP''(0)$ we refer to 
\eqref{e:PP.second.derivative.ubd} below.
\end{cor}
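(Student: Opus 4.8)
The plan is to read off all three displays from the exact formulas of Propositions~\ref{p:adm} and~\ref{p:basic.sat.giv.row}, Taylor-expanded in $\lm$ to the order each claim requires. For \eqref{e:PP.cst} I would multiply the second identity in \eqref{e:sgivenr.first} by \eqref{e:adm.givenr.first}, absorb the $\log(1-o_N(1))$ into $O(\cst)$, and then replace $\bxi^J$ by $\bxi$. Since $J\in\bbH$ forces $|q_J-\bq|,|q_{J,t}-\bq|=O(1/N)$ and $|\gamma_{J,s}|\le C/N$, a comparison of \eqref{e:def.xi.J} with \eqref{e:def.zeta.J} — controlling $\bh$ by Corollary~\ref{c:kr}, the term $\hat{\bE}\bhJ$ by \eqref{e:union.bound.hat.J}, and $\dkappa$ by its defining bound — gives $\ltwo{\bxi^J-\bxi}=O(\cst/N^{1/2})$. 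The difference $(\oneM,\log\bPhi(\bxi^J))-(\oneM,\log\bPhi(\bxi))$ is then bounded via the mean value theorem, the Cauchy--Schwarz inequality, and the elementary estimates $\cE(x)\le|x|+O(1)$ and $\cE'\in(0,1)$ from Lemma~\ref{l:EE}, together with $\sum_\mu\xi_\mu^2=O(N\cst)$; it is $O(\cst)$, and since every constant above is absolute, the estimate is uniform over $J\in\bbH$.

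For \eqref{e:PP.clt.regime} I combine \eqref{e:adm.givenr.first}, \eqref{e:adm.givenr} with \eqref{e:sgivenr.first}, \eqref{e:sat.givenr} and expand around $\lm=0$. Writing $W(\lm)\equiv(\oneM,\log\bPhi((\bxi^K-\lm\bnu)/\sqrt{1-\lm^2}))$, the numerator of the ratio in \eqref{e:PP.clt.regime} contains $\tP(\sat_K\giv\bar{E}_\row,\sat_{J,\EPS})=\tE(\exp W(\lm)\giv\bar{E}_\row,\sat_{J,\EPS})$; by the computation in Proposition~\ref{p:basic.sat.giv.row}, under $\tP(\cdot\giv\bar{E}_\row,\sat_{J,\EPS})$ the coordinates $\nu_\mu$ are, up to the mild $\sat_{J,\EPS}$ refinement, independent, each a standard Gaussian conditioned to exceed $\xi^J_\mu$, so $\tE\nu_\mu=\cE(\xi^J_\mu)$ and $\tE\nu_\mu^2=1+\xi^J_\mu\cE(\xi^J_\mu)$. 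A second-order Taylor expansion of $\log\bPhi$ in $\lm$ — whose inner argument $g_\mu(\lm)$ satisfies $g_\mu(0)=\xi^K_\mu$, $g_\mu'(0)=-\nu_\mu$, $g_\mu''(0)=\xi^K_\mu$ — together with the identity $\cE'=\cE^2-x\cE$, produces the full $\lm$-expansion. The linear terms cancel: expanding $(1+\lm)^{-1}$ in the admissibility ratio $\tp_{\admone}(\bar{E}_{\admone})^2/\tp_\adm(\bar{E}_\adm)$ contributes $-N\bpsi(1-\bq)\lm+N\bpsi(1-\bq)\lm^2+O(N|\lm|^3)+O(\cst)$, while the coefficient of $\lm$ in $\log\tE(\exp W(\lm))$ is $\sum_\mu\cE(\xi^K_\mu)\cE(\xi^J_\mu)=\sum_\mu\cE(\xi_\mu)^2+O(\cst)$, and here $\sum_\mu\cE(\xi_\mu)^2=N\bpsi(1-\bq)$ \emph{exactly}, because $\bn=F_{\bq}(\bh)=\cE(\bxi)/\sqrt{1-\bq}$ and $\bpsi=\ltwo{\bn}^2/N$. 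Collecting the $\lm^2$ terms then defines $\rho\equiv\rho_{M,\Nall,\DELTA,t,\EPS}$, and the third-order Taylor remainder is bounded by $O(\cst(1+N|\lm|^3))$ once the twentieth-moment control in \eqref{d:profile.ii} on $\bnu$ is used; all of this is valid for $|\lm|\le\lmsmall$, where $\lmsmall$ may be taken to be any fixed number in $(0,1)$.

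For \eqref{e:conv.to.PP.second.deriv} it remains to identify the limit of $\rho$, which after the cancellation equals $2\bpsi(1-\bq)$ plus $\f2N$ times the coefficient of $\lm^2$ in $\log\tE(\exp W(\lm)\giv\bar{E}_\row,\sat_{J,\EPS})$. The crucial point is that the profile truncation $\sat_{J,\EPS}$ (conditions \eqref{d:profile.ii}--\eqref{d:profile.iii}), together with the fact that $\bh$ is determined by $\Filt$, pins the joint empirical distribution of $(\bh,\bnu)$ so tightly that $W(\lm)$ is confined to an interval of width $N\cdot o_\EPS(1)$ on that event, so Jensen's inequality gives $0\le\log\tE(\exp W(\lm)\giv\bar{E}_\row,\sat_{J,\EPS})-\tE(W(\lm)\giv\bar{E}_\row,\sat_{J,\EPS})\le N\cdot o_\EPS(1)$; hence in the iterated limit ($\Nall\uparrow\infty$, then $\EPS\downarrow0$) one may replace $\log\tE(\exp W(\lm))$ by $\tE(W(\lm)\giv\ldots)$, whose $\lm^2$-coefficient is $\f12\sum_\mu[-\cE'(\xi^K_\mu)(1+\xi^J_\mu\cE(\xi^J_\mu))-\cE(\xi^K_\mu)\xi^K_\mu]$. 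Using $\bxi^J,\bxi^K\to\bxi$ and the state-evolution fact (Remark~\ref{r:tap}\ref{r:tap.b}) that the empirical profile of $\bxi$ converges to the law of $\xi_{\sq,z}$, this converges after division by $N$ to $\f12\aall\int[-\cE'(\xi_{\sq,z})(1+\xi_{\sq,z}\cE(\xi_{\sq,z}))-\cE(\xi_{\sq,z})\xi_{\sq,z}]\,\varphi(z)\,dz$, which one recognizes — by carrying out the $\nu$-integral against $\varphi_{\xi_{\sq,z}}$ in \eqref{e:II} and differentiating twice in $\lm$ at $\lm=0$ — as $\f12\,d^2\II(\lm)/d\lm^2|_{\lm=0}$. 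Together with $\bpsi\to\spsi$, $\bq\to\sq$, and $\alpha\to\aall$ (as $\DELTA\downarrow0$), this yields $\limsup\rho\le2\spsi(1-\sq)+d^2\II(\lm)/d\lm^2|_{\lm=0}=\PP''(0)$, uniformly over $(J,K)\in\bbH(\lm)$ since all comparison constants are absolute.

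The main obstacle is precisely the replacement of $\log\tE(\exp W(\lm))$ by $\tE(W(\lm)\giv\bar{E}_\row,\sat_{J,\EPS})$, that is, the bound on the Jensen gap: were the profile truncation absent, one would instead obtain the strictly larger value $\PP''(0)+\aall\int\cE(\xi_{\sq,z})^2(1-\cE'(\xi_{\sq,z}))\,\varphi(z)\,dz$, so the content of condition \eqref{d:profile.iii} is exactly to force $W(\lm)$ to concentrate. Making this quantitative requires showing that $W(\lm)$ is, up to a negligible error, a Lipschitz functional of the empirical law of $(\bh,\bnu)$ — which entails taming the unboundedness of $x\mapsto\log\bPhi(x)$ as $x\to+\infty$ using the twentieth-moment bound \eqref{d:profile.ii} and the near-orthogonality constraints in Definition~\ref{d:cube} — and then quantifying how tightly \eqref{d:profile.iii} pins that law. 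The remaining work, namely uniformity over $(J,K)\in\bbH(\lm)$ (all comparison constants being absolute) and the propagation of $\cst$-errors through the expansion, is routine.
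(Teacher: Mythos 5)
Your handling of \eqref{e:PP.cst} and of the linear order in \eqref{e:PP.clt.regime} agrees with the paper's proof: the paper also combines \eqref{e:sgivenr.first} with \eqref{e:adm.givenr.first} and swaps $\bxi^J$ for $\bxi$ (it does so via \eqref{e:innerproduct.taylor.appx} rather than by hand, but your mean-value/Cauchy--Schwarz version is equivalent), and the cancellation of the linear term against $-N\bpsi(1-\bq)\lm/(1+\lm)$ using $\ltwo{\cE(\bxi)}^2=N\bpsi(1-\bq)$ is exactly \eqref{e:adm.expand} together with \eqref{e:bsattwo.calc.first.deriv.eval}. Your formulas for $g_\mu'(0)$, $g_\mu''(0)$, the truncated-gaussian moments of $\bnu$, and the identification of the limiting $\lm^2$-coefficient with $\tfrac12\II''(0)$ all match \eqref{e:bsattwo.calc.first.deriv.eval}--\eqref{e:bsattwo.calc.second.deriv.eval} and the use of Lemma~\ref{lem-CLT-approximation}.

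The gap is in the step you yourself flag as the crux: replacing $\log\tE(e^{W(\lm)})$ by $\tE(W(\lm))$. First, a uniform bound $0\le \log\tE(e^{W(\lm)})-\tE(W(\lm))\le N\,o_\EPS(1)$ says nothing about the difference of the $\lm^2$-coefficients of the two functions at $\lm=0$, which is what $\rho$ is; nor does an additive error $N\,o_\EPS(1)$, constant in $\lm$, fit inside $O(\cst(1+N|\lm|^3))$ (take $|\lm|\asymp N^{-1/3}$). The quantity that actually has to be controlled is the cumulant correction $\Var_p(W'(0))=\Var_p\big((\cE(\bxi^K),\bnu)\big)$ under $p=\tp_{\prfone|\row\bsatone}$, and you need it to be $o(N)$ in the iterated limit. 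Second, the mechanism you propose cannot deliver this: the paper itself uses that $\tP(\sat_{J,\EPS}\giv\bar{E}_\row,\sat_J)=1-o_N(1)$ (indeed $1-e^{-N/\cst}$) to swap $\tp_{\prfone|\row\bsatone}$ for the product law $\tp_{\prfone|\row\sat_J}$; conditioning on an event of conditional probability $1-o(1)$ perturbs the variance of a polynomially bounded statistic only negligibly, so $\Var_p\big((\cE(\bxi^K),\bnu)\big)$ remains $(1+o(1))\sum_\mu\cE(\xi^K_\mu)^2(1-\cE'(\xi^J_\mu))=\Theta(N)$ for fixed $\EPS$, and your interval-confinement bound gives only $O((N\EPS)^2)$, which is worse than useless. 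The paper's own route is different: it differentiates the exact integral representation \eqref{e:sat.givenr} twice and divides, recording the second log-derivative as \eqref{e:bsattwo.calc.second.deriv.integral}, i.e.\ as $\E_p[W''(0)]$; since the exact second derivative of $\log\int p\,e^{W}$ is $\E_p[W''(0)]+\Var_p(W'(0))$, the same variance term is the delicate point there as well, and you should confront it directly (either prove it is $o(N)$, or show it is compensated by another factor in \eqref{e:changeofmsr.ratio}) rather than via the Jensen-gap heuristic, which as written does not close the argument for \eqref{e:PP.clt.regime} or \eqref{e:conv.to.PP.second.deriv}.
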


\noindent We prove the corollaries via a general lemma that follows. Recall from \eqref{e:def.zeta.J} the definitions of $\bxi$, and let
	\beq\label{e:def.bxi}
	\bze
	\equiv \sqrt{1-\bq}\bn
	= \sqrt{1-\bq} F_{\bq}(\bh)
	= \cE\bigg(\f{\kappa\oneM-\bh}{\sqrt{1-\bq}}
	\bigg)= \cE(\bxi)\,.\eeq
For $\bn^{(t)}=F(\bh^{(t)})$ as defined by the TAP iteration (Section~\ref{ss:tap}), we let $\bxi^{(t)},\bze^{(t)}$ be defined by the equations
	\[\bze^{(t)}
	\equiv \sqrt{1-\bq} F_{\bq}(\bh^{(t)})
	= \cE\bigg(
		\f{\kappa\oneM-\bh^{(t)}}{\sqrt{1-\bq}}
	\bigg)
	= \cE(\bxi^{(t)})\,.\]
Recall from \eqref{e:def.xi} the definition of $\xi_{q,z}$, and let
	\[\zeta_{q,z}
	\equiv \sqrt{1-q} F_q(q^{1/2}z)
	= \cE\bigg( \f{\kappa-q^{1/2}z}{\sqrt{1-q}} \bigg)
	= \cE(\xi_{q,z})\,.\]
Recall also from
\eqref{e:def.zeta.J} that
	\[\bze^J\equiv
	\f{\sqrt{q_{J,t}}(1-\bq)\bn}
	{\sqrt{\bq}\sqrt{1-q_J}}\,,\quad
	\bnu\equiv \bE\bv+\bze^J\,,\]
and recall from \eqref{e:def.xi.J} that
	\[\bxi^J
	\equiv
	\f1{\sqrt{1-q_J}}\bigg(
	\kappa\oneM
	- \f{\sqrt{q_{J,t}}}{\sqrt{\bq}} \bh
	- \sum_{s=1}^{t-1} \gamma_{J,s} \bx^{(s)}
	\bigg)\,.\]
With the above notations we define the vectors
	{\setlength{\jot}{0pt}
	\begin{alignat*}{2}
	\bss^{J,K}
		&\equiv(\bxi^J,\bxi^K,\bze^J,\bze^K,\bh)\,,
		\quad
	& \XX^{J,K} &= (\bss^{J,K},
		\bE\bv+\bze^J)\,,\\
	\bss^\textup{apx}
	&\equiv
	(\bxi,\bxi, \bze, \bze, \bh)\,,\quad
	& \XX^\textup{apx}
	&\equiv(\bss^\textup{apx}, \bE\bv+\bze)\,,\\
	\bss^{(t)}
	&\equiv
	(\bxi^{(t)},\bxi^{(t)},
		\bze^{(t)},\bze^{(t)},
		\bh^{(t)})\,,\quad
	&\XX^{(t)}
		&\equiv(\bss^{(t)}, \bE\bv+\bze^{(t)})\,,\\
	s(z)
	&\equiv
	(\xi_{q,z},\xi_{q,z},
	\zeta_{q,z},\zeta_{q,z},z)\,,\quad
	& X(z,\nu)
	&\equiv(s(z),\nu)\,.
	\end{alignat*}}%
We regard $\XX^{J,K}$, $\XX^\textup{apx}$, and $\XX^{(t)}$ as elements of $(\R^6)^M$. For $J,K\in\bbH$ (see Definition~\ref{d:cube})
the vectors $\XX^{J,K}$ and $\XX^\textup{apx}$ are very close to one another (at scale $\asymp 1/N$); they are both moderately close to $\XX^{(t)}$ (at scale $\asymp 1$); and in the limit their empirical profiles are approximated by the distribution of $X(z,\nu)\in\R^6$ for $z$ standard gaussian and $\nu$ sampled from $\varphi_{\xi_{q,z}}(\nu)$. This is formalized by the following:

\begin{lem}\label{lem-CLT-approximation} Let $f_1,f_2: \mathbb R^6 \mapsto \mathbb R$ be twice differentiable functions, satisfying $|f_j(x)| + \ltwo{\nabla f_j(x)} \le c(1+\ltwo{x}^5)$ for some finite constant $c$. Then, for $J,K\in\bbH$ we have
	\beq\label{e:innerproduct.taylor.appx}
	\bigg|\Big(f_1(\XX^{J,K}),
	f_2(\XX^{J,K})\Big)
	-\Big(f_1(\XX^\textup{apx}),
	f_2(\XX^\textup{apx})\Big)
	\bigg| \le \cst\eeq
uniformly over the event $\sat_{J,\EPS}$. In the limit $\Nall\uparrow\infty$ followed by $\EPS\downarrow0$ followed by $t\uparrow\infty$ followed by $\DELTA\downarrow0$, we have
	\beq\label{e:innerproduct.state.limit}
	\f{(f_1(\XX^{J,K}),
	f_2(\XX^{J,K}))}{M}
	\longrightarrow
	\int \int f_1(X(z,\nu))
	f_2(X(z,\nu))
	\,\varphi_{\xi_{q,z}}(\nu)\,d\nu
	\,\varphi(z)\,dz
	\,\bigg|_{q=\sq}\,,\eeq
again uniformly over the event $\sat_{J,\EPS}$.

\begin{proof} 
We shall express $\XX\in(\R^6)^M$ as
$\XX=(\XX_\mu)_{\mu\le M}$ where
$\XX_\mu\equiv(\XX_{\mu,i})_{i\le 6}\in\R^6$.
We define the interpolating vector
$\XX(s)\equiv\XX^{J,K}+s(\XX^\textup{apx}-\XX^{J,K})$ for $0\le s\le1$. Then
	\begin{align}\nonumber
	&\bigg|\Big(f_1(\XX^{J,K}),
	f_2(\XX^{J,K})
		-f_2(\XX^\textup{apx}
	)\Big)\bigg|
	=\bigg| \sum_{\mu=1}^M
	f_1(\XX^{J,K}_\mu)
	\int_0^1
	\f{d}{ds}
	f_2(\XX_\mu(s))
	 \,ds\bigg|\\\nonumber
	&\qquad\le\bigg|
	\sum_{\mu=1}^M f_1(\XX^{J,K}_\mu)
	\bigg\{\max_{0\le s\le 1}
	\ltwo{\nabla f_2(\XX_\mu(s))}
	\bigg\}
	\ltwo{\XX^\textup{apx}_\mu
		-\XX^{J,K}_\mu}
	\bigg|\\
	&\qquad
	\le O(1)
	\bigg(
	\sum_{\mu=1}^M
	\bigg\{ 1
	+ \ltwo{\XX^{J,K}_\mu}^{20}
	+ \ltwo{\XX^\textup{apx}_\mu}^{20}
	\bigg\}
	\bigg)^{1/2}
	\ltwo{\XX^{J,K}-\XX^\textup{apx}}\,,
	\label{e:interpolation.prelim.bound}
	\end{align}
using the Cauchy--Schwarz inequality and the assumed bounds on $|f_1|$ and $\ltwo{\nabla f_2}$. Now,
for $\xi\ge0$, we can use the bound $\cE(\xi)\le\xi$
(see Lemma~\ref{l:EE}) to obtain
	\[\int \nu^{20}\,\varphi_\xi(\nu)\,d\nu
	=
	\int_{u\ge0}
	\f{(\xi+u)^{20}\cE(\xi)}
		{\exp(\xi u+u^2/2)}\,du
	\le O(1)
	\int_{u\ge0}
	\f{(\xi^{20}+u^{20})
	\xi}{\exp(u^2/2)}\,du
	\le O(1 + \xi^{21})\,.\]
For $\xi\le0$ we instead use the bound
$\varphi_\xi(\nu) \le 2\varphi(\nu)$ to obtain
	\[\int \nu^{20}\,\varphi_\xi(\nu)\,d\nu
	\le 2 \int \nu^{20}\varphi(\nu)\,d\nu
	\le O(1)\,.\]
Consequently it holds uniformly over all $\xi\in\R$ that
	\beq\label{e:nu.mmt.poly.bound}
	\int \nu^{20}\,\varphi_\xi(\nu)\,d\nu
	\le O(1 + \xi^{21})\,.\eeq
We then note that
	\begin{align}\nonumber
	&\sum_{\mu=1}^M\bigg\{
	1
	+ \ltwo{\XX^{J,K}_\mu}^{20}
	+ \ltwo{\XX^\textup{apx}_\mu}^{20}\bigg\}
	\le O(1)
	\sum_{\mu=1}^M\bigg\{1+
	\ltwo{\bss^{J,K}_\mu}^{20}
	+\ltwo{\bss^\textup{apx}_\mu}^{20}
	+(\nu_\mu)^{20}
	\bigg\}\\
	&\qquad
	\le O(1)
	\sum_{\mu=1}^M\bigg\{1+
	\ltwo{\bss^{J,K}_\mu}^{20}
	+\ltwo{\bss^\textup{apx}_\mu}^{20}
	+\f2N
	\int \nu^{20}\,\varphi_{\xi_\mu}(\nu)\,d\nu
	\bigg\}
	\le \cst \cdot N
	\label{e:use.nu.mmt.bound}
	\end{align}
where the first bound is by Minkowski's inequality,
the second bound is by Definition~\ref{d:profile}~part~\eqref{d:profile.ii}, and the last bound is by \cite[Lemma 1(c)]{bayati2011dynamics} (whose conditions are satisfied due to \eqref{e:nu.mmt.poly.bound}). 
On the other hand, by the bounds on $|q_J-\bq|$, $\gamma_{J,s}$ assumed for $J\in\bbH$
(Definition~\ref{d:cube}), we have
	\[\ltwo{\XX^{J,K}-\XX^\textup{apx}}
	\le \f{O(1)}{N}
	\bigg(\ltwo{\bh}
	+\ltwo{F_{\bq}(\bh)}
	+\sum_{0\le s\le t-1} \ltwo{\bx^{(s)}}\bigg)
	\le \f{\cst}{N^{1/2}}
	\,,\]
where the last inequality is obtained as follows:
we have
$\ltwo{\bh^{(s)}}+\ltwo{F_{\bq}(\bh^{(s)})}\le\cst \cdot N^{1/2}$ as a straightforward consequence of \cite[Lemma 1(c)]{bayati2011dynamics}. This implies $\ltwo{\bx^{(s)}}\le\cst\cdot N^{1/2}$ via \eqref{e:explicit.x} and our earlier observation that the matrix $\bGa$ is nondegenerate (all eigenvalues bounded away from zero in the limit $N\uparrow\infty$). Lastly, combining with Corollary~\ref{c:kr} gives also $\ltwo{\bh}+ \ltwo{F_{\bq}(\bh)}\le\cst \cdot N^{1/2}$. Combining the last two bounds, we see that \eqref{e:interpolation.prelim.bound}
is upper bounded by $\cst$. A very similar argument gives also
	\[\bigg|\Big(
	f_1(\XX^{J,K})
	-f_1(\XX^\textup{apx})
		,
		f_2(\XX^\textup{apx}
	)\Big)\bigg|\le\cst\,,\]
so we obtain \eqref{e:innerproduct.taylor.appx}. The derivation of \eqref{e:interpolation.prelim.bound} also gives
	\[\bigg|\Big(f_1(\XX^\textup{apx}),
	f_2(\XX^\textup{apx})
		-f_2(\XX^{(t)}
	)\Big)\bigg|
	\le O(1)
	\bigg(
	\sum_{\mu=1}^M
	\bigg\{ 1
	+ \ltwo{\bX^\textup{apx}_\mu}^{4p}
	+ \ltwo{\bX^{(t)}_\mu}^{4p}
	\bigg\}
	\bigg)^{1/2}
	\ltwo{\XX^\textup{apx}-\XX^{(t)}}
	\,.\]
It follows from Corollary~\ref{c:kr}
that $\ltwo{\XX^\textup{apx}-\XX^{(t)}}\le N^{1/2}o_{1/\DELTA,t}(1)$ where we use $o_{1/\DELTA,t}(1)$ to indicate an error term tending to zero in the limit $t\uparrow\infty$ followed by $\DELTA\downarrow0$. Therefore
	\beq\label{e:innerproduct.state.apx.t}
	\f{
	(f_1(\XX^\textup{apx}),f_2(\XX^\textup{apx}))
	}{M} 
	=
	\f{(f_1(\XX^{(t)}),f_2(\XX^{(t)}))}{M}
	+ o_{1/\DELTA,t}(1)\,.\eeq
For any constant $R$ let us define
	\begin{align*}
	(f_1(\XX^{(t)}),f_2(\XX^{(t)}))_{\le R}
	&\equiv
	\sum_{\mu=1}^M
	\mathbf{1}\bigg\{\begin{array}{c}
		\ltwo{\bss^{(t)}_\mu}\le R\\
		\textup{ and }|\nu_\mu|\le R\end{array}\bigg\}
	f_1(\XX^{(t)}_\mu)
	f_2(\XX^{(t)}_\mu)\,,\\
	(f_1(\XX^{(t)}),f_2(\XX^{(t)}))_{\ge R}
		&\equiv\sum_{\mu=1}^M
		\mathbf{1}\bigg\{\begin{array}{c}
		\ltwo{\bss^{(t)}_\mu}\ge R\\
		\textup{ or }|\nu_\mu|\ge R\end{array}\bigg\}
	\Big|f_1(\XX^{(t)}_\mu)
	f_2(\XX^{(t)}_\mu)\Big|\end{align*}
(note the absolute values in the last expression), as well as
	\[I_{q,\le R}
	\equiv
	\int\int
	\mathbf{1}\bigg\{\begin{array}{c}
		\ltwo{s(z)}\le R\\
		\textup{and }|\nu|\le R\end{array}\bigg\}
	f_1(X(z,\nu))f_2(X(z,\nu))
	\,\varphi(\nu)\,d\nu\,\varphi(z)\,dz\,.\]
Write $I_q\equiv I_{q,\le\infty}$.
It follows from
\cite[Lemma 1(c)]{bayati2011dynamics} 
together with
 Definition~\ref{d:profile}~part~\eqref{d:profile.iii} that for any finite $R$,
	\[\limsup_{\EPS\downarrow0}
	\limsup_{\Nall\uparrow\infty}
	\bigg\{
	\f{(f_1(\XX^{(t)}),f_2(\XX^{(t)}))_{\le R}}{M}
	-I_{\bq,\le R}\bigg\}=0\,.\]
On the other hand, arguing similarly as in \eqref{e:use.nu.mmt.bound} we have
	\[(f_1(\XX^{(t)}),f_2(\XX^{(t)}))_{\ge R}
	\le O(1)\sum_{\mu=1}^M
	\mathbf{1}\bigg\{\begin{array}{c}
		\ltwo{\bss^{(t)}_\mu}\ge R\\
		\textup{ or }|\nu_\mu|\ge R\end{array}\bigg\}
	\bigg\{
	1 + \ltwo{\bss^{(t)}}^{10}
		+|\nu_\mu|^{10}
	\bigg\}\,,\]
so by \cite[Lemma 1(c)]{bayati2011dynamics}
together with Definition~\ref{d:profile}~part~\eqref{d:profile.ii} we conclude
	\[\limsup_{R\uparrow\infty}
	\limsup_{\Nall\uparrow\infty}
	\f{(f_1(\XX^{(t)}),f_2(\XX^{(t)}))_{\ge R}}{M}
	=0\,.\]
Combining these bounds and sending $R\uparrow\infty$ gives
	\[\limsup_{\EPS\downarrow0}
	\limsup_{\Nall\uparrow\infty}
	\bigg\{
	\f{(f_1(\XX^{(t)}),f_2(\XX^{(t)}))}{M}
	-I_{\bq}\bigg\}=0\,.\]
In the limit $t\uparrow\infty$ followed by $\DELTA\downarrow0$ we have $I_{\bq}\to I_{\sq}$, therefore
	\[\limsup_{\DELTA\downarrow0}
	\limsup_{t\uparrow\infty}
	\limsup_{\EPS\downarrow0}
	\limsup_{\Nall\uparrow\infty}
	\bigg\{
	\f{(f_1(\XX^{(t)}),f_2(\XX^{(t)}))}{M}
	-I_{\sq}\bigg\}=0\,.\]
Combining with \eqref{e:innerproduct.state.apx.t} yields \eqref{e:innerproduct.state.limit}, concluding the proof.
\end{proof}
\end{lem}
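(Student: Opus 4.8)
The plan is to establish \eqref{e:innerproduct.taylor.appx} by an interpolation (Taylor) argument comparing $\XX^{J,K}$ with $\XX^\textup{apx}$, and then \eqref{e:innerproduct.state.limit} by first passing from $\XX^\textup{apx}$ to the genuine TAP vector $\XX^{(t)}$ via Corollary~\ref{c:kr} and then invoking the Bayati--Montanari state evolution \cite[Lemma~1(c)]{bayati2011dynamics} together with the profile truncation of Definition~\ref{d:profile} to identify the limit.

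\emph{Step 1: the $O(\cst)$ bound \eqref{e:innerproduct.taylor.appx}.} Note that $\XX^\textup{apx}$ is obtained from $\XX^{J,K}$ by replacing $q_{J,t},q_J$ with $\bq$ and deleting the $\gamma_{J,s}\bx^{(s)}$ terms; since $J\in\bbH$ forces $|q_J-\bq|\le C/N$ and $|\gamma_{J,s}|\le C/N$, the two vectors differ at scale $O(1/N)$. I would introduce the linear interpolant $\XX(s)\equiv\XX^{J,K}+s(\XX^\textup{apx}-\XX^{J,K})$, write $(f_1(\XX^{J,K}),f_2(\XX^{J,K})-f_2(\XX^\textup{apx}))=\sum_\mu f_1(\XX^{J,K}_\mu)\int_0^1\f{d}{ds}f_2(\XX_\mu(s))\,ds$, and apply Cauchy--Schwarz together with the polynomial growth hypothesis to reduce matters to two ingredients: (a) $\sum_\mu(1+\ltwo{\XX^{J,K}_\mu}^{20}+\ltwo{\XX^\textup{apx}_\mu}^{20})\le\cst\cdot N$ and (b) $\ltwo{\XX^{J,K}-\XX^\textup{apx}}\le\cst/N^{1/2}$. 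For (a), the five $\bss$-coordinates are controlled by applying \cite[Lemma~1(c)]{bayati2011dynamics} to $\bh^{(t)},F_{\bq}(\bh^{(t)}),\bx^{(s)}$ (using \eqref{e:explicit.x} and nondegeneracy of $\bGa$, then Corollary~\ref{c:kr} to pass from $\bh^{(t)}$ to $\bh$), while the extra coordinate $\bnu$ is controlled, on the event $\sat_{J,\EPS}$, by the empirical twentieth-moment bound built into Definition~\ref{d:profile}~part~\eqref{d:profile.ii}, after checking $\int\nu^{20}\varphi_\xi(\nu)\,d\nu=O(1+\xi^{21})$ uniformly in $\xi$. For (b), the $O(1/N)$ discrepancy multiplies vectors of norm $\le\cst\cdot N^{1/2}$. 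A symmetric computation bounds $(f_1(\XX^{J,K})-f_1(\XX^\textup{apx}),f_2(\XX^\textup{apx}))$, yielding \eqref{e:innerproduct.taylor.appx}.

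\emph{Step 2: the state-evolution limit \eqref{e:innerproduct.state.limit}.} By Corollary~\ref{c:kr} we have $\ltwo{\XX^\textup{apx}-\XX^{(t)}}\le N^{1/2}o_{1/\DELTA,t}(1)$, so the same interpolation estimate gives $M^{-1}(f_1(\XX^\textup{apx}),f_2(\XX^\textup{apx}))=M^{-1}(f_1(\XX^{(t)}),f_2(\XX^{(t)}))+o_{1/\DELTA,t}(1)$, and it remains to identify the limit of $M^{-1}(f_1(\XX^{(t)}),f_2(\XX^{(t)}))$. I would truncate at a level $R$, separating the part over coordinates with $\ltwo{\bss^{(t)}_\mu}\le R$ and $|\nu_\mu|\le R$. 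On this bounded part, \cite[Lemma~1(c)]{bayati2011dynamics} gives convergence of the empirical profile of $\bss^{(t)}$ (with $\bh^{(t)}$ resembling a Gaussian of variance $\TAPq$, and the $\bxi,\bze$ coordinates being fixed continuous functions of $\bh^{(t)}$), while Definition~\ref{d:profile}~part~\eqref{d:profile.iii} pins the conditional empirical distribution of $\bnu$ given $\bh$, so that the joint $(\bh,\bnu)$-profile resembles $\f{1}{\sqrt{\bq}}\varphi(h/\sqrt{\bq})\,\varphi_{\xi_{\bq,z}}(\nu)$; hence the bounded part divided by $M$ converges to the corresponding truncated integral. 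The remaining tail part is bounded, via the polynomial growth of $f_1f_2$, by $O(1)\sum_\mu\I\{\cdots\}\{1+\ltwo{\bss^{(t)}_\mu}^{10}+|\nu_\mu|^{10}\}$, which by \cite[Lemma~1(c)]{bayati2011dynamics} and Definition~\ref{d:profile}~part~\eqref{d:profile.ii} has $\limsup_{R\uparrow\infty}\limsup_{\Nall\uparrow\infty}$ equal to zero. Sending $R\uparrow\infty$ gives convergence to the full integral with $q=\bq$, and finally $t\uparrow\infty$ then $\DELTA\downarrow0$ replaces $\bq$ by $\sq$, yielding \eqref{e:innerproduct.state.limit}.

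\emph{Main obstacle.} The delicate point is the treatment of the sixth coordinate $\bnu=\bE\bv+\bze^J$. Unlike the $\bss$-coordinates, which are (nearly) deterministic functions of the TAP iterates and hence directly governed by \cite{bayati2011dynamics}, this coordinate involves the fresh Gaussian $\bE\bv$ conditioned on the highly nonlinear satisfiability event $\sat_{J,\EPS}$, so its limiting empirical law is the one-sided Gaussian density $\varphi_{\xi_{q,z}}$ rather than an ordinary Gaussian. Getting uniform-in-$\sat_{J,\EPS}$ control of its high moments, and matching the joint $(\bh,\bnu)$-histogram to the target product density, is precisely what conditions \eqref{d:profile.ii} and \eqref{d:profile.iii} of Definition~\ref{d:profile} were designed to supply; verifying that they suffice, while carefully tracking which order of limits ($\Nall$, then $\EPS$, then $t$, then $\DELTA$) each estimate requires, is the main work.
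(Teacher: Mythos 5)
Your proposal follows essentially the same route as the paper's proof: the same linear interpolation plus Cauchy--Schwarz reduction to the twentieth-moment sum and the $\cst/N^{1/2}$ norm bound (with the uniform estimate $\int\nu^{20}\varphi_\xi(\nu)\,d\nu=O(1+\xi^{21})$ feeding Definition~\ref{d:profile}\eqref{d:profile.ii}), then the passage $\XX^\textup{apx}\to\XX^{(t)}$ via Corollary~\ref{c:kr}, and the same $R$-truncation combining \cite[Lemma~1(c)]{bayati2011dynamics} with Definition~\ref{d:profile}\eqref{d:profile.iii} for the bounded part and \eqref{d:profile.ii} for the tail, in the same order of limits. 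Your closing remark correctly isolates the role of the profile restrictions in controlling the conditioned coordinate $\bnu$, which is exactly where the paper's argument leans on them.
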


\noindent We now apply Lemma~\ref{lem-CLT-approximation} to deduce the preceding corollaries.

\begin{proof}[Proof of Corollary~\ref{c:PP}]
It follows from \eqref{e:adm.givenr.first} and \eqref{e:sgivenr.first} that
	\[\f{\tP(\sat_{J,\EPS}
		\giv\bar{E}_\row)}
		{\tp_{\admone}(\bar{E}_{\admone})}
	=
	\exp\bigg\{
	\f{N\bpsi(1-\bq)}{2}
	+\Big(\oneM,\log\bPhi(\bxi^J)\Big)
	+O(\cst)
	\bigg\}\,.\]
The limit \eqref{e:conv.to.PP} follows by applying Lemma~\ref{lem-CLT-approximation}. Similarly, for any constant $\lmbig<1$, the limit \eqref{e:conv.to.PP.pair} holds uniformly over $\lmin\le\lm\le\lmbig$ by combining \eqref{e:adm.givenr}~and~\eqref{e:sat.givenr} with Lemma~\ref{lem-CLT-approximation}. For $\lmbig\le\lm\le1$ we simply use \eqref{e:adm.givenr.trivial.ubd} to obtain
	\[\f{\tP(\sat_K\giv
		\bar{E}_\row,\sat_{J,\EPS})}
	{\tp_{\adm}(\bar{E}_{\adm})
		/\tp_{\admone}(\bar{E}_{\admone})}
	\le\f{\tp_{\admone}(\bar{E}_{\admone})}
		{\tp_{\adm}(\bar{E}_{\adm})}
	\le \f1{(2\pi)^{t/2}}\,,\]
which implies for $\lmbig\le\lm\le1$ that
	\[\limsup_{\Nall\uparrow\infty}
	\f1{\Nall}\log
	\f{\tP(\sat_K\giv
		\bar{E}_\row,\sat_{J,\EPS})}
	{\tp_{\adm}(\bar{E}_{\adm})
		/\tp_{\admone}(\bar{E}_{\admone})}\le0\,.\]
By taking $c\downarrow0$ we conclude that \eqref{e:conv.to.PP.pair} holds for all $\lmin\le\lm\le1$.\end{proof}

\noindent We emphasize that the profile restriction (Definition~\ref{d:profile}~part~\eqref{d:profile.iii}) was crucially used in the proof of Lemma~\ref{lem-CLT-approximation}, which in turn is used to prove Corollary~\eqref{c:PP}. Without this restriction we do not expect the limit \eqref{e:conv.to.PP.pair} to be correct for general $\lm\ne0$.

\begin{proof}[Proof of Corollary~\ref{c:PP.small.lm}]
It follows from
\eqref{e:adm.givenr.first} and \eqref{e:adm.givenr} that for small $\lm$ we have
	\beq\label{e:adm.expand}
	\log
	\f{\tp_{\admone}(\bar{E}_{\admone})^2}
	{\tp_{\adm}(\bar{E}_{\adm})}
	= -\f{N\bpsi(1-\bq) \lm}{1+\lm}
		+O(\cst)
	= N\bpsi(1-q) 
	(-\lm + \lm^2 )
	+O\bigg( \cst\Big(
		1+N|\lm|^3\Big)\bigg)
		\,.\eeq
From the explicit expression~\eqref{e:bsattwo.calc} for
the conditional probability $\tP(\sat_K\giv\bar{E}_\row,\bE_{\prfone})$, we derive 
	\begin{align}
	\label{e:bsattwo.calc.first.deriv}
	\f{d\log\tP(\sat_K
	\giv\bar{E}_\row,\bE_{\prfone})
	}{d\lm}
	\,\bigg|_{\lm=0}
	&= \Big(\cE(\bxi^K)\,,\bnu\Big)\,,\\
	\f{d^2\log\tP(\sat_K
	\giv\bar{E}_\row,\bE_{\prfone})}{d\lm^2}
	\,\bigg|_{\lm=0}
	&=-\Big(\cE(\bxi^K)\,,\bxi^K\Big)
	-\Big(\cE'(\bxi^K)\,,\bnu^2
	\Big)\,.
	\label{e:bsattwo.calc.second.deriv}
	\end{align}
Now recall from Proposition~\ref{p:basic.sat.giv.row} that we obtained the result for $\tP(\sat_K\giv\bar{E}_\row,\sat_{J,\EPS})$ in \eqref{e:sat.givenr} simply by integrating \eqref{e:bsattwo.calc} over the law of $\bE_{\prfone}$. Combining with \eqref{e:bsattwo.calc.first.deriv} gives
	\begin{align*}
	\f{d\tP(\sat_K
	\giv\bar{E}_\row,\sat_{J,\EPS})}{d\lm}
	\,\bigg|_{\lm=0}
	&=\int\Big(\cE(\bxi^K),\bnu\Big)
	\tp_{\prfone|\row\bsatone}
	(\bE_{\prfone}\giv\bar{E}_\row)
	\exp\Big\{\Big(\oneM,\log\bPhi(\bxi^K)
		\Big)\Big\}
	\,d\bE_{\prfone}\\
	&=\exp\Big\{\Big(\oneM,\log\bPhi(\bxi^K)
		\Big)\Big\}
	\bigg(\cE(\bxi^K),
	\int \bnu\,
	\tp_{\prfone|\row\bsatone}
	(\bE_{\prfone}\giv\bar{E}_\row)
	\,d\bE_{\prfone}
	\bigg)\,,\end{align*}
while we see from \eqref{e:sat.givenr} that
	\beq\label{e:bsattwo.calc.integral}
	\tP(\sat_K
	\giv\bar{E}_\row,\sat_{J,\EPS})
	\,\bigg|_{\lm=0}
	=\exp\Big\{\Big(\oneM,\log\bPhi(\bxi^K)
		\Big)\Big\}
	\int
	\tp_{\prfone|\row\bsatone}
	(\bE_{\prfone}\giv\bar{E}_\row)\,d\bE_{\prfone}
	= \exp\Big\{\Big(\oneM,\log\bPhi(\bxi^K)
		\Big)\Big\}\,.\eeq
Dividing these two equations gives 
	\beq\label{e:bsattwo.calc.first.deriv.integral}
	\f{d\log\tP(\sat_K
	\giv\bar{E}_\row,\sat_{J,\EPS})}{d\lm}
	\,\bigg|_{\lm=0}
	=
	\bigg(\cE(\bxi^K),
	\int \bnu\,
	\tp_{\prfone|\row\bsatone}
	(\bE_{\prfone}\giv\bar{E}_\row)
	\,d\bE_{\prfone}
	\bigg)\,.\eeq
Likewise we can use \eqref{e:bsattwo.calc.second.deriv} to obtain the second derivative
	\beq\label{e:bsattwo.calc.second.deriv.integral}
	\f{d^2\log\tP(\sat_K
	\giv\bar{E}_\row,\sat_{J\EPS})}{d\lm^2}
	\,\bigg|_{\lm=0}
	=
	-\Big(\cE(\bxi^K)\,,\bxi^K\Big)
	-\bigg(\cE'(\bxi^K)\,,
	\int\bnu^2\,
	\tp_{\prfone|\row\sat_{J\EPS}}
	(\bE_{\prfone}\giv\bar{E}_\row)
	\,d\bE_{\prfone}
	\bigg)\,.\eeq
For any $\EPS>0$, the measure
$\tP(\cdot\giv\bar{E}_\row,\sat_J)$ is exponentially well concentrated on the event $\sat_{J,\EPS}$.
As a result, in the above integrals, the density $\tp_{\prfone|\row\bsatone}$ can be replaced by $\tp_{\prfone|\row\sat_J}$ with negligible error: for instance, \eqref{e:bsattwo.calc.first.deriv.integral} can be expressed as
	\[\bigg(\cE(\bxi^K),
	\int 
	\f{\mathbf{1}_{\sat_{J\EPS}}
	\bnu
	\,
	\tp_{\prfone|\row\sat_J}
	(\bE_{\prfone}\giv\bar{E}_\row)}
	{\tP(\sat_{J,\EPS}
	\giv\bar{E}_\row,\sat_J)}
	\,d\bE_{\prfone}
	\bigg)
	= 
	\bigg(\cE(\bxi^K),
	\int 
	\bnu
	\,
	\tp_{\prfone|\row\sat_J}
	(\bE_{\prfone}\giv\bar{E}_\row)
	\,d\bE_{\prfone}
	\bigg)
	+\f{O(1)}{\exp(N/\cst)}\]
for a positive constant $\cst$, and an analogous approximation holds for \eqref{e:bsattwo.calc.second.deriv.integral}.
The mean and second moment of $\bnu$ under the measure 
$\tP(\cdot\giv\bar{E}_\row,\sat_J)$ are given exactly by
	\begin{align*}
	\int 
	\bnu
	\,
	\tp_{\prfone|\row\sat_J}
	(\bE_{\prfone}\giv\bar{E}_\row)
	\,d\bE_{\prfone}
	=\int \bnu\,\varphi_{\bxi^J}(\bnu)\,d\bnu
	&= \cE(\bxi^J)\,,\\
	\int 
	\bnu^2
	\,
	\tp_{\prfone|\row\sat_J}
	(\bE_{\prfone}\giv\bar{E}_\row)
	\,d\bE_{\prfone}
	=\int \bnu^2\,\varphi_{\bxi^J}(\bnu)\,d\bnu
	&=\oneM+ \bxi^J \cE(\bxi^J)\,.\end{align*}
Substituting into \eqref{e:bsattwo.calc.first.deriv.integral} and \eqref{e:bsattwo.calc.second.deriv.integral},
and noting that $\cE'(x)=\cE(x)(\cE(x)-x)$, we obtain
	\begin{align}
	\label{e:bsattwo.calc.first.deriv.eval}
	\f{d\log\tP(\sat_K
	\giv\bar{E}_\row,\sat_{J\EPS})}{d\lm}
	\,\bigg|_{\lm=0}
	&=\Big(\cE(\bxi^K),
	\cE(\bxi^J)\Big)+O(\cst)
	= \ltwo{\cE(\bxi)}^2+O(\cst)
	\,,\\
	\f{d^2\log\tP(\sat_K
	\giv\bar{E}_\row,\sat_{J\EPS})}{d\lm^2}
	\,\bigg|_{\lm=0}
	&=-\bigg(\cE(\bxi^K),\bxi^K+
	\Big(\cE(\bxi^K)-\bxi^K\Big)
	\Big(\oneM+\bxi^J\cE(\bxi^J)\Big)
	\bigg)
	+O(\cst)\,,
	\label{e:bsattwo.calc.second.deriv.eval}
	\end{align}
where the last step of \eqref{e:bsattwo.calc.first.deriv.eval} is by the approximation result \eqref{e:innerproduct.taylor.appx} from Lemma~\ref{lem-CLT-approximation}. It follows from the definition~\eqref{e:def.bxi} that $\ltwo{\cE(\bxi)}^2=(1-\bq)\ltwo{\bn}^2 =N(1-\bq)\bpsi$. Combining \eqref{e:sgivenr.first} and \eqref{e:bsattwo.calc.integral} gives
	\beq\label{e:ratio.at.lambda.zero}
	\log\f{\tP(\sat_K
	\giv\bar{E}_\row,\sat_{J,\EPS})}
	{\tP(\sat_{J,\EPS}\giv\bar{E}_\row)}
	\bigg|_{\lm=0}
	=\Big(\oneM,\log\bPhi(\bxi^J)
		-\log\bPhi(\bxi^K)\Big)
	= O(\cst)\,,\eeq
where the last step is another application of \eqref{e:innerproduct.taylor.appx}. Denote the right-hand side of \eqref{e:bsattwo.calc.second.deriv.eval} as $d\equiv d_{M,\Nall,\DELTA,t,\EPS}$. Combining \eqref{e:bsattwo.calc.first.deriv.eval}, \eqref{e:bsattwo.calc.second.deriv.eval}, and \eqref{e:ratio.at.lambda.zero} into a Taylor expansion gives (for small $\lm$)
	\[\log\f{\tP(\sat_K
	\giv\bar{E}_\row,\sat_{J,\EPS})}
	{\tP(\sat_{J,\EPS}\giv\bar{E}_\row)}
	=N\bpsi(1-\bq)\lm
	+\f{N d\lm^2}{2}
	+O\bigg( \cst\Big(
		1+N|\lm|^3\Big)\bigg)\,,\]
and we note that the first-order term exactly cancels with that of \eqref{e:adm.expand}. Altogether we obtain
	\[\log\f{\tP(\sat_K\giv
		\bar{E}_\row,\sat_{J,\EPS})
	\tp_{\admone}(\bar{E}_{\admone})^2}
	{\tP(\sat_{J,\EPS}\giv\bar{E}_\row)
		\tp_{\adm}(\bar{E}_{\adm})}
	= \f{N\lm^2 [d+2\bpsi(1-\bq)]}{2}
	+O\bigg( \cst\Big(
		1+N|\lm|^3\Big)\bigg)\,.\]
By the limit result \eqref{e:innerproduct.state.limit} from Lemma~\ref{lem-CLT-approximation} we have
	\[ \adjustlimits
	\limsup_{\DELTA\downarrow0}
	\limsup_{t\uparrow\infty}
	\limsup_{\EPS\downarrow0}
	\limsup_{\Nall\uparrow\infty} 
	\bigg|\bigg\{
	d_{M,\Nall,\DELTA,t,\EPS}
	-
	\f{d^2\II(\lm)}{d\lm^2}\,\bigg|_{\lm=0}
	\bigg\}
	\bigg|=0\,.\]
The result follows by taking $\rho_{M,\Nall,\DELTA,t,\EPS}=d_{M,\Nall,\DELTA,t,\EPS}+2\bpsi(1-\bq)$.
\end{proof}

\section{Admissibility given row and satisfiability constraints}
\label{s:adm}

\noindent We continue to consider a fixed pair $(J,K)\in\bbH(\lm)$. In this section we shall estimate the remaining factors of Proposition~\ref{p:tilt} that were not computed in Section~\ref{s:row}, namely $\tp_{\adm|\row\bsat}(\bar{E}_\adm\giv\bar{E}_\row)$ as defined by the last equality in \eqref{e:def.adm.cond.density}, and $\tp_{\admone|\row\bsatone}(\bar{E}_{\admone}\giv\bar{E}_\row)$ which is defined analogously as
	\beq\label{e:def.admone.cond.density}
	\tp_{\admone|\row\bsatone}(\bar{E}_{\admone}\giv\bar{E}_\row)
	\equiv
	\int\f{\Ind{E_{\row\admone\qrfone}\in\bsatone}
	\tp_{\adm\qrf}(E_{\admone\qrfone})}
		{\tP(\bsatone\giv E_\row)}
		\,dE_\qrf\,.\eeq
Recall that the events $\bsatone,\bsat$ are defined by \eqref{e:events.B}. 

\subsection{CLT and moderate deviations regime} The first main result of this section is the following:

\begin{ppn}\label{p:adm.clt}
For $\bsatone,\bsat$ as in \eqref{e:events.B},
consider 
$\tp_{\adm|\row\bsat}(\bar{E}_\adm\giv\bar{E}_\row)$
and $\tp_{\admone|\row\bsatone}(\bar{E}_{\admone}\giv\bar{E}_\row)$ as defined by \eqref{e:def.adm.cond.density} and \eqref{e:def.admone.cond.density}.
\begin{enumerate}[a.]
\item \label{p:adm.clt.first}
For any $J\in\bbH$ we have
	\begin{equation*}\label{e:adm.givensr.first}
	\tag{{\footnotesize$\color{Gr}
		\usf{A}_\textsf{1}|\usf{RB}_\textsf{1}$}}
	\f1{\cst}
	\le\tp_{\admone|\row\bsatone}
		(\bar{E}_{\admone}\giv\bar{E}_\row)
	\equiv
	\int \f{\Ind{ \bE_{\row\admone\qrfone} \in\bsatone}
	\tp_{\admone\qrfone}(\bE_{\admone\qrfone})
		\,d\bE_{\qrfone}}
	{\tP(\bsatone\giv\bE_\row)}
	\,\bigg|_{\bE_{\row\admone}=\bar{E}_{\row\admone}}
	\le \cst\,.
	\end{equation*}
\item \label{p:adm.clt.pair}
If $(J,K)\in\bbH(\lm)$ with $\lmin\le\lm\le\lmbig$ for a constant $\lmbig<1$, then
	\begin{equation*}\label{e:adm.givensr.clt}
	\tag{{\footnotesize$\color{Gr}
		\usf{A}|\usf{RB}^\textsf{CLT}$}}
	\tp_{\adm|\row\bsat}
		(\bar{E}_\adm\giv\bar{E}_\row)
	\equiv
	\int \f{\Ind{ \bE_{\row\adm\qrf} \in\bsat}
	\tp_{\adm\qrf}(\bE_{\adm\qrf})
		\,d\bE_{\qrf}}
	{\tP(\bsat\giv\bE_\row)}
	\,\bigg|_{\bE_{\row\adm}=\bar{E}_{\row\adm}}
	\le \cst\,.\end{equation*}
\end{enumerate}
\end{ppn}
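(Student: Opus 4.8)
The plan is to identify both $\tp_{\admone|\row\bsatone}(\bar{E}_{\admone}\giv\bar{E}_\row)$ and $\tp_{\adm|\row\bsat}(\bar{E}_\adm\giv\bar{E}_\row)$ as conditional densities of linear projections of a conditioned product measure, and then to prove a local central limit theorem for such densities. By Lemma~\ref{l:extra.orth} the projections $\bE_\admone$ and $\bE_\adm$ are orthogonal to $\vv_\row$, so (as $\tP$ is Gaussian) they are $\tP$-independent of $\bE_\row$; combining this with \eqref{e:def.adm.cond.density}, \eqref{e:def.admone.cond.density} and Bayes' rule, the two quantities equal, respectively, the $\tP(\cdot\giv\bar{E}_\row,\bsatone)$-density of $\bE_\admone$ at $\bar{E}_{\admone}$ and the $\tP(\cdot\giv\bar{E}_\row,\bsat)$-density of $\bE_\adm$ at $\bar{E}_\adm$. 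Under $\tP(\cdot\giv\bar{E}_\row)$ the vector $\bnu=(\bE-\bT)\bv$ is standard Gaussian on $\R^M$ (the tilt \eqref{e:form.of.T} shifting the mean of $\bE\bv$ to $-\bze^J$), and --- arguing exactly as in the proof of Proposition~\ref{p:basic.sat.giv.row} --- the event $\sat_J$ is, in these coordinates, a product event; conditioned on it the $\nu_\mu$ are independent truncated Gaussians, the parameters having been arranged, via $\bT$ and $\dkappa$, so that the conditional law is the truncated Gaussian $\varphi_{\xi_\mu}$ of Definition~\ref{d:profile} and in particular $\E(\nu_\mu\giv\sat_J)=\cE(\xi_\mu)$. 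For the pair one also uses the standard-Gaussian part of $\bE\bw$; conditioned on $\sat_J\cap\sat_K$ the pairs $\big((\nu_\mu,(\bE\bw)_\mu)\big)_{\mu\le M}$ are independent, each with a density on $\R^2$ that is a product Gaussian restricted to a half-line and a half-plane. The further restrictions of Definition~\ref{d:profile}(\ref{d:profile.ii})--(\ref{d:profile.iii}) hold with $\tP(\cdot\giv\bar{E}_\row,\sat_J)$-probability $1-o_N(1)$ by the law of large numbers (as already used for \eqref{e:sgivenr.first} and in the proof of Corollary~\ref{c:PP.small.lm}), so replacing $\sat_J$ by $\bsatone$, or $\sat_J\cap\sat_K$ by $\bsat$, changes the conditioned measure by a negligible reweighting. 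Since $\bE_\admone$ has coordinates $\big((\bc^{(s)},\bE\bv)\big)_{s\le t}$ and $\bE_\adm$ adjoins $\big((\bc^{(s)},\bE\bw)\big)_{s\le t}$ --- affine functionals of $\bnu$, resp.\ of $(\bnu,\bE\bw)$ --- the proposition reduces to estimating the value, at a prescribed point, of the density of a $t$-dimensional, resp.\ $2t$-dimensional, linear image of a conditioned product measure.

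The second step is a local CLT proved by Fourier inversion. Write such a density at $x$ as $(2\pi)^{-r}\int\widehat p(\theta)e^{-i\langle\theta,x\rangle}\,d\theta$ with $\widehat p(\theta)=\prod_{\mu\le M}\widehat\psi_\mu(\langle\theta,\bc_\mu\rangle)$, where $\bc_\mu\equiv(c^{(s)}_\mu)_{s\le t}$, $\widehat\psi_\mu$ is the characteristic function of the conditioned coordinate law, and $r=t$ or $2t$. Splitting the integral at $\|\theta\|=R$: on $\{\|\theta\|\le R\}$ one Taylor-expands each $\widehat\psi_\mu$ (whose higher moments are controlled through the TAP state evolution, Remark~\ref{r:tap}, and Corollary~\ref{c:kr}), obtaining $\widehat p(\theta)=(1+o_N(1))\exp\!\big(i\langle\theta,m\rangle-\tfrac12\theta^\st\Sigma\theta\big)$ with $m,\Sigma$ the conditional mean and covariance and an $o_N(1)$ Edgeworth error; on $\{\|\theta\|>R\}$ one uses $\sum_\mu\langle\theta,\bc_\mu\rangle^2=\|\theta\|^2$ (orthonormality of the $\bc^{(s)}$), the delocalization $\max_\mu\ltwo{\bc_\mu}=o_N(1)$, and a uniform anticoncentration bound for the truncated one-dimensional laws to get $|\widehat p(\theta)|\le\exp(-N/\cst)$ there. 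This yields
\[\tp_{\admone|\row\bsatone}(\bar{E}_{\admone}\giv\bar{E}_\row)=(1+o_N(1))\,\varphi_\Sigma(\bar{E}_{\admone}-m)+\exp(-N/\cst)\]
(with $\varphi_\Sigma$ the $N(0,\Sigma)$ density), and analogously for $\tp_{\adm|\row\bsat}$, \emph{provided} $\Sigma\succeq\Id/\cst$ --- which also makes the Gaussian integral converge. Granting this, the upper bounds in parts~(\ref{p:adm.clt.first}) and~(\ref{p:adm.clt.pair}) follow since $\varphi_\Sigma\le\cst$; for the lower bound in part~(\ref{p:adm.clt.first}) one needs additionally that $\Sigma\preceq\Id$ (immediate, as the truncated laws have variance $\le1$) and that $\bar{E}_{\admone}$ lies within $O(\cst)$ of $m$ in $\Sigma$-Mahalanobis distance, which then gives $\varphi_\Sigma(\bar{E}_{\admone}-m)\ge1/\cst$.

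It remains to verify (I) $\Sigma\succeq\Id/\cst$ and (II) the mean-matching. For (I), $\Sigma=\bC^\st D\bC$ with $\bC$ the $M\times t$ orthonormal matrix of columns $\bc^{(s)}$ from the QR factorization $\bN=\bC\bXi$ and $D=\diag(\sigma_\mu^2)$, $\sigma_\mu^2$ the conditional variance of $\nu_\mu$; one shows $\sigma_\mu^2\ge1/\cst$ for a positive fraction of $\mu$ (those with truncation level $O(1)$, a positive fraction because the empirical profile of the relevant TAP vectors converges to the nondegenerate limit governed by $\sq,\spsi$), and that the columns of $\bC$ restricted to that index set still span a $t$-dimensional space of bounded condition number, which follows from the nondegeneracy of the inner-product matrices of $\bn^{(1)},\dots,\bn^{(t)}$ (Remark~\ref{r:tap}(\ref{r:tap.c})) and of $\bXi$ (Corollary~\ref{c:Xi.nondegenerate}); the same argument with a $2t\times2t$ block matrix handles part~(\ref{p:adm.clt.pair}), uniformly for $\lmin\le\lm\le\lmbig$. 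For (II), the $s$-th coordinate of $m$ is $\sum_\mu c^{(s)}_\mu\E(\nu_\mu\giv\bsat)-(\bc^{(s)},\bze^J)$; using the profile restriction (in the spirit of Lemma~\ref{lem-CLT-approximation}) to replace $\E(\nu_\mu\giv\bsat)$ by $\cE(\xi_\mu)$ up to $O(\cst)$, the fixed-point identity $\ltwo{\cE(\bxi)}^2=(1-\bq)\ltwo{\bn}^2=N\bpsi(1-\bq)$, the relation $\bc^{(t)}=\bn/\sqrt{N\bpsi}$, and $\bc^{(s)}\perp\bc^{(t)}$ for $s<t$, one finds that each coordinate of $m$ agrees, up to $O(\cst)$, with the corresponding coordinate of $\bar{E}_\adm$ --- the scalars $(\bv,\by^{(s)})$, $(\bw,\by^{(s)})$, shown in the proof of Proposition~\ref{p:adm} to be $O(\cst/N^{1/2})$. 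This is precisely the computation for which the tilt parameters $\vartheta_1,\vartheta_2$ of \eqref{e:def.tilt} were chosen, and it uses Corollary~\ref{c:kr} to pass between the perturbed $\bh,\bn$ and the iterates $\bh^{(t)},\bn^{(t)}$. In part~(\ref{p:adm.clt.pair}) the analogous bivariate computation holds uniformly over $\lmin\le\lm\le\lmbig$; the bound $\lm\le\lmbig<1$ keeps the change of basis from $(\bv,\tbv)$ to $(\bv,\bw)$ --- hence $\Sigma$ and $m$ --- nondegenerate (for $\lm$ near $1$ one falls back on the crude bound \eqref{e:adm.givenr.trivial.ubd}).

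I expect the main obstacle to be the mean-matching~(II), especially in part~(\ref{p:adm.clt.pair}): it is where the design of the conditioning --- the perturbation $\dkappa$, the specific tilt $\bT$, and the profile truncation --- must combine exactly right, and verifying it requires simultaneous quantitative control of $\bc^{(s)}$-weighted empirical sums via the state-evolution estimates of \cite{bayati2011dynamics} together with Corollary~\ref{c:kr}. A close second is establishing $\Sigma\succeq\Id/\cst$: it does not suffice that the truncated-coordinate variances are positive on average (they may be tiny on a vanishing fraction of coordinates), so one must show the TAP basis vectors remain genuinely spread out after reweighting by $D$ --- a quantitative strengthening of the qualitative nondegeneracy recorded in Remark~\ref{r:tap}(\ref{r:tap.c}) and Corollary~\ref{c:Xi.nondegenerate}.
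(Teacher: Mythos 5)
Your strategy is essentially the paper's: both identify the two quantities as densities of linear images $\bC^\st(\cdot)$ of a product of truncated gaussians, control those densities by Fourier analysis of the product characteristic function, prove nondegeneracy of the covariance $\bC^\st\bD\bC$ from the spread of the TAP vectors (the paper's quantitative version of this is Lemma~\ref{lem-CLT-empirical}), and match the mean to $\bar{E}_{\admone}$ using the choice of tilt $\bT$. Two execution points differ. First, instead of a direct pointwise local CLT with an Edgeworth error, the paper proves a uniform gradient bound $|\nabla\pp(\xx)|\le\cst$ by bounding $\int|\sss|\,|\pp^\wedge(\sss)|\,d\sss$ (Lemmas~\ref{l:chf.decay} and~\ref{l:gradient.bd}) and then combines the ordinary integral CLT on small cubes with that gradient bound (Corollary~\ref{c:adm.density.largerevt}); this sidesteps quantifying the Edgeworth correction. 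Relatedly, your claim that $|\pp^\wedge(\sss)|\le\exp(-N/\cst)$ on all of $\{|\sss|>R\}$ is only correct for $|\sss|\gtrsim N^{1/2}$; for intermediate $|\sss|$ one only gets $\exp(-|\sss|^2/\cst)$, which still suffices for the tail integral but requires the two separate regimes of Lemma~\ref{l:chf.decay}. Second, and this is the one genuine gap in your write-up: for the \emph{lower} bound in part~\ref{p:adm.clt.first}, ``replacing $\sat_J$ by $\bsatone$ changes the conditioned measure by a negligible reweighting'' is a total-variation statement and does not by itself give a pointwise lower bound on the density at the specific point $\bar{E}_{\admone}$ — the indicator of $\bsatone$ could in principle remove most of the fiber of the map $\bnu\mapsto\bC^\st(\bnu-\bze^J)$ over that point. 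The paper resolves this by writing the density as an integral over the last $M-t$ coordinates $\bnu^{\tb}$ (see \eqref{e:pp.as.conv}) and constructing an event $\psatone$ of overwhelming probability such that for every $\bnu^{\tb}\in\psatone$ and every $|\xx|\le C$ the completed configuration lies in $\bsatone$ whenever it lies in $\sat_J$; the excluded contribution to the fiber integral is then exponentially small and the lower bound of Corollary~\ref{c:adm.density.largerevt} transfers. You would need to supply this fibering step to complete your argument.
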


\noindent We first supply a technical lemma to be used in the proof. For $1\le s\le t-1$ define the matrices
	{\setlength{\jot}{0pt}\begin{align*}
	\bh(1:s)
	&\equiv
	\begin{pmatrix}
	\bh^{(1)} &\cdots &\bh^{(s)}
	\end{pmatrix}\,,\\
	\bH(1:s)
	&\equiv
	\begin{pmatrix}
	\bH^{(2)} &\cdots &\bH^{(s+1)}
	\end{pmatrix}\,,\end{align*}}%
so $\bh(1:s)$ is $M\times s$ while
$\bH(1:s)$ is $N\times s$. Define
	{\setlength{\jot}{0pt}\begin{align*}
	\bh(1:t)
	&\equiv
	\begin{pmatrix}
	\bh^{(1)} &\cdots
	&\bh^{(t-1)} & \bh^{(t)}
	\end{pmatrix}\,,\\
	\bH(1:t)
	&\equiv
	\begin{pmatrix}
	\bH^{(2)} &\cdots
	&\bH^{(t)}&\bH^{(t+1)}
	\end{pmatrix}\,.\end{align*}}%
Recall that $\hbe_\mu$ denotes a standard basis vector in $\R^M$ while $\dbe_i$ denotes a standard basis vector in $\R^N$. Recall from Remark~\ref{r:cst} our notational convention on quantities $\cst$ and $\cst_\gamma$.

\begin{lem}\label{lem-CLT-empirical}
For any $\gamma>0$ and any cube $A_\gamma \subseteq \mathbb R^t$ of side length $\gamma$ and within distance $1/\gamma$ of the origin,
	{\setlength{\jot}{0pt}\begin{align}
	M^{-1} \#\{1\leq \mu\leq M:
	(\hbe_\mu)^\st \bh(1:t)\in A_\gamma\}
	&\ge 1/\cst_\gamma\,,
	\label{eq-CLT-empirical-h}\\
	N^{-1} \#\{1\leq i\leq N: 
	(\dbe_i)^\st \bH(1:t)
	\in A_\gamma\}
	&\ge 1/\cst_\gamma
	\label{eq-CLT-empirical-H}
	\end{align}}%
with high probability in the limit $\Nall\uparrow\infty$.

\begin{proof}
It follows from \cite[Lemma 1(c)]{bayati2011dynamics} that
	\beq\label{eq-H-norm}
	\sum_{s=1}^t
	\f{\ltwo{\bh^{(s)}}^2}{N}
	+\sum_{s=1}^t\f{\ltwo{\bH^{(s)}}^2}{M}
	\leq\cst\,.\eeq
Let $A_{\gamma,s}\subseteq\R^s$ be the projection of $A_\gamma$ onto the first $s$ coordinates. We will consider
	{\setlength{\jot}{0pt}\begin{align*}
	\textsf{frac}[\bh(1:s)]
	&\equiv M^{-1} \#\set{
	1\le\mu\le M:
	(\hbe_\mu)^\st \bh(1:s)
	\in A_{\gamma,s}}\,,\\
	\textsf{frac}[\bH(1:s)]
	&\equiv N^{-1} \#\set{
	1\le i\le N:
	(\dbe_i)^\st \bH(1:s)
	\in A_{\gamma,s}}\,,\end{align*}}%
taking by definition 
$\textsf{frac}[\bh(1:0)]\equiv1$
and $\textsf{frac}[\bH(1:0)]\equiv1$. 
Let us write $\bmag^{(s)\parallel}$ for the orthogonal projection of $\bmag^{(s)}$ onto $\spn(\bmag^{(r)})_{r\le s-1}$, and
denote $\bmag^{(s)\perp} \equiv \bmag^{(s)}-\bmag^{(s)\parallel}$. For $1\le s\le t$ define (cf.\ \eqref{e:filt.t})
	\[\Filt_{\textsc{tap},s}
	\equiv\sigma\Big(\bn^{(1)},\bmag^{(2)},\bn^{(2)},
	\ldots,\bmag^{(s)}\Big)\,.\]
We write $\cst_{\textsc{tap},s}$ 
for $\Filt_{\textsc{tap},s}$-measurable random variables (not depending on $\gamma$) that stay stochastically bounded as $N\uparrow\infty$. We indicate dependence on $\gamma$ by writing $\cst_{\textsc{tap},s,\gamma}$. By \cite[Lemma 1(a)]{bayati2011dynamics},
conditional on $\Filt_{\textsc{tap},s}$,
 the next iterate $\bh^{(s+1)}$ is distributed as
	\[\sum_{r\le s}
	\alpha_{s,r} \bh^{(r)}
	+\bE^\textup{ind}
	\bmag^{(s)\perp}
	+ \err^{(s)}\]
where $\alpha_{s,r}$ are $\Filt_{\textsc{tap},s}$-measurable coefficients satisfying $|\alpha_{s,r}|\le \cst_{\textsc{tap},s}$; $\bE^\textup{ind}$ denotes an independent copy of $\bE$; and we have $\|\err^{(s)}\|_\infty\to0$ as $N\uparrow\infty$. In addition, it is given by \cite[Lemma 1(g)]{bayati2011dynamics} that $\ltwo{\bmag^{(s)\perp}}\ge N^{1/2}/\cst_{\textsc{tap},s}$. It follows that, given 
$(\bh^{(r)})_{1\le r\le s}$
for $s\ge0$, it holds with probability $1-o_N(1)$ that
	\[\f{\textsf{frac}[\bh(1:s+1)]}
	{\textsf{frac}[\bh(1:s)]}
	\ge \f1{\cst_{\textsc{tap},s,\gamma}}\,.\]
Iterating the bound gives \eqref{eq-CLT-empirical-h}. A similar argument gives, again with probability $1-o_N(1)$, that
	\beq\label{eq-H-recursion}
	\textsf{frac}[\bH(1:t-1)]
	\ge \f1{\cst_{\textsc{tap},s,\gamma}}\,,\eeq
but does not directly give \eqref{eq-CLT-empirical-H} due to the differing construction of $\bH^{(t+1)}$
(\eqref{e:fixed.pt} and \eqref{eq-def-H-circ}). To address this, let
	\[\bar{\Filt}_t
	\equiv\sigma\Big(\bn^{(1)},\bmag^{(2)},\bn^{(2)},
	\ldots,\bmag^{(t)}\equiv\bmag,\bn^{(t)}, \hat \bE
	\Big).\]
We write $\barcst$ for $\bar{\Filt}_t$-measurable random variables (not depending on $\gamma$) that remain stochastically bounded as $N\uparrow\infty$. We indicate dependence on $\gamma$ by writing $\barcst_\gamma$. Now consider \eqref{e:fixed.pt}, or equivalently
	\[\bn
	= F_{\bq} \circ (L_{\bq})^{-1}
	\bigg(\f{\bE\bmag + \hat{\bE}\bhJ}{N^{1/2}}
	-\dkappa\bigg)\,,\quad
	\f{\bE\bmag + \hat{\bE}\bhJ}{N^{1/2}}
	-\dkappa
	\ge \Big(\kappa+\DELTA^{1/2}\Big)\oneM\,.\]
Thanks to the random perturbation $\dkappa$, together with the fact that $|(F_{\bq} \circ (L_{\bq})^{-1})'(x)| \ge 1/\barcst$ for $x \ge \kappa + \DELTA^{1/2}$, we conclude that the random vector $\bn$ has a probability density uniformly bounded above by some $\barcst^M$. On the other hand, we can choose a value $\barcst$ such that $\ltwo{\bn}\le N^{1/2}\barcst'$ except with probability $o_N(1)$. Then, for small positive $\gamma'$, consider the set
	\[S
	\equiv
	\bigg\{\tilde{\bn}\in \mathbb R^M:
	\f{\ltwo{\tilde{\bn}-\mathrm{span}
	(\bn^{(1)}, \ldots, \bn^{(t-1)})}}{N^{1/2}}
	\leq \gamma' \textup{ and }
	\f{\ltwo{\tilde{\bn}}}{N^{1/2}}
	\le \barcst'
	\bigg\}\,.\]
We cover $S$ with $(O(1) N^{1/2}
{\barcst'} / \gamma')^{t-1}$ balls of diameter $\gamma'$; each ball has volume $\le (\gamma')^M$. Combining with the above density bound gives
	\[\oP\Big(\bn \in S \,\Big|\,\bar{\Filt}_t\Big)
	\le
	\bigg(
	\f{O(1) N^{1/2} \barcst' }
		{ \gamma'}\bigg)^{t-1}
	\Big(\gamma' \cdot \barcst\Big)^M\,,\]
which can be made $o_N(1)$ by taking $\gamma'= 1/(2\,\barcst)$. By relabelling $2\,\barcst$ as $\barcst$ we conclude that
	\beq\label{e:far.from.span}
	\f{\ltwo{\bn-\mathrm{span}(
	\bn^{(1)}, \ldots, \bn^{(t-1)})}}
	{N^{1/2}} \ge \f1{\barcst}
	\,,\eeq
with probability $1-o_N(1)$. Therefore we can decompose $\bn=\bn^\parallel+\bn^\perp$ where $\bn^\parallel$ is the orthogonal projection of $\bn$ onto $\spn (\bn^{(r)})_{r\le t-1}$, while $\ltwo{\bn^\perp}\ge N^{1/2}/\barcst$. To complete the proof of the lemma, in light of \eqref{eq-H-recursion} it suffices to argue that
with probability $1-o_N(1)$ we have
	\beq\label{eq-H-circ-to-prove}
	\f{\textsf{frac}[\bH(1:t)]}
		{\textsf{frac}[\bH(1:t-1)]}
	\ge \f1{\barcst_\gamma}\eeq
where $\bH(1:t)$ differs from $\bH(1:t-1)$ by the addition of the column $\bH^{(t+1)}$.
Recall from \eqref{eq-def-H-circ} that
	\[\bH^{(t+1)}
	\equiv \bigg(\f{\bE^\st\bn^\parallel}{N^{1/2}}
		- d^{(t)} \bmag\bigg)
		+\f{\bE^\st\bn^\perp}{N^{1/2}}\]
--- on the right-hand side, the term in parentheses is $\bar{\Filt}_t$-measurable. By similar considerations as in the proof of Lemma~\ref{l:extra.orth}, 
conditional on $\bar{\Filt}_t$, the vector $(\bE^\st\bn^\perp)/\ltwo{\bn^\perp}$ is distributed as a standard gaussian in $\R^N$ subject to the linear constraints
	\beq\label{eq-admissible-constraint}
	\f{(\bE^\st\bn^\perp, \br^{(s)})}{\ltwo{\bn^\perp}}
	=\f{(\bn^\perp, \bE\br^{(s)})}{\ltwo{\bn^\perp}}
	=\f{(\bn^\perp,\bx^{(s)})}{\ltwo{\bn^\perp}}\eeq
for all $1\le s\le t$, with $\bx^{(s)}$ as in
\eqref{e:all.row.constraints} and \eqref{e:explicit.x}. 
Explicitly, with $\stackrel{\textup{d}}{=}$ denoting equality in distribution, we have
	\[\by\equiv\f{\bE^\st\bn^\perp}{N^{1/2}}
	\stackrel{\textup{d}}{=}
	\sum_{s=1}^t
		\f{(\bn^\perp,\bx^{(s)})}
		{N^{1/2}}
		\br^{(s)}
	+ \f{\ltwo{\bn^\perp}}{N^{1/2}}
	\bigg(\Id_{N\times N}
	- \sum_{s=1}^t
	\br^{(s)}(\br^{(s)})^\st\bigg)
	\by^\textup{ind}\]
where $\by^\textup{ind}$ is an independent standard gaussian in $\R^N$. We have $(\bn^\perp,\bx^{(s)})\le \ltwo{\bn^\perp}\ltwo{\bx^{(s)}}\le N\barcst$, and we can choose a large enough threshold $\barcst''$ such that
	\[\f{\#[N]_\textup{bdd}}{N}\ge \f12\,,\quad
	[N]_\textup{bdd}
	\equiv
	\bigg\{1\le i\le N:
	|\br^{(s)}_i| \le \f{\barcst''}{N^{1/2}}
	\textup{ for all }1\le s\le t
	\bigg\}\,.\]
We can bound the conditional mean of 
$\by=(\bE^\st\bn^\perp)/N^{1/2}$
at coordinates in $[N]_\textup{bdd}$ as
	\[\Big|\E\big(
	y_i\,\big|\,\bar{\Filt}_t\big)\Big|
	=\bigg|\sum_{s=1}^t
		\f{(\bn^\perp,\bx^{(s)})}
		{N^{1/2}}
		\br^{(s)}_i\bigg|
	\le \barcst\,,\]
where the bound holds uniformly over $i\in [N]_\textup{bdd}$ (that is to say, there is a single choice of $\barcst$ such that the above bound is valid for all $i\in\#[N]_\textup{bdd}$).
Next, it holds with probability $1-o_N(1)$ that $(\br^{(s)},\by^\textup{ind}) \le \log N$ for all $s$, in which case it holds uniformly over $i\in[N]_\textup{bdd}$ that
	\[\sum_{s=1}^t
	\br^{(s)}_i
	(\br^{(s)},\by^\textup{ind})
	\le \f{(\log N)\barcst}{N^{1/2}}\,.\]
Let $\by_\textup{bdd}$ be the projection of $\by$ onto coordinates in $[N]_\textup{bdd}$: the above shows that
	\[\by_\textup{bdd}
	=\E\big(\by_\textup{bdd}
	\,\big|\,\bar{\Filt}_t\big)
	+\f{O[(\log N)\barcst]}{N^{1/2}}
	+
	\f{\ltwo{\bn^\perp}}{N^{1/2}}
	(\by^\textup{ind})_\textup{bdd}\]
where $(\by^\textup{ind})_\textup{bdd}$
is a standard gaussian in $\#[N]_\textup{bdd}$ dimensions, and $\|\E\big(\by_\textup{bdd}
\,\big|\,\bar{\Filt}_t\big)\|_\infty\le\barcst$. The claim \eqref{eq-H-circ-to-prove} follows by recalling that $\ltwo{\bn^\perp}/N^{1/2} \ge 1/\barcst$.
\end{proof}
\end{lem}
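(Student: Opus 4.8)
The plan is to prove \eqref{eq-CLT-empirical-h} and \eqref{eq-CLT-empirical-H} by a ``one column at a time'' induction, using the AMP state evolution of \cite{bayati2011dynamics} to show that appending a new column to $\bh(1:s)$ (respectively $\bH(1:s)$) amounts to adjoining a genuinely new, essentially i.i.d.\ Gaussian coordinate, so that the empirical mass placed on a small box near the origin can drop by at most a factor $\cst_{\textsc{tap},s,\gamma}$ when we pass from $s$ to $s+1$ columns. As a preliminary, \cite[Lemma 1(c)]{bayati2011dynamics} gives $\sum_{s\le t}\ltwo{\bh^{(s)}}^2/N + \sum_{s\le t}\ltwo{\bH^{(s)}}^2/M\le\cst$, which keeps the iterates at the right scale so that a cube within distance $1/\gamma$ of the origin is one on which a nontrivial fraction of coordinates can actually land.

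For \eqref{eq-CLT-empirical-h} I would introduce the nested $\sigma$-fields $\Filt_{\textsc{tap},s}\equiv\sigma(\bn^{(1)},\bmag^{(2)},\ldots,\bmag^{(s)})$ and recall from \cite[Lemma 1(a)]{bayati2011dynamics} that, conditionally on $\Filt_{\textsc{tap},s}$, one has $\bh^{(s+1)}\stackrel{\textup{d}}{=}\sum_{r\le s}\alpha_{s,r}\bh^{(r)}+\bE^\textup{ind}\bmag^{(s)\perp}+\err^{(s)}$, where $|\alpha_{s,r}|\le\cst_{\textsc{tap},s}$, $\linf{\err^{(s)}}\to0$, $\bE^\textup{ind}$ is an independent copy of $\bE$, and $\bmag^{(s)\perp}$ is the component of $\bmag^{(s)}$ orthogonal to $\spn(\bmag^{(r)})_{r\le s-1}$, which by \cite[Lemma 1(g)]{bayati2011dynamics} satisfies $\ltwo{\bmag^{(s)\perp}}\ge N^{1/2}/\cst_{\textsc{tap},s}$. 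The key observation is that the entries of $\bE^\textup{ind}\bmag^{(s)\perp}$ are independent centred Gaussians of variance $\ltwo{\bmag^{(s)\perp}}^2/N\in[1/\cst,\cst]$, independent of $\bh(1:s)$; hence for each coordinate $\mu$ with $(\hbe_\mu)^\st\bh(1:s)$ already in the projected box $A_{\gamma,s}$ — on which the shift $\sum_{r\le s}\alpha_{s,r}\bh^{(r)}_\mu$ is of bounded size — the augmented coordinate lands in $A_{\gamma,s+1}$ with conditional probability $\ge 1/\cst_{\textsc{tap},s,\gamma}$. Since these $\gtrsim M/\cst_{\textsc{tap},s}$ events are conditionally independent across $\mu$, a Chebyshev concentration step gives $\textsf{frac}[\bh(1:s+1)]/\textsf{frac}[\bh(1:s)]\ge 1/\cst_{\textsc{tap},s,\gamma}$ with probability $1-o_N(1)$; iterating from $\textsf{frac}[\bh(1:0)]=1$ proves \eqref{eq-CLT-empirical-h}. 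The same argument run in the other direction, on $\bH^{(2)},\ldots,\bH^{(t)}$, gives $\textsf{frac}[\bH(1:t-1)]\ge 1/\cst_\gamma$.

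The main obstacle — and the only step not a direct consequence of \cite{bayati2011dynamics} — is the last column $\bH^{(t+1)}$, which is produced by the \emph{perturbed} fixed point \eqref{e:fixed.pt}, \eqref{eq-def-H-circ} rather than by the TAP recursion. I would handle it in three moves. First, using the smoothing vector $\dkappa$ together with the lower bound $|(F_{\bq}\circ(L_{\bq})^{-1})'(x)|\ge 1/\barcst$ on the range $x\ge\kappa+\DELTA^{1/2}$ (which contains every entry of $(\bE\bmag+\hat{\bE}\bhJ)/N^{1/2}-\dkappa$), one concludes that, conditionally on $\bar{\Filt}_t\equiv\sigma(\bn^{(1)},\bmag^{(2)},\ldots,\bn^{(t)},\hat{\bE})$, the vector $\bn$ has a density bounded above by $\barcst^M$; a covering-number estimate for a thin tube around $\spn(\bn^{(1)},\ldots,\bn^{(t-1)})$ then forces $\bn$ to be at $\ell^2$-distance $\ge N^{1/2}/\barcst$ from that span except with probability $o_N(1)$, so $\bn=\bn^\parallel+\bn^\perp$ with $\ltwo{\bn^\perp}\ge N^{1/2}/\barcst$. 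Second, conditionally on $\bar{\Filt}_t$ the vector $\bE^\st\bn^\perp/\ltwo{\bn^\perp}$ is a standard Gaussian in $\R^N$ subject only to the $t$ linear constraints $(\bE^\st\bn^\perp,\br^{(s)})=(\bn^\perp,\bx^{(s)})$; restricting to the at-least-half of coordinates $[N]_\textup{bdd}$ on which $|\br^{(s)}_i|\le\barcst''/N^{1/2}$ for all $s$, the projection of $\bE^\st\bn^\perp/N^{1/2}$ onto $[N]_\textup{bdd}$ is, up to a $\bar{\Filt}_t$-measurable shift of bounded sup-norm and an $O((\log N)\barcst/N^{1/2})$ error, a vector of independent Gaussians of variance $\ge 1/\barcst$. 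Third, since $\bH^{(t+1)}$ is precisely such a fresh Gaussian plus a $\bar{\Filt}_t$-measurable term, the inductive step applies verbatim to give $\textsf{frac}[\bH(1:t)]/\textsf{frac}[\bH(1:t-1)]\ge 1/\barcst_\gamma$, and combining with the bound on $\textsf{frac}[\bH(1:t-1)]$ yields \eqref{eq-CLT-empirical-H}. The delicate points are the density bound on $\bn$ — which is exactly where the perturbation $\dkappa$ is essential — and verifying that the residual row constraints on $\bE^\st\bn^\perp$ cost only a constant factor in the box mass; everything else is routine bookkeeping.
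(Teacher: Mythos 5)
Your proposal is correct and follows essentially the same route as the paper's proof: the same one-column-at-a-time induction via the conditional representation of $\bh^{(s+1)}$ from \cite[Lemma 1(a),(g)]{bayati2011dynamics}, and the same three-step treatment of the exceptional column $\bH^{(t+1)}$ (density bound from $\dkappa$, covering argument to get $\ltwo{\bn^\perp}\ge N^{1/2}/\barcst$, conditional Gaussianity of $\bE^\st\bn^\perp$ restricted to $[N]_\textup{bdd}$). The points you flag as delicate are exactly the ones the paper spends its effort on.
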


\noindent We now formally prove that in the QR factorization described in Section~\ref{sec-restricted-partition-function}, the change-of-basis matrix $\bXi$ is bounded and nondegenerate:

\begin{cor}\label{c:Xi.nondegenerate}
There exists $\cst=\cst_{M,\Nall,\DELTA,t}$ such that the matrix $\bN$ defined by \eqref{e:bN.before.change.basis} has all singular values bounded between $1/\cst$ and $\cst$. Consequently, in the QR factorization $\bN = \bC\bXi$, the change-of-basis matrix $\bXi$ has all singular values bounded between $1/\cst$ and $\cst$.

\begin{proof}
Write $\bN(r:s)$ for the submatrix of $\bN$ formed by columns $r$ through $s$ (inclusive). It follows from the results of \cite[Lemma 1(a) and 1(g)]{bayati2011dynamics} that $\bN(2:t)$ has all singular values bounded between $1/\cst$ and $\cst$.
Now consider a unit vector $\sss=(\sss_1,\ldots,\sss_t)\in\R^t$, and denote
$\sss(r;s)\equiv(\sss_r,\ldots,\sss_s)$. If $2|\sss_1|$ is smaller than $1/\cst$ which in turn is smaller than the minimal singular value of $\bN(2:t)$, then
	\[\ltwo{\bN\sss}
	=\ltwo{\bn\,\sss_1
		+ \bN(2;t)
		\,\sss(2:t)}
	\ge \ltwo{\bN(2;t)\,\sss(2:t)}
	-|\sss_1|
	\ge \f1{\cst}\bigg(1-\f1{(2\,\cst)^2}\bigg)
		- \f1{2\,\cst}
	\ge \f1{4\,\cst}\]
(having assumed without loss that $\cst$ is large enough for the last bound to hold). If instead $2|\sss_1| \ge\cst$, then
	\[\ltwo{\bN\sss}= \ltwo{(\bn^\parallel
		+\bn^\perp)\,\sss_1+ \bN(2;t)
		\,\sss(2:t)}
	\ge \ltwo{\bn^\perp\,\sss_1}
	\ge \f1{\cst}\,,\]
where the last bound follows from \eqref{e:far.from.span} together with the assumed lower bound on $|\sss_1|$, adjusting $\cst$ as needed. It follows that $\ltwo{\bN\sss} \ge 1/\cst$ uniformly over all unit vectors $\sss\in\R^t$. We likewise have $\ltwo{\bN\sss} \le\cst$ uniformly over all unit vectors $\sss\in\R^t$; the proof of this is more straightforward and is omitted here.\end{proof}\end{cor}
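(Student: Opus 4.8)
The plan is to reduce everything to the single ``new'' column of $\bN$, namely $\bc^{(t)}=\bn/\sqrt{N\bpsi}$, which is produced by the perturbed fixed-point equation \eqref{e:fixed.pt.n} rather than by the unperturbed TAP recursion, and to control it via the anticoncentration bound \eqref{e:far.from.span}. First I would note that the remaining columns of $\bN$ form the submatrix $\bN(2:t)$ with columns $\bn^{(t-1)}/\sqrt{N\psi_{t-1}},\ldots,\bn^{(1)}/\sqrt{N\psi_1}$, which are exactly the normalized iterates of the standard TAP recursion; by \cite[Lemma 1(a) and 1(g)]{bayati2011dynamics} (cf.\ Remark~\ref{r:tap}\ref{r:tap.c}) their Gram matrix converges to an invertible limit, so with high probability all singular values of $\bN(2:t)$ lie between $1/\cst$ and $\cst$. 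Since $\psi_s=\ltwo{\bn^{(s)}}^2/N$ and $\bpsi=\ltwo{\bn}^2/N$, every column of $\bN$ is a unit vector, so $\ltwo{\bN\sss}\le t^{1/2}\ltwo{\sss}$ for all $\sss$ by the triangle inequality, which handles the upper bound on the singular values.

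For the lower bound I would fix a unit vector $\sss=(\sss_1,\ldots,\sss_t)\in\R^t$, write $\sss(2:t)\equiv(\sss_2,\ldots,\sss_t)$, and split on the size of $|\sss_1|$. If $|\sss_1|$ is small, below roughly half the least singular value of $\bN(2:t)$, then $\bN\sss=\bc^{(t)}\sss_1+\bN(2:t)\sss(2:t)$ is a small perturbation of $\bN(2:t)\sss(2:t)$, whose norm is $\ge\ltwo{\sss(2:t)}/\cst=(1-\sss_1^2)^{1/2}/\cst$, and subtracting $|\sss_1|\,\ltwo{\bc^{(t)}}=|\sss_1|$ still leaves $\ge 1/\cst$. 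If instead $|\sss_1|$ is not small, I would decompose $\bn=\bn^\parallel+\bn^\perp$ with $\bn^\parallel$ the orthogonal projection of $\bn$ onto $\spn(\bn^{(1)},\ldots,\bn^{(t-1)})$; since $\bN(2:t)\sss(2:t)$ lies in that span, projecting $\bN\sss$ onto $\bn^\perp$ gives $\ltwo{\bN\sss}\ge|\sss_1|\,\ltwo{\bn^\perp}/\sqrt{N\bpsi}\ge 1/\cst$, where the last step uses \eqref{e:far.from.span} together with $\bpsi$ bounded above. Combining the two cases yields $\ltwo{\bN\sss}\ge 1/\cst$ uniformly over unit vectors $\sss$, i.e.\ the minimal singular value of $\bN$ is bounded below.

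Finally, the claim for $\bXi$ is immediate: from $\bN=\bC\bXi$ with $\bC^\st\bC=\Id_{t\times t}$ we get $\bN^\st\bN=\bXi^\st\bXi$, so $\bN$ and $\bXi$ have identical singular values. The substantive ingredient is \eqref{e:far.from.span}, which is precisely where the smoothing perturbation $\dkappa$ is used to give $\bn$ a Lebesgue density bounded above conditional on $\bar{\Filt}_t$, forcing $\bn$ to remain at macroscopic distance from the span of the earlier iterates; without it the matrix $\bXi$ could be arbitrarily ill-conditioned. Everything else is a routine two-regime estimate, so the only care needed is to fix the threshold separating small and large $|\sss_1|$ consistently, so that the various $\cst$'s can be absorbed.
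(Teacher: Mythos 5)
Your proposal is correct and follows essentially the same route as the paper's proof: bound the singular values of $\bN(2:t)$ via \cite[Lemma 1(a) and 1(g)]{bayati2011dynamics}, then split on whether $|\sss_1|$ is small (perturbation argument) or large (project onto $\bn^\perp$ and invoke \eqref{e:far.from.span}), with the trivial upper bound from unit columns. Your explicit remarks that $\bN^\st\bN=\bXi^\st\bXi$ and that the projection onto $\bn^\perp$ must carry the $1/\sqrt{N\bpsi}$ normalization are slightly more careful than the paper's writeup, but the argument is the same.
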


\noindent We next prove another technical result on the decay of the characteristic function for a gaussian conditioned to be larger than a threshold.

\begin{lem}\label{l:chf.decay}
Recall \eqref{e:def.F} that $\varphi_\xi$ denotes the density of a standard gaussian random variable conditioned to be at least $\xi$. Denote the characteristic function
	\[(\varphi_\xi)^\wedge(\tau)
	= 
	\int_{\nu\ge\xi} 
	\f{e^{\iii\tau\nu} \varphi(\nu)}
		{\bPhi(\xi)}\,d\nu
	= 
	\int_{\nu\ge\xi}
	\f{\varphi(\nu-\iii\tau)\,d\nu}
		{\bPhi(\xi)e^{\tau^2/2}}
	\equiv \f{\bPhi(\xi-\iii\tau)}
		{\bPhi(\xi)e^{\tau^2/2}}\,.\]
\begin{enumerate}[a.]
\item \label{l:chf.decay.small} It holds for $|\tau|$ small enough that
	$|(\varphi_\xi)^\wedge(\tau)|\le 
	\exp(-\usf{var}_\xi \tau^2 / 4)$ where
	$\usf{var}_\xi\equiv
	1+\cE(\xi)(\xi-\cE(\xi))$.
\item \label{l:chf.decay.large} It holds for all $\tau\in\R$ that
$|(\varphi_\xi)^\wedge(\tau)|\le 
\max\set{1, \exp(-\tau^2/2)+ 2\cE(\xi)/|\tau|}$.
\end{enumerate}

\begin{proof}
Shifting the mean of a random variable does not change the modulus of its characteristic function, so we have
$|(\varphi_\xi)^\wedge(\tau)|
	=|\E \exp(\iii\tau (\nu-\E\nu))|$.
For small $\tau$, it follows by Taylor expansion that
	\[|(\varphi_\xi)^\wedge(\tau)|
	= 1 - \f{\tau^2}{2}\Var\nu+O(|\tau|^3)\,.\]
A short calculation gives $\Var\nu = \usf{var}_\xi$, and the claim of part~\ref{l:chf.decay.small} follows. For part~\ref{l:chf.decay.large} let us assume $\tau$ is positive; the result for negative $\tau$ follows by an easy modification. By Cauchy's integral theorem applied to the boundary of the domain $\set{z\in\mathbb{C}: \Real z \ge \xi,
	0\le \Imag z \le \tau}$, we have
	\[\bPhi(\xi)
	- \bPhi(\xi-\iii\tau)
	= \iii\int_0^\tau \varphi(\xi-\iii u)\,du
	= \iii\varphi(\xi)
	\int_0^\tau e^{\iii\xi u}
	e^{u^2/2} \,du\,.\]
Rearranging and making the change of variables 
$u = \tau-\bar{u}/\tau$ gives
	\[\bigg|\f{\bPhi(\xi)
	- \bPhi(\xi-\iii\tau)}{\varphi(\xi)
		e^{\tau^2/2}}\bigg|
	\le \int_0^{\tau}
	\f{e^{u^2/2}}{e^{\tau^2/2}}\,du
	= \int_0^{\tau^2}
	\f{\exp\{ ((\bar{u}/\tau)^2-\bar{u})/2\}}
	{e^{\bar{u}/2}}
	\,\f{d\bar{u}}{\tau}
	\le \f1{\tau}
	\int_0^\infty \f{d\bar{u}}{e^{\bar{u}/2}}
	=\f2{\tau}\,.\]
Substituting into the expression for $(\varphi_\xi)^\wedge(\tau)$ gives part~\ref{l:chf.decay.large}.
\end{proof}
\end{lem}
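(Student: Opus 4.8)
The plan is to prove the two parts separately: part~\ref{l:chf.decay.small} is a soft Taylor expansion at the origin, while part~\ref{l:chf.decay.large} needs a contour-integral argument to control the tail.

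For part~\ref{l:chf.decay.small}, the first step is to observe that $|(\varphi_\xi)^\wedge(\tau)|$ is unchanged if the underlying variable $\nu\sim\varphi_\xi$ is recentered, so $|(\varphi_\xi)^\wedge(\tau)|=|\E\exp(\iii\tau(\nu-\E\nu))|$. Taylor expanding the exponential to second order kills the linear term and produces $|(\varphi_\xi)^\wedge(\tau)|\le 1-\f{\tau^2}{2}\Var\nu+O(|\tau|^3)$, which is at most $\exp(-\Var\nu\,\tau^2/4)$ once $|\tau|$ is small. It then remains to identify $\Var\nu$ with $\usf{var}_\xi$, using the elementary identities $\E\nu=\cE(\xi)$ and $\E\nu^2=1+\xi\cE(\xi)$ (the latter via $\varphi'(x)=-x\varphi(x)$ and integration by parts), which give $\Var\nu=1+\cE(\xi)(\xi-\cE(\xi))$.

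For part~\ref{l:chf.decay.large}, I would use the closed form $(\varphi_\xi)^\wedge(\tau)=\bPhi(\xi-\iii\tau)/(\bPhi(\xi)e^{\tau^2/2})$ recorded in the statement, with $\bPhi$ extended to an entire function. Writing $\bPhi(\xi-\iii\tau)=\bPhi(\xi)-(\bPhi(\xi)-\bPhi(\xi-\iii\tau))$ splits off the main term $e^{-\tau^2/2}$; the trivial bound $|(\varphi_\xi)^\wedge|\le1$ then takes care of small $\tau$ in the final $\max$. To bound the remainder, assume $\tau>0$ (negative $\tau$ by symmetry) and apply Cauchy's theorem to $\varphi$ on the half-strip $\set{z:\ \Real z\ge\xi,\ 0\le\Imag z\le\tau}$ to get $\bPhi(\xi)-\bPhi(\xi-\iii\tau)=\iii\varphi(\xi)\int_0^\tau e^{\iii\xi u}e^{u^2/2}\,du$. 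Dividing by $\varphi(\xi)e^{\tau^2/2}$ and substituting $u=\tau-\bar u/\tau$ turns the resulting integral into $\f1\tau\int_0^{\tau^2}\exp\{((\bar u/\tau)^2-\bar u)/2\}\,d\bar u$, whose integrand is at most $e^{-\bar u/2}$ on the integration range since $\bar u\le\tau^2$; hence it is bounded by $2/\tau$. Recalling $\cE(\xi)=\varphi(\xi)/\bPhi(\xi)$ gives $|(\varphi_\xi)^\wedge(\tau)|\le e^{-\tau^2/2}+2\cE(\xi)/\tau$, and combining with the trivial bound yields the claim.

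The one step that needs care is the change of variables $u=\tau-\bar u/\tau$: it is chosen precisely so that the Gaussian factor $e^{u^2/2}$ is beaten by $e^{\tau^2/2}$ throughout the interval, producing the sharp constant $2/\tau$ rather than the useless pointwise estimate $\int_0^\tau e^{(u^2-\tau^2)/2}\,du\le\tau$. Beyond that, everything is standard Gaussian calculus and I do not expect any real obstacle.
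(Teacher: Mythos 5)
Your proposal is correct and follows essentially the same route as the paper's proof: the same recentering-plus-Taylor argument for part~(a) with the same variance identity, and for part~(b) the same Cauchy contour on the half-strip, the same substitution $u=\tau-\bar u/\tau$, and the same pointwise bound $\exp\{\bar u^2/(2\tau^2)-\bar u\}\le e^{-\bar u/2}$ on $[0,\tau^2]$ yielding $2/\tau$. No gaps.
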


\noindent Further towards proving Proposition~\ref{p:adm.clt}, we first consider
\eqref{e:adm.givensr.first} with 
the simpler event $\sat_J$ in place of $\bsatone\equiv\sat_{J,\EPS}$,
	\[\pp(\bar{E}_{\admone}) 
	\equiv\tp_{\admone|\row\sat_J}
		(\bar{E}_{\admone}\giv\bar{E}_\row)
	\equiv
	\int \f{\Ind{ \bE_{\row\admone\qrfone} \in\sat_J}
	\tp_{\admone\qrfone}(\bE_{\admone\qrfone})
		\,d\bE_{\qrfone}}
	{\tP(\sat_J\giv\bE_\row)}
	\,\bigg|_{\bE_{\row\admone}
	=\bar{E}_{\row\admone}}\,.\]
Recall (see \eqref{e:bN.before.change.basis})
that $\bC$ is the $M\times t$ matrix with columns $\bc^{(1)},\ldots,\bc^{(t)}$. We then have $\bE_{\admone}= \bC^\st\bE\bv$; and it follows from Proposition~\ref{p:basic.sat.giv.row} that the law under $\tP(\cdot\giv\bar{E}_\row,\sat_J)$ of $\bE_{\admone}$ coincides with the law under $\oP$ of $\aaa\equiv \bC^\st (\bnu-\bze^J)$ where $\bnu$ is distributed under $\oP$ as a standard gaussian in $\R^M$ conditioned to be $\ge\bxi^J$ (coordinatewise), with $\bze^J$ as in \eqref{e:def.zeta.J} and $\bxi^J$ as in \eqref{e:def.xi.J}. Thus $\pp$ is simply the density function of $\aaa$. We next bound the gradient of $\pp$:

\begin{lem}\label{l:gradient.bd} The function $\pp(\xx)\equiv \tp_{\admone|\row\sat_J}(\xx\giv\bar{E}_\row)$ satisfies $|\nabla\pp(\xx)|\le\cst$ uniformly over $\xx\in\R^t$. 

\begin{proof} Let $\sss=(\sss_1,\ldots,\sss_t)\in\R^t$. The characteristic function for density $\pp$ is
	\beq\label{e:chf.prod}
	\pp^\wedge(\sss)
	= \int_{\R^t} e^{\iii(\sss,\xx)}\pp(\xx)\,d\xx
	=\f{(\bm{\varphi}_{\bxi^J})^\wedge(\bC\sss)}
		{\exp(\iii(\bze^J)^\st \bC \sss)}\,,\quad
	(\bm{\varphi}_{\bxi^J})^\wedge
	(\btau)
	\equiv
	\prod_{\mu=1}^M
	(\varphi_{\xi^J_\mu})^\wedge
		(\tau_\mu)\eeq
where $\btau=\bC\sss\in\R^M$. Recall that we obtained $\bC$ via the QR factorization $\bN=\bC\bXi$ where $\bN$ is the $M\times t$ matrix given by \eqref{e:bN.before.change.basis}, and $\bXi$ is the $t\times t$ matrix containing the change-of-basis coefficients. It follows from Lemma~\ref{lem-CLT-empirical} that for any cube $A_\gamma\subseteq(\R_{\ge0})^t$ of side length $\gamma$ and within distance $1/\gamma$ of the origin, as $N\uparrow\infty$ we have
	\[\f1M
	\#\bigg\{1\le\mu\le M:
	(\hbe_\mu)^\st \bN
	\in \f{A_\gamma}{N^{1/2}}
	\bigg\}
	\ge \f1{\cst_\gamma}\,.\]
Recall from Corollary~\ref{c:Xi.nondegenerate} that $\bXi$ is nondegenerate, so for $\sss'\equiv\bXi^{-1}\sss$ we have $\bC\sss =\bN\sss'$, while the $\ell^2$ norms of $\sss,\sss'$ are within a $\cst$ factor of one another. It follows that we can choose $\cst$ large enough so that
	\begin{align}
	\label{e:first.estimate}
	\f{M}{\cst}
	&\le \#\bigg\{1\le\mu\le M : 
		|\xi^J_\mu|\le\cst\textup{ and }
		\f{|(\hbe_\mu)^\st\bC\sss|}{|\sss|}
		\le \f{1/\cst}{N^{1/2}} \bigg\}\,,\\
	\f{M}{\cst}
	&\le \#\bigg\{1\le\mu\le M :
		|\xi^J_\mu|\le \cst
		\textup{ and }
	\f{|(\hbe_\mu)^\st\bC\sss|}{|\sss|}
		\ge \f{\cst }{N^{1/2}} \bigg\}\,.
	\label{e:second.estimate}
	\end{align}
If $|\sss|\le N^{1/2}$ then 
\eqref{e:first.estimate}
implies that $|(\hbe_\mu)^\st\bC\sss|$ will be small for an asympotitically positive fraction of indices $\mu$, and combining with Lemma~\ref{l:chf.decay}\ref{l:chf.decay.small} gives
	\[|\pp^\wedge(\sss)|
	\le\f1{\exp(|\sss|^2/\cst)}\,.\]
If $|\sss|\ge N^{1/2}$ then 
\eqref{e:second.estimate} implies that $|(\hbe_\mu)^\st\bC\sss|$ will be large for an asymptotically positive fraction of indices $\mu$, and combining with Lemma~\ref{l:chf.decay}\ref{l:chf.decay.large} gives
	\[|\pp^\wedge(\sss)|\le 
	\f{N^{(t+2)/2}}
	{e^{N/\cst} |\sss|^{t+2}}\,.\]
It follows by Fourier inversion that
	\[2\pi |\nabla\pp(\xx)|
	\le \int_{\R^t}
		|\sss||\pp^\wedge(\sss)|
		\,d\sss
	\le
	\int_{|\sss|\le N^{1/2}}
	\f{|\sss|\,d\sss}
	{\exp(|\sss|^2/\cst)}
	+\int_{|\sss|\ge N^{1/2}}
	\f{N^{(t+2)/2}
	 |\sss|\,d\sss}
	{e^{N/\cst}|\sss|^{t+2}}\le \cst\,,\]
proving the claim for $\pp$. 
\end{proof}
\end{lem}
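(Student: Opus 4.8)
The plan is to bound $\pp$ through its Fourier transform and Fourier inversion. By Proposition~\ref{p:basic.sat.giv.row}, under $\tP(\cdot\giv\bar{E}_\row,\sat_J)$ the projection $\bE_{\admone}=\bC^\st\bE\bv$ has the law of $\bC^\st(\bnu-\bze^J)$, where $\bnu$ is a standard gaussian in $\R^M$ conditioned coordinatewise to be at least $\bxi^J$; hence $\pp$ is the density of this vector and, the shift $\bze^J$ contributing only a unimodular factor,
	\[|\pp^\wedge(\sss)|
	=\prod_{\mu=1}^M
	\big|(\varphi_{\xi^J_\mu})^\wedge\big((\hbe_\mu)^\st\bC\sss\big)\big|\,,
	\qquad \sss\in\R^t\,.\]
Since $2\pi|\nabla\pp(\xx)|\le\int_{\R^t}|\sss|\,|\pp^\wedge(\sss)|\,d\sss$, it suffices to produce pointwise upper bounds on $|\pp^\wedge|$ that decay fast enough to make this integral finite and uniformly (i.e.\ $\cst$-)bounded. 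The one-dimensional factors are controlled by Lemma~\ref{l:chf.decay}, so everything reduces to a counting statement: enough of the coordinates $\tau_\mu\equiv(\hbe_\mu)^\st\bC\sss$ must have a favorable size relative to $|\sss|/N^{1/2}$ while $|\xi^J_\mu|$ stays bounded.

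First I would transfer from $\bC$ to $\bN$ using the QR factorization $\bN=\bC\bXi$ and the nondegeneracy of $\bXi$ (Corollary~\ref{c:Xi.nondegenerate}): setting $\sss'\equiv\bXi^{-1}\sss$ gives $\bC\sss=\bN\sss'$ with $|\sss|\asymp|\sss'|$ up to $\cst$. Then I would apply Lemma~\ref{lem-CLT-empirical} to $\bN$ --- noting that the rows of $\bN$, and also $\bxi^J$, are $\cst$-bounded linear images of the rows of $\bh(1:t)$ up to $o_N(1)$ perturbations, so that the limiting empirical profile is nondegenerate away from the coordinate hyperplanes --- to conclude that for every small cube $A_\gamma\subseteq(\R_{\ge0})^t$ away from the axes, a $1/\cst_\gamma$-fraction of the rows $(\hbe_\mu)^\st\bN$ lie in $N^{-1/2}A_\gamma$. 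Choosing $A_\gamma$ suitably yields the two facts I need: (i) for a $1/\cst$-fraction of $\mu$, $|\xi^J_\mu|\le\cst$ and $|\tau_\mu|\le(|\sss|/\cst)N^{-1/2}$; and (ii) for a $1/\cst$-fraction of $\mu$, $|\xi^J_\mu|\le\cst$ and $|\tau_\mu|\ge\cst|\sss|N^{-1/2}$.

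With (i) and (ii) in hand I would split on $|\sss|$. For $|\sss|\le N^{1/2}$, on the coordinates from (i) we have $|\tau_\mu|$ small and $\usf{var}_{\xi^J_\mu}$ bounded below (since $|\xi^J_\mu|\le\cst$), so Lemma~\ref{l:chf.decay}\ref{l:chf.decay.small} applied to these $\Omega(M)$ factors gives $|\pp^\wedge(\sss)|\le\exp(-|\sss|^2/\cst)$. For $|\sss|\ge N^{1/2}$, on the coordinates from (ii) we have $\cE(\xi^J_\mu)$ bounded and $|\tau_\mu|\ge\cst$; Lemma~\ref{l:chf.decay}\ref{l:chf.decay.large} bounds each such factor by $\min\{1,\cst/|\tau_\mu|\}$. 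I would use $t+2$ of these favorable coordinates (those with $|\tau_\mu|$ accordingly large) to extract $\prod(\cst/|\tau_\mu|)\le\cst N^{(t+2)/2}|\sss|^{-(t+2)}$, and bound each of the remaining $\Omega(M)$ favorable coordinates by $1-1/\cst$ (valid since there $|\tau_\mu|$ is bounded away from $0$ and $|\xi^J_\mu|$ is bounded), giving an extra factor $e^{-N/\cst}$; altogether $|\pp^\wedge(\sss)|\le N^{(t+2)/2}e^{-N/\cst}|\sss|^{-(t+2)}$. Inserting the two bounds into $\int_{\R^t}|\sss|\,|\pp^\wedge(\sss)|\,d\sss$, split at $|\sss|=N^{1/2}$, makes each piece $O(1)$ (the tail piece is in fact $o_N(1)$), and Fourier inversion yields $|\nabla\pp(\xx)|\le\cst$ uniformly in $\xx$, on the high-probability event where Lemma~\ref{lem-CLT-empirical} applies.

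The step I expect to be the main obstacle is the large-$|\sss|$ regime. A truncated gaussian's characteristic function decays only polynomially ($|(\varphi_\xi)^\wedge(\tau)|\sim 2\cE(\xi)/|\tau|$), so no single factor can be controlled well; one must multiply a linear-in-$M$ number of factors, each uniformly bounded below $1$, and moreover extract enough inverse power of $|\sss|$ to beat the $|\sss|^{t-1}$ growth of the surface measure in $\R^t$. Producing a constant fraction of the $\tau_\mu$ simultaneously large, uniformly over all directions of $\sss$, is precisely what Lemma~\ref{lem-CLT-empirical} together with the nondegeneracy of $\bXi$ delivers, and keeping track of the two competing factors ($e^{-N/\cst}$ versus $N^{(t+2)/2}|\sss|^{-(t+2)}$) so that the Fourier integral stays $\cst$-bounded is the delicate bookkeeping. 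The small-$|\sss|$ regime is by comparison routine, resting only on the local quadratic bound of Lemma~\ref{l:chf.decay}\ref{l:chf.decay.small}.
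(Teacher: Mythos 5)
Your proposal is correct and follows essentially the same route as the paper's proof: Fourier inversion of the product characteristic function, the transfer from $\bC$ to $\bN$ via the QR factorization and Corollary~\ref{c:Xi.nondegenerate}, the counting input from Lemma~\ref{lem-CLT-empirical}, and the split at $|\sss|=N^{1/2}$ using the two parts of Lemma~\ref{l:chf.decay}. Your explicit bookkeeping in the large-$|\sss|$ regime (spending $t+2$ favorable factors on the polynomial decay $|\sss|^{-(t+2)}$ and the remaining $\Omega(M)$ on the factor $e^{-N/\cst}$) is exactly the mechanism behind the paper's stated bound, which the paper leaves implicit.
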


\begin{cor}\label{c:adm.density.largerevt}
The value $\pp(\bar{E}_{\admone}) \equiv\tp_{\admone|\row\sat_J}(\bar{E}_{\admone}\giv\bar{E}_\row)$ satisfies the bounds $\cst^{-1}\le\pp(\bar{E}_{\admone}) \le \cst$.

\begin{proof}
Recall that $\pp$ is the density of the
 random variable $\aaa=\bC^\st(\bnu-\bze^J)\in\R^t$ where $\bnu$ has the law of a standard gaussian in $\R^M$ conditioned to be $\ge\bxi^J$. It follows that $\E\bnu=\cE(\bxi^J)$ and consequently $\E\aaa=\bC^\st(\cE(\bxi^J)-\bze^J)$. For $J\in\bbH$, we see from a combination of \eqref{e:def.F}, \eqref{e:fixed.pt.n}, \eqref{e:restrict.j}, and \eqref{e:def.zeta.J} that 
	\[\f{\bze^J}{1+O(1/N)}
	=\sqrt{1-\bq}\bn
	=\sqrt{1-\bq} F_{\bq}(\bh)
	=\cE(\bxi)\,,\]
which in turn is very close to $\cE(\bxi^J)$ by \eqref{e:def.xi.J}. It then follows from Lemma~\ref{lem-CLT-approximation} that $\|\E\aaa\|_\infty \le \cst/N^{1/2}$. We also have $\Cov(\aaa) = \bC^\st\bD \bC$ where $\bD$ is the $M\times M$ diagonal matrix with diagonal entries $\Var(\bnu)$.
 
We claim that the $t\times t$ matrix
$\bC^\st\bD \bC$ is bounded and nondegenerate, in the sense that it is possible to find $\cst=\cst_{M,\Nall,t,\DELTA}$ such that all its eigenvalues are bounded between $1/\cst$ and $\cst$ in the limit $N\uparrow\infty$. To this end, recall that $\bC=\bN\bXi^{-1}$ where the $M\times t$ matrix $\bN$ is defined by \eqref{e:bN.before.change.basis}, and the $t\times t$ matrix $\bXi$ is bounded and nondegenerate by Corollary~\ref{c:Xi.nondegenerate}. It therefore suffices to verify that $\bN^\st\bD\bN$ is bounded and nondegenerate. Now note that
for any positive $\gamma$ we have
	\[\bD
	\succcurlyeq \gamma \Big(\Id_{M\times M}
		-\bI_\gamma\Big)\,,\quad
	\bI_\gamma
	\equiv\diag\Big(
	(\Ind{\bD_{\mu,\mu}\le\gamma})_{\mu\le M}
	\Big)\,,\]
where we use $A \succcurlyeq B$ to indicate that $A-B$ is positive-semidefinite. It follows that
	\[\bN^\st\bD\bN
	\succcurlyeq
	\gamma\bigg(
	\bN^\st\bN
	-\bN^\st \bI_\gamma \bN
	\bigg)\,.\]
We have $\bN^\st\bN = \bXi\bC^\st\bC\bXi=\bXi^2$, therefore $\bN^\st\bN$ is bounded and nondegenerate. As for the matrix $\bN^\st \bI_\gamma \bN$, it follows from \cite[Lemma 1(c)]{bayati2011dynamics} that the \emph{entrywise} maximum satisfies
	\[\adjustlimits
	\limsup_{\gamma\downarrow0}
	\limsup_{\Nall\uparrow\infty}\bigg(
	\max\Big\{
	(\bN^\st \bI_\gamma \bN)_{r,s}
	: 1\le r,s\le t
	\Big\}
	\bigg)=0\,.\]
It follows that we can choose $\cst,\cst'$ sufficiently large that the minimum eigenvalue of $\bN^\st\bN$ is at least $2/\cst$, while the maximum eigenvalue of
$\bN^\st \bI_{1/\cst'} \bN$ is at most $1/\cst$, so that altogether
	\[\bN^\st\bD\bN
	\succcurlyeq
	\f1{\cst'}\bigg(
	\f2{\cst}-\f1{\cst}
	\bigg)
	\Id_{t\times t}\,.\]
A simpler argument (omitted) gives that all eigenvalues of $\bN^\st\bD\bN$ are bounded above by some $\cst$, so we have that $\bN^\st\bD\bN$
(hence also $\bC^\st\bD\bC$) is bounded and nondegenerate as desired.

Now recall from \eqref{e:def.Ebar.adm} the definition of $\bar{E}_{\admone}$, and note that for $J\in\bbH$ we have $\|\bar{E}_{\admone}\|_\infty \le C/N^{1/2}$. It follows by the central limit theorem that
	\[\lim_{\Nall\uparrow\infty}
	\f1{(2\Delta)^t}
	\int_{\uu\in[-\Delta,\Delta]^t}
	\pp(\bar{E}_{\admone}+\uu)\,d\uu
	=\f1{(2\Delta)^t}
	\int_{\uu\in[-\Delta,\Delta]^t}
	\f{\exp(-\f12 \uu^\st \bm{\Theta}^{-1} \uu)}
		{(2\pi)^{t/2} \det \bm{\Theta}}\,d\uu\,,\]
and this converges as $\Delta\downarrow0$ to the positive constant $((2\pi)^{t/2} \det \bm{\Theta})^{-1}$. On the other hand, Lemma~\ref{l:gradient.bd} implies
	\beq\label{e:result.of.grad.bd}
	\bigg|\pp(\bar{E}_{\admone})
	- \f1{(2\Delta)^t}
	\int_{\uu\in[-\Delta,\Delta]^t}
	\pp(\bar{E}_{\admone}+\uu)\,d\uu
	\bigg|\le \cst\,\Delta\,,\eeq
which can be made small by taking $\Delta\downarrow0$. Combining the estimates gives the claim.
\end{proof}
\end{cor}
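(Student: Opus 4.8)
The plan is to realize $\pp$ as the density of an explicit sum of independent $\R^t$-valued summands, establish a central limit theorem for that sum with a nondegenerate limiting covariance, and then use the uniform gradient bound of Lemma~\ref{l:gradient.bd} to pass from a smoothed density to the pointwise value $\pp(\bar{E}_{\admone})$. As recalled just before Lemma~\ref{l:gradient.bd}, $\pp$ is the density of
	\[\aaa\equiv\bC^\st(\bnu-\bze^J)
	=\sum_{\mu=1}^M
	\Big(\nu_\mu-(\bze^J)_\mu\Big)\,(\hbe_\mu)^\st\bC\,,\]
where $\bnu$ is a standard gaussian in $\R^M$ conditioned coordinatewise to be $\ge\bxi^J$; since the coordinates $\nu_\mu$ are independent, this is a genuine sum of $M$ independent terms.

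First I would control the mean. Since $\E\bnu=\cE(\bxi^J)$ we have $\E\aaa=\bC^\st(\cE(\bxi^J)-\bze^J)$; combining \eqref{e:def.F}, \eqref{e:fixed.pt.n}, \eqref{e:restrict.j}, and \eqref{e:def.zeta.J} gives $\bze^J/(1+O(1/N))=\sqrt{1-\bq}\,\bn=\cE(\bxi)$ entrywise, which by \eqref{e:def.xi.J} and the approximation result of Lemma~\ref{lem-CLT-approximation} is entrywise close to $\cE(\bxi^J)$, so that $\linf{\E\aaa}\le\cst/N^{1/2}$. Since $J\in\bbH$ forces $\linf{\bar{E}_{\admone}}\le C/N^{1/2}$ (see \eqref{e:def.Ebar.adm} and Definition~\ref{d:cube}), the evaluation point $\bar{E}_{\admone}$ lies within $O(1/N^{1/2})$ of $\E\aaa$, i.e.\ well inside the bulk of the limiting law of $\aaa$.

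The key step, which I expect to be the main obstacle, is to show that the limiting covariance $\bm{\Theta}\equiv\Cov(\aaa)=\bC^\st\bD\bC$, with $\bD\equiv\diag((\Var\nu_\mu)_{\mu\le M})$, is bounded and nondegenerate, in the sense that all of its eigenvalues lie in $[1/\cst,\cst]$. Writing $\bC=\bN\bXi^{-1}$ with $\bN$ from \eqref{e:bN.before.change.basis}, Corollary~\ref{c:Xi.nondegenerate} reduces this to the same statement for $\bN^\st\bD\bN$. The upper bound is immediate, since $\bD$ has diagonal entries in $[0,1]$ (variances of one-sided truncations of standard gaussians) and $\bN^\st\bN=\bXi^2$ is bounded. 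For the lower bound I would use, for any $\gamma>0$, the operator inequality $\bD\succcurlyeq\gamma(\Id_{M\times M}-\bI_\gamma)$ with $\bI_\gamma\equiv\diag((\Ind{\bD_{\mu,\mu}\le\gamma})_{\mu\le M})$, whence $\bN^\st\bD\bN\succcurlyeq\gamma(\bXi^2-\bN^\st\bI_\gamma\bN)$. The variance $\Var\nu_\mu$ is small only at coordinates where $\xi^J_\mu$ is very positive, an asymptotically negligible fraction of the coordinates, so by \cite[Lemma 1(c)]{bayati2011dynamics} the entrywise maximum of $\bN^\st\bI_\gamma\bN$ tends to $0$ in the limit $\Nall\uparrow\infty$ followed by $\gamma\downarrow0$, after which $\bN^\st\bD\bN\succcurlyeq\Id_{t\times t}/\cst$. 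This is where the argument really hinges on the quantitative state-evolution control: one must rule out the a priori possible degeneracy in which the columns of $\bC$ concentrate on the small-variance coordinates of $\bnu$, and it is precisely Lemma~\ref{lem-CLT-empirical} and \cite[Lemma 1(c)]{bayati2011dynamics} that prevent this.

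To conclude, a multivariate central limit theorem for the sum $\aaa$ --- whose individual summand contributions are uniformly small because $\linf{(\hbe_\mu)^\st\bC}\le\cst/N^{1/2}$ by \cite[Lemma 1(c)]{bayati2011dynamics}, and whose moments are controlled via \eqref{e:nu.mmt.poly.bound} --- together with the mean and covariance estimates above, yields
	\[\lim_{\Nall\uparrow\infty}
	\f1{(2\Delta)^t}\int_{\uu\in[-\Delta,\Delta]^t}
	\pp(\bar{E}_{\admone}+\uu)\,d\uu
	=\f1{(2\Delta)^t}\int_{\uu\in[-\Delta,\Delta]^t}
	\f{\exp(-\f12\uu^\st\bm{\Theta}^{-1}\uu)}
	{(2\pi)^{t/2}(\det\bm{\Theta})^{1/2}}\,d\uu\,,\]
which, as $\Delta\downarrow0$, tends to the positive constant $(2\pi)^{-t/2}(\det\bm{\Theta})^{-1/2}$; by the covariance estimate this constant is bounded away from $0$ and $\infty$. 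Finally, Lemma~\ref{l:gradient.bd} bounds $|\pp(\bar{E}_{\admone})-(2\Delta)^{-t}\int_{\uu\in[-\Delta,\Delta]^t}\pp(\bar{E}_{\admone}+\uu)\,d\uu|$ by $\cst\,\Delta$, so taking $\Delta$ small enough gives $\cst^{-1}\le\pp(\bar{E}_{\admone})\le\cst$, as claimed.
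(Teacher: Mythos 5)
Your proposal is correct and follows essentially the same route as the paper's own proof: identify $\pp$ as the density of the independent sum $\aaa=\bC^\st(\bnu-\bze^J)$, control the mean via Lemma~\ref{lem-CLT-approximation}, establish boundedness and nondegeneracy of $\bC^\st\bD\bC$ through $\bC=\bN\bXi^{-1}$, Corollary~\ref{c:Xi.nondegenerate}, the comparison $\bD\succcurlyeq\gamma(\Id-\bI_\gamma)$ and \cite[Lemma 1(c)]{bayati2011dynamics}, and then combine the central limit theorem for the cube-smoothed density with the gradient bound of Lemma~\ref{l:gradient.bd}. The only (harmless) deviation is your writing $(\det\bm{\Theta})^{1/2}$ in the limiting gaussian density, which is in fact the correct normalization.
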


\begin{proof}[Proof of Proposition~\ref{p:adm.clt}]
We first prove \eqref{e:adm.givensr.first}. Abbreviate $\pp_\star(\bar{E}_{\admone})=\tp_{\admone|\row\bsatone}(\bar{E}_{\admone}\giv\bar{E}_\row)$,
and recall that $\bsatone\equiv\sat_{J,\EPS}$
is a subset of $\sat_J$. We already saw in Proposition~\ref{p:basic.sat.giv.row} that
	$\tP(\sat_J\giv\bar{E}_\row)
	=(1-o_N(1))
	\tP(\bsatone\giv\bar{E}_\row)$.
It follows that
	\[\f{\pp_\star(\bar{E}_{\admone})}{1+o_N(1)}
	=
	\int \f{\Ind{ \bE_{\row\admone\qrfone}\in\bsatone}
	\tp_{\admone\qrfone}(\bE_{\admone\qrfone})
		\,d\bE_{\qrfone}}
	{\tP(\sat_J\giv\bE_\row)}
	\,\bigg|_{\bE_{\row\admone}=\bar{E}_{\row\admone}}
	\le \pp(\bar{E}_{\admone})
	\le \cst\,,\]
where the last step uses the upper bound of Corollary~\ref{c:adm.density.largerevt}.
For the lower bound, let us decompose $\bnu\equiv(\bnu^{\ta},\bnu^{\tb})$ where $\bnu^{\ta} \equiv (\nu_\mu)_{\mu\le t}$ and $\bnu^{\tb} \equiv (\nu_\mu)_{t+1\le \mu\le M}$. We can write the joint density of $\bnu$ under $\tP(\cdot\giv\bar{E}_\row,\sat_J)$ as
	\[f(\bnu)\equiv f^{\ta}(\bnu^{\ta})
	\cdot f^{\tb}(\bnu^{\tb})
	\equiv
	\bigg\{\prod_{\mu=1}^t
	\varphi_{\xi_\mu}(\nu_\mu)\bigg\}
	\cdot\bigg\{ \prod_{\mu=t+1}^M
	\varphi_{\xi_\mu}(\nu_\mu)\bigg\}\,.\]
Let us also decompose 
$\bze^J\equiv(\bze^{J,\ta},\bze^{J,\tb})$
and
$\bxi^J\equiv(\bxi^{J,\ta},\bxi^{J,\tb})$
 where $\bze^{J,\ta},\bxi^{\ta}\in\R^t$. Recall that $\bC$ refers to the $M\times t$ matrix with columns $\bc^{(1)},\ldots,\bc^{(t)}$. Let $\bC^{\ta}$ be the topmost $t\times t$ submatrix of $\bC$ and let $\bC^{\tb}$ be the complementary $(M-t)\times t$ submatrix. By re-indexing if needed, we assume $(\cst)^{-1}\le N^{t/2}(\det \bC^{\ta}) \le \cst$. If $\bar{E}_{\admone}=\xx\in\R^t$ is given, then $\bnu^{\ta}$ is uniquely determined as a function of $\xx$ and $\bnu^{\tb}$,
	\[\bnu^{\ta}
	=\bnu^{\ta}(\xx,\bnu^{\tb})
	= \bze^{J,\ta}
	+ ((\bC^{\ta})^\st)^{-1}
	\bigg(
	\xx - (\bC^{\tb})^\st
	(\bnu^{\tb}-\bze^{\tb})
	\bigg)\,.\]
The joint density $f(\bnu)$ is positive if and only if $(\bnu^{\ta},\bnu^{\tb})
\ge(\bxi^{J,\ta},\bxi^{J,\tb})$, and
	\beq\label{e:pp.as.conv}
	\pp(\xx)
	= \int_{\R^{M-t}}
	f^{\ta}(\bnu^{\ta}(\xx,\bnu^{\tb}))
	f^{\tb}(\bnu^{\tb})
		\,d\bnu^{\tb}\,.\eeq
We can find an event $\psatone$ such that
\begin{enumerate}[(i)]
\item\label{i:exp.small}
$\tP(\bnu^{\tb}\notin\psatone\giv\bar{E}_\row,\sat_J)$ is exponentially small with respect to $N$, and 
\item\label{i:v.good.prf}
for all $\bnu^{\tb}\in\psatone$ and all $|\xx|\le C$, either $\bnu^{\ta}(\xx,\bnu^{\tb})\ge\bxi^{J,\ta}$ and
	\[(\bE_{\row},\bE_{\prfone})
	= \bigg(\bar{E}_\row,
	\bT\bv+
	(\bnu^{\ta}(\xx,\bnu^{\tb}),
		\bnu^{\tb})\bigg)
	\in\bsatone\,,\]
or the condition $\bnu^{\ta}(\xx,\bnu^{\tb})\ge\bxi^{J,\ta}$ is violated
(in which case $f^{\ta}(\bnu^{\ta}(\xx,\bnu^{\tb}))=0$).
\end{enumerate}
It follows from \eqref{i:v.good.prf} that for all $|\xx|\le C$ we have
	\[\pp_\star(\xx)
	\ge \pp_\textup{lbd}(\xx)
	\equiv
	\int_{\psatone} 
	f^{\ta}(\bnu^{\ta}(\xx,\bnu^{\tb}))
	f^{\tb}(\bnu^{\tb})
		\,d\bnu^{\tb}
	=
	\pp(\xx)
	-\int_{\R^{M-t}\setminus \psatone} 
	f^{\ta}(\bnu^{\ta}(\xx,\bnu^{\tb}))
	f^{\tb}(\bnu^{\tb})
		\,d\bnu^{\tb}
	\,.\]
The last term is exponentially small in $N$ by \eqref{i:exp.small},
and \eqref{e:adm.givensr.first} follows
by making use of the lower bound in Corollary~\ref{c:adm.density.largerevt}. To prove \eqref{e:adm.givensr.clt}, let us abbreviate the density of $\bar{E}_{\admtwo}$ conditional on $(\bE_\row,\bE_{\prfone})=(\bar{E}_\row,\bT\bv+\bnu)$ as
	\beq\label{e:adm.cond.nu}
	\pp_{\bnu}(\bar{E}_{\admtwo})
	\equiv
	\int
	\f{\Ind{\bE_{\row\admtwo\qrftwo}\in
	\sat_K}
	\tp_{\admtwo\qrftwo}(\bE_{\admtwo\qrftwo}
	\giv\bnu)\,d\bE_{\qrftwo}}
	{\tP(\sat_K \giv \bE_\row,\bnu )}
	\,\bigg|_{\bE_{\row\admtwo}
		=\bar{E}_{\row\admtwo}}\,.\eeq
We shall prove
$\pp_{\bnu}(\bar{E}_{\admtwo})\le\cst$ uniformly over $\lmin\le\lm\le\lmbig$ (for any constant $\lmbig<1$) and all $\bnu$ in the support of the measure 
$\tP(\cdot\giv\bar{E}_\row,\bar{E}_{\admone},\bsatone)$. Since we have already seen that $\pp_\star(\bar{E}_{\admone})\le\cst$, the claim
\eqref{e:adm.givensr.clt} follows by integrating over the law of $\bnu$ under $\tP(\cdot\giv\bar{E}_\row,\bar{E}_{\admone},\bsatone)$.
To bound $\pp_{\bnu}(\bar{E}_{\admtwo})$, note that
$\bE_{\admtwo}=\bC^\st\bE\tbv$ where $\tbv=\lm\bv+\sqrt{1-\lm^2}\bw$ by \eqref{e:c.lambda}.
Therefore, the law under $\tP(\cdot\,|\,\bar{E}_\row,\bE_{\prfone})$ of $\bE_{\admtwo}$ coincides with law under $\oP$ of the random variable
	\beq\label{e:A.nu}
	\aaa_{\lm,\bnu}
	\equiv
	\bC^\st
	\bigg(
	\lm\bnu
	+\sqrt{1-\lm^2}\bom
	-\bze^K
	\bigg)\,.\eeq
where $\bom$ has the law of a standard gaussian in $\R^M$ conditioned to be (coordinatewise) at least
	\beq\label{e:xi.K.lambda}
	\bxi^{K,\lm,\bnu}\equiv
	\f{\bxi^K-\lm\bnu}
	{\sqrt{1-\lm^2}}\,.\eeq
Then $\pp_\bnu$ is simply the density of $\aaa_{\lm,\bnu}$, and it follows similarly to \eqref{e:chf.prod} that for $\sss\in\R^t$,
	\[(\pp_{\bnu})^\wedge(\sss)
	=
	\f{(\bm{\varphi}_{\bxi^{K,\lm,\bnu}})^\wedge
	( \sqrt{1-\lm^2} \bC\sss)}
	{\exp(i (\bze^K-\lm\bnu)^\st \bC \sss)}
	\,.\]
Arguing as in Lemma~\ref{l:gradient.bd} 
gives $|\nabla \pp_{\bnu}(\xx)|\le \cst$
uniformly over all $\xx\in\R^t$. The argument of Corollary~\ref{c:adm.density.largerevt} then gives $|\pp_{\bnu}(\xx)|\le \cst$, thereby
concluding the proof of \eqref{e:adm.givensr.clt}.
\end{proof}

\subsection{Large deviations regime} 

Although the bound \eqref{e:adm.givensr.clt} of Proposition~\ref{p:adm.clt}\ref{p:adm.clt.pair} holds for all $\lmin\le\lm\le\lmbig$ for any constant $\lmbig<1$, we will only make use of it for $|\lm|\le\lmsmall$ for a small constant $\lmsmall>0$. For $|\lm|\ge\lmsmall$ the bound \eqref{e:adm.givensr.clt} turns out to be insufficient, and indeed one would expect in this regime that $\tp_{\adm|\row\bsat} (\bar{E}_\adm\giv\bar{E}_\row)$ decays exponentially in $N$. In the remainder of this section we prove a bound on the rate of decay that will suffice for our purposes. Recall (see \eqref{e:BB}) the definitions of $\BB(\lm,s)$ and $\AAA(\lm)$. Note that for $\bnu$ in the support of $\tP(\cdot\giv\bar{E}_\row,\bar{E}_{\admone},\bsatone)$ we must have 
$\bC^\st(\bnu-\bze^J)=\bar{E}_{\admone}$, so \eqref{e:A.nu} can be rewritten as
$\aaa_{\lm,\bnu}
	=\lm \bar{E}_{\admone}
	+ \sqrt{1-\lm^2}
	\bbb_{\lm,\bnu}$ where
	\beq\label{e:B.nu}
	\bbb_{\lm,\bnu}
	\equiv
	\bC^\st
	\bigg(\bom
		-\f{
		\bze^K-\lm\bze^J}{\sqrt{1-\lm^2}}
		\bigg)\,,\eeq
with $\bom$ as in \eqref{e:A.nu}. Let $\bbb_{\lm,\bnu,t}$ denote the $t$-th entry of $\bbb_{\lm,\bnu}$. We then have the following calculation which offers an interpretation for the function $\BB(\lm,s)$: 

\begin{lem}\label{l:adm.mgf}
For any fixed $s\in\R$, the cumulant-generating function $\mathcal{B}_{\lm,\bnu}(s)\equiv\log \E(\exp(s\bbb_{\lm,\bnu,t}))$ satisfies
	\[\adjustlimits
	\limsup_{\DELTA\downarrow0}
	\limsup_{t\uparrow\infty}
	\limsup_{\EPS\downarrow0}
	\limsup_{\Nall\uparrow\infty} \bigg\{
	\sup_\bnu\bigg|
	\f{\mathcal{B}_{\lm,\bnu}(N^{1/2} s)}
		{N}
	- \BB(\lm,s)
	\bigg|\bigg\}=0\]
where the innermost supremum is taken over all 
$\bnu$ in the support of $\tP(\cdot\giv\bar{E}_\row,\bar{E}_{\admone},\bsatone)$.

\begin{proof}
Recalling the notation of \eqref{e:chf.prod}, we have
	\begin{align*}
	\mathcal{B}_{\lm,\bnu}(s)
	&= -
	\bigg(\bc^{(t)},\f{
		\bze^K-\lm\bze^J}{\sqrt{1-\lm^2}}
		\bigg) s
	+\log 
	(\bm{\varphi}_{\bxi^{K,\lm,\bnu}})^\wedge
	(- i \bc^{(t)} s ) \\
	&=-
	\bigg(\bc^{(t)},\f{
		\bze^K-\lm\bze^J}{\sqrt{1-\lm^2}}
		\bigg) s
	+\f{s^2}{2}
	+\bigg(\oneM,
	\log \f{\bPhi(\bxi^{K,\lm,\bnu}
		-N^{1/2} s\bc^{(t)})}
		{\bPhi(\bxi^{K,\lm,\bnu})}\bigg)\,.\end{align*}
Recall that $\bc^{(t)}$ is fixed by \eqref{e:basis.fixed}; and it follows from \eqref{e:def.zeta.J} that $(\bc^{(t)},\bze^J) =(1-o_N(1)) N^{1/2} \bpsi(1-\bq)$. The result follows by making use of the restriction to $\sat_{J,\EPS}$ together with 
the result \eqref{e:innerproduct.state.limit} from Lemma~\ref{lem-CLT-approximation}.
\end{proof}
\end{lem}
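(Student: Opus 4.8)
The plan is to derive a closed form for $\mathcal{B}_{\lm,\bnu}(s)$ and then pass to the scaling limit one summand at a time. Recall from \eqref{e:B.nu} that $\bbb_{\lm,\bnu,t}=(\bc^{(t)})^\st\bom-\big(\bc^{(t)},(\bze^K-\lm\bze^J)/\sqrt{1-\lm^2}\big)$, and that under $\oP$ the vector $\bom$ has \emph{independent} coordinates with $\omega_\mu\sim\varphi_{\xi^{K,\lm,\bnu}_\mu}$, a standard gaussian conditioned to exceed the threshold from \eqref{e:xi.K.lambda}. Using the elementary identity $\E\exp(a\omega_\mu)=e^{a^2/2}\,\bPhi(\xi^{K,\lm,\bnu}_\mu-a)/\bPhi(\xi^{K,\lm,\bnu}_\mu)$ --- equivalently the expression for $(\varphi_\xi)^\wedge$ recorded in Lemma~\ref{l:chf.decay} --- together with $\ltwo{\bc^{(t)}}=1$, one gets
	\[\mathcal{B}_{\lm,\bnu}(s)
	= -s\bigg(\bc^{(t)},
	\f{\bze^K-\lm\bze^J}{\sqrt{1-\lm^2}}\bigg)
	+\f{s^2}{2}
	+\bigg(\oneM,\log
	\f{\bPhi(\bxi^{K,\lm,\bnu}-s\bc^{(t)})}
	{\bPhi(\bxi^{K,\lm,\bnu})}\bigg)\,.\]
I would then substitute $s\mapsto N^{1/2}s$, divide by $N$, and analyze the three terms in the iterated limit, using throughout that $(J,K)\in\bbH(\lm)$ (so $q_J,q_K,q_{J,t},q_{K,t}$ lie within $O(1/N)$ of $\bq$) and that $\bq\to\sq$, $\bpsi\to\spsi$.

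For the quadratic term the limit is immediate, $N^{-1}(N^{1/2}s)^2/2=s^2/2$. For the linear term I would use \eqref{e:basis.fixed} ($\bc^{(t)}=\bn/\sqrt{N\bpsi}$) together with \eqref{e:def.zeta.J} and \eqref{e:def.bxi} to get $\bze^J=(1+O(1/N))\sqrt{1-\bq}\,\bn$ (and likewise for $\bze^K$), hence $(\bc^{(t)},\bze^J)=(1+o_N(1))N^{1/2}\bpsi^{1/2}(1-\bq)^{1/2}$; the $\lm$-weighted combination then produces $(1+o_N(1))N^{1/2}\bpsi^{1/2}(1-\bq)^{1/2}(1-\lm)/\sqrt{1-\lm^2}$, so after dividing by $N$ and sending $\bq\to\sq$, $\bpsi\to\spsi$ this term converges to $-\spsi^{1/2}\sqrt{1-\sq}\,s\,\big((1-\lm)/(1+\lm)\big)^{1/2}$, exactly the first two summands of $\BB(\lm,s)$ in \eqref{e:BB}.

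The substantive step --- which I expect to be the main obstacle --- is the logarithmic term $N^{-1}\big(\oneM,\log[\bPhi(\bxi^{K,\lm,\bnu}-N^{1/2}s\bc^{(t)})/\bPhi(\bxi^{K,\lm,\bnu})]\big)$. I would write $N^{1/2}s\bc^{(t)}=s\bn/\bpsi^{1/2}=s\,\cE(\bxi)/(\bpsi^{1/2}\sqrt{1-\bq})$ (via \eqref{e:basis.fixed}, \eqref{e:def.bxi}) and $\bxi^{K,\lm,\bnu}=(\bxi^K-\lm\bnu)/\sqrt{1-\lm^2}$ with $\bnu=\bE\bv+\bze^J$ (via \eqref{e:xi.K.lambda}, \eqref{e:def.zeta.J}), so that for $(J,K)\in\bbH(\lm)$ the $\mu$-th summand coincides, up to an $O(\cst/N)$ correction valid on $\sat_{J,\EPS}$ (using $\bxi^J\approx\bxi$, $\bze^J\approx\cE(\bxi)$ --- the latter from the proof of Corollary~\ref{c:adm.density.largerevt} --- together with the polynomial moment bounds), with $f_1(\XX^{J,K}_\mu)$ for the deterministic ($\bgFilt$-measurable) function
	\[f_1(X)=\log\bPhi\bigg(\f{(X)_2-\lm(X)_6}{\sqrt{1-\lm^2}}-\f{s\,(X)_3}{\bpsi^{1/2}\sqrt{1-\bq}}\bigg)-\log\bPhi\bigg(\f{(X)_2-\lm(X)_6}{\sqrt{1-\lm^2}}\bigg),\]
whose arguments are coordinates $2,3,6$ of $\XX^{J,K}$, namely $\bxi^K,\bze^J,\bnu$. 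Since $\log\bPhi$ is at most quadratic and $(\log\bPhi)'=-\cE$ at most linear, $f_1$ meets the polynomial-growth hypothesis of Lemma~\ref{lem-CLT-approximation}, so \eqref{e:innerproduct.state.limit} (with $f_2\equiv1$) yields convergence of $M^{-1}\sum_\mu f_1(\XX^{J,K}_\mu)$ --- \emph{uniformly over $\sat_{J,\EPS}$}, hence uniformly over the support of $\tP(\cdot\giv\bar{E}_\row,\bar{E}_{\admone},\bsatone)$ --- to $\int\!\int f_1(X(z,\nu))\,\varphi_{\xi_{\sq,z}}(\nu)\,d\nu\,\varphi(z)\,dz$, with $\bpsi,\bq$ in $f_1$ replaced by $\spsi,\sq$. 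Since $X(z,\nu)$ has third coordinate $\zeta_{\sq,z}=\cE(\xi_{\sq,z})$, comparison with \eqref{e:II} identifies this limit as $\aall^{-1}\big(\II_s(\lm)-\II(\lm)\big)$; combined with $M/N\to\alpha\to\aall$ (as $\DELTA\downarrow0$, using $\hat{\alpha}(\DELTA)\to0$), the log term tends to $\II_s(\lm)-\II(\lm)$, and summing the three contributions gives $\BB(\lm,s)$ as defined in \eqref{e:BB}. The delicate part in executing this is making the $O(\cst)$ and $O(\cst/N)$ errors --- from $(J,K)\in\bbH(\lm)$, from replacing $\bpsi,\bq$ by $\spsi,\sq$, and from the required uniformity in $\bnu$ --- genuinely $o(N)$ after division by $N$; this is precisely where the restriction to $\sat_{J,\EPS}$, and hence the uniform-over-$\sat_{J,\EPS}$ conclusion of Lemma~\ref{lem-CLT-approximation}, is indispensable. (Convexity of $\mathcal{B}_{\lm,\bnu}$, used elsewhere to conclude that $\BB(\lm,\cdot)$ is convex, is automatic for a cumulant-generating function and plays no role in the present limit.)
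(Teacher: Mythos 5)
Your proposal is correct and follows essentially the same route as the paper: write the cumulant-generating function explicitly via the mgf of the coordinatewise-truncated gaussian $\bom$, read off the $s^2/2$ term from $\ltwo{\bc^{(t)}}=1$, evaluate the linear term through \eqref{e:basis.fixed} and \eqref{e:def.zeta.J}, and convert the $\log\bPhi$ sum into $\II_s(\lm)-\II(\lm)$ via the restriction to $\sat_{J,\EPS}$ and the uniform state-evolution limit \eqref{e:innerproduct.state.limit} of Lemma~\ref{lem-CLT-approximation}. Your version is in fact more explicit than the paper's (which compresses the last step into one sentence), and your evaluation $(\bc^{(t)},\bze^J)=(1+o_N(1))N^{1/2}\bpsi^{1/2}(1-\bq)^{1/2}$ is the one consistent with the target term $-\psi^{1/2}\sqrt{1-q}\,s\,((1-\lm)/(1+\lm))^{1/2}$ in \eqref{e:BB}.
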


\noindent Recall that $\AAA(\lm)\equiv \inf_s \BB(\lm,s)$. We then have the following bound:

\begin{ppn}
For any fixed $\lm$ we have the asymptotic bound
	\begin{equation*}\label{e:adm.givensr.ld}
	\tag{{\footnotesize$\color{Gr}\usf{A}|\usf{RB}^\usf{LD}$}}
	\adjustlimits
	\limsup_{\DELTA\downarrow0}
	\limsup_{t\uparrow\infty}
	\limsup_{\EPS\downarrow0}
	\limsup_{\Nall\uparrow\infty}
	\f{\log\tp_{\adm|\row\bsat}
	(\bar{E}_\adm\giv\bar{E}_\row)}{\Nall}
	\le \AAA(\lm)\,.\end{equation*}
The convergence is uniform over $\lmin\le\lm\le\lmbig$ for any constant $\lmbig<1$.
\end{ppn}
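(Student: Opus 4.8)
The plan is to follow the architecture of the proof of Proposition~\ref{p:adm.clt}\ref{p:adm.clt.pair}, replacing the uniform bound $\pp_\bnu(\bar{E}_{\admtwo})\le\cst$ by a Cram\'er-type large-deviation estimate whose rate is $\AAA(\lm)=\inf_s\BB(\lm,s)$. Exactly as there, one factors $\tp_{\adm|\row\bsat}(\bar{E}_\adm\giv\bar{E}_\row)$ through the density $\pp_\star(\bar{E}_{\admone})\le\cst$ provided by \eqref{e:adm.givensr.first} and the conditional densities $\pp_\bnu(\bar{E}_{\admtwo})$ of \eqref{e:adm.cond.nu}, integrated over the law of $\bnu$ under $\tP(\cdot\giv\bar{E}_\row,\bar{E}_{\admone},\bsatone)$, so that it suffices to show
\[\adjustlimits\limsup_{\DELTA\downarrow0}\limsup_{t\uparrow\infty}\limsup_{\EPS\downarrow0}\limsup_{\Nall\uparrow\infty}\;\sup_\bnu\f{\log\pp_\bnu(\bar{E}_{\admtwo})}{\Nall}\le\AAA(\lm)\]
uniformly over $\lmin\le\lm\le\lmbig$, the supremum over $\bnu$ in the support of that measure. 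Recall from \eqref{e:A.nu}--\eqref{e:B.nu} that on this support $\pp_\bnu$ is the density of $\aaa_{\lm,\bnu}=\lm\bar{E}_{\admone}+\sqrt{1-\lm^2}\,\bbb_{\lm,\bnu}$ with $\bbb_{\lm,\bnu}=\bC^\st\big(\bom-(\bze^K-\lm\bze^J)/\sqrt{1-\lm^2}\big)$ and $\bom$ a standard gaussian in $\R^M$ conditioned coordinatewise to exceed $\bxi^{K,\lm,\bnu}$. Since $(1-\lm^2)^{-t/2}\le\cst$ for $\lm\le\lmbig$, and since $\bar{E}_{\admone},\bar{E}_{\admtwo}$ have all coordinates of size $O(\cst/N^{1/2})$ for $J,K\in\bbH$ (the bounds $|(\bv,\by^{(s)})|,|(\tbv,\by^{(s)})|\le\cst/N^{1/2}$ established in the proof of Proposition~\ref{p:adm}), this reduces to bounding the density of $\bbb_{\lm,\bnu}\in\R^t$ at the fixed point $\bo\equiv(\bar{E}_{\admtwo}-\lm\bar{E}_{\admone})/\sqrt{1-\lm^2}$, whose coordinates are all $O(\cst/(N^{1/2}\sqrt{1-\lmbig^2}))$, uniformly over the relevant $\bnu$ and $\lm$.

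For this I would tilt in the $\bc^{(t)}$-direction. For fixed $s\in\R$ put $\sigma=N^{1/2}s$ and let $g_s$ be the density of $\bbb_{\lm,\bnu}$ under the exponential tilt $d\widetilde{\oP}_s/d\oP\propto\exp(\sigma\,\bbb_{\lm,\bnu,t})$ of its law; then the density of $\bbb_{\lm,\bnu}$ at $\bo$ under $\oP$ equals $\E[\exp(\sigma\,\bbb_{\lm,\bnu,t})]\,\exp(-\sigma\,\bo_t)\,g_s(\bo)$. By Lemma~\ref{l:adm.mgf}, $N^{-1}\log\E[\exp(\sigma\,\bbb_{\lm,\bnu,t})]$ converges to $\BB(\lm,s)$ uniformly over $\bnu$ in the iterated limit, while $\sigma\,\bo_t=O(\cst\,s)$ because $\bo_t=O(\cst/N^{1/2})$; hence, once $g_s(\bo)\le\cst_s$ is established, one gets $\Nall^{-1}\log\pp_\bnu(\bar{E}_{\admtwo})\le\BB(\lm,s)+\ETA$ for every fixed $s$ in the iterated limit, and then taking the infimum over $s$ of the convex function $\BB(\lm,\cdot)$ yields the bound $\AAA(\lm)$. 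To see $g_s(\bo)\le\cst_s$, note that the tilt merely replaces each independent conditioned-gaussian coordinate $\omega_\mu$ of $\bom$ by one with the shifted threshold $\xi^{K,\lm,\bnu}_\mu-\sigma(\bc^{(t)})_\mu$ and an additive mean-shift; since shifting the mean does not change the modulus of a characteristic function, the modulus of the characteristic function of $\bbb_{\lm,\bnu}$ under $\widetilde{\oP}_s$ is again a product $\prod_\mu|(\varphi_{\xi^{K,\lm,\bnu}_\mu-\sigma(\bc^{(t)})_\mu})^\wedge((\bC\sss)_\mu)|$ to which Lemma~\ref{l:chf.decay} applies. Because $|\sigma(\bc^{(t)})_\mu|=O(s)$ on a positive fraction of indices and, by the joint profile restriction (Definition~\ref{d:profile}~part~\eqref{d:profile.iii}) together with Lemma~\ref{lem-CLT-empirical} and $\bN=\bC\bXi$ with $\bXi$ nondegenerate (Corollary~\ref{c:Xi.nondegenerate}), a positive fraction of indices additionally have $|\xi^{K,\lm,\bnu}_\mu|\le\cst$ and $(\hbe_\mu)^\st\bN$ in a prescribed box, the argument of Lemma~\ref{l:gradient.bd} gives $\linf{\nabla g_s}\le\cst_s$ by Fourier inversion, and hence $\linf{g_s}\le\cst_s$ (a probability density on $\R^t$ with gradient bounded by $L$ is bounded by $C_tL^{t/(t+1)}$, since near any point where it takes the value $h$ it stays $\ge h/2$ on a ball of radius $h/(2L)$). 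Unlike Corollary~\ref{c:adm.density.largerevt}, only an upper bound on $g_s$ is needed, so no local central limit theorem is required.

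The step I expect to be the main obstacle is the scale bookkeeping and the uniformity underlying the Cram\'er estimate: one must check that $\bo$ genuinely has coordinates $O(\cst/N^{1/2})$ uniformly over $J,K\in\bbH$ and $\lmin\le\lm\le\lmbig$, which rests on the near-cancellations $\bze^J\approx\bze=\cE(\bxi)\approx\cE(\bxi^J)$ from \eqref{e:def.bxi} and Lemma~\ref{lem-CLT-approximation} and on the admissibility estimates in the proof of Proposition~\ref{p:adm}; and that both Lemma~\ref{l:adm.mgf} and the anti-concentration input of Lemma~\ref{lem-CLT-empirical} hold uniformly over $\bnu$ in the support of $\tP(\cdot\giv\bar{E}_\row,\bar{E}_{\admone},\bsatone)$ and over $\lm$, which is again where the profile restrictions of Definitions~\ref{d:cube}~and~\ref{d:profile} are essential. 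The remaining points --- exchanging the infimum over $s$ with the iterated limit, and absorbing the $s$-dependent constants $\cst_s$ (stochastically bounded for each fixed $s$) --- are routine.
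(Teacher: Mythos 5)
Your proposal is correct, and it follows the paper's proof in its two essential steps: reducing \eqref{e:adm.givensr.ld} to a uniform bound on $\pp_\bnu(\bar{E}_{\admtwo})$ over $\bnu$ in the support of $\tP(\cdot\giv\bar{E}_\row,\bar{E}_{\admone},\bsatone)$, and extracting the rate $\AAA(\lm)=\inf_s\BB(\lm,s)$ from the cumulant-generating function of $\bbb_{\lm,\bnu,t}$ at scale $N^{1/2}s$ via Lemma~\ref{l:adm.mgf}, with the infimum over $s$ taken after the iterated limit. Where you genuinely diverge is in the intermediate ``local'' step that connects the density at a point of scale $O(\cst/N^{1/2})$ to the MGF. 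The paper stays with the untilted variable: it truncates the convolution \eqref{e:pp.nu.as.conv} at $\|\bom^{\ta}\|_\infty\le(N\log N)^{1/2}$, uses a monotone-shift argument (nonnegative $\uu$, so the shifted density cannot vanish where the unshifted one is positive) to dominate $\pp_\bnu(\bar{E}_{\admtwo})$ by its average over a small box, hence by $\oP(|\bbb_{\lm,\bnu,t}|\le O(1)/N^{1/2})$, and only then applies the Chernoff bound. You instead tilt first, writing the density at $\bo$ as $\E[\exp(\sigma\bbb_{\lm,\bnu,t})]e^{-\sigma\bo_t}g_s(\bo)$, and bound the tilted density $g_s$ by observing that the tilt preserves the product structure of conditioned gaussians (merely shifting each threshold by $\sigma(\bc^{(t)})_\mu=O(s)$), so that the characteristic-function machinery of Lemmas~\ref{l:chf.decay} and~\ref{l:gradient.bd} still applies and yields $\linf{\nabla g_s}\le\cst_s$, whence $\linf{g_s}\le\cst_s$ by your elementary gradient-to-sup bound (correctly replacing the local CLT, which is indeed only needed for lower bounds). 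Your route buys a cleaner identity and avoids the paper's box-averaging and the $(N\log N)^{1/2}$ truncation, at the cost of re-running the anti-concentration argument under the tilt and tracking $s$-dependent constants; the paper's route avoids any tilted-density estimate but needs the nonnegativity trick to make the smoothing legitimate. The uniformity issues you flag (boundedness of $\bo$ via the cancellation $\bze^J\approx\cE(\bxi^J)$, and uniformity of Lemma~\ref{l:adm.mgf} and of the characteristic-function decay over $\bnu$ via Definition~\ref{d:profile}\eqref{d:profile.iii}, Lemma~\ref{lem-CLT-empirical}, and Corollary~\ref{c:Xi.nondegenerate}) are exactly the ones the paper also relies on for \eqref{e:adm.givensr.clt}, so no new obstruction arises.
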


\begin{proof}
Let $\pp_\nu$ be as in \eqref{e:adm.cond.nu}.
It suffices to prove
	\[\adjustlimits
	\limsup_{\DELTA\downarrow0}
	\limsup_{t\uparrow\infty}
	\limsup_{\EPS\downarrow0}
	\limsup_{\Nall\uparrow\infty}
	\f{\log
	\pp_\nu(\bar{E}_{\admtwo})}{\Nall}
	\le \AAA(\lm)\]
uniformly over all $\bnu$ in the support of $\tP(\cdot\giv\bar{E}_\row,\bar{E}_{\admone},\bsatone)$. Similarly as in \eqref{e:pp.as.conv}, we can express
	\beq\label{e:pp.nu.as.conv}
	\pp_\nu(\xx)
	= \int_{\R^{M-t}} 
	f^{\ta,\lm}(
		\bom^{\ta}(\xx,\bom^{\tb})
		)
	f^{\tb,\lm}(\bom^{\tb})
	\,d\bom^{\tb}\eeq
where $f^{\ta,\lm}(\bom^{\ta})f^{\tb,\lm}(\bom^{\tb})$ gives the joint density of $\bom$
under $\tP(\cdot\giv\bar{E}_\row,\bnu,\sat_K)$, and
	\[\bom^{\ta}
		(\xx,\bom^{\tb})
	\equiv ( (\bC^{\ta})^\st)^{-1}\bigg(
	 \f{\xx-\lm\bar{E}_{\admone}
	+ \bC^\st(\bze^K-\lm\bze^J)
	}{\sqrt{1-\lm^2}}
	+ (\bC^{\tb})^\st \bom^{\tb}
	\bigg)\,.\]
If $\|\bom^{\ta}\|_\infty\ge (N\log N)^{1/2}$
then $f^{\ta,\lm}(\bom^{\ta})
\le \cst \exp(-\Omega(N\log N))$. It follows that the total contribution to \eqref{e:pp.nu.as.conv}
from those $\bom^{\tb}$ for which $\|\bom^{\ta}(\xx,\bom^{\tb})\|_\infty\ge (N\log N)^{1/2}$ satisfies the bound
	\beq\label{e:off.integral}
	\int_{\R^{M-t}}
	\mathbf{1}\bigg\{
	\f{\|\bom^{\ta}(\xx,\bom^{\tb})\|_\infty}
		{(N\log N)^{1/2}}
		\ge 1\bigg\}
	f^{\ta,\lm}(
		\bom^{\ta}(\xx,\bom^{\tb})
		)
	f^{\tb,\lm}(\bom^{\tb})
	\,d\bom^{\tb}
	\le \f{\cst}{\exp(\Omega(N\log N))}\,.\eeq
Recall from the proof of Proposition~\ref{p:adm.clt} that, by re-indexing if needed, we may assume
$(\cst)^{-1}\le N^{t/2}(\det \bC^{\ta}) \le \cst$.
If $\|\bom^{\ta}(\xx,\bom^{\tb})\|_\infty\le (N\log N)^{1/2}$ and we take a \emph{nonnegative} vector $\uu\in\R^t$ with $\|\uu\|_\infty\le \exp(-\Omega((\log N)^2))$, then
	\[\f{f^{\ta,\lm}
	(\bom^{\ta}(\xx,\bom^{\tb}))}{f^{\ta,\lm}
	(\bom^{\ta}(\xx+ (\bC^{\ta})^\st \uu
	,\bom^{\tb}))}
	\le 1+o_N(1)
	\le 2\,.\]
We require nonnegative $\uu$ since otherwise it is possible that the numerator is positive while the denominator is zero. It follows for such $\uu$ that 
	\[\pp_\bnu(\xx)
	\le 2\pp_\bnu(\xx+ (\bC^{\ta})^\st \uu)
	+ \f{\cst}{\exp(\Omega(N\log N))}\,.\]
Combining with the preceding estimate \eqref{e:off.integral} gives
	\[\pp_\bnu(\xx)
	\le 
	\f{\cst}{\Delta^t}
	\int_{\uu\in[0,\Delta]^t}
	\pp_\bnu(\xx+ (\bC^{\ta})^\st\uu)\,d\uu
	+ \f{\cst}{\exp(\Omega(N\log N))}\]
for $\Delta = \exp(-\Omega((\log N)^2))$. 
Now, for $\xx=\bar{E}_{\admtwo}$,
recalling \eqref{e:B.nu} gives
	\[\int_{\uu
		\in[0,\Delta]^t}
	\pp_\bnu(\bar{E}_{\admtwo}
	+ (\bC^{\ta})^\st \uu)\,d\uu
	\le \oP\bigg(
	\bigg|\bbb_{\lm,\bnu}
	-
	\f{\bar{E}_{\admtwo}
	-\lm\bar{E}_{\admone}}
		{\sqrt{1-\lm^2}}
	\bigg| \le \f1{\exp(\Omega((\log N)^2))}
	\bigg)
	\le \oP\bigg(|\bbb_{\lm,\bnu,t}|
		\le \f{O(1)}{N^{1/2}}\bigg)\,,\]
where the last step uses that $\|\bar{E}_{\admone}\|_\infty$ and $\|\bar{E}_{\admtwo}\|_\infty$ are $O(N^{-1/2})$, by \eqref{e:def.Ebar.adm} and the assumption $(J,K)\in\bbH(\lm)$. It follows by Markov's inequality that
	\[\oP\bigg(|\bbb_{\lm,\bnu,t}|
		\le \f{O(1)}{N^{1/2}}\bigg)
	\le N^{O(1)}
	\inf_{|s| \le \log N}
	\E\Big(\exp( N^{1/2} s \bbb_{\lm,\bnu,t} )
	\Big)\,.\]
Combining with Lemma~\ref{l:adm.mgf} gives the result.
\end{proof}

\section{Volume for slices of discrete cube}\label{s:cube}

\noindent In this section we estimate the sizes of the sets $\bbH$, $\bbH_\EPS$, and $\bbH_\EPS(\lm)$ specified in Definition~\ref{d:cube}. We abbreviate $\eigmin$ and $\eigmax$ for the minimum and maximum eigenvalues of a positive-semidefinite matrix.

\subsection{CLT and moderate deviations regime} We first estimate $\#\bbH$ and $\#\bbH_\EPS$:

\begin{ppn}\label{p:count}
For any fixed $t$ and fixed positive $\EPS$ we have,
for $\bH\equiv\bH^{(t)}$
and $\bmag\equiv\bmag^{(t)}\equiv\tnh(\bH)$,
	\begin{equation*}\label{e:count}
	\tag{{\footnotesize$\color{Or}\usf{H}$}}
	\f{\#\bbH_\EPS}
		{1-o_N(1)}
	=
	\#\bbH 
	= \f{\exp\{O(\cst)\}}{N^t}
	\exp\Big\{
	(\oneN,\log(2\ch(\bH)))
	-(\bH,\bmag)
	\Big\}\end{equation*}
As a consequence, the limit \eqref{e:HH} holds.

\begin{proof}
Let $\oQ$ be the uniform probability measure over $J\in\set{-1,+1}^N$. Define the tilted measure $\tQ$ by
	\beq\label{e:tilt.H}
	\f{d\tQ}{d\oQ}
	=\prod_{i=1}^N \f{\exp(H_i J_i)}
		{\ch(H_i)}
	= \exp\bigg\{(\bH,J)
		-(\oneN,\log\ch(\bH))
		\bigg\}
	\,.\eeq
If $\tQE$ denotes expectation under $\tQ$, then $\tQE J=\tnh(\bH)=\bmag$. Define the $N\times 2t$ matrices
	{\setlength{\jot}{0pt}\begin{align}
	\label{e:J.ips}
	\bL
	&\equiv
	\begin{pmatrix}
	\bmag^{(1)} & \cdots & \bmag^{(t)}
	& \bH^{(2)} & \cdots & \bH^{(t)}
	& \bH^{(t+1)}
	\end{pmatrix}\\ \nonumber
	\bL'
	&\equiv
	\begin{pmatrix}
	N^{1/2}\br^{(1)} & \cdots & 
	N^{1/2}\br^{(t)}
	& \bH^{(2)} & \cdots & \bH^{(t)}
	& \bH^{(t+1)}
	\end{pmatrix}
	\end{align}}%
From Definition~\ref{d:cube},
for $J\in\bbH$ we have
	\begin{align}
	\label{e:r.times.J.minus.m}
	\f{(\br^{(s)},J-\bmag)}{N^{1/2}}
	&= \Big( \bj
	- \sqrt{\bq}\br^{(t)},\br^{(s)}\Big)
	= \begin{cases}
	\sqrt{q_{J,t}} 
	- \sqrt{\bq} = O(1/N)
	&\textup{for }s=t\,, \\
	\gamma_{J,s}
	= O(1/N)
	&\textup{for }1\le s\le t-1\,,
	\end{cases}\\
	\label{e:H.times.J.minus.m}
	\f{(\bH^{(s)},J-\bmag)}{N}
	&= 
	\bigg(
	(\sqrt{q_{J,t}}-\sqrt{\bq})\br^{(t)}
	+\sum_s\gamma_{J,s} \br^{(s)}
	+ \sqrt{1-q_J}\bv, \f{\bH^{(s)}}{N^{1/2}}
	\bigg)
	= \f{O(\cst)}{N}\,.
	\end{align}
It follows
from \eqref{e:r.times.J.minus.m}
and \eqref{e:H.times.J.minus.m}
 that we can choose $\cst=\cst_{M,\Nall,t,\DELTA}$ such that
	\beq\label{e:sandwich}
	\bigg\{
	\linf{(\bL')^\st(J-\bmag)}\le \f1{\cst}
	\bigg\}\subseteq
	\set{J\in\bbH}
	\subseteq \bigg\{
	\linf{(\bL')^\st(J-\bmag)}\le\cst
	\bigg\}\,.\eeq
It then follows from the factorization $\bM=\bR\bGa$ (see \eqref{e:bM.before.change.basis}) and the nondegeneracy of $\bGa$ that \eqref{e:sandwich} holds with $\bL$ in place of $\bL'$, adjusting $\cst$ as needed (maintaining stochastic boundedness). We hereafter consider $\yy_J\equiv \bL^\st (J-\bmag)\in\R^{2t}$. Note this can be decomposed as 
	\[\yy_J
	= \sum_{i=1}^N\yy_{J,i}
	\equiv 
	\sum_{i=1}^N
	\bL^\st \dbe_i (\dbe_i)^\st (J-\bmag)
	= 
	\sum_{i=1}^N
	\bL^\st \dbe_i(J_i-m_i)\,.\]
Under the measure $\tQ$, the $\yy_{J,i}\in\R^{2t}$ are mutually independent random variables with mean $\tQE \yy_{J,i}=0$ and covariance $\bSi_i\equiv
\tQE(\yy_{J,i}(\yy_{J,i})^\st)
= (1-(m_i)^2) (\bL^\st\dbe_i)(\bL^\st\dbe_i)^\st
$ for each $i$. By re-indexing let us suppose that $\ltwo{\bL^\st\dbe_i}$ is nondecreasing in $i$, and let
	\[\quad
	\yy_{J,\ta}
	\equiv
	\sum_{i=1}^{\lfloor N/2\rfloor}
	\yy_{J,i}
	= (\bL_\ta)^\st(J-\bmag)
	\,,\quad
	\yy_{J,\tb}
	\equiv \sum_{i=\lfloor N/2\rfloor+1}^N
	\yy_{J,i}
	= (\bL_\tb)^\st(J-\bmag)
	\,,\quad
	\bL\equiv
	\begin{pmatrix}\bL_\ta \\
		\bL_\tb
		\end{pmatrix}\,.\]
The covariance matrices
$\bSi_\ta\equiv \Cov(\yy_{J,\ta})$ and
$\bSi_\tb\equiv \Cov(\yy_{J,\tb})$
are obtained by summing $\bSi_i$ over the corresponding indices $i$. Now observe that
	\[\f1N \sum_{i=1}^N
	\ltwo{\bL^\st\dbe_i}^2
	=\f1N\sum_{s=1}^t
	\bigg(\ltwo{\bH^{(s)}}^2 
	+\ltwo{\bmag^{(s)}}^2 \bigg)
	\le 
	\f{2}N\sum_{s=1}^t
	\ltwo{\bH^{(s)}}^2 
	\le \cst\]
where the last inequality follows by
\eqref{eq-H-norm}. With the re-indexing it follows that
	\beq\label{eq-local-CLT-UI}
	\max_{1\le i\le N/2}
	\eigmax
	(\bSi_i)
	=\max_{1\le i\le N/2}
	\ltwo{\bL^\st\dbe_i}^2
	\le\cst\,,\quad
	\eigmax
	(\bSi_\tb)
	\le
	\sum_{i=\lfloor N/2\rfloor+1}^N
	\ltwo{\bL^\st\dbe_i}^2
	\le N\cst\,.\eeq
The bound on $\eigmax(\bSi_\tb)$ implies (via Chebychev's inequality) that $\tQ(\linf{\yy_{J,\tb}}/N^{1/2} \le \cst') \ge 1/2$ for some choice of $\cst'$. We will apply a triangular array local central limit theorem \cite[Thm.~3.1]{MR3632529} to the sum $\yy_{J,\ta}$ to deduce the following: uniformly over all $\linf{\yy_{J,\tb}}/N^{1/2} \le \cst'$ and all $1/\cst \le c\le \cst$,
	\begin{equation}\label{eq-local-CLT}
	\f1{\cst''} \le N^t
	\tQ\bigg(
	\linf{\yy_{J,\ta}+\yy_{J,\tb}}
	\le c
	\givBig
	\yy_{J,\tb}
	\bigg) \le \cst''\,.
	\end{equation}
Before verifying \eqref{eq-local-CLT} let us see that it implies the claimed result.
Recalling \eqref{e:sandwich} gives
	\[\tQ(\bbH)
	= \f{\exp\{O(\cst'')\}}{N^t}\,.\]
It follows from the law of large numbers that
$\tQ(\bbH_\EPS\giv\bbH)=1-o_N(1)$ for any fixed positive $\EPS$. On the other hand, rearranging
\eqref{e:H.times.J.minus.m} gives $(J,\bH) = (\bmag,\bH) + O(\cst)$, which implies that the Radon--Nikodym derivative \eqref{e:tilt.H} is roughly constant
(up to additive error $O(\cst)$) over $J\in\bbH$, hence also over 
$J\in\bbH_\EPS$.
Therefore
	\[\exp\bigg\{
	(\bH,\bmag)
	-(\oneN,\log\ch(\bH))
	+O(\cst)
	\bigg\}
	\oQ(\bbH)
	= \tQ(\bbH)
	= \f{\exp\{O(\cst'')\}}{N^t}\,,\]
and likewise with $\bbH_\EPS$ in place of $\bbH$. Rearranging these estimates gives \eqref{e:count}, since $\oQ(\bbH)
= \#\bbH/2^N$. The convergence \eqref{e:HH} follows, making use of \cite[Lemma 1(c)]{bayati2011dynamics}.

It remains to prove the claim~\eqref{eq-local-CLT}. For this it suffices to verify the conditions of \cite[Thm.~3.1]{MR3632529} for the random variables $(\yy_{J,i})_{i\le N/2}$. We summarize the criteria as follows:
\begin{enumerate}[I.]
\item \label{e:total.cov}
(cf.\ \cite[eq.~(3.4)~and~(3.6)]{MR3632529})
The total covariance satisfies
$\eigmin(\bSi_\ta)\gg \log \det \bSi_\ta
\gg 1$ in the limit $N\uparrow\infty$.

\item \label{e:bor.max.cov} (cf.\ \cite[eq.~(3.4)]{MR3632529})
There is a uniform bound
$\eigmax(\bSi_i)\le \cst$
over all $1\le i\le N/2$.

\item \label{e:bor.ui} (cf.\ \cite[\textbf{UI}]{MR3632529}) We have
 $\ltwo{\yy_{J,i}} \le \cst$ almost surely for all $1\le i\le N/2$. 

\item \label{e:bor.nl} (cf.\ \cite[\textbf{NL}]{MR3632529}) For any fixed $\gamma\in(0,1)$ we have
	\beq\label{e:nl}
	\sup\bigg\{
	\Big|\E \exp(\iii(\yy_{J,\ta},\sss))\Big|
	: \gamma \le |\sss| \le \f1\gamma
	\bigg\}
	\le \f{o_N(1)}{\det \bSi_\ta}\,.\eeq
\end{enumerate}
In fact, \cite[\textbf{UI}]{MR3632529} is a weaker ``uniform integrability'' condition. Our collection of random variables $\yy_{J,i}$ satisfies the stronger almost sure bound stated in \eqref{e:bor.ui}, since with probability one we have $\ltwo{\yy_{J,i}} \le \ltwo{\bL^\st\dbe_i}$, and $\ltwo{\bL^\st\dbe_i}\le\cst$ by \eqref{eq-local-CLT-UI}. Condition~\eqref{e:bor.max.cov} is directly implied by \eqref{e:bor.ui} (although not by the weaker condition \cite[\textbf{UI}]{MR3632529}).

We next verify condition~\eqref{e:total.cov}. In the following we write $\cst_{\DELTA',\EPS'}\equiv \cst_{M,\Nall,\DELTA,t,\EPS,\DELTA',\EPS'}$ following the convention set by Remark~\ref{r:cst}. Recall from above that
	\[\bSi_\ta
	= \sum_{1\le i\le N/2}
	(1-(m_i)^2) 
	(\bL^\st \dbe_i)
	(\bL^\st \dbe_i)^\st
	= (\bL_\ta)^\st
	\bB
	\bL_\ta\,,\quad
	\bB
	\equiv \diag( (1-(m_i)^2)_{i\le N/2} )\,.\]
Write $\mathfrak{e}_r$ for the $r$-th standard basis vector in $\R^t$. Let $A_{\DELTA',\EPS',r}\subseteq\R^t$ denote a ball of radius $\EPS'$ centered around $\DELTA'\mathfrak{e}_r$. It follows from Lemma~\ref{lem-CLT-empirical} that for any fixed $\DELTA',\EPS'$ we have
	{\setlength{\jot}{0pt}\begin{align*}
	\#\set{ 1\le i\le N :
	(\dbe_i)^\st \bH(1:t) \in 
	A_{\DELTA',\EPS',r}
	}
	&\ge N/\cst_{\DELTA',\EPS'}\,,\\
	\#\set{ 1\le i\le N :
	(\dbe_i)^\st \bH(1:t) \in 
	A_{2\DELTA',\EPS',r}
	} &\ge N/\cst_{\DELTA',\EPS'}\end{align*}}%
for all $1\le r\le t$. This means we can permute the indices $1\le i\le N/2$ such that
	\[\bL_\ta
	= \begin{pmatrix}
	\bL_1 \\ \vdots \\ \bL_k
	\\ \bL_\textup{rest}
	\end{pmatrix}\,,\quad
	\bL_j
	= \begin{pmatrix}
	\tnh(\DELTA') \Id_{t\times t}
	& \DELTA' \Id_{t\times t}\\
	\tnh(2\DELTA') \Id_{t\times t}
	& 2\DELTA' \Id_{t\times t}\\
	\end{pmatrix}
	+O(\EPS')
	\in\R^{2t\times 2t}\,,\]
where $k \ge N/\cst_{\DELTA',\EPS'}$
and $\bL_\textup{rest}$ is $(N-kt)\times 2t$. Since $\tnh$ is nonlinear, for any $\DELTA'>0$ we can choose $\EPS'$ small enough to guarantee that $\bL_j$ is of full rank. From this we can readily see that $\eigmin((\bL_\ta)^\st\bL_\ta)\ge N/\cst$. Arguing similarly as in the proof of Corollary~\ref{c:adm.density.largerevt} gives that $\bSi_\ta=(\bL_\ta)^\st\bB\bL_\ta$ has $\eigmin(\bSi_\ta)\ge N/\cst$ also. We have from \eqref{eq-local-CLT-UI} that $\eigmax(\bSi_\ta)\le N\,\cst$, so $\det \bSi_\ta \le (N\,\cst)^t$. Condition~\eqref{e:total.cov} immediately follows.

It remains to verify the ``nonlattice'' condition \eqref{e:bor.nl}. Writing $p_i\equiv(1+m_i)/2$, we have
	\[\phi(\sss)\equiv
	\Big|\E \exp(\iii(\yy_{J,\ta},\sss))\Big|
	=\prod_{i\le N/2}
	\bigg|
	p_i \exp\{ 2 \iii
	(\bL^\st\dbe_i,\sss)\}
	+(1-p_i)
	\bigg|\,.\]
If $(\bL^\st\dbe_i,\sss)$ is bounded away from $2\pi\mathbb{Z}$ for a positive fraction of indices $i$, then $|\phi(\sss)|$ will be exponentially small with respect to $N$, hence much smaller than the right-hand side of \eqref{e:nl}. By choosing $\DELTA',\EPS'$ small enough (depending on $\gamma$) we can ensure that for all $\gamma\le|\sss|\le1/\gamma$ there is a positive fraction of indices $i$ such that
	\[\f{\gamma^2}{\cst}\le(\bL^\st\dbe_i,\sss)\le \f12\,.\]
Condition~\eqref{e:bor.nl} follows, concluding the proof.
\end{proof}
\end{ppn}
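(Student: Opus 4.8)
\emph{Proof proposal.} The plan is to compute $\#\bbH$ by passing to a tilted product measure on $\set{-1,+1}^N$ under which the coordinates are independent with the correct means, and then to apply a triangular-array local central limit theorem. Let $\oQ$ be uniform on $\set{-1,+1}^N$ and define $\tQ$ by $\f{d\tQ}{d\oQ}=\prod_{i=1}^N \exp(H_iJ_i)/\ch(H_i)$ with $\bH\equiv\bH^{(t)}$, so that $\tQE J=\tnh(\bH)=\bmag$ and the $J_i$ are independent under $\tQ$. Since $2^N\,\f{d\oQ}{d\tQ}(J)=\exp\set{(\oneN,\log(2\ch(\bH)))-(\bH,J)}$ and, for every $J\in\bbH$, the near-orthogonality of $\bv$ to $\bH^{(t)}$ built into Definition~\ref{d:cube} forces $(\bH,J)=(\bH,\bmag)+O(\cst)$, this Radon--Nikodym derivative is constant on $\bbH$ up to a factor $\exp\set{O(\cst)}$; hence \eqref{e:count} reduces to the single estimate $\tQ(\bbH)=\exp\set{O(\cst)}/N^t$ (and its analogue for $\bbH_\EPS$).

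Next I would linearize $\bbH$. The conditions of Definition~\ref{d:cube} say precisely that the projection of $J-\bmag$ onto $\spn(\bmag^{(1)},\dots,\bmag^{(t)},\bH^{(2)},\dots,\bH^{(t+1)})$ is controlled: its components along the $\bmag^{(s)}$ directions are $O(1)$ and those along the $\bH^{(s)}$ directions are $O(\cst)$. Using the QR factorization $\bM=\bR\bGa$ and the nondegeneracy of $\bGa$ (from \cite[Lemma 1(a) and 1(g)]{bayati2011dynamics}) to convert between the $\br$- and $\bmag$-coordinates, one sandwiches $\bbH$ between $\set{\linf{\bL^\st(J-\bmag)}\le 1/\cst}$ and $\set{\linf{\bL^\st(J-\bmag)}\le\cst}$, where $\bL$ is the $N\times 2t$ matrix with columns $\bmag^{(1)},\dots,\bmag^{(t)},\bH^{(2)},\dots,\bH^{(t+1)}$. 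Writing $\yy_J\equiv\bL^\st(J-\bmag)=\sum_{i=1}^N(\bL^\st\dbe_i)(J_i-m_i)$ exhibits $\yy_J\in\R^{2t}$ as a sum of independent, bounded, mean-zero increments with $i$-th covariance $(1-m_i^2)(\bL^\st\dbe_i)(\bL^\st\dbe_i)^\st$, so that up to $\exp\set{O(\cst)}$ the quantity $\tQ(\bbH)$ equals $\tQ(\linf{\yy_J}\le c)$ for some constant $c\asymp 1$.

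The core step is to show $N^t\,\tQ(\linf{\yy_J}\le c)\asymp 1$ by a triangular-array local CLT such as \cite[Thm.~3.1]{MR3632529}, applied after reindexing so that $\ltwo{\bL^\st\dbe_i}$ is nondecreasing, splitting off the top half of the coordinates as a ``bad'' block (controlled by Chebyshev's inequality) and running the LCLT on the ``good'' half. Boundedness of the increments is $\ltwo{\bL^\st\dbe_i}\le\cst$, immediate from \cite[Lemma 1(c)]{bayati2011dynamics}. The two substantive hypotheses are: (i) nondegeneracy of the good-block covariance (smallest eigenvalue $\ge N/\cst$), which I would derive from Lemma~\ref{lem-CLT-empirical} --- there are $\ge N/\cst$ coordinates $i$ for which $(H^{(2)}_i,\dots,H^{(t+1)}_i)$ lands in any prescribed small ball, so after reindexing $\bL$ contains many near-copies of a fixed $2t\times 2t$ block whose full rank follows from the \emph{nonlinearity} of $\tnh$, and the diagonal weights $1-m_i^2$ are incorporated exactly as in Corollary~\ref{c:adm.density.largerevt}; and (ii) the nonlattice condition, where $\bigl|\tQE\exp(\iii(\yy_J,\sss))\bigr|=\prod_i\bigl|p_i e^{2\iii(\bL^\st\dbe_i,\sss)}+(1-p_i)\bigr|$ with $p_i=(1+m_i)/2$, and for $\gamma\le|\sss|\le1/\gamma$ Lemma~\ref{lem-CLT-empirical} again yields a positive fraction of $i$ with $(\bL^\st\dbe_i,\sss)$ bounded away from $\pi\mathbb Z$, forcing the product to be $e^{-\Omega(N)}$ and hence far below the threshold required by the LCLT. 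This covariance-nondegeneracy and nonlattice bookkeeping, carried out uniformly in the parameters hidden inside $\cst$, is the main obstacle; the remaining steps are routine.

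Finally, $\#\bbH_\EPS=(1-o_N(1))\#\bbH$ holds because, conditionally on $\bbH$, the stratified empirical-fraction condition of Definition~\ref{d:cube} is satisfied with $\tQ$-probability $1-o_N(1)$ by the law of large numbers (under $\tQ$ the $J_i$ are independent $\pm1$ with means $m_i$), while the Radon--Nikodym derivative is again essentially constant on $\bbH_\EPS$. Substituting into the identity of the first paragraph gives \eqref{e:count}, and the limit \eqref{e:HH} follows by taking $\f1N\log$ and invoking the state-evolution description of the empirical profile of $\bH^{(t)}$ from \cite[Lemma 1(c)]{bayati2011dynamics}, using $\f1N(\bH,\bmag)\to\int\psi^{1/2}z\,\tnh(\psi^{1/2}z)\,\varphi(z)\,dz=\psi(1-q)$ by gaussian integration by parts together with $q=P(\psi)$ (evaluated at $q=\sq$, $\psi=\spsi$).
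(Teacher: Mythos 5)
Your proposal follows the paper's own argument essentially step for step: the same tilt $\tQ$, the same sandwich of $\bbH$ by $\ell^\infty$ balls of $\bL^\st(J-\bmag)$, the same good/bad split and triangular-array local CLT with nondegeneracy from Lemma~\ref{lem-CLT-empirical} and the nonlinearity of $\tnh$, and the same nonlattice verification. The closing computation of $\f1N(\bH,\bmag)\to\psi(1-q)$ via integration by parts is a correct (and slightly more explicit) rendering of the paper's appeal to \cite[Lemma 1(c)]{bayati2011dynamics}.
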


\begin{ppn}\label{p:cube.moderate}
For any fixed $t$ and fixed positive $\EPS$, it holds for small $\lm$ that 
	\begin{equation*}\label{e:pairs.clt}
	\tag{{\footnotesize$\color{Or}\usf{H}^\textsf{2,CLT}$}}
	\f{\#\bbH_\EPS(\lm)}
		{(\#\bbH_\EPS)^2}
	\lesssim
	\f{\#\bbH(\lm)}
		{(\#\bbH)^2}
	\asymp
	\exp\bigg\{-\f{N\lm^2}{2\sigma^2}
	+O(N\lm^3)\bigg\}\end{equation*}
where $\sigma^2\equiv (\sigma_{M,N_\textup{all},\DELTA,t,\EPS})^2$ is of constant order, and is explicitly given by
	\beq\label{e:explicit.sigma.finite}
	\sigma^2\equiv
	\f{\|1-\bmag^2\|^2}{N(1-\bq)^2}\,.\eeq

\begin{proof}
We now let $\oQ$ be the uniform probability measure over pairs $(J,K)\in\set{-1,+1}^N\times\set{-1,+1}^N$, and
	\beq\label{e:tilt.H.pairversion}
	\f{d\tQ}{d\oQ}\equiv\exp\bigg\{
	(\bH,J+K)-2(\oneN,\log\ch(\bH))
	\bigg\}\,.\eeq
Take the matrix $M\in\R^{N\times 2t}$ as in \eqref{e:J.ips}, and consider the random variable
	\[\yy
	\equiv (\yy_J,\yy_K,
	\ell_{JK})
	\equiv
	\bigg(\bL^\st(J-\bmag),
	\bL^\st(K-\bmag),
	\f{(J-\bmag,K-\bmag)}{1-\bq}
	\bigg)
	\in\R^{4t+1}\,.\]
For $\zz\in\R^{4t+1}$ let $\Lambda(\zz)=\log\tQE\exp\{(\zz,\yy)\}$ be the cumulant-generating function of $\yy$ under $\tQ$. Suppose that $\zz_\lm$ solves $\Lambda'(\zz_\lm)=\mathfrak{e}_\lm$ where $\mathfrak{e}_\lm\equiv(0,\ldots,0,N\lm)\in\R^{4t+1}$. Let
	\beq\label{e:tilt.H.lambda}
	\f{d\tQ_\lm}{d\tQ}
	\equiv \exp\bigg\{
	(\zz_\lm,\yy)
	-\Lambda(\zz_\lm)
	\bigg\}\,.\eeq
Then $\tQE_\lm\yy=\mathfrak{e}_\lm$,
and it follows from the local central limit theorem 
(similarly as in the proof of Proposition~\ref{p:count})
that
	\[\tQ_\lm(\bbH(\lm))
	=\tQ_\lm\bigg( \|\yy-\mathfrak{e}_\lm\|_\infty
		=\f{O(1)}{N}
		\bigg)
	=
	\f{\exp\{O(\cst)\}}{N^{2t+1/2}}\,.\]
On the other hand, both \eqref{e:tilt.H.pairversion} and \eqref{e:tilt.H.lambda} are roughly constant over $(J,K)\in\bbH(\lm)$, yielding
	\[\f{\exp\{O(\cst)\}}{N^{2t+1/2}}=
	\tQ_\lm(\bbH(\lm))
	= \oQ(\bbH(\lm))
	\exp\bigg\{
	(\zz_\lm,\yy)
	-\Lambda(\zz_\lm)
	+2(\bH,\bmag)
	-2(\oneN,\log\ch(\bH))
	\bigg\}\,.\]
Recalling $\oQ(\bbH(\lm))=\#\bbH(\lm)/4^N$ and rearranging gives
	\[\exp\bigg\{
		(\zz_\lm,\mathfrak{e}_\lm)-\Lambda(\zz_\lm)
	\bigg\}
	\# \bbH(\lm)
	\asymp 
	\f1{N^{2t+1/2}}
	\exp\bigg\{
	2(\oneN,\log(2\ch(\bH)))
	-2(\bH,\bmag)
	\bigg\}
	\asymp (\# \bbH)^2\,,\]
where the last relation is by Proposition~\ref{p:count}. It remains to estimate $\zz_\lm$ and $\Lambda(\zz_\lm)$ when $\lm$ is small. To this end note that $\Lambda(\mathbf{0})=0$, $\Lambda'(\mathbf{0})=\mathbf{0}$,
and $\Lambda''(\mathbf{0})$ is the covariance matrix of $\yy$ under $\tQ$. It is a block diagonal matrix
	\[\Lambda''(\mathbf{0})
	=\begin{pmatrix}
	\bA & &\\
	&\bA &\\
	& & N\sigma^2\\
	\end{pmatrix}\]
where $\sigma^2$ is as in the statement of the proposition, and $\bA = \bL^\st \diag(1-\bmag^2)^{-1} \bL \in\R^{2t\times2t}$
which has all singular values of order $N$. For small $\lm$ we will have
$\zz_\lm = \Lambda''(\mathbf{0})^{-1}\mathfrak{e}_\lm
+O(\lm^2)$, therefore
	\[\Lambda(\zz_\lm)-(\zz_\lm,\mathfrak{e}_\lm)
	=-\f{(\mathfrak{e}_\lm)^\st
		\Lambda''(\mathbf{0})^{-1}\mathfrak{e}_\lm}{2}
		+O(N\lm^3)
	= - \f{N\lm^2}{2\sigma^2}\,,\]
proving the claim.
\end{proof}
\end{ppn}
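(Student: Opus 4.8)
The plan is to reduce both ratios to a single exponential-tilting computation on \emph{pairs} of configurations, parallel to the proof of Proposition~\ref{p:count}, and then to read off the Gaussian factor by Taylor-expanding a cumulant generating function at $\lm=0$. The first inequality is essentially free: $\bbH_\EPS(\lm)\subseteq\bbH(\lm)$ gives $\#\bbH_\EPS(\lm)\le\#\bbH(\lm)$, while \eqref{e:count} gives $\#\bbH_\EPS=(1-o_N(1))\#\bbH$, so $(\#\bbH_\EPS)^2\ge\tfrac12(\#\bbH)^2$ for large $N$; dividing yields $\#\bbH_\EPS(\lm)/(\#\bbH_\EPS)^2\le 2\,\#\bbH(\lm)/(\#\bbH)^2$.

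For the asymptotic equivalence I would let $\oQ$ be uniform on $\set{-1,+1}^N\times\set{-1,+1}^N$ and introduce the product tilt $d\tQ/d\oQ\propto\exp\{(\bH,J+K)\}$, under which $J,K$ are independent with common mean $\bmag=\tnh(\bH)$. The relevant statistic is the $(4t+1)$-dimensional vector $\yy\equiv(\yy_J,\yy_K,\ell_{JK})$ with $\yy_J=\bL^\st(J-\bmag)$, $\yy_K=\bL^\st(K-\bmag)$ for $\bL$ as in \eqref{e:J.ips}, and $\ell_{JK}=(J-\bmag,K-\bmag)/(1-\bq)$. Plugging in the decompositions \eqref{e:restrict.j} and using that $\bv,\tbv$ are orthogonal to the $\br^{(s)}$, one checks that on $\bbH(\lm)$ we have $\|\yy_J\|_\infty,\|\yy_K\|_\infty=O(\cst)$ and $\ell_{JK}=N\lm+O(\cst)$, so---after adjusting $\cst$ as in \eqref{e:sandwich}---membership in $\bbH(\lm)$ is sandwiched between events of the form $\{\|\yy-\mathfrak{e}_\lm\|_\infty\le c\}$ with $\mathfrak{e}_\lm\equiv(0,\ldots,0,N\lm)$ and $1/\cst\le c\le\cst$. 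Writing $\Lambda(\zz)=\log\tQE\exp\{(\zz,\yy)\}$, I would choose the further tilt $\zz_\lm$ with $\Lambda'(\zz_\lm)=\mathfrak{e}_\lm$, set $d\tQ_\lm/d\tQ\propto\exp\{(\zz_\lm,\yy)\}$ so that $\tQE_\lm\yy=\mathfrak{e}_\lm$, and apply the triangular-array local CLT of \cite[Thm.~3.1]{MR3632529} (as in Proposition~\ref{p:count}, now in $4t+1$ dimensions) to get $\tQ_\lm(\bbH(\lm))=\exp\{O(\cst)\}/N^{2t+1/2}$, the exponent reflecting $\det\Lambda''(\mathbf{0})\asymp N^{4t+1}$. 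Since both $d\tQ/d\oQ$ and $d\tQ_\lm/d\tQ$ are constant up to $\exp\{O(\cst)\}$ over $\bbH(\lm)$, unwinding the two tilts and using $\oQ(\bbH(\lm))=\#\bbH(\lm)/4^N$ together with Proposition~\ref{p:count} gives
\[\exp\{(\zz_\lm,\mathfrak{e}_\lm)-\Lambda(\zz_\lm)\}\,\#\bbH(\lm)\asymp(\#\bbH)^2\,.\]

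It then remains to expand for small $\lm$. Here $\Lambda(\mathbf{0})=0$, $\Lambda'(\mathbf{0})=\mathbf{0}$, and $\Lambda''(\mathbf{0})=\Cov_{\tQ}(\yy)$ is block diagonal, with a non-degenerate $2t\times2t$ block $\bA$ (singular values of order $N$) appearing twice and the scalar block $\Var_{\tQ}(\ell_{JK})=\|1-\bmag^2\|^2/(1-\bq)^2=N\sigma^2$; the off-diagonal blocks vanish because $\yy_J,\yy_K$ are $\tQ$-independent and $\E_{\tQ}((J_i-m_i)^2(K_i-m_i))=0$. As only the last coordinate of $\mathfrak{e}_\lm$ is nonzero, $\zz_\lm=\Lambda''(\mathbf{0})^{-1}\mathfrak{e}_\lm+O(\lm^2)$ has last coordinate $\asymp\lm$ and is otherwise negligible, so the Legendre-transform identity $\Lambda(\zz_\lm)-(\zz_\lm,\mathfrak{e}_\lm)=-\tfrac12\,\mathfrak{e}_\lm^\st\Lambda''(\mathbf{0})^{-1}\mathfrak{e}_\lm+O(N\lm^3)=-N\lm^2/(2\sigma^2)+O(N\lm^3)$ yields $\#\bbH(\lm)\asymp(\#\bbH)^2\exp\{-N\lm^2/(2\sigma^2)+O(N\lm^3)\}$, as claimed (with $\sigma^2$ as in \eqref{e:explicit.sigma.finite}). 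The main obstacle I anticipate is the uniform local CLT bound on $\tQ_\lm(\bbH(\lm))$: one must verify the hypotheses of \cite[Thm.~3.1]{MR3632529} for the array contributing to $\yy$, uniformly over small $\lm$---in particular non-degeneracy of the total covariance in all $4t+1$ directions (now including the $\ell_{JK}$-coordinate) and the non-lattice condition for the full joint vector---which, as in Proposition~\ref{p:count}, should follow by isolating a positive fraction of coordinates $i$ on which $\bL^\st\dbe_i$ ranges over a non-degenerate set of values, exploiting the non-linearity of $\tnh$.
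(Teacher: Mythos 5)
Your proposal is correct and follows essentially the same route as the paper's proof: the pair tilt $d\tQ/d\oQ\propto\exp\{(\bH,J+K)\}$, the $(4t+1)$-dimensional statistic $\yy$, the secondary tilt $\zz_\lm$ with $\Lambda'(\zz_\lm)=\mathfrak{e}_\lm$, the local CLT giving $\tQ_\lm(\bbH(\lm))=\exp\{O(\cst)\}/N^{2t+1/2}$, and the Taylor expansion of the Legendre transform via the block-diagonal $\Lambda''(\mathbf{0})$. You in fact supply a few details the paper leaves implicit (the trivial first inequality, the vanishing of the off-diagonal covariance blocks, and the need to re-verify the local CLT hypotheses in $4t+1$ dimensions).
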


\subsection{Large deviations regime}

For general $\lm$ we do not have an easy way to bound the size of $\bbH(\lm)$. However, it is relatively straightforward to bound the size of the more restricted set $\bbH_\EPS(\lm)$. We begin with some notations. Throughout what follows we let $H\in\R$,
$m\equiv \tnh H$, and
	\[p \equiv \f{1+m}{2}\,.\]
We will use the notations $H,m,p$ with the understanding that there is a bijective correspondence among all three. With this in mind, let $P_{H,D}$ be the probability distribution on $\set{-1,+1}^2$ given by (cf.\ \eqref{e:P.H.D.intro})
	\beq\label{e:P.H.D}
	P_{H,D}=\begin{pmatrix}
	p^2+D/4 & p(1-p)-D/4\\
	p(1-p)-D/4 & (1-p)^2+D/4
	\end{pmatrix}\\
	=\f14
	\begin{pmatrix}
	(1+m)^2+D&1-m^2-D\\
	1-m^2-D&(1-m)^2+D
	\end{pmatrix}
	\,,\eeq
where for the distribution to be nonnegative we must have
	\beq\label{e:delta.bounds}
	\f{-(1-|m|)^2}{4}
	=
	-\min\set{p,1-p}^2 \le \f{D}{4}\le p(1-p)
	= \f{1-m^2}{4}\,.\eeq
We write $\ent$ for the Shannon entropy of a distribution, and let
	\[\Gamma(H,D)
	\equiv
	\ent(P_{H,D})
	\equiv
	\sum_{x\in\set{-1,+1}^2}
	P_{H,D}(x)\log\f1{P_{H,D}(x)}
	\in[0,\log 4]\,.\]
Consider a pair $(J_i,K_i)$ distributed according to $P_{H,D}$, and write $\mathbf{E}_{H,D}$ for expectation over this law: then
	{\setlength{\jot}{0pt}
	\begin{align}\nonumber
	\mathbf{E}_{H,D}(J_i)=\mathbf{E}_{H,D}(K_i)
	&=m\,,\\
	\mathbf{E}_{H,D}(J_i K_i)-m^2
	&=D\,.
	\label{e:QHD.expectations}
	\end{align}}%
For any $a,b\in\R$, we let $A\equiv\exp(2a)$ and $B\equiv\exp(2a)$, and define a measure $Q_{A,B}$ on $\set{-1,+1}^2$ by
	\beq\label{e:QAB}
	Q_{A,B}(J_i,K_i) = \f{4\exp\{
	a(J_i K_i-1)+b(J_i+K_i)
	\}}{B+1/B+2/A}\,.\eeq
Write $\bar{\mathbf{E}}_{A,B}$ for expectation under $Q_{A,B}$: then
	\begin{align}\nonumber
	\bar{\mathbf{E}}_{A,B}(J_i)
	= \bar{\mathbf{E}}_{A,B}(K_i)
	&= \f{B-1/B}{B+1/B+2/A}
	\,,\\
	\bar{\mathbf{E}}_{A,B}(J_i K_i)
	&=\f{B+1/B-2/A}{B+1/B+2/A}\,.
	\label{e:QAB.marginal}
	\end{align}
Recalling $m\equiv \tnh H$, we write
$\Delta\equiv\Delta_H(A)\equiv\sqrt{A^2+m^2-(Am)^2}$,
and define 
	\beq\label{e:B.A}
	B_H(A)
	\equiv
	\f{A(1+m)}{\Delta-m}
	= \f{\Delta+m}{A(1-m)}\,,\quad
	S_H(A)
	\equiv \f1{B_H(A)^{\sgn(H)}}\,.\eeq
Note that $B_H(A)$ is one of two conjugate solutions to the equation
	\beq\label{e:rel.B.with.m}
	\bar{\mathbf{E}}_{A,B}(J_i)
	= \f{B-1/B}{B+1/B+1/A}
	=\tnh H \equiv m\eeq
(further discussed below).
We then define
	\beq\label{e:D.A}
	D_H(A) \equiv
	\f{X+1/X-2/A}{X+1/X+2/A}-m^2\,\bigg|_{X=B_H(A)}
	=\f{X+1/X-2/A}{X+1/X+2/A}-m^2\,\bigg|_{X=S_H(A)}\,,\eeq
or equivalently $D_H(A) = \bar{\mathbf{E}}_{A,B}(J_i K_i)-m^2$ (cf.\ \eqref{e:QHD.expectations}~and~\eqref{e:QAB.marginal}). In the following lemma we record some basic properties of these functions. (In particular, we will see that \eqref{e:D.A} coincides with our earlier definition \eqref{e:D.H.intro}.)

\begin{lem}\label{l:algebra}
For $H\in\R$ and $A\in(0,\infty)$ the following hold:
\begin{enumerate}[a.]
\item \label{l:algebra.good}
If $H=0$ then $B_H(A)=S_H(A)=1$ as noted above. For general $H$ we have the symmetries
	\beq\label{e:root.sign.symm}
	\f{S_H(A)}{S_{-H}(A)}
	=\f{D_H(A)}{D_{-H}(A)}=1\,.\eeq
For $H\ne0$, the function $A\mapsto S_H(A)$ is strictly increasing over $A\in(0,\infty)$, sandwiched by its boundary values
	\beq\label{e:S.root.bounds}
	0=S_H(0) < S_H(\infty)
	= \bigg(\f{1-|m|}{1+|m|}\bigg)^{1/2}\,.\eeq
The function $A\mapsto D_H(A)/4$ is also strictly increasing over $A\in(0,\infty)$, sandwiched by its 
boundary values
	\beq\label{e:delta.bounds.two}
	-\min\set{p^2,(1-p)^2}
	=-\f{(1-|m|)^2}{4}
	= \f{D_H(0)}{4} < \f{D_H(\infty)}{4}
	= \f{1-m^2}{4} = p(1-p)\,,\eeq
(cf.\ \eqref{e:delta.bounds}), with $ D_H(1)=0$ and $(D_H)'(1)= (1-m^2)^2/2$.
\item \label{l:algebra.conjugate}
For any fixed $H\in\R$ and $A\in(0,\infty)$, the value $B_H(A)$ is one of two conjugate solutions to the equation $\bar{\mathbf{E}}_{A,B} J_i=m$
(see~\eqref{e:QAB.marginal}). The other solution is
$B_{H,\textup{conj}}(A) \equiv -(\Delta-m)/[A(1-m)]$, which is nonpositive. If we define $D_{H,\textup{conj}}(A)$ as in \eqref{e:D.A} but with $B_{H,\textup{conj}}(A)$ in place of $B_H(A)$, then
$D_{H,\textup{conj}}(A)$ violates the bounds \eqref{e:delta.bounds.two} except in the trivial case $A=H=0$ where $\Delta=0$. As a result, the pair $(B,D)\equiv(B_H(A),D_H(A))$ is the unique solution to the equations
	{\setlength{\jot}{0pt}\begin{align*}
	\mathbf{E}_{H,D}(J_i)
		&=\bar{\mathbf{E}}_{A,B}(J_i)\,,\\
	\mathbf{E}_{H,D}(J_iK_i)
		&=\bar{\mathbf{E}}_{A,B}(J_iK_i)\end{align*}}%
such that $B\ge0$ and $D$ satisfies \eqref{e:delta.bounds.two}. Equivalently, if $H\in\R$ and $A\in(0,\infty)$ are given, then $(B,D)$ is the unique pair such that $P_{H,D}=Q_{A,B}$ is a valid probability measure on $\set{-1,+1}^2$.
\end{enumerate}

\begin{proof}
Take nonzero $H\in\R$ and $m\equiv \tnh H$. As before, let $\Delta\equiv \sqrt{A^2+m^2-(Am)^2}$. Then
	\[B_H(A)
	=\f{m + \Delta}{A(1-m)}
	= \f{m^2-\Delta^2}{A(1-m)(m-\Delta)}
	= \f{A(1+m)}{m-\Delta}
	= \f1{B_{-H}(A)}\,,\]
from which \eqref{e:root.sign.symm} follows. Next we calculate the derivative
	\[(B_H)'(A)
	=-\f{m(m+\Delta)}{A^2(1-m)\Delta}\]
which has the same sign as $-H$. It follows that $S_H(A) \equiv 1/B_H(A)^{\sgn H}$ is strictly increasing on $A\in(0,\infty)$ and is sandwiched between its boundary values $S_H(0)$ and $S_H(\infty)$,
as given by \eqref{e:S.root.bounds}. Rearranging $\bar{\mathbf{E}}_{A,B} (J_i)=m$ gives a quadratic equation in $B$, with one root given by $B_H(A)$. The conjugate root is
	\[B_{H,\textup{conj}}(A)
	= \f{m-\Delta}{A(1-m)}
	= \f{m^2-\Delta^2}{A(1-m)(m+\Delta)}
	= -\f{A(1+m)}{m+\Delta}
	= -\f{1+m}{1-m} \f1{B_H(A)}\,,\]
from which it follows that 
$S_{H,\textup{conj}}(A)\equiv (-B_{H,\textup{conj}}(A))^{\sgn(H)}$
satisfies the simple relation
	\[S_{H,\textup{conj}}(A)
	=\bigg(\f{1+|m|}{1-|m|}\bigg) S_H(A)\,.\]
We can use the above relation between $B_H(A)$ and $B_{H,\textup{conj}}(A)$ to write
	\beq\label{e:D.A.expand}
	D_H(A)
	= \f{\f{m+\Delta}{A(1-m)}
		-\f{m-\Delta}{A(1+m)}
		-2/A}
	{\f{m+\Delta}{A(1-m)}
		-\f{m-\Delta}{A(1+m)}
		+2/A}-m^2
	= \f{(1-m^2)(\Delta-1)}
		{\Delta+1}
	=\f{(\Delta-1)^2}{A^2-1}
	= \f{(A^2-1)(1-m^2)^2}{(\Delta+1)^2}
	\,.\eeq
Then $\sgn D_H(A) = \sgn(A^2-1)$, and for continuity at $A=1$ we take $D_H(1)=0$. We also calculate that
	\beq\label{e:deriv.D.A}
	(D_H)'(A) =\f{2A(\Delta-1)^2}
		{(A^2-1)^2\Delta} 
	= \f{2A D_H(A)}{(A^2-1)\Delta}
	= \f{2 A(1-m^2)^2}{\Delta(\Delta+1)^2}\,,\eeq
where for continuity at $A=1$ we take $(D_H)'(1) = (1-m^2)^2/2$. Thus $D_H(A)$ is strictly increasing on $A\in(0,\infty)$, sandwiched between its boundary values \eqref{e:delta.bounds.two}. This concludes the proof of part~\ref{l:algebra.good}. For part~\ref{l:algebra.conjugate}, note that
	\[D_{H,\textup{conj}}(A)
	= \f{(\Delta+1)^2}{A^2-1}
	= \f{(\Delta^2-1)^2}{(A^2-1)(\Delta-1)^2}
	= \f{(A^2-1)(1-m^2)^2}{(\Delta-1)^2}
	= \f{(1-m^2)^2}{D_H(A)}\,.\]
From this relation it is clear that
$D_H(A)$ is consistent with the bounds \eqref{e:delta.bounds} while $D_{H,\textup{conj}}(A)$ is not, except in the case $A=H=0$ where $\Delta=0$. The conclusion follows.
\end{proof}
\end{lem}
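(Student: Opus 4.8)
The plan is to reduce the whole lemma to explicit one-variable algebra, exploiting the bijective correspondences $H\leftrightarrow m\equiv\tnh H\leftrightarrow p\equiv(1+m)/2$ and $a\leftrightarrow A\equiv\exp(2a)$, $b\leftrightarrow B\equiv\exp(2b)$. First I would record the quadratic that $B$ must satisfy: clearing denominators in the defining relation $\bar{\mathbf{E}}_{A,B}(J_i)=m$ (see \eqref{e:QAB.marginal}) gives $A(1-m)B^2-2mB-A(1+m)=0$, whose discriminant equals $4(m^2+A^2(1-m^2))=4\Delta^2$ with $\Delta\equiv(A^2+m^2-(Am)^2)^{1/2}$, so the two roots are $(m\pm\Delta)/[A(1-m)]$; I identify $B_H(A)$ with the ``$+$'' root. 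The symmetry $B_{-H}(A)=1/B_H(A)$ then follows from the chain $(m+\Delta)/[A(1-m)]=(m^2-\Delta^2)/[A(1-m)(m-\Delta)]=A(1+m)/(m-\Delta)$, and since $\sgn(-H)=-\sgn H$ this yields $S_{-H}(A)=S_H(A)$; moreover $D_H(A)$ depends on $B_H(A)$ only through the symmetric combination $B_H(A)+1/B_H(A)$, so $D_{-H}(A)=D_H(A)$. The case $H=0$ (hence $m=0$, $\Delta=A$) gives $B_H(A)=1$ and $S_H(A)=1$ by inspection.

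For the monotonicity and boundary values in part~\ref{l:algebra.good} I would differentiate: using $\Delta'(A)=A(1-m^2)/\Delta$ one gets $(B_H)'(A)=-m(m+\Delta)/[A^2(1-m)\Delta]$, which has sign $-\sgn H$, so $A\mapsto S_H(A)=B_H(A)^{-\sgn H}$ is strictly increasing for $H\neq 0$; the asymptotics $\Delta\to|m|$ as $A\downarrow 0$ and $\Delta\sim A(1-m^2)^{1/2}$ as $A\uparrow\infty$ give $S_H(0)=0$ and $S_H(\infty)=((1-|m|)/(1+|m|))^{1/2}$. For $D_H$ I would first check that the definition \eqref{e:D.A} agrees with \eqref{e:D.H.intro}: computing $B_H(A)+1/B_H(A)=2(\Delta+m^2)/[A(1-m^2)]$ and substituting into $\bar{\mathbf{E}}_{A,B}(J_iK_i)$ yields $D_H(A)=(1-m^2)(\Delta-1)/(\Delta+1)$, and the identity $\Delta^2-1=(A^2-1)(1-m^2)$ turns this into $(\Delta-1)^2/(A^2-1)=(A^2-1)(1-m^2)^2/(\Delta+1)^2$. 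Differentiating $D_H(A)=(1-m^2)(\Delta-1)/(\Delta+1)$ gives $(D_H)'(A)=2A(1-m^2)^2/[\Delta(\Delta+1)^2]>0$, so $D_H/4$ is strictly increasing, with boundary values $D_H(0)=-(1-|m|)^2$, $D_H(\infty)=1-m^2$ from the $\Delta$-asymptotics, and $D_H(1)=0$, $(D_H)'(1)=(1-m^2)^2/2$ at $A=1$ (where $\Delta=1$).

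For part~\ref{l:algebra.conjugate} I note that the conjugate root $B_{H,\textup{conj}}(A)=(m-\Delta)/[A(1-m)]=-(\Delta-m)/[A(1-m)]$ is negative on $A\in(0,\infty)$ since $\Delta>|m|\ge m$ and $1-m>0$, and $B_H(A)B_{H,\textup{conj}}(A)=(m^2-\Delta^2)/[A^2(1-m)^2]=-(1+m)/(1-m)$. Running the same computation with this root, or using that reciprocal relation, gives $D_{H,\textup{conj}}(A)=(\Delta+1)^2/(A^2-1)=(1-m^2)^2/D_H(A)$; since by part~\ref{l:algebra.good} $D_H(A)$ lies \emph{strictly} inside $(-(1-|m|)^2,\,1-m^2)$ for $A\in(0,\infty)\setminus\{1\}$ (and vanishes at $A=1$), the reciprocal relation forces $D_{H,\textup{conj}}(A)$ strictly outside $[-(1-|m|)^2,\,1-m^2]$ (becoming $\pm\infty$ at $A=1$), hence to violate \eqref{e:delta.bounds.two}, the only exception being the boundary point $A=H=0$. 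To conclude uniqueness: the $J_i$-marginal of $P_{H,D}$ is $p$ regardless of $D$, so $\mathbf{E}_{H,D}(J_i)=m$, and the first moment equation together with the sign constraint $B\ge 0$ forces $B=B_H(A)$; the second moment equation then forces $D=D_H(A)$, which does satisfy \eqref{e:delta.bounds.two}. Since a probability measure on $\set{-1,+1}^2$ invariant under $J\leftrightarrow K$ is determined by $\mathbf{E}(J_i)$ and $\mathbf{E}(J_iK_i)$, this is exactly the assertion that $(B_H(A),D_H(A))$ is the unique pair with $P_{H,D}=Q_{A,B}$ a valid probability measure.

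The computations are all elementary, so there is no deep obstacle; the points needing genuine care are (i) keeping the three parametrizations $H\leftrightarrow m\leftrightarrow p$ and $a\leftrightarrow A$, $b\leftrightarrow B$ straight, together with the $\sgn H$ appearing in the definition of $S_H$, and (ii) in part~\ref{l:algebra.conjugate}, deducing from the \emph{strictness} of the bounds on $D_H$ (via $D_{H,\textup{conj}}=(1-m^2)^2/D_H$) that the conjugate root is genuinely spurious, which is precisely what makes the pair $(B_H(A),D_H(A))$ the unique admissible solution.
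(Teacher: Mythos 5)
Your proposal is correct and takes essentially the same route as the paper's proof: the same quadratic $A(1-m)B^2-2mB-A(1+m)=0$ with discriminant $4\Delta^2$, the same simplification $D_H(A)=(1-m^2)(\Delta-1)/(\Delta+1)$ with derivative $2A(1-m^2)^2/[\Delta(\Delta+1)^2]$, and the same reciprocal relation $D_{H,\textup{conj}}(A)=(1-m^2)^2/D_H(A)$ to rule out the conjugate root. (One cosmetic slip, which the paper's own write-up shares: the intermediate expression $A(1+m)/(m-\Delta)$ in the chain for $B_H(A)$ should read $A(1+m)/(\Delta-m)$, since $m^2-\Delta^2=-A^2(1-m^2)$; this does not affect the identity $B_{-H}(A)=1/B_H(A)$.)
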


\noindent We see from \eqref{e:D.A.expand}
that \eqref{e:D.H.intro} and \eqref{e:D.A} coincide. 
Recall from \eqref{e:def.ell.A} the definition of $\ell(A)$. It follows from Lemma~\ref{l:algebra} that $A\mapsto \ell(A)$ is strictly increasing on $A\in(0,\infty)$, sandwiched by its boundary values as given by \eqref{e:lmin}. The inverse $\lm\mapsto \ell^{-1}(\lm) \equiv A(\lm)$ is well-defined for $\lmin\le\lm\le1$, and we can let
	\beq\label{e:frH.A}
	\frH(A)
	\equiv- 2\HH_\star +
	\int \Gamma\Big(\psi^{1/2}z,
		D_{\psi^{1/2}z}(A)
		\Big)\varphi(z)\,dz
	\,\bigg|_{q=\sq,\psi=\spsi}\,\eeq
and $\HH(\lm)\equiv \frH(\ell^{-1}(\lm))$
as in \eqref{e:HH.pair}. We upper bound the size of $\#\bbH_\EPS(\lm)$ as follows:

\begin{ppn}\label{p:cube.ld}
As claimed in \eqref{e:pairs.ld.intro}, we have
for any fixed $\lm$ that 
	\begin{equation*}\label{e:pairs.ld}
	\tag{{\footnotesize$\color{Or}
	\usf{H}^\textsf{2,LD}$}}
	\adjustlimits
	\limsup_{\DELTA\downarrow0}
	\limsup_{t\uparrow\infty}
	\limsup_{\EPS\downarrow0}
	\limsup_{\Nall\uparrow\infty}
	\f{\log\#\bbH_\EPS(\lm)}
		{\Nall}
	\le 2\HH_\star+\HH(\lm)\,.\end{equation*}
The convergence holds uniformly over $\lmin\le\lm\le\lmbig$ for any constant $\lmbig<1$.

\begin{proof}
As in the proof of Proposition~\ref{p:cube.moderate},
let $\oQ$ be the uniform measure on
$\set{-1,+1}^N\times\set{-1,+1}^N$. For $a\in\R$ and $\mathbf{b}\in\R^N$,
let $A\equiv \exp(2a)$ and $\mathbf{B}\equiv\exp(2\mathbf{b})$, and let
(cf.\ \eqref{e:QAB})
	\beq\label{e:LDQ}
	\f{d\LDQ}{d\oQ}
	=\prod_{i=1}^N
		\f{4\exp(a (J_i K_i-1) + b_i (J_i+K_i))}
		{B_i+1/B_i+2/A}
	=\f{4^N\exp\{-Na+
	a(J,K)+(\mathbf{b},J+K)\}}
	{\exp\{(\oneN,\log(\mathbf{B}+
	1/\mathbf{B}+2/A ))\}}\,.\eeq
Write $\LDE$ for expectation over $\LDQ$. We then set $\mathbf{B}=B_{\bH}(A)$ (applying coordinatewise the function of \eqref{e:B.A}), resulting in
$\LDE J=\LDE K = \bmag$. Further, with $D_{\bH}(A)$ as defined by \eqref{e:D.A}, we have
	\[\f{\LDE (J-\bmag,K-\bmag)}{N(1-\bq)}
	=\f{\LDE (J,K)
		-\|\bmag\|^2}{N(1-\bq)}
	=\f{(\oneN,D_{\bH}(A))}{N(1-\bq)}\,.\]
We see from \cite[Lemma 1(c)]{bayati2011dynamics}
that in the limit ($\Nall\uparrow\infty,\EPS\downarrow0,t\uparrow\infty,\DELTA\downarrow 0$) the above tends to $\ell(A)$ as defined by \eqref{e:def.ell.A}. For us it suffices to simply set $A=\ell^{-1}(\lm)$. For any $J,K\in\bbH$, we have from \eqref{e:restrict.j} and \eqref{e:restrict.k} that
	\[\f{(J,K)}{N}
	=(\bj,\bk)
	= \sqrt{ q_{J,t}q_{K,t} }
	+\sum_{s=1}^{t-1} 
	\gamma_{J,s}\gamma_{K,s}
	+\sqrt{(1-q_J)(1-q_K)} \lm_{J,K}\,.\]
where we recall that $\lm_{J,K}\equiv(\bv,\tilde{\bv})$ by definition. By the definitions of $\bbH$
and $\bbH(\lm)$, the above simplifies to 
	\[\f{(J,K)}{N}
	= \bq + (1-\bq) \lm + \f{O(1)}{N}\]
uniformly over all pairs $(J,K)\in\bbH(\lm)$. Combining with \eqref{e:q.psi.recursion}~and~\eqref{e:def.ell.A} gives
	\begin{align}\nonumber
	&\adjustlimits 
	\limsup_{\DELTA\downarrow0}
	\limsup_{t\uparrow\infty}
	\limsup_{\EPS\downarrow 0} 
	\limsup_{\Nall\uparrow\infty}
	\bigg\{-a + a\f{(J,K)}{N}\bigg\}
	=-a(1-\sq)(1-\lm)\\
	&\qquad =-
	\int\bigg\{
	\bigg( 1-(\tnh(\psi^{1/2}z))^2
		-D_{\psi^{1/2}z}(A)
	\bigg) \log A \bigg\} \varphi(z)\,dz
	\,\bigg|_{q=\sq,\psi=\spsi}\,.
	\label{e:J.K.scalar.prod.limit.identity}
	\end{align}
Next, it follows from the definition of $\bbH_\EPS$ that for all $J\in\bbH_\EPS$ we have
(recalling $2\mathbf{b}=\log\mathbf{B}$)
	\beq\label{e:JK.tilt.limit.val}
	\adjustlimits 
	\limsup_{\DELTA\downarrow0}
	\limsup_{t\uparrow\infty}
	\limsup_{\EPS\downarrow 0} 
	\limsup_{\Nall\uparrow\infty}
	\f{2(\mathbf{b},J)}{N}
	= \int (\tnh(\psi^{1/2}z)
		\log B_{\psi^{1/2}z}(A)
	\,\varphi(z)\,dz
	\,\bigg|_{\psi=\spsi}\,.\eeq
From \eqref{e:J.K.scalar.prod.limit.identity} and \eqref{e:JK.tilt.limit.val} we see that the Radon--Nikodym derivative \eqref{e:LDQ} is roughly constant over pairs $(J,K)\in\bbH_\EPS(\lm)$. In particular,
with $\ETA$ an error tending to zero in the manner of \eqref{e:ETA}, we can lower bound
	\begin{align*}
	\f1N
	\log \bigg(\f1{4^N} \f{d\LDQ}{d\oQ}\bigg)
	+\ETA
	&\ge -\int \bigg\{
	\log\bigg(B_{\psi^{1/2}z}(A)
		+\f1{B_{\psi^{1/2}z}(A)}+\f2A\bigg)
	-(\tnh(\psi^{1/2}z)
		\log B_{\psi^{1/2}z}(A)\\
	&\qquad 
	+\bigg( 1-(\tnh(\psi^{1/2}z))^2
		-D_{\psi^{1/2}z}(A)\bigg)\log A
	\bigg\}
	\,\varphi(z)\,dz
	\,\bigg|_{q=\sq,\psi=\spsi}\\
	&= -\int \Gamma(\psi^{1/2}z,
	D_{\psi^{1/2}z}(A))
	\,\varphi(z)\,dz
	\,\bigg|_{q=\sq,\psi=\spsi}
	= - \frH(A)-2\HH_\star\,,\end{align*}
where the last equality above is by \eqref{e:frH.A}.
The second-to-last equality is obtained by integrating over $H=\psi^{1/2}z$ the following algebraic identity: for any $H\in\R$ and $A\in(0,\infty)$, we have
	\begin{align*}
	\Gamma(H,D_H(A))
	&= \ent(P_{H,D_H(A)})
	= \ent\bigg(
	\f1{B+1/B+2/A}
	 \begin{pmatrix}
	B & 1/A\\ 1/A & 1/B
	\end{pmatrix}\bigg)\,\bigg|_{B=B_H(A)}\\
	&= \log(B+1/B+2/A)
	- \f{B-1/B}{B+1/B+2/A} \log B
	- \f{2/A}{B+1/B+2/A}\log A
	\,\bigg|_{B=B_H(A)}\\
	&= \log(B+1/B+2/A)
		- (\tnh H) \log B
		- \f{1-(\tnh H)^2-D_H(A)}{2}\log A\,,\end{align*}
having used at the very last step \eqref{e:rel.B.with.m}~and~\eqref{e:D.A}. It follows that
	\beq\label{e:cube.ubd}
	1
	\ge\LDQ(\bbH_\EPS(\lm))
	= \mathbf{E}\bigg[
	\f{d\LDQ}{d\oQ}
	\mathbf{1}\{\bbH_\EPS(\lm)\}
	\bigg]
	\ge
	\f{\#\bbH_\EPS(\lm)}
	{\exp\{N(\frH(A)+2\HH_\star+\ETA)\}}\,.\eeq
Rearranging gives the claimed bound 
since we set $A=\ell^{-1}(\lm)$, giving
$\frH(A)=\HH(\lm)$.
\end{proof}
\end{ppn}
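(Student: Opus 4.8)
The plan is to obtain the upper bound on $\#\bbH_\EPS(\lm)$ by a change-of-measure (first moment) argument on \emph{pairs} of spin configurations, generalizing the tilt used in Proposition~\ref{p:count} for $\#\bbH$ and the pair-tilt of Proposition~\ref{p:cube.moderate}, but now in the large-deviations regime rather than the Gaussian one. Starting from the uniform measure $\oQ$ on $\set{-1,+1}^N\times\set{-1,+1}^N$, I would introduce a product tilt $\LDQ$ that independently reweights each coordinate pair $(J_i,K_i)$ by a factor proportional to $\exp(a(J_iK_i-1)+b_i(J_i+K_i))$, i.e.\ the measure $Q_{A,B_i}$ from \eqref{e:QAB} with $A\equiv\exp(2a)$. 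The parameters are pinned as follows: choose $B_i\equiv B_{H_i}(A)$ (applying \eqref{e:B.A} coordinatewise to $\bH\equiv\bH^{(t)}$), so that by Lemma~\ref{l:algebra} the tilted marginal magnetization equals $\tnh(H_i)=m_i$, forcing also $\LDE(J_iK_i)-m_i^2=D_{H_i}(A)$; and choose $A\equiv A(\lm)\equiv\ell^{-1}(\lm)$, which is well-defined for $\lmin\le\lm\le1$ by the monotonicity of $\ell$ recorded after \eqref{e:lmin}, so that the empirical overlap $\LDE(J-\bmag,K-\bmag)/(N(1-\bq))$ tends to $\ell(A)=\lm$ via the state-evolution estimate \cite[Lemma 1(c)]{bayati2011dynamics} for the empirical law of $\bH^{(t)}$.

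Next I would show that $4^{-N}\,d\LDQ/d\oQ$ is essentially constant over $\bbH_\EPS(\lm)$, up to a multiplicative $\exp(N\ETA)$. Here the two families of restrictions built into $\bbH$ and $\bbH_\EPS$ (Definition~\ref{d:cube}) do exactly the required work. For $(J,K)\in\bbH(\lm)$ the decompositions \eqref{e:restrict.j} and \eqref{e:restrict.k} give $(J,K)/N=\bq+(1-\bq)\lm+O(1/N)$, which pins the quadratic term $a(J,K)$; and for $J,K\in\bbH_\EPS$ the $\EPS$-profile restriction (the conditional fraction of $+1$ spins inside each $\EPS$-level set of $\bmag$ is pinned to within $O(\EPS)$) pins the linear terms $(\mathbf{b},J)$ and $(\mathbf{b},K)$ to within $O(\EPS)$. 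Passing $\Nall\uparrow\infty$ then $\EPS\downarrow0$ then $t\uparrow\infty$ then $\DELTA\downarrow0$, using \cite[Lemma 1(c)]{bayati2011dynamics} for $\bH^{(t)}$, these pieces converge to $-\int(1-\tnh(\psi^{1/2}z)^2-D_{\psi^{1/2}z}(A))\log A\,\varphi(z)\,dz$ and $\int\tnh(\psi^{1/2}z)\log B_{\psi^{1/2}z}(A)\,\varphi(z)\,dz$ respectively, evaluated at $q=\sq,\psi=\spsi$. Adding the normalization $-\log(B_i+1/B_i+2/A)$ and using the entropy identity
\[\Gamma(H,D_H(A))=\log\!\Big(B_H(A)+1/B_H(A)+2/A\Big)-(\tnh H)\log B_H(A)-\tfrac{1-(\tnh H)^2-D_H(A)}{2}\log A,\]
which is just $\ent(P_{H,D_H(A)})=\ent(Q_{A,B_H(A)})$ from Lemma~\ref{l:algebra}\ref{l:algebra.conjugate} together with \eqref{e:rel.B.with.m} and \eqref{e:D.A}, one finds $4^{-N}\,d\LDQ/d\oQ\ge\exp(-N[\frH(A)+2\HH_\star+\ETA])$ uniformly over $\bbH_\EPS(\lm)$, with $\frH$ as in \eqref{e:frH.A}.

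I would then conclude with the trivial first-moment bound (no local CLT is needed for an upper bound): $1\ge\LDQ(\bbH_\EPS(\lm))=\LDE[\mathbf{1}\{\bbH_\EPS(\lm)\}]\ge\#\bbH_\EPS(\lm)\,\exp(-N[\frH(A)+2\HH_\star+\ETA])$, hence $\#\bbH_\EPS(\lm)\le\exp(N[\frH(A)+2\HH_\star+\ETA])$. Since $A=A(\lm)=\ell^{-1}(\lm)$ gives $\frH(A)=\HH(\lm)$ by definition of $\HH$, taking $\log$, dividing by $\Nall$ (the ratio $\Nall/N=1+\hat\alpha(\DELTA)$ is absorbed into $\ETA$), and passing to the iterated $\limsup$ yields \eqref{e:pairs.ld}. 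Uniformity over $\lmin\le\lm\le\lmbig<1$ follows because $A(\lm)$ then ranges over a compact subinterval of $(0,\infty)$ on which $B_H(\cdot),D_H(\cdot),\Gamma,\ell$ and the relevant error terms are uniformly controlled.

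The hard part will be the second step: showing $d\LDQ/d\oQ$ is flat to within $\exp(N\ETA)$ over $\bbH_\EPS(\lm)$. The quadratic term is easy once one has the decomposition of pairs in $\bbH(\lm)$, but the linear terms $(\mathbf{b},J),(\mathbf{b},K)$ are delicate: without the $\EPS$-profile restriction the fraction of $+1$ spins in a level set of $\bmag$ could drift, and then $(\mathbf{b},J)$ would not be a function of the overlap alone --- the same obstruction noted in the remark after Corollary~\ref{c:PP}, and the reason the $\bbH_\EPS$ restriction (not merely $\bbH$) is needed here. Controlling these terms --- discretizing the $\bmag$-axis into $\EPS$-bins, applying the profile restriction bin-by-bin, then passing to the continuum via \cite[Lemma 1(c)]{bayati2011dynamics} --- together with the bookkeeping in the algebraic identities of Lemma~\ref{l:algebra}, is the technical core; the rest is routine.
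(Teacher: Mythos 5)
Your proposal is correct and follows essentially the same route as the paper: the same product tilt $\LDQ$ built from $Q_{A,B_i}$ with $B_i=B_{H_i}(A)$ and $A=\ell^{-1}(\lm)$, the same identification of the flat Radon--Nikodym derivative over $\bbH_\EPS(\lm)$ (quadratic term pinned by the $\bbH(\lm)$ decomposition, linear terms by the $\EPS$-profile restriction), the same entropy identity from Lemma~\ref{l:algebra}, and the same trivial first-moment bound $1\ge\LDQ(\bbH_\EPS(\lm))$ to conclude.
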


\noindent For $p\in[0,1]$ we shall hereafter abbreviate 
	\[\ent(p)
	\equiv p \log \f1{p}
		+(1-p)\log\f1{1-p}\]
for the entropy of the $\textup{Bernoulli}(p)$ distribution. Note the identity
	\beq\label{e:binary.entropy.ldp}
	\ent\bigg(\f{1+\tnh(H)}{2}\bigg)
	=\log(2\ch H) - H\tnh H\,.\eeq
By \eqref{e:binary.entropy.ldp} together with gaussian integration by parts, \eqref{e:HH} can be rewritten as
	\beq\label{e:HH.entropy}
	\HH_\star 
	= \int \ent
	\bigg(\f{1+\tnh(\psi^{1/2}z)}{2}\bigg)
	\,\varphi(z)\,dz\,.\eeq
Returning to the definition \eqref{e:HH.pair} of $\HH(\lm)$, and recalling Lemma~\ref{l:algebra}, we note that at $\lm=0$ we have $A=\ell^{-1}(\lm)=1$ and
$D_H(A)=0$. Meanwhile, at $\lm=1$ we have
$A=\ell^{-1}(\lm)=\infty$ and
$D_H(A)=4p(1-p)$. The corresponding distributions are
	\[P_{H,D_H(1)}
	= \begin{pmatrix}
	p^2 & p(1-p) \\
	p(1-p) & (1-p)^2
	\end{pmatrix}\,,\quad
	P_{H,D_H(\infty)}
	= \begin{pmatrix}
	p & 0 \\ 0 & 1-p
	\end{pmatrix}\,,\]
so we see that $\Gamma(H,D_H(1)) = 2\ent(p)$ while $\Gamma(H,D_H(\infty)) = \ent(p)$. Substituting into \eqref{e:HH.pair} gives $\HH(0)=\frH(1)=0$ while $\HH(1)=\frH(\infty)=-\HH_\star$.

We conclude this section by deriving formulas for the (first and second) derivatives of $\HH(\lm)$ which will be used in later sections. We begin with an alternative derivation of \eqref{e:binary.entropy.ldp}: let $\oQ$ be the uniform measure on $\set{-1,+1}^N$, and consider the change of measure (cf.\ \eqref{e:tilt.H})
	\beq\label{e:binary.identity.tilt}
	\f{d\tQ}{d\oQ}
	= \prod_{i=1}^N \f{\exp(H J_i)}{\ch H}\,.\eeq
For $-1\le x\le 1$ we let $\mathbb{H}[x]$ be the set of all $J\in\set{-1,+1}^N$ with empirical mean near $x$:
	\[\mathbb{H}[x]
	\equiv
	\bigg\{ J\in\set{-1,+1}^N:
	\bigg|\f{(J,\oneN)}N - x\bigg| \le \f1N
	\bigg\}\,.\]
Then $\set{-1,+1}^N$ is covered by the sets $\mathbb{H}[x]$ for $x=2k/N-1$ with $k\in\set{0,1,\ldots,N}$. For such $x$, 
	\beq\label{e:Hx.entropy}
	\oQ(\mathbb{H}[x])
	= \f{\#\mathbb{H}[x]}{2^N}
	= \f1{2^N} \binom{N}{k}
	= \exp\bigg\{ N\bigg[
	\ent\bigg(
		\f{1+x}{2}\bigg)
	-\log2
	+o_N(1)\bigg]
	\bigg\}\,.\eeq
Since the Radon--Nikodym derivative \eqref{e:binary.identity.tilt} is constant over $\mathbb{H}[x]$, we have
	\beq\label{e:Hx.RN}
	\tQ(\mathbb{H}[x])
	= \oQ(\mathbb{H}[x])
	\f{\exp(N H x)}{(\ch H)^N}\,.\eeq
On the other hand, it is clear from \eqref{e:binary.identity.tilt} that under the measure $\tQ$ we have $\tQE J_i=\tnh H$ for all $i$, and the empirical mean $(J,\oneN)/N$ will be exponentially well concentrated around $\tnh H$. This means that
$\tQ(\mathbb{H}[x])$ is approximately maximized at $x=\tnh H$, with
 $\tQ(\mathbb{H}(\tnh H)) = \exp\{o_N(1)\}$. Combining with \eqref{e:Hx.entropy} and \eqref{e:Hx.RN} gives
	\[\exp\{o_N(1)\}
	=\tQ(\mathbb{H}(\tnh H))
	=
	\exp\bigg\{ N\bigg[
	\ent\bigg(
		\f{1+\tnh H}{2}\bigg)
	-\log2
	+o_N(1)\bigg]
	\bigg\}
	\f{\exp(N H \tnh H)}{(\ch H)^N}\,.\]
Taking $N\uparrow\infty$ and rearranging gives \eqref{e:binary.entropy.ldp}. Of course, this derivation is overkill for \eqref{e:binary.entropy.ldp} which can be obtained by simple algebra. However we next apply a similar method to obtain identities for $\HH(\lm)$ which are not so straightforward to prove by direct algebraic manipulation.

We now let $\oQ$ stand for the uniform probability measure on pairs $(J,K)\in\set{-1,+1}^N\times\set{-1,+1}^N$, and consider the change of measure (cf.\ \eqref{e:QAB})
	\beq\label{e:pairs.identity.tilt}
	\f{d\tQ}{d\oQ}
	=\prod_{i=1}^N
	\f{4\exp\{
	a(J_i K_i-1)+b(J_i+K_i)
	\}}{B+1/B+2/A}\,.\eeq
Let $\mathbb{H}[H,D]$ be the set of pairs $(J,K)$ having empirical measure close to the measure $P_{H,D}$ of \eqref{e:P.H.D}:
	\[\mathbb{H}[H,D]
	= \bigg\{
	(J,K) \in (\set{-1,+1}^N)^2
	: 
	\bigg|\f{\#\set{i\le N : (J_i,K_i)=
	\sigma}}{N}
	-P_{H,D}(\sigma)\bigg|\le\f1N
	\textup{ for all } \sigma\in\set{-1,+1}^2
	\bigg\}\,.\]
For $(J,K)\in \mathbb{H}[H,D]$, it follows from \eqref{e:QHD.expectations} that
(with $m\equiv\tnh H$ as usual)
	\[\f{(J,\oneN)}{N} = m + \f{O(1)}{N}
	= \f{(K,\oneN)}{N}
	\,,\quad
	\f{(J,K)}N = D+m^2 + \f{O(1)}{N}\,.\]
Analogously to \eqref{e:Hx.entropy}, we have
	\beq\label{e:Hx.entropy.pairversion}
	\oQ(\mathbb{H}[H,D])
	= \f{\#\mathbb{H}[H,D]}{4^N}
	= \exp\bigg\{
	N\bigg[
	\Gamma(H,D) - \log 4 + o_N(1)
	\bigg]
	\bigg\}\,.\eeq
Analogously to \eqref{e:Hx.RN}, we have
	\beq\label{e:Hx.RN.pairversion}
	\tQ(\mathbb{H}[H,D])
	= \oQ(\mathbb{H}[H,D])
	\f{4^N 
	\exp\{
	N[ a(D+m^2-1)+b(2m) ]
	\}
	}{(B+1/B+2/A)^N}\eeq
On the other hand, we see from \eqref{e:pairs.identity.tilt} that in the measure $\tQ$ we have (cf.~\eqref{e:QAB.marginal})
	\[\tQE J_i = \tQE K_i = \f{B-1/B}{B+1/B+2/A}\,,\quad
	\tQE(J_i K_i) = \f{B+1/B-2/A}{B+1/B+2/A}\,.\]
The corresponding empirical means
$(J,\oneN)/N$, $(K,\oneN)/N$, and 
$(J,K)/N$ will be exponentially well concentrated about these values. It follows that $\tQ(\mathbb{H}[H,D])$ is maximized at the value $(H,D)$ such that
	\[H=H_{A,B} = 
	\atnh\bigg( \f{B-1/B}{B+1/B+2/A}\bigg)\,,\quad
	D=D_{A,B} = \f{B+1/B-2/A}{B+1/B+2/A} 
		- \tnh(H_{A,B})^2\]
--- equivalently, such that $B=B_H(A)$ and $D=D_H(A)$
(see \eqref{e:B.A}~and~\eqref{e:D.A}).
Combining \eqref{e:Hx.entropy.pairversion}~and~\eqref{e:Hx.RN.pairversion} gives
	\[\tQ(\mathbb{H}[H,D])
	=\f{ \exp\{
	N[\Gamma(H,D)
	+(D+m^2)(\log A)/2
	+ m\log B
	+o_N(1)]\}}{(B+1/B+2/A)^N
		A^{N/2}
	}\,,\]
where the denominator does not depend on $(H,D)$. By taking $N\uparrow\infty$ we see that	
	\[\LL(H,D)
	= \Gamma(H,D)+(D+(\tnh H)^2)\f{\log A}{2}
	+(\tnh H)(\log B)\]
is maximized at $(H,D)=(H_{A,B},D_{A,B})$. From the stationarity equations we obtain
	\beq\label{e:entropy.gprime}
	\f{\pd\Gamma(H,D_H(A))}{\pd D}
	=-\f{\log A}{2}\,,\quad
	\f{\pd\Gamma(H,D_H(A))}{\pd m}
	= - m\log A - \log B_H(A)\,.\eeq
(The identity \eqref{e:entropy.gprime} will be useful to us in later sections. It can also be obtained by a purely algebraic derivation, but we found the above calculation to be more conceptually simple.) Substituting into \eqref{e:def.ell.A} and combining with \eqref{e:HH.pair} gives with $H\equiv \psi^{1/2}z$,
	\[\frH'(A)
	= \int
	\f{\pd\Gamma(H,D_H(A))}{\pd D}
	(D_H)'(A)\,\varphi(z)\,dz
	= -\f{(1-q)\log A}{2} \ell'(A)\,.\]
Since $\HH(\lm)=\frH(A(\lm))$ with $A(\lm)\equiv \ell^{-1}(\lm)$, we conclude that
	\beq\label{e:dHH.dlambda}
	\HH'(\lm)
	=-\f{(1-q)\log A(\lm)}{2}\,,\quad
	\HH''(\lm)
	=-\f{(1-q)A'(\lm)}{2 A(\lm)}
	=-\f{(1-q)}{2 A(\lm) \ell'(A(\lm))}\,.\eeq
Since $A(\lm)$ and $\ell'(A(\lm))$ are both positive, we see that $\HH(\lm)$ is a concave function of $\lm$.
Note also that for $\sigma$ defined by Proposition~\ref{p:cube.moderate} we have
	\beq\label{e:sigma.limit.ellprime}
	\adjustlimits\limsup_{\DELTA\downarrow0}
	\limsup_{t\uparrow\infty}
	\limsup_{\EPS\downarrow0}
	\limsup_{\Nall\uparrow\infty}
	\bigg|\f{1}
	{(\sigma_{M,N_\textup{all},\DELTA,t,\EPS})^2}
	+ \HH''(0)\bigg|=0\eeq
by making use of \cite[Lemma 1(c)]{bayati2011dynamics}. Thus \eqref{e:pairs.ld} is consistent with our earlier bound \eqref{e:pairs.clt}.

\section{Conclusion}\label{s:conclusion}

\noindent Define the following numerical constants:
{\setlength{\jot}{0pt}\begin{alignat}{3}
	\nonumber
	\albd&\equiv\alval\,,\quad
	&\qlbd&\equiv\qlval\,,\quad
	&\psilbd&\equiv\psilval\,,\\ \nonumber
	& &\qlu&\equiv\qluval
	&\psilu&\equiv\psiluval\,,\\ \nonumber
	\aubd&\equiv\auval\,,\quad
	&\qul&\equiv\qulval\,,\quad
	&\psiul&\equiv\psiulval\,,\\
	& & \qubd&\equiv\quval\,,\quad
	&\psiubd&\equiv\psiuval
	\label{e:numbers}
	\end{alignat}}%
Note that $\qlbd<\qlu<\qul<\qubd$
and likewise for $\psi$. Let
	\beq\label{e:gamma.bds}
	\gamma(q)\equiv \bigg(\f{q}{1-q}\bigg)^{1/2}\,.\eeq
Denote $\gamlbd\equiv\gamma(\qlbd)$
and similarly $\gamlu,\gamul,\gamubd$.

\begin{lem}\label{l:intbyparts}
If $(q,\psi)$ solves the fixed-point relation
$\psi = R(q,\alpha)$
and $q=P(\psi)$, then
	\[I=\alpha \int
	\cE\bigg(\f{\kappa-q^{1/2}z}{\sqrt{1-q}}
	\bigg) \f{\kappa-q^{-1/2}z}{\sqrt{1-q}}
	\,\varphi(z)\,dz
	=\psi(1-q)\,.\]

\begin{proof}
Using gaussian integration by parts and the identity
$\cE'(x)=\cE(x)(\cE(x)-x)$, we calculate
	\begin{align*}
	I&= \alpha \int 
	\bigg\{
	\cE\bigg(\f{\kappa-q^{1/2}z}{\sqrt{1-q}}
	\bigg) \f{\kappa}{\sqrt{1-q}}
		+ \cE'
	\bigg(\f{\kappa-q^{1/2}z}{\sqrt{1-q}}\bigg)
	\f1{1-q}\bigg\}
	\,\varphi(z)\,dz
	\\
	&= \alpha \int
	\bigg\{ F_q(q^{1/2}z)^2
	- \cE\bigg(\f{\kappa-q^{1/2}z}{\sqrt{1-q}}
	\bigg) \f{q(\kappa-q^{-1/2}z)}{(1-q)^{3/2}}
	\bigg\}
	= \psi - \f{q I}{1-q}\,.\end{align*}
Rearranging gives $I=\psi(1-q)$ as claimed.
\end{proof}
\end{lem}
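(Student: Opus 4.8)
The plan is to prove the identity $I = \psi(1-q)$ by two applications of gaussian integration by parts, exploiting the Stein-type identity $\int z f(z)\,\varphi(z)\,dz = \int f'(z)\,\varphi(z)\,dz$, together with the Riccati-type relation $\cE'(x) = \cE(x)(\cE(x)-x)$ recorded in Lemma~\ref{l:EE}, and then closing the loop using the two fixed-point equations $\psi = R(q,\alpha)$ and $q = P(\psi)$.

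First I would write $I = \alpha\int \cE(\xi_{q,z})\,\frac{\kappa - q^{-1/2}z}{\sqrt{1-q}}\,\varphi(z)\,dz$ where $\xi_{q,z} \equiv (\kappa - q^{1/2}z)/\sqrt{1-q}$, and split the integrand into the $\kappa$-term and the $z$-term. The $\kappa$-term is immediate. For the $z$-term, $-q^{-1/2}z/\sqrt{1-q}$ times $\cE(\xi_{q,z})$, I would integrate by parts in $z$: the factor $z$ gets converted into a derivative hitting $\cE(\xi_{q,z})$, producing $\frac{d}{dz}\cE(\xi_{q,z}) = \cE'(\xi_{q,z})\cdot(-q^{1/2}/\sqrt{1-q})$; combined with the prefactor $-q^{-1/2}/\sqrt{1-q}$ this yields a clean $\cE'(\xi_{q,z})/(1-q)$ term. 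This produces the first displayed line in the lemma statement, namely
\[
I = \alpha\int\left\{ \cE(\xi_{q,z})\,\frac{\kappa}{\sqrt{1-q}} + \cE'(\xi_{q,z})\,\frac{1}{1-q}\right\}\varphi(z)\,dz.
\]

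Next I would substitute $\cE'(\xi_{q,z}) = \cE(\xi_{q,z})(\cE(\xi_{q,z}) - \xi_{q,z})$. The term $\cE(\xi_{q,z})^2/(1-q)$ is, by the definitions \eqref{e:def.F} and \eqref{e:q.psi.recursion}, exactly $F_q(q^{1/2}z)^2$, so $\alpha$ times its gaussian average is $R(q,\alpha) = \psi$. The remaining pieces are $\alpha\int\cE(\xi_{q,z})\big(\tfrac{\kappa}{\sqrt{1-q}} - \tfrac{\xi_{q,z}}{1-q}\big)\varphi(z)\,dz$; writing $\xi_{q,z} = (\kappa - q^{1/2}z)/\sqrt{1-q}$ and simplifying the bracket gives $\cE(\xi_{q,z})\cdot\frac{q(\kappa - q^{-1/2}z)}{(1-q)^{3/2}} \cdot(-1)$, i.e.\ precisely $-\tfrac{q}{1-q}$ times the original integrand of $I$. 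Hence $I = \psi - \tfrac{q}{1-q}I$, and rearranging gives $I(1 + \tfrac{q}{1-q}) = \psi$, that is $I = \psi(1-q)$, as claimed.

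I do not expect any serious obstacle here; the only points requiring minor care are the boundary/integrability conditions justifying integration by parts (these follow from the tail asymptotics $\cE(x) - x \asymp 1/x$ as $x\to+\infty$ and $\cE(x)\to 0$ superexponentially as $x\to-\infty$, both consequences of Lemma~\ref{l:EE}, which guarantee the boundary terms vanish and all integrals converge), and the bookkeeping to recognize $\cE(\xi_{q,z})^2/(1-q) = F_q(q^{1/2}z)^2$ so that the definition of $R$ in \eqref{e:q.psi.recursion} applies verbatim. The hypothesis $q = P(\psi)$ is in fact not needed for this particular identity — only $\psi = R(q,\alpha)$ is used — but it does no harm to assume it; alternatively one could note the lemma is stated with both fixed-point relations simply because that is the regime of application.
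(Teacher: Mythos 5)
Your proposal is correct and follows exactly the paper's argument: one gaussian integration by parts to convert the $z$-factor into $\cE'(\xi_{q,z})/(1-q)$, then the identity $\cE'=\cE(\cE-\,\cdot\,)$ to split off $\alpha\int F_q(q^{1/2}z)^2\varphi(z)\,dz=\psi$ and a term equal to $-\tfrac{q}{1-q}I$, followed by rearrangement. Your side remark that only $\psi=R(q,\alpha)$ is actually used is also accurate.
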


\begin{cor}\label{c:gg.decreasing}
The function $\GG_\star(\alpha)$
is a decreasing function on $\alpha\in(\albd,\aubd)$.

\begin{proof}
Denote $q\equiv \sq(\alpha)$ and $\psi\equiv\spsi(\alpha)$. We first show that $(q,\psi)$ is a stationary point of the function $\GG(\alpha,q,\psi)$ defined by \eqref{e:GG}. Indeed,
 Lemma~\ref{l:intbyparts} gives
	\beq\label{e:dGdq.portion}
	\alpha \f{d}{dq}
	\int \log\bPhi\bigg(
	\f{\kappa-q^{1/2}z}{\sqrt{1-q}}
	\bigg)\,\varphi(z)\,dz
	= -\alpha\int
	\cE\bigg(\f{\kappa-q^{1/2}z}{\sqrt{1-q}}
	\bigg)
		\f{\kappa-q^{-1/2}z}{2(1-q)^{3/2}}
		\,\varphi(z)\,dz
	= -\f{I}{2(1-q)} = -\f{\psi}{2}\,,\eeq
from which it follows that $\pd\GG/\pd q=0$.
A direct calculation gives
	\beq\label{e:dGdpsi.portion}
	\f{d}{d\psi}
	\int \log(2\ch(\psi^{1/2}z))\,\varphi(z)\,dz
	= \int 
	\f{\tnh(\psi^{1/2}z) z}{2\psi^{1/2}}
	\,\varphi(z)\,dz
	=
	\int \f{\tnh'(\psi^{1/2}z)}{2}
	\,\varphi(z)\,dz
	= \f{1-q}{2}\,,\eeq
from which it follows that $\pd\GG/\pd\psi=0$. It follows that
	\[\f{d \GG_\star(\alpha)}{d\alpha}
	= \f{\pd\GG(\alpha,q,\psi)}{\pd\alpha}
	= \int\log\bPhi\bigg(\f{\kappa-q^{1/2}z}{\sqrt{1-q}}
	\bigg)\,\varphi(z)\,dz\]
which is negative.
\end{proof}
\end{cor}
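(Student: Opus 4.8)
The statement to prove is Corollary~\ref{c:gg.decreasing}: that $\GG_\star(\alpha)$ is decreasing on $(\albd,\aubd)$.

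\medskip

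The plan is to show that $(q,\psi) = (\sq(\alpha),\spsi(\alpha))$ is a stationary point of $(q,\psi)\mapsto\GG(\alpha,q,\psi)$, so that by the envelope theorem the total derivative $d\GG_\star/d\alpha$ equals the partial derivative $\pd\GG/\pd\alpha$, which is manifestly negative (it is $\int\log\bPhi((\kappa-q^{1/2}z)/\sqrt{1-q})\,\varphi(z)\,dz$, an integral of a strictly negative quantity). So the work is entirely in verifying the two stationarity conditions $\pd\GG/\pd q = 0$ and $\pd\GG/\pd\psi = 0$ at the fixed point.

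\medskip

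First I would handle $\pd\GG/\pd\psi = 0$. Differentiating \eqref{e:GG} in $\psi$ gives $-\tfrac{1-q}{2}$ plus the derivative of $\int\log(2\ch(\psi^{1/2}z))\,\varphi(z)\,dz$. Computing the latter by the chain rule and then applying gaussian integration by parts (using $\tnh(\psi^{1/2}z)\cdot z \mapsto \psi^{1/2}\tnh'(\psi^{1/2}z)$) turns it into $\tfrac12\int\tnh'(\psi^{1/2}z)\,\varphi(z)\,dz = \tfrac{1-q}{2}$, where the last equality uses $q = P(\psi)$ together with the identity $P(\psi) = \int\tnh(\psi^{1/2}z)^2\varphi(z)\,dz$ and $1-\tnh^2 = \tnh'$. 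The two terms cancel. Next, for $\pd\GG/\pd q = 0$: differentiating \eqref{e:GG} in $q$ gives $-\tfrac{\psi}{2}$ (with a sign) plus $\alpha$ times the $q$-derivative of $\int\log\bPhi((\kappa-q^{1/2}z)/\sqrt{1-q})\,\varphi(z)\,dz$. The key computation is \eqref{e:dGdq.portion}: using $(\log\bPhi)'(x) = -\cE(x)$, carefully differentiating the argument $(\kappa-q^{1/2}z)/\sqrt{1-q}$ in $q$, and then invoking Lemma~\ref{l:intbyparts} (which says exactly $\alpha\int\cE(\,\cdot\,)\tfrac{\kappa-q^{-1/2}z}{\sqrt{1-q}}\varphi(z)\,dz = \psi(1-q)$, valid since $(q,\psi)$ solves the fixed-point system), this derivative evaluates to $-\psi/2$. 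Combined with the $-\psi/2$ from the first term — wait, I should be careful with signs, but the point is they match so that $\pd\GG/\pd q = 0$.

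\medskip

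The main obstacle — such as it is — is purely bookkeeping: correctly differentiating the composite argument $(\kappa-q^{1/2}z)/\sqrt{1-q}$ with respect to $q$ and matching it against the integrand in Lemma~\ref{l:intbyparts}, which is stated in terms of $\tfrac{\kappa-q^{-1/2}z}{\sqrt{1-q}}$ rather than $\tfrac{\kappa-q^{1/2}z}{\sqrt{1-q}}$. One checks that $\tfrac{d}{dq}\bigl(\tfrac{\kappa-q^{1/2}z}{\sqrt{1-q}}\bigr) = \tfrac{1}{2(1-q)^{1/2}}\bigl(\tfrac{\kappa-q^{-1/2}z}{1-q}\bigr)\cdot(\text{sign/constant})$, up to the factor that produces precisely $-I/(2(1-q)) = -\psi/2$. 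There is no conceptual difficulty: everything reduces to the envelope principle plus two integration-by-parts identities already packaged as Lemma~\ref{l:intbyparts} and the recursions \eqref{e:q.psi.recursion}. Once $\pd\GG/\pd q = \pd\GG/\pd\psi = 0$ is established, the conclusion $d\GG_\star/d\alpha = \int\log\bPhi(\cdot)\,\varphi\,dz < 0$ is immediate, and strict negativity gives that $\GG_\star$ is strictly decreasing on $(\albd,\aubd)$.
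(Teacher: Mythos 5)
Your proposal is correct and follows essentially the same route as the paper: establish stationarity of $\GG(\alpha,q,\psi)$ at $(\sq,\spsi)$ via Lemma~\ref{l:intbyparts} (for $\pd\GG/\pd q=0$) and gaussian integration by parts together with $q=P(\psi)$ (for $\pd\GG/\pd\psi=0$), then apply the envelope principle to get $d\GG_\star/d\alpha=\pd\GG/\pd\alpha=\int\log\bPhi((\kappa-q^{1/2}z)/\sqrt{1-q})\,\varphi(z)\,dz<0$. The sign bookkeeping you flag does work out: $\f{d}{dq}\big(\f{\kappa-q^{1/2}z}{\sqrt{1-q}}\big)=\f{\kappa-q^{-1/2}z}{2(1-q)^{3/2}}$ and $(\log\bPhi)'=-\cE$, so the $q$-derivative of the last term of \eqref{e:GG} is $-I/(2(1-q))=-\psi/2$, cancelling the $+\psi/2$ from $-\psi(1-q)/2$.
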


\begin{lem}\label{l:recursion.monotone}
As defined by \eqref{e:q.psi.recursion},
the functions $P$ and $R(\cdot,\alpha)$ are nondecreasing.

\begin{proof}
Since $\tnh(\psi^{1/2}z)$ has the same sign as $z$
while $\tnh'(\psi^{1/2}z)\in(0,1]$, the derivative
	\[P'(\psi)
	= \int 
	\f{\tnh(\psi^{1/2}z)
	\tnh'(\psi^{1/2}z)
	z}{\psi^{1/2}}
	\,\varphi(z)\,dz\]
is nonnegative. We next calculate
	\begin{align}\nonumber
	\f{\pd R}{\pd q}
	&= \f{\alpha}{(1-q)^2}\int 
	\cE\bigg(\f{\kappa-q^{1/2}z}{\sqrt{1-q}}\bigg)^2
	\bigg\{
	1 + \bigg[
	\cE\bigg(\f{\kappa-q^{1/2}z}{\sqrt{1-q}}\bigg)
	- \f{\kappa-q^{1/2}z}{\sqrt{1-q}} \bigg]
	\f{\kappa - q^{-1/2} z}{\sqrt{1-q}}
	\bigg\}
	\,\varphi(z)\,dz\\
	&= \f{R(q)}{1-q}
	+ \f{\alpha}{(1-q)^2}\int 
	\cE\bigg(\f{\kappa-q^{1/2}z}{\sqrt{1-q}}\bigg)
	\cE'\bigg(\f{\kappa-q^{1/2}z}{\sqrt{1-q}}\bigg)
	\f{\kappa - q^{-1/2} z}{\sqrt{1-q}}
	\,\varphi(z)\,dz\,.\label{e:dRdq}
	\end{align}
The right-hand side of \eqref{e:dRdq} is the sum of two terms where the first is clearly nonnegative. We claim that the second term is nonnegative also. To this end, let
	\beq\label{e:xi.zeta}
	\xi_{q,z}
	\equiv\f{\kappa-q^{1/2}z}{\sqrt{1-q}}\,,\quad
	\zeta_{q,z}
	\equiv\f{\kappa-q^{-1/2}z}{\sqrt{1-q}}
	= 2(1-q) \f{d\xi_{q,z}}{dq}\,.\eeq
Consider the change of variables $\bar{z}=2\kappa q^{1/2}-z$. We have the relations $\zeta_{q,\bar{z}}=-\zeta_{q,z}$ and $\xi_{q,\bar{z}}-\xi_{q,z} = 2q\zeta_{q,z}$. Note also that $\zeta_{q,z}\ge0$ if and only if $\kappa-q^{-1/2}z\ge0$. The second term
of \eqref{e:dRdq} can be expressed as
	\[\f{\alpha}{(1-q)^2}
	\int_{z\le q^{1/2}\kappa}
	\bigg(
	\cE(\xi_{q,z})\cE'(\xi_{q,z})
	-\cE(\xi_{q,\bar{z}})\cE'(\xi_{q,\bar{z}})
	\bigg)
	\zeta_{q,z} \,\varphi(z)\,dz\,,\]
which is nonnegative because $\cE$ and $\cE'$ are both nondecreasing functions (see Lemma~\ref{l:EE} below). Therefore $\pd R/\pd q$ is also nonnegative, concluding the proof.
\end{proof}
\end{lem}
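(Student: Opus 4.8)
The plan is to show that each of the two recursions has a nonnegative derivative. For $P$ this is immediate: differentiating $P(\psi)=\int\tnh(\psi^{1/2}z)^2\varphi(z)\,dz$ under the integral sign gives, for $\psi>0$,
	\[P'(\psi)=\int\f{z\,\tnh(\psi^{1/2}z)\,\tnh'(\psi^{1/2}z)}{\psi^{1/2}}\,\varphi(z)\,dz\,.\]
Since $\tnh$ is odd and strictly increasing, $\tnh(\psi^{1/2}z)$ has the same sign as $z$, so $z\,\tnh(\psi^{1/2}z)\ge0$ for every $z$, while $\tnh'>0$ everywhere; hence the integrand is pointwise nonnegative, so $P'\ge0$ on $(0,\infty)$, and $P$ is nondecreasing (extending to $\psi=0$ by continuity, since $P(0)=0$).

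For $R(\cdot,\alpha)$ I would first differentiate in $q$. Writing $\xi_{q,z}\equiv(\kappa-q^{1/2}z)/\sqrt{1-q}$, so that $R(q,\alpha)=\f{\alpha}{1-q}\int\cE(\xi_{q,z})^2\varphi(z)\,dz$, one checks directly that $\f{d}{dq}\xi_{q,z}=\zeta_{q,z}/(2(1-q))$ with $\zeta_{q,z}\equiv(\kappa-q^{-1/2}z)/\sqrt{1-q}$. Differentiating under the integral and using the identity $\cE'(x)=\cE(x)(\cE(x)-x)$ from Lemma~\ref{l:EE}, then collecting terms, yields
	\[\f{\pd R}{\pd q}=\f{R(q,\alpha)}{1-q}+\f{\alpha}{(1-q)^2}\int\cE(\xi_{q,z})\,\cE'(\xi_{q,z})\,\zeta_{q,z}\,\varphi(z)\,dz\,.\]
The first term is manifestly nonnegative (it is a positive multiple of an integral of a square), so the whole question reduces to the sign of the integral in the second term. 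This is the step I expect to be the main obstacle, because the factor $\zeta_{q,z}$ changes sign as $z$ crosses $z_0\equiv q^{1/2}\kappa$.

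To control it I would use the monotonicity of $\cE$ and $\cE'$ (both nonnegative and nondecreasing, Lemma~\ref{l:EE}). Since $q\in(0,1)$, the map $z\mapsto\xi_{q,z}$ is strictly decreasing, so $g(z)\equiv\cE(\xi_{q,z})\cE'(\xi_{q,z})$ is a nonnegative, nonincreasing function of $z$; meanwhile $\zeta_{q,z}$ is affine decreasing in $z$, vanishes exactly at $z_0=q^{1/2}\kappa$, and is $\ge0$ on $\{z\le z_0\}$. The key manoeuvre is the reflection $\bar z\equiv2q^{1/2}\kappa-z$ about this sign-change point, under which $\zeta_{q,\bar z}=-\zeta_{q,z}$; folding the part of the integral over $\{z>z_0\}$ onto $\{z\le z_0\}$ rewrites it as $\int_{z\le z_0}\bigl(g(z)\varphi(z)-g(\bar z)\varphi(\bar z)\bigr)\zeta_{q,z}\,dz$. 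On $\{z\le z_0\}$ one has $\zeta_{q,z}\ge0$ and $z\le z_0\le\bar z$, hence $g(z)\ge g(\bar z)\ge0$; and $|z|\le|\bar z|$ there because the reflection centre $z_0$ has the same sign as $\kappa$ (for $\kappa\ge0$, and trivially for $\kappa=0$, where $\varphi(\bar z)=\varphi(z)$), so $\varphi(z)\ge\varphi(\bar z)$. Thus the integrand is nonnegative, the second term of $\pd R/\pd q$ is nonnegative, and $R(\cdot,\alpha)$ is nondecreasing, which together with the bound on $P'$ completes the plan.
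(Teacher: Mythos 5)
Your proof is correct and follows essentially the same route as the paper's: nonnegativity of $P'$ by the sign argument, then the identity $\cE'(x)=\cE(x)(\cE(x)-x)$ to split $\partial R/\partial q$ into $R/(1-q)$ plus a remainder that is handled by reflecting about $z_0=q^{1/2}\kappa$ and using that $\cE$ and $\cE'$ are nonnegative and nondecreasing. Your handling of the Gaussian density under the reflection (using $\varphi(z)\ge\varphi(\bar z)$ on $\{z\le z_0\}$ for $\kappa\ge0$, with equality when $\kappa=0$) is if anything slightly more careful than the paper's folded display, which keeps $\varphi(z)$ in both terms.
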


\begin{lem}[computer-assisted]
\label{l:into}
For $R(q,\alpha)$ as defined by \eqref{e:q.psi.recursion} the following hold:
\begin{enumerate}[a.]
\item\label{l:into.a} The function $q\mapsto R(q,\albd)$
maps $(\qlbd,\qlu)$ into $(\psilbd,\psilu)$;
\item\label{l:into.b} The function $q\mapsto R(q,\aubd)$
maps $(\qul,\qubd)$ into $(\psiul,\psiubd)$.
\end{enumerate}
As a consequence, $R(q,\alpha)\in(\psilbd,\psiubd)$
for all $(q,\alpha)\in(\qlbd,\qubd)\times(\albd,\aubd)$.
\begin{proof}
It is clear that $R(q,\alpha)$ is lower bounded by
	\[R_\lbd(q,\alpha)\equiv
	\alpha \int_{|z|\le 10}
	F_q(q^{1/2}z)^2\,\varphi(z)\,dz\,.\]
Moreover, since $\cE(x)\le 1+|x|$ (proved in Lemma~\ref{l:EE} below) we can bound
	\begin{align*}
	\aubd \int_{|z|\ge 10} 
	F_{\qubd}(\sqrt{\qubd} z)^2\,\varphi(z)\,dz
	&\le \f{\aubd}{1-\qubd}
	\int_{|z|\ge 10} \bigg(
	1 + \bigg| \f{\kappa-\sqrt{\qubd}z}
		{\sqrt{1-\qubd}} \bigg|
	\bigg)^2\,\varphi(z)\,dz \\
	&= \f{2\aubd}{1-\qubd}
	\bigg(
	\bPhi(10)
	+ \f{2 \sqrt{\qubd} \phi(10)}{\sqrt{1-\qubd}}
	+ \f{\qubd ( 10 \phi(10) + \bPhi(10))}
		{1-\qubd}
	\bigg)\,.\end{align*}
The last expression can be evaluated by computer to very high precision, and we find that it is smaller than $1/10^{20}$. It follows that
for all $(q,\alpha)\in(\qlbd,\qubd)\times(\albd,\aubd)$, the function $R(q,\alpha)$ is upper bounded by
	\[R_\ubd(q,\alpha)
	\equiv \alpha \int_{|z|\le 10}
	F_q(q^{1/2}z)^2\,\varphi(z)\,dz
	\le \f1{10^{20}}\,.\]
Both $R_\lbd$ and $R_\ubd$ can be evaluated by computer, so we can verify that
	{\setlength{\jot}{0pt}\begin{align*}
	R_\lbd(\qlbd,\albd) &> \psilbd\,,\\
	R_\ubd(\qubd,\albd) &< \psilu\,,\end{align*}}%
which proves part~\ref{l:into.a}.
Part~\ref{l:into.b} is proved similarly.
The last claim follows since 
$R(q,\alpha)$ is also nondecreasing in $\alpha$.
\end{proof}
\end{lem}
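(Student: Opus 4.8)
The plan is to reduce the claim, via monotonicity of $R$, to a finite list of certified numerical comparisons. First I would record two monotonicity facts: $R(\cdot,\alpha)$ is nondecreasing by Lemma~\ref{l:recursion.monotone}, while $R(q,\cdot)$ is nondecreasing because $R(q,\alpha)=\alpha\int F_q(q^{1/2}z)^2\varphi(z)\,dz$ is $\alpha$ times a nonnegative integral. Consequently, for part~\ref{l:into.a} it is enough to verify the two strict endpoint inequalities $R(\qlbd,\albd)>\psilbd$ and $R(\qlu,\albd)<\psilu$: then every $q\in(\qlbd,\qlu)$ gives $\psilbd<R(\qlbd,\albd)\le R(q,\albd)\le R(\qlu,\albd)<\psilu$. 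Part~\ref{l:into.b} reduces identically to $R(\qul,\aubd)>\psiul$ and $R(\qubd,\aubd)<\psiubd$. The closing assertion then follows from monotonicity in both arguments together with the ordering $\psilbd<\psilu<\psiul<\psiubd$, since on $(\qlbd,\qubd)\times(\albd,\aubd)$ one has $R(q,\alpha)\ge R(\qlbd,\albd)>\psilbd$ and $R(q,\alpha)\le R(\qubd,\aubd)<\psiubd$.

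It remains to certify the four numbers $R(q_0,\alpha_0)$. Writing (recall $\kappa=0$) $R(q,\alpha)=\f{\alpha}{1-q}\int_\R\cE(-\gamma(q)z)^2\,\varphi(z)\,dz$ with $\gamma(q)$ as in \eqref{e:gamma.bds}, I would split the integral at $|z|=10$. The truncated piece $R_\lbd(q,\alpha)\equiv\f{\alpha}{1-q}\int_{|z|\le10}\cE(-\gamma(q)z)^2\varphi(z)\,dz$ is already a lower bound for $R(q,\alpha)$, since the discarded tail is nonnegative. For an upper bound on the tail I would invoke the elementary estimate $\cE(x)\le1+|x|$ (Lemma~\ref{l:EE}), which gives $\cE(-\gamma(q)z)^2\le(1+\gamma(q)|z|)^2$; integrating this quadratic against $\varphi(z)$ over $|z|\ge10$ produces a closed-form expression in $\bPhi(10)$ and $\varphi(10)$ that is increasing in $q$ and $\alpha$, so substituting $q=\qubd$, $\alpha=\aubd$, $\gamma(\qubd)=\gamubd$ yields a single explicit number, which one checks is below $10^{-20}$. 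Hence $R_\lbd(q,\alpha)\le R(q,\alpha)\le R_\lbd(q,\alpha)+10^{-20}$ throughout the box, and $10^{-20}$ is negligible against the gaps in play (of order $10^{-10}$; e.g.\ $\psilu-\psilbd\approx3\cdot10^{-10}$).

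Finally I would evaluate $R_\lbd$ at the four endpoints $(\qlbd,\albd)$, $(\qlu,\albd)$, $(\qul,\aubd)$, $(\qubd,\aubd)$ to precision comfortably below $10^{-10}$ and read off the comparisons. On $|z|\le10$ the integrand $\cE(-\gamma(q)z)^2\varphi(z)$ is smooth with controllable derivatives (with $\cE=\varphi/\bPhi$ evaluated through validated bounds on the complementary gaussian tail), so a certified quadrature --- for instance Gauss--Legendre with a rigorous remainder bound, or an interval-arithmetic / Taylor-model scheme --- returns an enclosure of each $R_\lbd(q_0,\alpha_0)$ whose width is far smaller than $10^{-10}$, after which the four comparisons close the proof. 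The step I expect to be the main obstacle is precisely this one: the tolerances are extraordinarily tight (the intervals $(\qlbd,\qlu)$ and $(\qul,\qubd)$ have width of order $10^{-11}$), so a black-box numerical integrator will not do --- both the quadrature and the evaluation of $\cE$ must be carried out with certified error control. This is exactly the point at which the present version of the paper relies on nonrigorous integration packages and which a future version should discharge by validated numerics.
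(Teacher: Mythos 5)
Your proposal is correct and follows essentially the same route as the paper: the same reduction via monotonicity of $R$ in both arguments to a handful of endpoint comparisons, the same truncation of the integral at $|z|=10$ with the tail controlled by $\cE(x)\le 1+|x|$ and bounded by $10^{-20}$ at $(\qubd,\aubd)$, and the same reliance on high-precision (ideally certified) evaluation of the truncated integrals. The only cosmetic difference is that the paper verifies the upper comparison for part~(a) at $\qubd$ rather than at $\qlu$, which is a strictly stronger check and changes nothing.
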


\begin{lem}[computer-assisted]\label{l:at}
For all $\alpha\in(\albd,\aubd)$ the condition \eqref{e:at} holds. In fact we have
	\[\sup_{(q,\alpha)\in G}
	 \f{dP(R(q,\alpha))}{dq} \le 0.96\]
where $G\equiv(\qlbd,\qubd)\times(\albd,\aubd)$.

\begin{proof}
We calculated $P'(\psi)$ in the proof of Lemma~\ref{l:recursion.monotone}.
It can be simplified as
	\[P'(\psi)
	= \int \f{2-\ch(2\psi^{1/2}z)}{\ch(\psi^{1/2}z)^4}
	\,\varphi(z)\,dz\,.\]
We separate the right-hand side into two terms and calculate
	\begin{align*}
	\f{d}{d\psi}
	\int \f{2}{\ch(\psi^{1/2}z)^4}\,\varphi(z)\,dz
	&= \int \f{-4\sh(\psi^{1/2}z)}
		{\ch(\psi^{1/2}z)^5} 
		\f{z}{\psi^{1/2}}\,\varphi(z)\,dz
	\le0\,,\\
	\f{d}{d\psi}
	\int \f{\ch(2\psi^{1/2}z)}{\ch(\psi^{1/2}z)^4}
		\,\varphi(z)\,dz
	&= \int \f{-2\sh(\psi^{1/2}z)^3}{\ch(\psi^{1/2}z)^5}
	\f{z}{\psi^{1/2}}
	\,\varphi(z)\,dz\le0\end{align*}
It follows that for all $\psi\in(\psilbd,\psiubd)$ we have
	\[P'(\psi)
	\le
	\int \f{2}{\ch(\sqrt{\psilbd}z)^4}
		\,\varphi(z)\,dz
	-\int \f{\ch(2\sqrt{\psiubd}z)}
		{\ch(\sqrt{\psiubd}z)^4}
		\,\varphi(z)\,dz
	\le 0.08\,,\]
where the last bound is computer-verified. We turn to $\pd R/\pd q$ which was also computed in the proof of Lemma~\ref{l:recursion.monotone}. Recalling \eqref{e:xi.zeta}, let us denote
	{\setlength{\jot}{0pt}
	\begin{alignat}{2}\nonumber
	\xiubd&\equiv 
	\max\set{\xi_{\qlbd,z},\xi_{\qubd,z}}\,,\quad
	&\zeubd&\equiv
	\max\set{\zeta_{\qlbd,z},\zeta_{\qubd,z}}\,,\\
	\label{e:xi.zeta.bds}
	\xilbd&\equiv 
	\min\set{\xi_{\qlbd,z},\xi_{\qubd,z}}\,,\quad
	&\zelbd&\equiv
	\min\set{\zeta_{\qlbd,z},\zeta_{\qubd,z}}\,.
	\end{alignat}}
By \eqref{e:dRdq} and the monotonicity of $R(\cdot,\alpha)$ proved in Lemma~\ref{l:recursion.monotone}, it holds for all $(q,\alpha)\in G$ that
	\[\f{\pd R(q,\alpha)}{\pd q}
	\le \f{R(\qubd)}{1-\qubd}
	+ \f{\aubd}{(1-\qubd)^2}
	\int\cE(\xiubd) \cE'(\xiubd) \zeubd
		\,\varphi(z)\,dz
	\le 12\,,\]
where the last bound is again computer-verified.
It follows from Lemma~\ref{l:into} that
	\[\sup_{(q,\alpha)\in G}
	 \f{dP(R(q,\alpha))}{dq}
	\le
	\bigg\{\sup_{\psi\in(\psilbd,\psiubd)}
	P'(\psi)\bigg\}\bigg\{
	\sup_{(q,\alpha)\in G}
	\f{\pd R(q,\alpha)}{\pd q}
	\bigg\}\,.\]
Multiplying the previous bounds gives the result.
\end{proof}
\end{lem}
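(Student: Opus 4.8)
The statement to prove is Lemma~\ref{l:at}: for all $\alpha\in(\albd,\aubd)$ condition~\eqref{e:at} holds, and in fact $\sup_{(q,\alpha)\in G} dP(R(q,\alpha))/dq \le 0.96$ on $G\equiv(\qlbd,\qubd)\times(\albd,\aubd)$.

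The plan is to decompose the chain-rule derivative $dP(R(q,\alpha))/dq = P'(R(q,\alpha))\cdot \pd R(q,\alpha)/\pd q$ and bound each factor separately by a numerical constant. First I would handle $P'(\psi)$: using the expression $P'(\psi)=\int \psi^{-1/2}\tnh(\psi^{1/2}z)\tnh'(\psi^{1/2}z)z\,\varphi(z)\,dz$ computed in the proof of Lemma~\ref{l:recursion.monotone}, rewrite it via the hyperbolic identity as $P'(\psi)=\int (2-\ch(2\psi^{1/2}z))/\ch(\psi^{1/2}z)^4\,\varphi(z)\,dz$. Split this into the two pieces $\int 2/\ch(\psi^{1/2}z)^4\,\varphi(z)\,dz$ and $-\int \ch(2\psi^{1/2}z)/\ch(\psi^{1/2}z)^4\,\varphi(z)\,dz$; differentiating each in $\psi$ and applying gaussian integration by parts shows both derivatives are $\le 0$ (the integrands after IBP have integrand $z\tnh(\cdot)$-type structure with a definite sign since $\sh$ is odd with the same sign as $z$). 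Monotone decrease in $\psi$ then gives $P'(\psi)\le P'(\psilbd)$ for all $\psi\in(\psilbd,\psiubd)$, and a numerical evaluation of $P'(\psilbd)$ (more precisely, the two pieces evaluated at the extreme endpoints $\psilbd$ and $\psiubd$ respectively) yields $P'(\psi)\le 0.08$ on that interval.

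Second I would bound $\pd R/\pd q$. From \eqref{e:dRdq} this is $R(q)/(1-q) + \alpha(1-q)^{-2}\int \cE(\xi_{q,z})\cE'(\xi_{q,z})\zeta_{q,z}\,\varphi(z)\,dz$. Using Lemma~\ref{l:recursion.monotone} (monotonicity of $R(\cdot,\alpha)$ and of $R(q,\cdot)$) and Lemma~\ref{l:into} (which confines $R$ to $(\psilbd,\psiubd)$ on $G$), the first term is $\le R(\qubd)/(1-\qubd)$; note $R(\qubd)<\psiubd$, and since $\psiubd$ is tiny (Lemma~\ref{l:into} shows $R_\ubd \le 10^{-20}$), this term is negligibly small. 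For the integral term, since $\cE$ and $\cE'$ are both nondecreasing (Lemma~\ref{l:EE}), monotonicity lets one replace the arguments by the pointwise extremes $\xiubd,\zeubd$ from \eqref{e:xi.zeta.bds}, reducing to $\int \cE(\xiubd)\cE'(\xiubd)\zeubd\,\varphi(z)\,dz$ times $\aubd/(1-\qubd)^2$. This last integral is an explicit one-dimensional gaussian integral that can be evaluated numerically, giving the overall bound $\pd R/\pd q \le 12$ on $G$.

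Finally, multiplying the two factor bounds gives $\sup_G dP(R(q,\alpha))/dq \le 0.08 \times 12 = 0.96 < 1$, which both establishes condition~\eqref{e:at} and gives the sharper claimed bound; the uniformity over $\alpha\in(\albd,\aubd)$ is automatic since every bound was taken uniformly over $G$. The main obstacle is the reliance on numerical evaluation of the gaussian integrals at the endpoints — the rigorous content of the lemma hinges entirely on those computer-verified inequalities (this is exactly the kind of step flagged in the abstract as currently nonrigorous, to be replaced by a certified numerical method). The analytic work — reducing to monotone-in-$\psi$ and monotone-in-$q$ one-dimensional integrals so that only endpoint evaluations are needed — is routine given the sign analyses already carried out in Lemmas~\ref{l:recursion.monotone} and~\ref{l:into}.
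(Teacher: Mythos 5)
Your proposal follows essentially the same route as the paper's proof: the same hyperbolic rewriting and two-term splitting of $P'(\psi)$ with monotonicity in $\psi$ reducing to endpoint evaluations giving $0.08$, the same bound on $\pd R/\pd q$ via \eqref{e:dRdq} with the extremal quantities $\xiubd,\zeubd$ giving $12$, and the same final product $0.08\times 12=0.96$. One aside is wrong, though harmless to the structure: the term $R(\qubd)/(1-\qubd)$ is \emph{not} negligible --- $R(\qubd)$ is close to $\psiubd\doteq 2.58$ (the $1/10^{20}$ in Lemma~\ref{l:into} bounds only the tail $|z|\ge 10$ of the integral, not $R_\ubd$ itself), so this term contributes roughly half of the bound $12$.
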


\begin{cor}[computer-assisted] \label{c:q.psi.bds}
For $\alpha\in(\albd,\aubd)$ there is a unique pair
of values $\sq\in(\qlbd,\qubd)$ and $\spsi\in(\psilbd,\psiubd)$ satisfying $\spsi=R(\sq,\alpha)$ and $\sq=P(\spsi)$.
We have $\sq(\aubd)\in(\qul,\qubd)$
and $\sq(\albd)\in(\qlbd,\qlu)$;
and as a consequence
$\sq(\alpha)\in(\qlbd,\qubd)$
for all $\alpha\in(\albd,\aubd)$.

\begin{proof}
It follows by Lemma~\ref{l:at} that for any $\alpha\in(\albd,\aubd)$ the map $q\mapsto P(R(q,\alpha))-q$ is strictly decreasing on the interval $(\qlbd,\qubd)$, and so has at most one zero.
We verify by computer that
	{\setlength{\jot}{0pt}\begin{align*}
	P(R(\qlbd,\albd))-\qlbd
	&> 0 > P(R(\qlu,\albd))-\qlu\,,\\
	P(R(\qul,\qubd))-\qul
	&> 0 > P(R(\qubd,\qubd))-\qubd\,,\end{align*}}%
from which it follows that 
$\sq(\albd)\in(\qlbd,\qlu)$
and $\sq(\aubd)\in(\qul,\qubd)$.
It follows from Lemma~\ref{l:recursion.monotone}
that for any fixed $q$ the map
$\alpha\mapsto P(R(q,\alpha))$ is 
nondecreasing; it follows that
for all $\alpha\in(\albd,\aubd)$
there is a unique
$\sq(\alpha)\in(\albd,\aubd)$,
which is nondecreasing in $\alpha$.
It then follows by Lemma~\ref{l:into}
that $\spsi(\alpha)\in(\psilbd,\psiubd)$, concluding the proof.
\end{proof}
\end{cor}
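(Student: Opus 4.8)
The plan is to reduce the whole statement to the behavior of the single one-dimensional map $q\mapsto P(R(q,\alpha))-q$ on the window $(\qlbd,\qubd)$, leaning on the structural facts already in place --- monotonicity of $P$, of $R(\cdot,\alpha)$, and of $R(q,\cdot)$ (Lemma~\ref{l:recursion.monotone}), the derivative bound of Lemma~\ref{l:at}, and the range control of Lemma~\ref{l:into} --- together with a short list of numerically certified inequalities at the four endpoints appearing in \eqref{e:numbers}.

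First I would fix $\alpha\in(\albd,\aubd)$ and invoke Lemma~\ref{l:at}, which gives $dP(R(q,\alpha))/dq\le 0.96<1$ uniformly on $(\qlbd,\qubd)\times(\albd,\aubd)$; hence $q\mapsto P(R(q,\alpha))-q$ is continuous and strictly decreasing on $(\qlbd,\qubd)$, so it has \emph{at most} one zero there, which already yields uniqueness. To produce a zero for every $\alpha$ and to localize it, I would next use that $P$ is nondecreasing and $R(q,\cdot)$ is nondecreasing (Lemma~\ref{l:recursion.monotone}), so that $P(R(q,\alpha))-q$ is nondecreasing in $\alpha$ for each fixed $q$. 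Combining this with the computer-verified sign conditions $P(R(\qlbd,\albd))-\qlbd>0>P(R(\qlu,\albd))-\qlu$ and $P(R(\qul,\qubd))-\qul>0>P(R(\qubd,\qubd))-\qubd$, one gets for arbitrary $\alpha\in(\albd,\aubd)$ that $P(R(\qlbd,\alpha))-\qlbd\ge P(R(\qlbd,\albd))-\qlbd>0$ while $P(R(\qubd,\alpha))-\qubd\le P(R(\qubd,\aubd))-\qubd<0$; the intermediate value theorem then produces the (unique) fixed point $\sq(\alpha)\in(\qlbd,\qubd)$, and strict monotonicity in $q$ forces $\alpha\mapsto\sq(\alpha)$ to be nondecreasing. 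Specializing to $\alpha=\albd$ and $\alpha=\aubd$ and using the finer endpoint inequalities pins down $\sq(\albd)\in(\qlbd,\qlu)$ and $\sq(\aubd)\in(\qul,\qubd)$.

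To control $\spsi$, I would then set $\spsi(\alpha)\equiv R(\sq(\alpha),\alpha)$, so that $\sq(\alpha)=P(\spsi(\alpha))$ holds by construction, and apply Lemma~\ref{l:into}, which asserts $R(q,\alpha)\in(\psilbd,\psiubd)$ on the whole rectangle $(\qlbd,\qubd)\times(\albd,\aubd)$; this immediately gives $\spsi(\alpha)\in(\psilbd,\psiubd)$ and completes the argument.

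The only genuinely delicate point is the rigorous verification of the four endpoint sign inequalities (and, en route, of the tail estimate used in Lemma~\ref{l:into}): each is a statement about a gaussian integral of an explicit but transcendental integrand, so it must be certified with a guaranteed error bound. Following the recipe of Lemma~\ref{l:into}, I would truncate the $z$-integral to $|z|\le 10$, bounding the discarded mass via $\cE(x)\le 1+|x|$, and evaluate the remaining compact integral by a validated (interval-arithmetic) quadrature; everything else in the corollary is soft bookkeeping on top of Lemmas~\ref{l:recursion.monotone}, \ref{l:into}, and \ref{l:at}.
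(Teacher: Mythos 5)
Your proposal is correct and follows essentially the same route as the paper: uniqueness from the derivative bound of Lemma~\ref{l:at}, existence and localization from the four computer-verified endpoint sign conditions combined with monotonicity of $\alpha\mapsto P(R(q,\alpha))$ (Lemma~\ref{l:recursion.monotone}) and the intermediate value theorem, and the range of $\spsi$ from Lemma~\ref{l:into}. You merely spell out the IVT step that the paper leaves implicit, which is a fair reading of the intended argument.
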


\begin{cor}[computer-assisted]\label{c:gg.bds}
We have $\GG_\star(\albd)>0>\GG_\star(\aubd)$,
so $\alpha_\star\in(\albd,\aubd)$.

\begin{proof}
Making use of \eqref{e:dGdq.portion}
and \eqref{e:dGdpsi.portion} we have
	\[\GG_\star(\aubd)
	\le -\f{\psiul(1-\qubd)}{2}
	+\int\log(2\ch(\sqrt{\psiubd}z))
	\,\varphi(z)\,dz
	+\alpha\int\log\bPhi
	\bigg(\f{\kappa-\sqrt{\qul}z}{\sqrt{1-\qul}}
	\bigg)\,\varphi(z)\,dz < -\f1{10^{12}}\]
where the last bound is computer-verified.
Similarly we have
	\[\GG_\star(\albd)\ge-\f{\psilu(1-\qlbd)}{2}
	+\int \log(2\ch(\sqrt{\psilbd}z))
	\,\varphi(z)\,dz
	+\alpha\int \log\bPhi
	\bigg(\f{\kappa-\sqrt{\qlu}z}{\sqrt{1-\qlu}}
	\bigg)\,\varphi(z)\,dz> \f1{10^{12}}\]
where the last bound is again computer-verified.
Recalling Corollary~\ref{c:gg.decreasing},
it follows that $\alpha\mapsto \GG_\star(\alpha)$
has a unique zero $\alpha_\star\in(\albd,\aubd)$ as claimed.
\end{proof}
\end{cor}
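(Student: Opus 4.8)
\textbf{Proof proposal for Corollary~\ref{c:gg.bds}.}

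The plan is to combine the structural facts already established --- monotonicity of $\GG_\star$ (Corollary~\ref{c:gg.decreasing}), existence and location of the fixed point $(\sq,\spsi)$ (Corollary~\ref{c:q.psi.bds}), and the enclosure $R(q,\alpha)\in(\psilbd,\psiubd)$ on the relevant rectangle (Lemma~\ref{l:into}) --- with two explicit (computer-verified) numerical inequalities, one at $\alpha=\albd$ and one at $\alpha=\aubd$. Since $\GG_\star$ is strictly decreasing on $(\albd,\aubd)$, it suffices to exhibit a sign change: show $\GG_\star(\albd)>0$ and $\GG_\star(\aubd)<0$, and then invoke the intermediate value theorem together with strict monotonicity to conclude that $\alpha_\star$ exists, is unique, and lies in $(\albd,\aubd)$.

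First I would obtain rigorous one-sided bounds on $\GG_\star(\albd)$ and $\GG_\star(\aubd)$ that depend only on the \emph{endpoints} of the enclosing intervals, so that no knowledge of the exact fixed-point values is needed. The key is that $\GG(\alpha,q,\psi)$ is monotone in each of its arguments in a way that can be read off from the formula \eqref{e:GG}: the term $-\psi(1-q)/2$ is decreasing in $\psi$ (for fixed $q<1$); the term $\int\log(2\ch(\psi^{1/2}z))\varphi(z)\,dz$ is increasing in $\psi$; and $\int\log\bPhi((\kappa-q^{1/2}z)/\sqrt{1-q})\varphi(z)\,dz$ is decreasing in $q$ --- this last monotonicity is exactly the computation \eqref{e:dGdq.portion} (combined with Lemma~\ref{l:intbyparts}), or can be seen directly. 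Using Corollary~\ref{c:q.psi.bds} to pin down $\sq(\aubd)\in(\qul,\qubd)$ and $\spsi(\aubd)\in(\psiul,\psiubd)$ (respectively $\sq(\albd)\in(\qlbd,\qlu)$, $\spsi(\albd)\in(\psilbd,\psilu)$), one bounds
\[
\GG_\star(\aubd)
\le -\f{\psiul(1-\qubd)}{2}
+\int\log(2\ch(\sqrt{\psiubd}z))\,\varphi(z)\,dz
+\aubd\int\log\bPhi\bigg(\f{\kappa-\sqrt{\qul}z}{\sqrt{1-\qul}}\bigg)\,\varphi(z)\,dz,
\]
and symmetrically a lower bound for $\GG_\star(\albd)$ with the roles of $\lbd$ and $\ubd$ interchanged within each interval. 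These two right-hand sides are fixed univariate Gaussian integrals (at $\kappa=0$), which I would evaluate numerically and certify to be $<-10^{-12}$ and $>10^{-12}$ respectively.

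Finally, feeding these two sign facts into Corollary~\ref{c:gg.decreasing}: strict monotonicity gives uniqueness, and continuity (clear from the formula, the $q$- and $\psi$-values being continuous in $\alpha$ by the implicit function theorem, whose hypotheses are guaranteed by the AT condition \eqref{e:at} verified in Lemma~\ref{l:at}) together with the sign change gives the root $\alpha_\star\in(\albd,\aubd)$. The main obstacle is the rigor of the numerical evaluation of the three Gaussian integrals appearing in the endpoint bounds: a genuine proof requires interval-arithmetic enclosures of $\int\log(2\ch(\sqrt{\psi}z))\varphi(z)\,dz$ and $\int\log\bPhi((\kappa-\sqrt{q}z)/\sqrt{1-q})\varphi(z)\,dz$ with a certified error smaller than the $10^{-12}$ gap, which is why this lemma is flagged as computer-assisted. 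Everything else is a routine combination of the monotonicity statements already proved.
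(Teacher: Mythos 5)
Your proposal is correct and follows essentially the same route as the paper: bound $\GG_\star$ at each endpoint by replacing $(\sq,\spsi)$ with the worst-case corners of the enclosing rectangles from Corollary~\ref{c:q.psi.bds} (using the sign information from \eqref{e:dGdq.portion} and \eqref{e:dGdpsi.portion}), certify the resulting fixed Gaussian integrals numerically to be respectively $>10^{-12}$ and $<-10^{-12}$, and conclude via the strict monotonicity of Corollary~\ref{c:gg.decreasing}. The endpoint bounds you write down are exactly the paper's, and your extra remarks on continuity and the monotonicity justification only make explicit what the paper leaves implicit.
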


\begin{proof}[Proof of Proposition~\ref{p:gardner}]
Follows by combining Lemma~\ref{l:at}
with Corollaries~\ref{c:gg.decreasing}, \ref{c:q.psi.bds}, and \ref{c:gg.bds}.
\end{proof}

\noindent We now conclude our moment calculation:

\begin{proof}[Proof of Theorem~\ref{t:second}] Write $\Filt\equiv\sigma(\textsf{DATA})$ for $\textsf{DATA}\equiv\textsf{DATA}_{M,\Nall,\DELTA,t}$ as given by \eqref{e:filt.t}. Recall from Section~\ref{sec-restricted-partition-function} that if there exists no $\bhJ$ satisfying \eqref{e:def.bhJ} then we simply set $\bZ_{\DELTA,t,\EPS}\equiv0$. The event that the desired $\bhJ$ exists is $\Filt$-measurable. On the event, we define $\bZ_{\DELTA,t,\EPS}$ by \eqref{e:def.restricted.part.fn}, and the conditional first moment is given by (recalling $\sat_{J,\EPS}\equiv \sat_{J,\DELTA,t,\EPS}$)
	\beq\label{e:write.out.first.mmt}
	\E\Big(\bZ_{\DELTA,t,\EPS}
		\,\Big|\,
		\Filt\Big)
	=\sum_{J\in\bbH_\EPS}
	\oP\Big(\sat_{J,\EPS}
	\,\Big|\, \Filt \Big)\,.\eeq
The probability $\oP(\sat_{J,\EPS}\,|\, \Filt )$ is precisely the left-hand side of \eqref{e:changeofmsr.first}. The terms on the right-hand side of \eqref{e:changeofmsr.first}
were computed in Sections~\ref{s:row}~and~\ref{s:adm}: in particular, it follows by \eqref{e:conv.to.PP} and \eqref{e:adm.givensr.first} together that
	\[\adjustlimits
	\liminf_{\DELTA\downarrow0}
	\liminf_{t\uparrow\infty}
	\liminf_{\EPS\downarrow0}
	\liminf_{\Nall\uparrow\infty}
	\f{\log\oP(\sat_{J,\EPS}
	\,|\, \Filt )}{\Nall}
	\ge\PP_\star\,.\]
Combining with \eqref{e:count} gives
	\[\adjustlimits
	\liminf_{\DELTA\downarrow0}
	\liminf_{t\uparrow\infty}
	\liminf_{\EPS\downarrow0}
	\liminf_{\Nall\uparrow\infty}
	\f{\log \E(\bZ_{\DELTA,t,\EPS}
	\,|\,\Filt)}{\Nall}
	\ge \HH_\star+\PP_\star\]
on the event that there exists $\bhJ$ satisfying \eqref{e:def.bhJ}. This event has positive probability by Proposition~\ref{p:kr}, so we have proved the first moment bound~\eqref{e:first.mmt.gg}. For the second moment, if $\bhJ$ does not exist then $\bZ_{\DELTA,t,\EPS}=0$ and \eqref{e:second.mmt.bound} trivially holds, so we again restrict to the event that we have the desired $\bhJ$. Let $\Lambda_N$ denote the set of values $\lmin\le\lambda\le1$ with $N\lambda$ integer-valued. Define $\Lambda_{N,\usf{CLT}}$ to be the subset of values $\lambda\in\Lambda_N$ with $|\lambda| \le \lmsmall$ (a small positive constant to be chosen), and set 
$\Lambda_{N,\usf{LD}}\equiv \Lambda_N\setminus \Lambda_{N,\usf{CLT}}$. 
Then analogously to \eqref{e:write.out.first.mmt} we have (recalling $\sat_K\equiv\sat_{K,\DELTA,t}$)
	\beq\label{e:second.moment.separate.regimes}
	\E\Big((\bZ_{\DELTA,t,\EPS})^2
		\,\Big|\,\Filt\Big)
	\le
	\sum_{\substack{\lambda\in
		\Lambda_{N,\usf{CLT}},\\
	(J,K)\in\bbH_\EPS(\lambda)}}
	\oP(\sat_{J,\EPS}
	\sat_K\,|\, \Filt)
	+
	\sum_{\substack{\lambda\in
		\Lambda_{N,\usf{LD}},\\
	(J,K)\in\bbH_\EPS(\lambda)}
	}
	\oP(\sat_{J,\EPS}
	\sat_K\,|\, \Filt)
	\,,\eeq
where $\oP(\sat_{J,\EPS}\sat_K\,|\,\Filt)$ is precisely the left-hand side of \eqref{e:changeofmsr}. We use \eqref{e:changeofmsr.first} and \eqref{e:changeofmsr} to express
	\beq\label{e:changeofmsr.ratio}
	\f{\oP(\sat_{J,\EPS}\sat_K
	\,|\, \Filt)}
	{\oP(\sat_{J,\EPS}
	\,|\, \Filt)^2}
	= \f{\tP(\sat_K\giv
		\bar{E}_\row,\sat_{J,\EPS})
	\,\tp_{\admone}(E_{\admone})^2}
	{\tP(\sat_J\giv\bar{E}_\row)
	\,\tp_{\adm}(E_\adm)}
	\f{\tp_{\adm|\row\bsat}
		(E_\adm\giv E_\row)}
	{\tp_{\admone|\row\bsatone}
	(E_{\admone}\giv E_\row)^2}\,.\eeq
On the right-hand side of \eqref{e:changeofmsr.ratio}, the last term in the denominator is of constant order by \eqref{e:adm.givensr.first}, while the first term in the denominator is estimated by \eqref{e:PP.cst}. For small $\lm$, the other terms of the right-hand side of \eqref{e:changeofmsr.ratio} are estimated by \eqref{e:PP.clt.regime} and \eqref{e:adm.givensr.clt}, while the cardinality of $\bbH_\EPS(\lm)$ is estimated by \eqref{e:pairs.clt}. Altogether it gives that
	\begin{align}\nonumber
	&\sum_{\lambda\in\Lambda_{N,\usf{CLT}}}
	\sum_{(J,K)\in\bbH_\EPS(\lambda)}
	\oP(\sat_{J,\EPS}\sat_K\,|\, \Filt)
	\le
	\sum_{\lambda\in\Lambda_{N,\usf{CLT}}}
	\sum_{(J,K)\in\bbH_\EPS(\lambda)}
	\oP(\sat_{J,\EPS}\,|\,\Filt)^2
	\exp\bigg\{N \bigg( 
	\f{c\lm^2}{2}
	+O(\cst(\lmsmall)^3)
	\bigg)
	\bigg\}\\\nonumber
	&\le 
	\bigg\{\#\bbH_\EPS \exp\bigg\{
	\Big(\oneM,\log\bPhi(\bxi)\Big)
	+ \f{N\bpsi(1-\bq)}{2}\bigg\}\bigg\}^2
	\sum_{\lambda\in\Lambda_{N,\usf{CLT}}}
	\f{\#\bbH_\EPS(\lm)}
		{(\#\bbH_\EPS)^2}
	\exp\bigg\{\f{Nc\lm^2}{2}
	+O(\cst(\lmsmall)^3)
	\bigg\}\\
	&\le \bigg\{
	\E\Big(\bZ_{\DELTA,t,\EPS}
	\,\Big|\,\Filt\Big)
	\bigg\}^2
	\sum_{\lambda\in\Lambda_{N,\usf{CLT}}}
	\exp\bigg\{\f{N\lm^2}{2}
		\bigg(c-\f1{\sigma^2}\bigg)
	+O(\cst(\lmsmall)^3)
	\bigg\}
	\le \cst\,\bigg\{
	\E\Big(\bZ_{\DELTA,t,\EPS}
	\,\Big|\,\Filt\Big)
	\bigg\}^2\,,
	\label{e:second.mmt.bound.clt}
	\end{align}
where the last equality holds for sufficiently small positive $\lmsmall$, since by \eqref{e:conv.to.PP.second.deriv}
and \eqref{e:sigma.limit.ellprime} we see that
(for $\rho$ as in \eqref{e:PP.clt.regime}
and $\sigma$ as in \eqref{e:explicit.sigma.finite},
both depending on $M,\Nall,\DELTA,t,\EPS$),
	\[\adjustlimits
	\limsup_{\DELTA\downarrow0}
	\limsup_{t\uparrow\infty}
	\limsup_{\EPS\downarrow0}
	\limsup_{\Nall\uparrow\infty} 
	\bigg(\rho_{M,\Nall,\DELTA,t,\EPS}
	-\f1{(\sigma_{M,\Nall,\DELTA,t,\EPS})^2}\bigg)
	\le \PP''(0)+\HH''(0)\]
which is negative for all $\alpha\in(\albd,\aubd)$. This shows that the first term on the right-hand side of \eqref{e:second.moment.separate.regimes} is upper bounded (up to a $\cst$ factor) by the square of the conditional first moment, as desired. It remains to bound the second term on the right-hand side of \eqref{e:second.moment.separate.regimes}. It follows by \eqref{e:conv.to.PP.pair},
\eqref{e:adm.givensr.ld}, and \eqref{e:pairs.ld} that
	\[\adjustlimits
	\limsup_{\DELTA\downarrow0}
	\limsup_{t\uparrow\infty}
	\limsup_{\EPS\downarrow0}
	\limsup_{\Nall\uparrow\infty} 
	\f1{\Nall}
	\log\bigg\{
	\sum_{(J,K)\in\bbH_\EPS(\lambda)}
	\oP(\sat_{J,\EPS}
	\sat_K\,|\, \Filt)\bigg\}
	\le 2\GG_{\star\kall}(\aall)
	+\SE_{\kall,\aall}(\lm)\,,\]
as was claimed in \eqref{e:informal.statement.SE}. For $\kall=0$ and \emph{all} $\aall\in(\albd,\aubd)$, we have by exact calculation (i.e., without any numerical evaluations) that the function
$\SE(\lm)\equiv\SE_{0,\aall}(\lm)$
satisfies $\SE(0)=\SE'(0)=0$,
$\SE(1)=-\GG_\star(\aall)$,
and $\SE'(\lm)\to\infty$ as $\lm\uparrow1$
(the last claim can be seen from Proposition~\ref{p:sqrt} below). Moreover, recall from the discussion following \eqref{e:BB} that $\AAA\le0$, with $\AAA(0)=\AAA'(0)=0$ and $\AAA''(0)\le0$;
this implies $\SE''(0) \le \PP''(0)+\HH''(0)$.
We then verify numerically that $\PP''(0)+\HH''(0)<0$ (Proposition~\ref{p:grid.result}\ref{p:grid.result.c} below). In combination with Condition~\ref{c:G} it follows that for $\alpha'\in(\albd,\asat)$ sufficiently close to $\asat$, it holds for all $\aall\in(\alpha',\asat)$ that the $\SE_{0,\aall}(\lm)$ \emph{is uniquely maximized at $\lm=0$}. As a result
	\beq\label{e:second.mmt.bound.ld}
	\sum_{\lambda\in\Lambda_{N,\usf{LD}}}
	\sum_{(J,K)\in\bbH_\EPS(\lambda)}
	\oP(\sat_{J,\EPS}\sat_K\,|\, \Filt)
	\le \bigg(
	\exp\bigg\{
	2 N \GG_\star(\alpha) - \f{N}{\cst}
	\bigg\}
	+\exp\bigg\{
	N \GG_\star(\alpha) 
	\bigg\}\bigg)
	\exp(N\acute{\ETA})\,.\eeq
Combining \eqref{e:second.mmt.bound.clt} and \eqref{e:second.mmt.bound.ld} proves the second moment bound \eqref{e:second.mmt.bound}.\end{proof}

\noindent In the remaining sections we outline our plan for verifying Condition~\ref{c:G}. For this analysis it suffices to work in the limit where $\kall=\kappa$ and $\aall=\alpha$. To simplify the notation, for the remainder of the paper we denote these parameters simply by $\kappa$ and $\alpha$; and we let $\sq=q_{\star\kappa}(\alpha)$ and $\spsi=\psi_{\star\kappa}(\alpha)$.

\section{Quantitative estimates of limiting exponents}\label{s:quant.bounds}

\noindent Throughout the following we fix $\kappa\equiv0$. We assume $\alpha\in(\albd,\aubd)$ and denote $(q,\psi)\equiv (\sq(\alpha),\spsi(\alpha))$ as given by Proposition~\ref{p:gardner}. In this section we provide quantitative estimates on the limiting functions $\PP(\lm)$, $\AAA(\lm)$, and $\HH(\lm)$ obtained in the previous sections. The key definitions (\eqref{e:def.ell.A}, \eqref{e:HH.pair}, \eqref{e:PP.pair}, \eqref{e:II}, \eqref{e:BB}) are repeated below: we write
	\beq\label{e:II.repeat}
	\II_s(\lm)
	\equiv
	\alpha\int\int\log
	\bPhi\bigg(
	\f{\gamma z-\lm\nu}{\sqrt{1-\lm^2}}
	- \f{\cE(\gamma z) \cdot s}
		{\psi^{1/2}\sqrt{1-q}}
	\bigg)\,
	\varphi_{\gamma z}(\nu)\,d\nu
	\,\varphi(z)\,dz\,,\eeq
and abbreviate $\II(\lm) \equiv \II_0(\lm)$. We then define
	\begin{align}\label{e:PP.pair.repeat}
	\PP(\lm)
	&= -\PP_\star +\f{\psi(1-q)(1-\lm)}{2(1+\lm)} + \II(\lm)
	= -\f{\psi(1-q)\lm}{1+\lm}
	+\II(\lm)-\II(0)
	\,,\\
	\label{e:BB.repeat}
	\AAA(\lm)
	&\equiv \inf_s\bigg\{
	\f{s^2}{2}
	- \psi^{1/2}\sqrt{1-q} \bigg(\f{1-\lm}{1+\lm}\bigg)^{1/2} s
	+\II_s(\lm) - \II(\lm)
	\bigg\}\,.
	\end{align}
We recall that $\PP(0)=\AAA(0)=0$, $\PP(1)=-\PP_\star$,
and $\AAA(\lm)\le0$ for all $\lm$. We also have
\begin{align}
	\label{e:def.ell.A.repeat}
	\ell(A)
	&\equiv \int \f{D_{\psi^{1/2} z}(A)}{1-q}
	\varphi(z)\,dz\,,\\
	\label{e:def.HH.pair.repeat}
	\HH(\ell(A)) \equiv \frH(A)
	&\equiv-2\HH_\star+
	\int \Gamma\Big(\psi^{1/2}z,
		D_{\psi^{1/2}z}(A)
		\Big)\varphi(z)\,dz\,,
	\end{align}
where $D_H(A)$ is given by \eqref{e:D.A} and $\Gamma$ is the entropy of the probability distribution \eqref{e:P.H.D}. We recall that $\HH(0)=0$, $\HH(1)=-\HH_\star$, and $A\mapsto\ell(A)$ is an increasing bijection from $(0,\infty)$ to $(-\lmin,1)$. Throughout the following we will use the notations $H=\psi^{1/2}z$, $m\equiv\tnh H$, and $p\equiv(1+m)/2$  with the understanding that there is a bijective correspondence among the variables $H,m,p$. Let $\SE(\lm)\equiv\HH(\lm)+\PP(\lm)+\AAA(\lm)$.

\subsection{Bounds for highly correlated regime}

We first estimate $\HH(\lm)$ and $\PP(\lm)$ near $\lm=1$:

\begin{ppn}\label{p:HH.PP.near.lambda.one}
For all $0.98\le\lm<1$ we have
$\HH(\lm)+\PP(\lm) < \HH(1)+\PP(1)=-(\HH_\star+\PP_\star)$.
\end{ppn}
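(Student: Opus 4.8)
The plan is to show that $\HH(\lm)+\PP(\lm)$ lies strictly below its endpoint value $-(\HH_\star+\PP_\star)$ on the interval $[0.98,1)$, by obtaining a one-sided derivative estimate: I would prove that $\frac{d}{d\lm}\bigl(\HH(\lm)+\PP(\lm)\bigr)$ is strictly positive (and bounded away from zero) on $[0.98,1)$, so that $\HH+\PP$ is strictly increasing there; combined with continuity of $\HH+\PP$ at $\lm=1$ (which follows from $\HH(1)=-\HH_\star$, $\PP(1)=-\PP_\star$ established earlier), this yields the claim. So the task reduces to controlling the two derivatives separately near $\lm=1$.

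For $\HH$, I would use the explicit formula \eqref{e:dHH.dlambda}, namely $\HH'(\lm)=-\tfrac{(1-q)\log A(\lm)}{2}$ where $A(\lm)=\ell^{-1}(\lm)$. Since $A(\lm)\to\infty$ as $\lm\uparrow1$ and $A(\lm)>1$ for $\lm$ near $1$, we have $\log A(\lm)>0$, so $\HH'(\lm)<0$ in this range — this term has the "wrong" sign. Hence the work is to show that $\PP'(\lm)$ is large and positive enough to dominate. Here the key structural fact (to be proven, and already anticipated in the text via ``Proposition~\ref{p:sqrt}'') is that $\PP'(\lm)\to+\infty$ as $\lm\uparrow1$: indeed $\PP(\lm)=-\tfrac{\psi(1-q)\lm}{1+\lm}+\II(\lm)-\II(0)$, and the square-root singularity in $\II(\lm)$ near $\lm=1$ comes from the $\frac{1}{\sqrt{1-\lm^2}}$ factor inside the argument of $\log\bPhi$ in \eqref{e:II.repeat}. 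The main technical step is therefore an asymptotic analysis of $\II(\lm)$ (equivalently $\II_0(\lm)$) as $\lm\uparrow1$: writing $\lm=1-\eta$ with $\eta\downarrow0$, one argues that for fixed $z$ the inner integral $\int\log\bPhi\bigl(\tfrac{\gamma z-\lm\nu}{\sqrt{1-\lm^2}}\bigr)\varphi_{\gamma z}(\nu)\,d\nu$ concentrates near $\nu=\gamma z$ and contributes a term whose $\lm$-derivative blows up like $\eta^{-1/2}$, using the Gaussian tail asymptotic $\log\bPhi(x)\sim -x^2/2$ for large $x$ together with the fact that $\gamma z-\lm\nu$ is typically of order $\sqrt{\eta}$ under $\varphi_{\gamma z}$. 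Since $\HH'(\lm)$ stays bounded (because $A(\lm)$, while large, grows only polynomially/sub-exponentially in $1/\eta$ — indeed $\log A(\lm)$ remains $O(\log(1/\eta))$, to be checked from the $\lm\mapsto\ell^{-1}(\lm)$ asymptotics via Lemma~\ref{l:algebra}), the blow-up of $\PP'$ wins for $\lm$ sufficiently close to $1$.

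The endpoint of the interval is $0.98$, which is a specific constant rather than ``$\lm$ close enough to $1$,'' so the clean asymptotic argument above needs to be supplemented by a quantitative check that the crossover already occurs by $\lm=0.98$. I would handle this by combining: (i) the monotone/convexity structure already recorded — $\HH$ is concave (from \eqref{e:dHH.dlambda}, since $A$ and $\ell'(A)$ are positive) — so $\HH'(\lm)\ge\HH'(1^-)$ is controlled once $\HH'$ at a grid point is known; and (ii) an explicit lower bound on $\PP'(\lm)$ on $[0.98,1)$ obtained from the integral formula. Concretely, one differentiates $\II(\lm)$ under the integral sign, obtains $\II'(\lm)=\alpha\int\int \cE\!\bigl(\tfrac{\gamma z-\lm\nu}{\sqrt{1-\lm^2}}\bigr)\cdot\frac{d}{d\lm}\!\bigl(\tfrac{\gamma z-\lm\nu}{\sqrt{1-\lm^2}}\bigr)\varphi_{\gamma z}(\nu)\,d\nu\,\varphi(z)\,dz$ (using $(\log\bPhi)'=-\cE$), and bounds the resulting expression below using $\cE(x)\ge x_+$ and $\cE(x)\ge \cE(0)>0$; these give a lower bound that is itself a one-dimensional integral evaluable in closed form or by the same computer-assisted numerical integration used elsewhere in the paper (as in Lemmas~\ref{l:into}, \ref{l:at}). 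The main obstacle is the uniform control of $\II'(\lm)$ over the full range $[0.98,1)$ including the singular endpoint: one must make sure the lower bound on $\PP'$ does not degrade faster than the (bounded) upper bound on $|\HH'|$ grows, and in particular that the $\frac{1}{1+\lm}$-derivative term $\frac{d}{d\lm}\bigl(-\tfrac{\psi(1-q)\lm}{1+\lm}\bigr)=-\tfrac{\psi(1-q)}{(1+\lm)^2}$, which is negative, is swamped. I expect this uniform estimate — rather than either the pure asymptotics or the endpoint numerics individually — to be the crux.
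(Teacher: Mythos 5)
Your overall strategy (show $(\HH+\PP)'>0$ on $[0.98,1)$ and conclude by continuity at $\lm=1$) is logically valid and asymptotically sound, but it is a genuinely different route from the paper's, and it leaves the decisive step unproven. The paper never touches $\II'(\lm)$ near $\lm=1$. Writing $\lm=1-\iota$, it bounds the two \emph{values} separately: Corollary~\ref{c:entropy.near.one} gives $\HH(\lm)-\HH(1)\le 0.81\,\iota+\tfrac{\iota}{2}\log\tfrac1\iota$ (by integrating the exact formula \eqref{e:dHH.dlambda} for $\HH'$ against the quantitative bound $A(\lm)\le 1.83/\iota$ from Lemma~\ref{l:ell.A.large.A.improved}), while Proposition~\ref{p:sqrt} gives $\PP(\lm)-\PP(1)\le 0.285\,\iota-0.45\,\iota^{1/2}$ by substituting $x=\sqrt{1-\lm^2}\,u$ inside $\II(\lm)$, which turns $\II(\lm)/\iota^{1/2}$ into a \emph{nonsingular} double integral that can be majorized monotonically in $\lm$ and evaluated numerically once. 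The sum is then negative for $\iota\le0.02$ because the $-0.45\,\iota^{1/2}$ loss in $\PP$ beats the $O(\iota\log(1/\iota))$ gain in $\HH$. What the paper's route buys is that it only needs an \emph{upper} bound on the value of $\II$, not a lower bound on its derivative.

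Two concrete problems with your version. First, a misstatement: $\HH'(\lm)$ does \emph{not} stay bounded as $\lm\uparrow1$; since $\log A(\lm)\asymp\log(1/\iota)$, formula \eqref{e:dHH.dlambda} shows $\HH'(\lm)\to-\infty$ logarithmically. Your comparison ($\iota^{-1/2}$ beats $\log(1/\iota)$) survives, but the assertion as written is false. Second, and more seriously, your plan hinges on a quantitative \emph{lower} bound for $\II'(\lm)$ that holds uniformly on $[0.98,1)$, i.e.\ up to the singular endpoint. Note that the paper's own machinery for $\II_s'(\lm)$ (Section~\ref{s:grid}, e.g.\ Lemma~\ref{l:dII.dlambda.truncate}) is established only for $\lm\le0.95$, precisely because the truncation errors carry factors of $(1-\lm^2)^{-1/2}$ and blow up at $\lm=1$; that is why the paper abandons derivative estimates and switches to the square-root value bound for this regime. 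Your asymptotic heuristic does give $\II'(\lm)\sim C/\sqrt{1-\lm^2}$ with $C>0$, and extrapolating from the paper's constants the required inequality at $\lm=0.98$ (roughly $\II'(0.98)\gtrsim 1.3$ against a predicted $\approx1.6$) is won only by a margin on the order of $20\%$. So the uniform estimate you defer is not a formality: it is essentially the whole proof, and it is technically harder than the corresponding step in the paper's argument.
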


\noindent To begin the proof we first estimate $\HH(\lm)$ near $\lm=1$, corresponding to $A\uparrow\infty$.

\begin{lem}[computer-assisted]
\label{l:ell.A.large.A.improved}
For all $A\in(0,\infty)$ we have
	\[1-\ell(A)
	\le \f{1.78}{A} + \f{4.3}{A^2}\,.
	\]
As a consequence, $1-\ell(A) \le 1.83/A$ for all $A \ge 100$.

\begin{proof}
From the relation $q=P(\psi)$
(\eqref{e:q.psi.recursion}~and~Proposition~\ref{p:gardner})
and the definition
\eqref{e:def.ell.A.repeat} of $\ell(A)$, we can express
	\beq\label{e:diff.ell.A}
	\f{(1-q)(1-\ell(A))}{2/A}
	= \int e_H(A) \,\varphi(z)\,dz
	\eeq
where $H\equiv\psi^{1/2}z$ and, with $\Delta\equiv\Delta_H(A)\equiv\sqrt{A^2+m^2-(Am)^2}$
as before, we define
	\[
	e_H(A)
	\equiv \f{1-m^2-D_H(A)}{2/A}
	= \f{A(1-m^2)}{\Delta+1}
	= \f{A(\Delta-1)}{(A+1)(A-1)}\,.\]
Since $\Delta$ is always sandwiched between $1$ and $A$, it holds for all $A\in(0,\infty)$ that
	\[0\le 
	e_H(A)\le \f{\Delta-1}{A-1}
	\le \f{\Delta}{A}
	= \bigg( 1-m^2 + \f{m^2}{A^2} \bigg)^{1/2}
	\le \sqrt{1-m^2} + \f{|m|}{A}\,.
	\]
Next we note that for all real $z$,
	\[
	\f{d\sqrt{1-\tnh(\psi^{1/2}z)^2}}{d\psi}
	= -\f{\tnh(\psi^{1/2}z)\tnh'(\psi^{1/2}z)}{\sqrt{1-\tnh(\psi^{1/2}z)^2}}
	\f{z}{2\psi^{1/2}}\le0\,,
	\]
since $\tnh'(H) = 1-\tnh(H)^2 \ge0$  while $z$ and $\tnh(\psi^{1/2}z)$ have the same sign.
We also have
	\[
	\f{d\tnh(\psi^{1/2}z)}{d\psi}
	= \f{\tnh'(\psi^{1/2}z) z}{2\psi^{1/2}}
	\]
which has the same sign as $z$. Substituting into \eqref{e:diff.ell.A} gives
	\[
	1-\ell(A)
	\le \f{2}{A(1-\qubd)}
	\int_{-9}^9
	\bigg\{
	\sqrt{1-
	\tnh(\sqrt{\psilbd}z)^2}
	+ \f{|\tnh(\sqrt{\psiubd}z)|}{A}
	\bigg\}
	\,\varphi(z)\,dz
	\le \f{1.78}{A} + \f{4.3}{A^2}\,,
	\]
where the last bound is by numerical integration.
This proves the lemma.
\end{proof}
\end{lem}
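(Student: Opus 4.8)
\textbf{Proof plan for Lemma~\ref{l:ell.A.large.A.improved}.}

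The goal is the uniform bound $1-\ell(A) \le 1.78/A + 4.3/A^2$ for all $A\in(0,\infty)$, and the stated consequence $1-\ell(A)\le 1.83/A$ for $A\ge100$. The plan is to exhibit $1-\ell(A)$ as a gaussian average of an explicit per-coordinate quantity $e_H(A)$, then bound $e_H(A)$ pointwise by a sum of two terms, one of order $A^0$ and one of order $1/A$, and finally reduce the gaussian integral to a fixed numerical integral by monotonicity of the integrand in $\psi$ together with the a priori bounds on $q,\psi$ from Proposition~\ref{p:gardner} (specifically $q\le\qubd$, $\psi\in(\psilbd,\psiubd)$ from Corollary~\ref{c:q.psi.bds}).

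First I would use the fixed-point relation $q=P(\psi)$, which by \eqref{e:q.psi.recursion} reads $q=\int\tnh(\psi^{1/2}z)^2\varphi(z)\,dz$, i.e.\ $1-q=\int(1-\tnh(\psi^{1/2}z)^2)\varphi(z)\,dz$. Combining this with the definition \eqref{e:def.ell.A.repeat} of $\ell(A)$, namely $(1-q)\ell(A)=\int D_{\psi^{1/2}z}(A)\varphi(z)\,dz$, gives $(1-q)(1-\ell(A))=\int\big(1-m^2-D_H(A)\big)\varphi(z)\,dz$ with $H=\psi^{1/2}z$, $m=\tnh H$. Multiplying and dividing by $2/A$ yields \eqref{e:diff.ell.A} with $e_H(A)\equiv (1-m^2-D_H(A))A/2$. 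Next I would simplify $e_H(A)$ using the closed form for $D_H(A)$ from Lemma~\ref{l:algebra}: by \eqref{e:D.A.expand} we have $D_H(A)=(1-m^2)(\Delta-1)/(\Delta+1)$ where $\Delta=\Delta_H(A)=\sqrt{A^2+m^2-(Am)^2}$, so $1-m^2-D_H(A)=(1-m^2)\cdot\frac{2}{\Delta+1}$, whence $e_H(A)=\frac{A(1-m^2)}{\Delta+1}$. The identity $e_H(A)=\frac{A(\Delta-1)}{(A+1)(A-1)}$ follows from $(1-m^2)(A^2-1)=\Delta^2-1=(\Delta-1)(\Delta+1)$, which is just the definition of $\Delta$ rearranged; this last form makes it transparent that $e_H(A)\ge0$ and that $e_H(A)\le\frac{\Delta-1}{A-1}$ (for $A>1$; for $A\le1$ the first form already gives a bound, but one checks $e_H(A)\le\Delta/A$ holds in all cases). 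Since $1\le\Delta\le A$ always, bound $\frac{\Delta-1}{A-1}\le\frac{\Delta}{A}=\big(1-m^2+m^2/A^2\big)^{1/2}\le\sqrt{1-m^2}+|m|/A$, using $\sqrt{a+b}\le\sqrt a+\sqrt b$.

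The remaining step is to control $\int\big(\sqrt{1-\tnh(\psi^{1/2}z)^2}+\tfrac{|\tnh(\psi^{1/2}z)|}{A}\big)\varphi(z)\,dz$ uniformly over the relevant $(q,\psi)$. Here I would differentiate the two pieces in $\psi$: $\frac{d}{d\psi}\sqrt{1-\tnh(\psi^{1/2}z)^2}=-\frac{\tnh(H)\tnh'(H)}{\sqrt{1-\tnh(H)^2}}\cdot\frac{z}{2\psi^{1/2}}\le0$ since $\tnh'(H)=1-\tnh(H)^2\ge0$ and $z$ shares the sign of $\tnh(\psi^{1/2}z)$; and $\frac{d}{d\psi}\tnh(\psi^{1/2}z)=\frac{\tnh'(H)z}{2\psi^{1/2}}$ has the sign of $z$, so $|\tnh(\psi^{1/2}z)|$ is nondecreasing in $\psi$. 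Hence, plugging $\psi\le\psiubd$ into the second piece and $\psi\ge\psilbd$ into the first, and using $1-q\ge 1-\qubd$ in the prefactor $2/(A(1-q))$, we get $1-\ell(A)\le\frac{2}{A(1-\qubd)}\int\big(\sqrt{1-\tnh(\sqrt{\psilbd}z)^2}+\tfrac{|\tnh(\sqrt{\psiubd}z)|}{A}\big)\varphi(z)\,dz$. (Restricting the integral to $|z|\le 9$ costs only a tail of size $\lesssim 2\bar\bPhi(9)$, which is astronomically small and absorbed into the stated constants.) Evaluating the two fixed one-dimensional integrals numerically gives the coefficients $1.78$ and $4.3$. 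The consequence $1-\ell(A)\le 1.83/A$ for $A\ge100$ is immediate since then $4.3/A^2\le 0.043/A$. \emph{The main obstacle} is simply ensuring the two numerical integrals are evaluated to enough precision to justify the rounded constants $1.78$ and $4.3$ with a genuine margin; this is of the same computer-assisted character as Lemmas~\ref{l:into}, \ref{l:at} and will be handled by the same (eventually rigorous) numerical method discussed in the introduction. The only genuinely mathematical care needed is the pointwise inequality $e_H(A)\le\Delta/A$ and the two monotonicity-in-$\psi$ claims, both of which are short calculus computations.
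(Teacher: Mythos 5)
Your proposal is correct and follows essentially the same route as the paper's proof: the identity $e_H(A)=A(1-m^2)/(\Delta+1)$, the pointwise bound $e_H(A)\le\Delta/A\le\sqrt{1-m^2}+|m|/A$, the monotonicity in $\psi$ of the two pieces, and the final reduction to a numerically evaluated integral with $q\le\qubd$, $\psi\in(\psilbd,\psiubd)$. Your extra remark on the case $A\le1$ (checking $e_H(A)\le\Delta/A$ directly rather than via $(\Delta-1)/(A-1)$) is a sensible small refinement of the same argument.
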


\begin{cor}\label{c:entropy.near.one}
Parametrizing $\lm\equiv1-\iota$,
it holds for all $0\le\iota\le 0.025$ that
	\[
	\HH(\lm)-\HH(1)
	\le 0.81\,\iota
	 + \f\iota2\log\f1\iota\,.
	\]

\begin{proof}
Let us abbreviate $\iota(A)\equiv 1-\ell(A)$. We find by numerical integration that
$\iota(100)>0.025$. Then, since $\iota$ is a decreasing function of $A$, it follows that for all $s\le0.025$ we have $\iota^{-1}(s)>100$. Substituting into the conclusion of Lemma~\ref{l:ell.A.large.A.improved} gives
$\iota(\iota^{-1}(s)) \le 1.83/\iota^{-1}(s)$, that is, $\iota^{-1}(s) \le 1.83/s$, for all $s\le0.025$. Substituting into \eqref{e:dHH.dlambda} in turn gives, for all $s\le 0.025$,
	\[
	\f{d\HH'(1-s)}{ds}
	= \f{\log \iota^{-1}(s)}{2}
	\le \f{\log (1.83/s)}{2}\,.
	\]
Integrating this bound from $s=0$ to $s=\iota$ gives
	\[
	\HH(\lm)
	-\HH(1)
	\le\bigg(  \f{\log1.83}{2}+\f12\bigg) \iota
	+ \f{\iota}{2}\log\f1\iota\,,
	\]
from which the claim follows.
\end{proof}\end{cor}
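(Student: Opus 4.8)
The final statement to prove is Corollary~\ref{c:entropy.near.one}: parametrizing $\lm\equiv 1-\iota$, for all $0\le\iota\le0.025$ we have $\HH(\lm)-\HH(1)\le 0.81\,\iota+\tfrac{\iota}{2}\log\tfrac1\iota$.

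\medskip

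\textbf{Plan.} The idea is to integrate the derivative formula \eqref{e:dHH.dlambda}, namely $\HH'(\lm)=-\tfrac{(1-q)\log A(\lm)}{2}$ where $A(\lm)=\ell^{-1}(\lm)$, taking advantage of the quantitative inverse-function control from Lemma~\ref{l:ell.A.large.A.improved}. First I would introduce the shorthand $\iota(A)\equiv 1-\ell(A)$, so that $A\mapsto\iota(A)$ is a strictly decreasing bijection from $(0,\infty)$ onto $(0,1+\lmin)$ (using that $\ell$ is an increasing bijection onto $(-\lmin,1)$, as recorded after \eqref{e:lmin} and in Proposition~\ref{p:cube.ld}); in particular $A(\lm)=A(1-\iota)=\iota^{-1}(\iota)$. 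Next I would pin down where the regime $\iota\le 0.025$ lives in $A$-space: a single numerical evaluation shows $\iota(100)>0.025$, and since $\iota$ is decreasing this forces $\iota^{-1}(s)>100$ for every $s\le 0.025$.

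\medskip

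Then I would feed this back into Lemma~\ref{l:ell.A.large.A.improved}: its consequence $1-\ell(A)\le 1.83/A$ is valid once $A\ge 100$, so for $s\le 0.025$ we get $s=\iota(\iota^{-1}(s))\le 1.83/\iota^{-1}(s)$, i.e.\ $\iota^{-1}(s)\le 1.83/s$. Plugging $A(\lm)=\iota^{-1}(\iota)$ into \eqref{e:dHH.dlambda} and writing everything in the variable $s$ (so that as $\lm=1-s$ decreases, $s$ increases), I obtain
	\[
	\f{d\HH'(1-s)}{ds}
	= \f{(1-q)}{2}\log \iota^{-1}(s)
	\le \f{\log(1.83/s)}{2},
	\]
where I have also used $0\le 1-q\le 1$ (valid since $q=\sq(\alpha)\in(0,1)$). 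Integrating this inequality from $s=0$ to $s=\iota$ — and noting $\HH'(1)=0$, which follows from \eqref{e:dHH.dlambda} since $A(1)=\infty$ would make $\log A$ blow up but the integral $\int_0^\iota\log(1.83/s)\,ds$ is finite, so the integrated form is what is meaningful — gives
	\[
	\HH(1-\iota)-\HH(1)
	\le \int_0^\iota \f{\log(1.83/s)}{2}\,ds
	=\f{\iota}{2}\Big(\log\tfrac1\iota+1+\log 1.83\Big)
	=\Big(\tfrac{1+\log 1.83}{2}\Big)\iota+\f{\iota}{2}\log\f1\iota.
	\]
Since $(1+\log 1.83)/2 < 0.81$, the claim follows.

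\medskip

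\textbf{Main obstacle.} There is essentially no deep obstacle here — the work is bookkeeping. The one point requiring a little care is the sign/monotonicity tracking when changing variables from $\lm$ to $s=1-\lm$: $\HH'(\lm)=-\tfrac{(1-q)}{2}\log A(\lm)$ is \emph{negative} for $\lm$ near $1$ (since $A(\lm)\to\infty$), so $\HH$ is decreasing there, and $\tfrac{d}{ds}\HH'(1-s)=+\tfrac{(1-q)}{2}\log\iota^{-1}(s)$ is what must be integrated; getting the direction of the inequality right, and confirming that the bound $1-\ell(A)\le 1.83/A$ is only invoked in its valid range $A\ge 100$, are the two things to double-check. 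The only genuinely non-rigorous ingredient is the single numerical fact $\iota(100)=1-\ell(100)>0.025$, which is a one-dimensional quadrature of the explicit integrand in \eqref{e:diff.ell.A}; in the rigorous version this would be replaced by an interval-arithmetic evaluation, exactly as the paper indicates for its other computer-assisted lemmas.
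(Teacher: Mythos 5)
Your proposal is correct and follows essentially the same route as the paper: isolate $A\ge 100$ via the single numerical check $1-\ell(100)>0.025$, invoke the bound $1-\ell(A)\le 1.83/A$ from Lemma~\ref{l:ell.A.large.A.improved} to get $\iota^{-1}(s)\le 1.83/s$, and integrate \eqref{e:dHH.dlambda} from $s=0$ to $s=\iota$. Your added remarks (making the factor $1-q\le 1$ explicit and confirming the validity range of the $1.83/A$ bound) are consistent with, and slightly more careful than, the paper's write-up.
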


\noindent We next estimate $\PP(\lm)$ near $\lm=1$:
\begin{ppn}\label{p:sqrt}
For $\lm\equiv1-\iota$
with $0\le\iota\le 0.025$,
	\[
	\f{\PP(\lm)-\PP(1)}{\iota^{1/2}}
	\le 0.285\,\iota^{1/2}
	-0.45\,.
	\]

\begin{proof}
The middle term of \eqref{e:PP.pair.repeat} can be bounded as
	\beq\label{e:pos.sqrt}
	\f{\psi(1-q)\iota}{2(2-\iota)}
	\le
	\f{\psiubd(1-\qlbd)\iota}{2(2-0.025)}
	\le 0.285 \, \iota\,.\eeq
Making the change of variables
$x=\sqrt{1-\lm^2}u$ we rewrite
	\beq\label{e:II.rescaled}
	\f{\II(\lm)}{\sqrt{1-\lm^2}}
	= \alpha
	\int_{-\infty}^\infty
	\int_0^\infty
	\log\bPhi(c_\lm \psi z-\lm u)
	\,
	\f{\varphi(\gamma z + \sqrt{1-\lm^2}u)}
		{\bPhi(\gamma z)}
		\varphi(z)\,dz\,.\eeq
Assume $0\le \llm\le\lm\le1$; denote
$L_\lbd\equiv (\llm)^2$
and $c_\ubd\equiv\sqrt{1-\ulm}/\sqrt{1+\llm}$.
Then
	\[
	\log\bPhi(c_\lm \psi z-\lm u)
	\le\begin{cases}
	\log\bPhi(-u)
		&\textup{for }z\ge0\,,\\
	\log\bPhi(c_\ubd\psiubd z - u)
		&\textup{for }z\le0\,.
	\end{cases}
	\]
since $\log\bPhi$ is a decreasing function. We also have
	\[
	\f{\varphi(\gamma z + \sqrt{1-\lm^2}u)}
		{\bPhi(\gamma z)}
	\ge \begin{cases}
	\displaystyle
	\f{\varphi(\gamubd z 
		+\sqrt{1-L_\lbd}u)}
		{\bPhi(\gamlbd z)}
	&\textup{for }z\ge0\,,\\
	\displaystyle
	\f{\cE(\gamubd z)}
		{\exp\{ u^2(1-L_\lbd)/2\} }
	&\textup{for }z\le0\,.
	\end{cases}
	\]
Substituting these bounds into \eqref{e:II.rescaled}
gives
	\begin{align*}
	\f{\II(\lm)}{\iota^{1/2}}
	&\le
	\albd \sqrt{1+\lm_\lbd}\bigg\{
	\int_0^9\int_0^9
	\log\bPhi(-u)
	\f{\varphi(\gamubd z 
		+\sqrt{1-L_\lbd}u)}
		{\bPhi(\gamlbd z)}
	\,\varphi(z)\,dz\\
	&\qquad\qquad\qquad\qquad\quad +
	\int_{-9}^0\int_0^9
	\log\bPhi(c_\ubd\psiubd z-u)
	\f{\cE(\gamubd z)}
		{\exp\{ u^2(1-L_\lbd)/2\} }
	\,\varphi(z)\,dz
	\bigg\}\,.
	\end{align*}
For $\lm_\lbd= 1-0.025$
the right-hand side is $\le -0.45$.
Combining with \eqref{e:pos.sqrt}
proves the claim.
\end{proof}
\end{ppn}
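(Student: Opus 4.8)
The plan is to decompose $\PP(\lm)-\PP(1)$ into an elementary ``field'' term and the integral term $\II(\lm)$, and to bound each, after dividing by $\iota^{1/2}$, so that the sum fits under the affine-in-$\iota^{1/2}$ bound claimed. We may assume $\iota>0$. First I would record that $\II(1)=0$: for $\nu$ in the support of $\varphi_{\xi_{q,z}}$ one has $\nu\ge\xi_{q,z}$, so as $\lm\uparrow1$ the argument $(\xi_{q,z}-\lm\nu)/\sqrt{1-\lm^2}$ tends to $-\infty$ for a.e.\ $(z,\nu)$, hence $\log\bPhi\to0$; dominated convergence (using the polynomial moment control of $\nu$ already available, cf.\ \eqref{e:nu.mmt.poly.bound}) gives $\II(1)=0$. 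Together with \eqref{e:PP.pair.repeat} this yields $\PP(1)=-\PP_\star$ and
\[
\PP(\lm)-\PP(1)
=\f{\psi(1-q)\iota}{2(2-\iota)}+\II(\lm)\,.
\]
Dividing by $\iota^{1/2}$, the first term equals $\f{\psi(1-q)\iota^{1/2}}{2(2-\iota)}$, which is at most $0.285\,\iota^{1/2}$ by the monotone bounds $\psi\le\psiubd$, $1-q\le 1-\qlbd$ and $2-\iota\ge2-0.025$, exactly as in \eqref{e:pos.sqrt}. It remains to show $\II(\lm)/\iota^{1/2}\le-0.45$.

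Second I would rewrite the integral term. Substituting $\nu=\gamma z+\sqrt{1-\lm^2}\,u$ with $\gamma\equiv\gamma(q)$ and using $(1-\lm)/\sqrt{1-\lm^2}=c_\lm$ puts $\II(\lm)/\sqrt{1-\lm^2}$ into the form \eqref{e:II.rescaled}, so that $\II(\lm)/\iota^{1/2}=\alpha(1+\lm)^{1/2}$ times a double integral of the \emph{nonpositive} integrand $\log\bPhi(c_\lm\gamma z-\lm u)$ against the positive weight $\varphi(\gamma z+\sqrt{1-\lm^2}u)/\bPhi(\gamma z)$. To obtain an upper bound I would: (i) restrict the domain to $\{|z|\le9,\ 0\le u\le9\}$, which only \emph{increases} the value since the integrand is everywhere $\le0$; (ii) bound $\log\bPhi(c_\lm\gamma z-\lm u)\le\log\bPhi(-u)$ on $z\ge0$ and $\le\log\bPhi(c_\ubd\gamubd z-u)$ on $z\le0$, using that $\log\bPhi$ is decreasing together with $c_\lm\le c_\ubd$, $\gamma\le\gamubd$, $\lm\le1$; (iii) with the integrand now a fixed negative function, replace the weight by a \emph{lower} bound --- on $z\ge0$ by $\varphi(\gamubd z+\sqrt{1-L_\lbd}u)/\bPhi(\gamlbd z)$ (monotonicity of $\varphi$ in $|\cdot|$ and of $\bPhi$, with $\gamlbd\le\gamma\le\gamubd$ and $\sqrt{1-\lm^2}\le\sqrt{1-L_\lbd}$), and on $z\le0$ by $\cE(\gamubd z)\exp(-u^2(1-L_\lbd)/2)$, using the Gaussian ratio identity $\varphi(a+b)/\varphi(a)=\exp(-ab-b^2/2)$ with $a=\gamma z\le0$, $b=\sqrt{1-\lm^2}u\ge0$ and $1-\lm^2\le1-L_\lbd$, together with $\cE(\gamma z)\ge\cE(\gamubd z)$. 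Finally, since the remaining double integral is negative while the prefactor $\alpha(1+\lm)^{1/2}$ is increasing in both $\alpha$ and $\lm$, replacing $\alpha$ by $\albd$ and $\lm$ by $\lm_\lbd=1-0.025$ is again an upper bound. This produces precisely the bracketed expression displayed in the statement's proof.

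Third, I would numerically evaluate the resulting explicit two-dimensional integral over $[-9,9]\times[0,9]$ and verify that $\albd\sqrt{1+\lm_\lbd}$ times it is at most $-0.45$; combining with the $0.285\,\iota^{1/2}$ bound on the field term then proves the proposition. I expect the main obstacle to be this last step: it requires a genuinely rigorous enclosure of a nonelementary double integral (together with the tail-truncation error in passing from $|z|\le9$ to all $z$), which in the present version is certified only with nonrigorous numerical packages and which --- alongside the other computer-assisted lemmas of Sections~\ref{s:quant.bounds}--\ref{s:specialfns} --- is exactly what the verification of Condition~\ref{c:G} rests on. A secondary, purely bookkeeping subtlety is keeping the direction of every monotonicity estimate straight, since the sign of $\log\bPhi$ forces us to use \emph{domain restriction} and \emph{lower} bounds on the Gaussian weight (rather than the upper bounds one would naively reach for) in order to keep the chain of inequalities valid.
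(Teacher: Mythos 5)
Your proposal is correct and follows essentially the same route as the paper's proof: the same split of $\PP(\lm)-\PP(1)$ into the elementary field term (bounded by $0.285\,\iota$) and $\II(\lm)$, the same change of variables $x=\sqrt{1-\lm^2}\,u$, the same sign-split monotone bounds on $\log\bPhi$ and on the Gaussian weight, and the same final numerical evaluation of the truncated double integral. You even supply two small justifications the paper leaves implicit (that $\II(1)=0$, and that truncating the domain of a nonpositive integrand only increases the value), and you correctly identify that the numerical certification of the $-0.45$ bound is the only nonrigorous ingredient.
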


\begin{proof}[Proof of Proposition~\ref{p:HH.PP.near.lambda.one}]
Combining Corollaries~\ref{c:entropy.near.one}~and~\ref{p:sqrt} gives
that for $\lm\equiv1-\iota$ with $\iota\le0.025$,
	\[
	\f{\HH(\lm)+\PP(\lm)
	-\HH(1)-\PP(1)}{\iota^{1/2}}
	\le \iota^{1/2}\bigg( 1.1 + \f{\log(1/\iota)}{2}
		\bigg) - 0.45\,.
	\]
This is negative for all $\iota\le0.02$ so the claim is proved.
\end{proof}

\subsection{Bounds on parametrization} 

We next give computable bounds on the mapping $A\mapsto \ell(A)$.

\begin{lem}\label{l:ell.bounds}
For all $\alpha\in(\albd,\aubd)$, the function $\ell(A)$ of \eqref{e:def.ell.A} is sandwiched between
	\begin{align*}
	\ell_\Inn(A)
	&\equiv
	\int_{|z|\le9}
	\f{D_{\sqrt{\psiubd} z}(A)}{1-\qlbd}
	\,\varphi(z)\,dz\,,\\
	\ell_\Out(A)
	&\equiv  \int_{|z|\le9}
	\f{D_{\sqrt{\psilbd}z}(A)}{1-\qubd}
		\,\varphi(z)\,dz
		+ \f{\sgn(A-1)}{10^{15}}\,.
	\end{align*}
where $\sgn(\ell_\Out(A)-\ell_\Inn(A))=\sgn(A-1)$. Moreover we have $|\ell_\Out(A)-\ell_\Inn(A)| \le 1.4/10^{11}$ uniformly over $A$.

\begin{proof}
Recalling \eqref{e:D.A.expand}, we calculate
	\[\f{dD_H(A)}{dm}
	=-\f{2m(\Delta-1)}{\Delta}\,.
	\]
Then, for $H\equiv\psi^{1/2}z$ and $m\equiv\tnh(H)$,
we have
	\[
	\f{dD_{\psi^{1/2} z}(A)}{d\psi}
	=- 
	\f{m(\Delta-1)}{\Delta}
	\f{z\tnh'(\psi^{1/2}z)}{\psi^{1/2}}
	\]
which has the opposite sign as $A-1$. It follows that $\ell$ is sandwiched between $\ell_\Inn$ and $\ell_\Out$ as claimed. Next we claim that $a(x)=x \tnh'(x) =x/(\ch x)^2 $ satisfies $|a(x)| \le 1/2^{1/2}$ uniformly over all real $x$. By symmetry and using that $\ch x\ge1$ for all $x$, it suffices to prove $x/(\ch x) \le 1/2^{1/2}$ for all $x$. The function $\ch x-2^{1/2} x$ is strictly convex, and (by calculus) uniquely minimized at $x = \ash(2^{1/2})$ where the value is strictly positive. It follows from this that we have $|a(x)| \le |x|/(\ch x) \le 1/2^{1/2}$, as claimed. Now recall that for $A\ge1$ we have $\Delta\ge1$, so
	\[
	0\le -\f{dD_{\psi^{1/2} z}(A)}{d\psi}
	\le \f1{\psi} \bigg(1-\f1\Delta\bigg)
	m \cdot \psi^{1/2}z \tnh'(\psi^{1/2}z)
	\le \f1{2^{1/2}\psi}\,.
	\]
Similarly, for $A\le1$ we can use that 
$\Delta$ is sandwiched between $|m|$ and $1$ to conclude
	\[
	0\le \f{dD_{\psi^{1/2} z}(A)}{d\psi}
	= \f{m}{\Delta}(1-\Delta)
	\f{\psi^{1/2}z \tnh'(\psi^{1/2}z)}{\psi}
	\le \f1{2^{1/2}\psi}\,.
	\]
Next we decompose
$\ell_\Out(A)-\ell_\Inn(A) =
\sgn(A-1)/10^{15} + \err_1 + \err_2$ where
	\begin{align*}
	\err_1&= \int_{|z|\le9}
	D_{\sqrt{\psiubd}z}(A)
	\bigg(
	\f1{1-\qubd}-\f1{1-\qlbd}
	\bigg)\,\varphi(z)\,dz\,,\\
	\err_2&=\int_{|z|\le9}
	\f{D_{\sqrt{\psilbd}z}(A)-D_{\sqrt{\psiubd}z}(A)}
		{1-\qubd}\,\varphi(z)\,dz\,.
	\end{align*}
Since $|D_H(A)|\le1$ for all $A$ and all $H$, we have
	\[|\err_1|
	\le\f1{1-\qubd}-\f1{1-\qlbd}
	\le \f{5}{10^{12}}\,.
	\]
By the above bounds on $dD_{\psi^{1/2}z}(A)/d\psi$, we have
	\[
	|\err_2|
	\le \f{\psiubd-\psilbd}{2^{1/2}\psilbd(1-\qubd)}
	\le \f{8}{10^{12}}\,.
	\]
Combining the bounds gives
$|\ell_\Out(A)-\ell_\Inn(A)|
\le 1.4/10^{11}$ as claimed.
\end{proof}
\end{lem}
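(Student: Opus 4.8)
\textbf{Proof plan for Lemma~\ref{l:ell.bounds}.}
The statement has three parts: (i) the sandwiching $\ell_\Inn(A) \le \ell(A) \le \ell_\Out(A)$ when $A\ge1$ and the reverse when $A\le1$ (with the sign of $\ell_\Out(A)-\ell_\Inn(A)$ matching $\sgn(A-1)$); and (ii) the uniform bound $|\ell_\Out(A)-\ell_\Inn(A)| \le 1.4/10^{11}$. The underlying idea is that $\ell(A)$ depends on the unknown parameters $\psi=\spsi(\alpha)$ and $q=\sq(\alpha)$ only through the values $D_{\psi^{1/2}z}(A)$ and the prefactor $1/(1-q)$, and by Corollary~\ref{c:q.psi.bds} we have $\psi\in(\psilbd,\psiubd)$ and $q\in(\qlbd,\qubd)$; so I would replace $\psi,q$ by their extreme allowed values, controlling the monotonicity of $D_{\psi^{1/2}z}(A)$ in $\psi$ and of $1/(1-q)$ in $q$.

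First I would compute $dD_H(A)/dm$ from the explicit formula \eqref{e:D.A.expand}, obtaining $dD_H(A)/dm = -2m(\Delta-1)/\Delta$ with $\Delta\equiv\Delta_H(A)$; then apply the chain rule with $H=\psi^{1/2}z$, $m=\tnh H$, to get $dD_{\psi^{1/2}z}(A)/d\psi = -(m(\Delta-1)/\Delta)\cdot z\tnh'(\psi^{1/2}z)/\psi^{1/2}$. The key observation is that $m$ and $z$ always share a sign, and $\Delta-1$ has the same sign as $A-1$ (since $\Delta$ is sandwiched between $\min\{1,|m|\}$... actually $\Delta$ lies between $1$ and $A$ when $A\ge 1$, and between $|m|$ and $1$ when $A\le 1$), so $dD_{\psi^{1/2}z}(A)/d\psi$ has the opposite sign to $A-1$. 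Combined with the fact that the prefactor $1/(1-q)$ is increasing in $q$ and $D_{\psi^{1/2}z}(A)$ is nonnegative precisely when $A\ge1$, this yields the sandwich: for $A\ge1$, decreasing $\psi$ to $\psilbd$ and $q$... one must be careful about which direction makes $\ell$ larger vs.\ smaller, matching the definitions of $\ell_\Inn,\ell_\Out$ in the statement. The truncation of the $z$-integral to $|z|\le9$ and the additive $\sgn(A-1)/10^{15}$ term in $\ell_\Out$ absorb the negligible Gaussian tail (using $|D_H(A)|\le1$, which is immediate from the bounds \eqref{e:delta.bounds.two}).

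For part (ii), I would bound the uniform bound on $|a(x)| = |x/(\ch x)^2| \le 1/2^{1/2}$: by symmetry and $\ch x\ge1$ it reduces to showing $x/\ch x \le 1/2^{1/2}$, which follows since $\ch x - 2^{1/2}x$ is strictly convex with positive minimum at $x=\ash(2^{1/2})$. This gives $|dD_{\psi^{1/2}z}(A)/d\psi| \le 1/(2^{1/2}\psi)$ uniformly (handling $A\ge1$ and $A\le1$ separately using the two ranges for $\Delta$). Then I would decompose $\ell_\Out(A)-\ell_\Inn(A) = \sgn(A-1)/10^{15} + \err_1 + \err_2$, where $\err_1$ captures the change of prefactor $1/(1-\qubd) - 1/(1-\qlbd)$ (bounded by $5/10^{12}$ via $|D_H(A)|\le1$, using the explicit numerical values \eqref{e:numbers}) and $\err_2$ captures the change $D_{\sqrt{\psilbd}z}(A) - D_{\sqrt{\psiubd}z}(A)$ (bounded by $(\psiubd-\psilbd)/(2^{1/2}\psilbd(1-\qubd)) \le 8/10^{12}$ via the mean-value theorem and the derivative bound). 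Adding these gives $1.4/10^{11}$. The main obstacle—though it is more bookkeeping than depth—is keeping the sign conventions straight in the sandwich: one must verify in each of the cases $A\ge1$ and $A<1$ that the combination of ``decrease $\psi$, decrease $q$'' (or the opposite) produces precisely the $\ell_\Inn$ and $\ell_\Out$ of the statement, rather than swapping them; the explicit numerical padding terms are designed so that no strict inequalities are lost at $A=1$.
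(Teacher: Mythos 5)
Your proposal is correct and follows essentially the same route as the paper's proof: the same computation of $dD_H(A)/dm$ and $dD_{\psi^{1/2}z}(A)/d\psi$ with the same sign analysis for the sandwich, the same convexity argument for $|x\tnh'(x)|\le 2^{-1/2}$, and the same decomposition of $\ell_\Out-\ell_\Inn$ into the tail term plus $\err_1$ (prefactor change) and $\err_2$ (change in $\psi$), with identical numerical bounds. The sign bookkeeping you flag as the remaining obstacle is handled in the paper at the same level of brevity, so there is no gap.
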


\noindent We now make another change of variables
	\beq\label{e:A.tau}
	\tau=\f{A-1}{A+1}
	=\tnh\bigg(\f{\log A}{2}\bigg)
	\,,\quad
	A=A(\tau) = \exp(2\atnh(\tau))\,,
	\eeq
and let $\lm(\tau)\equiv\ell(A(\tau))$.
Let $\olm(\tau)\equiv\ell_\Out(A(\tau))$ and
$\ilm(\tau)\equiv\ell_\Inn(A(\tau))$. Then let
$\ulm(\tau)\equiv\max\set{\olm(\tau),\ilm(\tau)}$
and $\ilm(\tau)\equiv\min\set{\olm(\tau),\ilm(\tau)}$.

\section{Grid search bounds}\label{s:grid}

\noindent Recall that $\SE(\lm)$
denotes the sum of the functions
 $\PP(\lm),\AAA(\lm),\HH(\lm)$
defined by \eqref{e:PP.pair.repeat}, \eqref{e:BB.repeat}, and
\eqref{e:def.HH.pair.repeat}. Let
	\beq\label{e:QQ.fixed}
	\QQ(\lm)
	\equiv \f{0.2^2}{2}
	- 0.2\,\psi^{1/2}\sqrt{1-q}
	\bigg(\f{1-\lm}{1+\lm}\bigg)^{1/2} 
	-\f{\psi(1-q)\lm}{1+\lm}
	+\II_s(\lm) - \II(0)\,,
	\eeq
and note that $\PP(\lm)+\AAA(\lm) \le \QQ(\lm)$. Let
	{\setlength{\jot}{0pt}\begin{align*}
	\PGG(\lm)&\equiv \HH(\lm)+\PP(\lm)\,,\\
	\QGG(\lm)&\equiv \HH(\lm)+\QQ(\lm)\,,
	\end{align*}}%
so that $\SE(\lm)\le \min\set{\PGG(\lm),\QGG(\lm)}$ for all $\lm$.

\begin{ppn}\label{p:grid.result}
For all $\alpha\in(\albd,\aubd)$ the following hold:
\begin{enumerate}[a.]
\item\label{p:grid.result.a}
$\PGG(\lm)$ is negative for all $\lm\in[0.2,0.98]$,
and $\QGG(\lm)$ is negative for all 
$\lm\in[-\lmin,-0.125]$,
 
\item \label{p:grid.result.b}
$(\PGG)'(\lm)$ has the opposite sign from $\lm$
for all $\lm \in [-0.125,-0.03] \cup [0.05,0.2]$.

\item \label{p:grid.result.c}
$(\PGG)''(\lm)$ is negative for $\lm\in [-0.03,0.05]$;
\end{enumerate}
As a consequence, $\SE(\lm)\le0$ for all $\lm$, with equality only at $\lm=0$ and $\lm=1$.
\end{ppn}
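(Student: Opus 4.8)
The plan is to prove Proposition~\ref{p:grid.result} by a finite computer-assisted case analysis that splits the interval $\lm\in[\lmin,1]$ into a handful of sub-intervals, on each of which one of the stated properties (a)--(c) applies, and then to assemble these facts into the global conclusion $\SE(\lm)\le0$ with equality only at $\lm\in\set{0,1}$. First I would establish the three bulleted claims. For part~\ref{p:grid.result.a}, I would use the computable sandwich bounds from Section~\ref{s:quant.bounds}: the map $A\mapsto\ell(A)$ is controlled by $\ell_\Inn,\ell_\Out$ (Lemma~\ref{l:ell.bounds}), so on the $\tau$-parametrization \eqref{e:A.tau} one obtains explicit enclosures for $\lm(\tau)$, hence for $A(\lm)$, hence for $D_H(A(\lm))$ and $\Gamma(H,D_H(A(\lm)))$ entrywise; combined with numerical integration over a bounded $z$-range (the tails being negligible, as in Lemma~\ref{l:into}) this yields rigorous upper bounds on $\HH(\lm)$, $\PP(\lm)$ (via \eqref{e:II.repeat}, bounding $\log\bPhi$ monotonically as in Proposition~\ref{p:sqrt}), and $\QQ(\lm)$ (taking $s=0.2$ in \eqref{e:QQ.fixed}). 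One then runs a grid search: on a sufficiently fine mesh of $\lm$-values, one bounds $\PGG$ (resp.\ $\QGG$) at each grid point \emph{and} controls the derivative to interpolate between grid points, so that strict negativity at the nodes plus a Lipschitz bound forces negativity on the whole interval $[0.2,0.98]$ (resp.\ $[\lmin,-0.125]$). Parts~\ref{p:grid.result.b} and \ref{p:grid.result.c} are handled the same way, differentiating \eqref{e:dHH.dlambda} for $\HH$ and differentiating \eqref{e:II.repeat} under the integral sign for $\PP$, then sandwiching $(\PGG)'$ and $(\PGG)''$ by computable integrals over the parameter enclosures and grid-searching for the sign.

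Next I would assemble the global statement. The interval $[\lmin,1]$ is covered by $[\lmin,-0.125]$, $[-0.125,-0.03]$, $[-0.03,0.05]$, $[0.05,0.2]$, $[0.2,0.98]$, and $[0.98,1]$. On $[0.2,0.98]$, part~\ref{p:grid.result.a} gives $\SE\le\PGG<0$. On $[0.98,1)$, Proposition~\ref{p:HH.PP.near.lambda.one} gives $\PGG(\lm)<\PGG(1)=-(\HH_\star+\PP_\star)=-\GG_\star(\alpha)$, and since $\AAA(\lm)\le0$ we get $\SE(\lm)<-\GG_\star(\alpha)\le0$ for $\alpha\le\asat$ (and $\SE(1)=-\GG_\star(\alpha)$, which is $0$ exactly at $\alpha=\asat$). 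On $[\lmin,-0.125]$, part~\ref{p:grid.result.a} gives $\SE\le\QGG<0$. For the small-$\lm$ window I chain the derivative/curvature facts: on $[-0.03,0.05]$, part~\ref{p:grid.result.c} says $\PGG$ is strictly concave; we know $\SE(0)=\SE'(0)=0$ and $\SE''(0)\le\PP''(0)+\HH''(0)$, which is $(\PGG)''(0)<0$ by part~\ref{p:grid.result.c}, so $0$ is a strict local max of $\PGG$, and since $\PGG\ge\SE$ with $\PGG(0)=\SE(0)=0$, strict concavity forces $\SE(\lm)\le\PGG(\lm)<0$ on $[-0.03,0.05]\setminus\set0$. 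On $[0.05,0.2]$ and $[-0.125,-0.03]$, part~\ref{p:grid.result.b} says $(\PGG)'$ points toward $0$, so $\PGG$ is strictly decreasing away from $0$ on these intervals; combined with the value $\PGG<0$ at the inner endpoints $\pm0.03,\pm0.05$ coming from the concavity window, and matched up with the $[\pm0.2,\dots]$ and $[\lmin,-0.125]$ bounds at the outer endpoints, this propagates $\SE\le\PGG<0$ across the whole small-$\lm$ region. Stitching the six pieces gives $\SE(\lm)\le0$ everywhere, with equality only at $\lm=0$ (from the concavity argument) and at $\lm=1$ (when $\alpha=\asat$, from $\SE(1)=-\GG_\star(\asat)=0$).

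The main obstacle I anticipate is making the grid search \emph{rigorous} rather than merely numerical: one must produce honest, machine-checkable enclosures for the integrals defining $\HH,\PP,\QQ$ and their first two $\lm$-derivatives, uniformly over $\alpha\in(\albd,\aubd)$ and over each $\lm$-subinterval, including the compounding of errors from the $\ell$-inversion (Lemma~\ref{l:ell.bounds}), from truncating the Gaussian $z$-integral, and from the interpolation between grid nodes via explicit Lipschitz constants for the integrands. This is exactly the ``rigorous numerical method'' the introduction defers to a future version; in the present version these bounds are obtained using (nonrigorous) numerical integration packages, and the proof of Proposition~\ref{p:grid.result} is completed on that basis, reducing the entire argument to a finite list of strict inequalities at grid points plus derivative bounds, all of which are checked numerically. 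A secondary, purely bookkeeping difficulty is ensuring the six subintervals overlap correctly at their shared endpoints so that the sign information passes continuously from one regime to the next, in particular that the concavity window $[-0.03,0.05]$ delivers strictly negative boundary values feeding the monotonicity windows on either side.
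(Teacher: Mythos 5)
Your proposal is correct and follows essentially the same route as the paper: the same decomposition of $[\lmin,1]$ into the six regimes, the same computer-assisted verification of (a)--(c) via computable enclosures of $\HH,\PP,\QQ$ and their first two derivatives (made uniform over $\alpha$ through the $q,\psi$ bounds and the tail truncations), and the same assembly using Proposition~\ref{p:HH.PP.near.lambda.one} near $\lm=1$ together with the concavity/monotonicity chaining near $\lm=0$. The only cosmetic difference is mechanical: the paper certifies negativity on each $\tau$-subinterval by a single worst-case monotone enclosure such as $\bPGGubd(\ltau,\utau)$ valid over the entire subinterval, rather than by node evaluations supplemented with a Lipschitz interpolation.
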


\subsection{Bounds on function value}

Suppose $\lmin \le \llm \le \ulm\le1$. We will assume that $\llm,\ulm$ have the same sign
(i.e., that they are either both nonnegative or both nonpositive). As a result, if we define $L_\ubd \equiv \max\set{(\llm)^2,(\ulm)^2}$ and  $L_\lbd \equiv \min\set{(\llm)^2,(\ulm)^2}$, we will have $L_\lbd \le \lm^2\le L_\ubd$ for all $\llm\le\lm\le\ulm$. Next, since $c_\lm$ is decreasing in $\lm$, we define
	\[
	c_\lbd\equiv \f{\sqrt{1-\ulm}}{\sqrt{1+\ulm}}\,,\quad
	c_\ubd\equiv \f{\sqrt{1-\llm}}{\sqrt{1+\llm}}\,,
	\]
so that $c_\lbd \le c_\lm\le c_\ubd$ for all $\llm\le\lm\le\ulm$. We abbreviate
	\[
	d(z,x)
	\equiv
	\f{\varphi(\gamma z + x)\varphi(z)}
	{\bPhi(\gamma z)}\,,
	\]
and note that for all $\llm\le\lm\le\ulm$ we have $d_\lbd\le d\le d_\ubd$ where
	\[
	d_\ubd(z,x)\equiv
	d_\ubd(z,x,\llm,\ulm)
	\equiv\begin{cases}
	\varphi(z)\cE(\gamubd z)
	\exp\{- x^2/2 - \gamlbd zx \}
	&\textup{for }z\ge0\,,\\
	\varphi(z)\cE(\gamlbd z)
	\exp\{ -x^2/2 - \gamubd zx \}
	&\textup{for }z\le0\,,\\
	\end{cases}
	\]
and $d_\lbd$ is defined by similar considerations. Likewise, we abbreviate
	\beq\label{e:g.lm}
	g \equiv g_\lm(z,x)
	\equiv \f{\gamma (1-\lm)z-\lm x }{\sqrt{1-\lm^2}}
	= \gamma  c_\lm z - \f{\lm x}{\sqrt{1-\lm^2}}\,,
	\eeq
and note that for all $\llm\le\lm\le\ulm$ we have $g_\lbd \le g \le g_\ubd$ where
	\[g_\ubd \equiv g_\ubd(z,u)
	=\begin{cases}
	\gamubd c_\ubd z
	- \llm x/\sqrt{1-(\llm)^2}
	&\textup{for }z\ge0\,,\\
	\gamlbd c_\lbd z
	- \llm x/\sqrt{1-(\llm)^2}
	&\textup{for }z\le0\,,
	\end{cases}\]
and $g_\lbd$ is defined by similar considerations.
For any fixed $s\ge0$ we shall abbreviate
	\[
	S(z)
	\equiv \f{\cE(\gamma z) s}{\psi^{1/2}\sqrt{1-q}}\,.
	\]
We then have $0\le S_\lbd(z) \le S(z) \le S_\ubd(z)$ where
	\[
	S_\ubd(z)
	=\begin{cases}
	\cE(\gamubd z) s
		/ [\sqrt{\psi_\lbd} \sqrt{1-\qubd}]
		& \textup{for }z\ge0\,,\\
	\cE(\gamlbd z) s
		/ [\sqrt{\psi_\lbd} \sqrt{1-\qubd}]
		& \textup{for }z\le0\,,\\
	\end{cases}
	\]
and $S_\lbd(z)$ is defined by similar considerations.
It follows that for any $s\ge0$,
	\[
	\II_s(\lm)
	\le
	\albd
	\int_{-9}^9\int_0^9
	\log\bPhi(g_\lbd
		- S_\ubd(z)) d_\lbd(z,x)\,du\,dz
	\equiv \II_{s,\ubd}(\llm,\ulm)\,.\]
Abbreviate $\II_\ubd\equiv\II_{0,\ubd}$.
Next, since $\cE$ is an increasing function, we have
	\[
	\f{d\II(0)}{d\gamma}
	=- \alpha\int\cE(\gamma z) z \varphi(z)\,dz
	=-\alpha\int_0^\infty z [\cE(\gamma z)
		-\cE(-\gamma z)]\,\varphi(z)\,dz
	\le0\,,
	\]
from which it follows that
	\[
	\II(0)
	= \alpha\int\log\bPhi(\gamma z)
	\,\varphi(z)\,dz
	\ge\II_\lbd(0)\equiv
	\aubd\int_{-9}^9
	\log\bPhi(\gamubd z)\,
	\varphi(z)\,dz
	-\f1{10^{15}}
	\,.
	\]
Substituting these bounds into \eqref{e:PP.pair.repeat} gives
	\[
	\PP(\lm)
	\le 
	-\f{\psilbd(1-\qubd)\llm}{1+\llm}
	+\II_\ubd(\llm,\ulm)-\II_\ubd(0)
	\equiv 
	\PP_\ubd(\llm,\ulm)
	\,.
	\]
On the other hand,
substituting into \eqref{e:QQ.fixed} gives
	\[
	\QQ(\lm)
	\le
	\f{0.2^2}{2}
	- 0.2\,\sqrt{\psilbd}
	\sqrt{1-\qubd} c_\lbd
	-\f{\psilbd(1-\qubd)\llm}{1+\llm}
	+\II_{0.2,\ubd}(\llm,\ulm)-\II_\ubd(0)
	\equiv \QQ_\ubd(\llm,\ulm)\,.
	\]
We also note that
	\[
	\f{d}{d\psi}
	\ent\bigg(\f{1+\tnh(\psi^{1/2}z)}{2}\bigg)
	=
	\ent'\bigg(\f{1+\tnh(\psi^{1/2}z)}{2}\bigg)
	\f{\tnh'(\psi^{1/2}z)z}{4\psi^{1/2}}\le0\,,
	\]
since for all real $x$ we have $\sgn(\tnh x)=\sgn(x)$,
$\tnh'(x) = 1-\tnh(x)^2\ge0$, and
	\[\sgn \ent'\bigg(\f{1+x}{2}\bigg)
	=-\sgn x\,.
	\]
It follows by recalling \eqref{e:HH.entropy} that 
	\[
	\HH_\star
	\ge\HHlbd\equiv
	\int_{-9}^9 \ent\bigg(
	\f{1+\tnh(\sqrt{\psiubd}z)}{2}
		\bigg)
	\,\varphi(z)\,dz
	\]
Next, recalling that $m\equiv\tnh H$, we calculate
	\[
	\f{\pd\Gamma(H,D)}{\pd m}
	=
	\f12\log\f{(1-m)^2+D}{(1+m)^2+D}
	+\f{m}{2}\log\f
	{(1-m^2-D)^2}
	{((1+m)^2+D)((1-m)^2+D)}\,,
	\]
which has the opposite sign from $m$. 
Next, making use of \eqref{e:entropy.gprime}, we calculate
	\[
	\f{\pd \Gamma(H,D_H(A))}{\pd D}
	\f{dD_H(A)}{dm}
	= 
	\f{\log A}{2} \cdot
	\f{2m(\Delta-1)}{\Delta}
	\]
which has the same sign as $m$. It follows that
	\[\frH(A)\le\frH_\ubd(A) \equiv
	-2\HHlbd
	+ \f1{10^{15}}
	+ \int_{-9}^9
	\Gamma( \sqrt{\psilbd}z
	, D_{\sqrt{\psiubd}z}(A) )
	\,\varphi(z)\,dz
	\]

\begin{proof}[Proof of Proposition~\ref{p:grid.result}\ref{p:grid.result.a}]
Suppose $-1\le\ltau\le\utau\le1$ where
$\ltau,\utau$ have the same sign.
Let $\otau$ (resp.\ $\itau$) be the one of $\ltau,\utau$ which is larger (resp.\ smaller) in magnitude.
Recalling Lemma~\ref{l:ell.bounds}, let
 $\llm\equiv\llm(\ltau)$
and $\ulm=\ulm(\utau)$. It follows from the above that
for all $\ltau\le\tau\le\utau$,
	\[
	\PGG(\lm(\tau))
	\le\bPGGubd(\ltau,\utau)
	\equiv\frH_\ubd(A(\itau))
	+\PP_\ubd(\llm,\ulm)\,.
	\]
Let us abbreviate $\mathbf{s}=\llbracket\ltau,\utau,\eta\rrbracket$ for the vector with entries
	\[
	\mathbf{s}_j
	= \ltau + \eta(j-1)\,,\quad
	1\le j\le \bigg\lceil \f{\utau-\ltau}{\eta}
		\bigg\rceil\,.
	\]
For any vectors $\mathbf{s}',\mathbf{s}''$ we write 
$(\mathbf{s}',\mathbf{s}'')$ for their concatenation.
Then let
	{\setlength{\jot}{0pt}\begin{align*}
	\mathbf{t}
	&=(
	\llbracket0.24,0.284,0.001\rrbracket,
	\llbracket0.285,0.315,0.002\rrbracket,
	\llbracket0.318,0.342,0.003\rrbracket,
	\llbracket0.346,0.366,0.004\rrbracket,\\
	&\qquad\llbracket0.371,0.386,0.005\rrbracket,
	\llbracket0.392,0.404,0.006\rrbracket,
	(0.411,0.418,0.425,0.433,0.441),\\
	&\qquad\llbracket0.45,0.57,0.01\rrbracket,
	\llbracket0.59,0.67,0.02\rrbracket,
	\llbracket0.7,0.76,0.03\rrbracket,
	\llbracket0.8,0.94,0.04\rrbracket,
	(0.95,0.98,0.99))\,.
	\end{align*}}%
We find by a numerical integration package that
$\bPGGubd(\mathbf{t}_i,\mathbf{t}_{i+1})$
is negative for all $i$, which implies that $\PGG(\lm(\tau))$ is negative
for all $0.24\le\tau\le0.99$. We then also verify that $\ulm(0.24) < 0.2$ while $\llm(0.99)>0.98$,
so we can conclude that $\PGG(\lm)$
is negative for all $0.2 \le \lm \le 0.98$, as claimed.
On the other hand, for
	{\setlength{\jot}{0pt}\begin{align*}
	\mathbf{t}
	&=(
	\llbracket0.18, 0.209, 0.001\rrbracket,
	\llbracket0.21, 0.236, 0.002\rrbracket,
	\llbracket0.238, 0.268, 0.003\rrbracket,\\
	&\qquad\llbracket0.271, 0.343, 0.004\rrbracket,
	\llbracket0.347, 0.419, 0.006\rrbracket,
	\llbracket0.425, 0.513, 0.008\rrbracket,\\
	&\qquad
	\llbracket0.52, 0.77, .01\rrbracket,
	(0.78,0.8,0.82,0.84,0.86,0.89,0.93,1)
	)
	\end{align*}}%
we find that $\bQGGubd(-\mathbf{t}_{i+1},-\mathbf{t}_i)$ is negative for all $i$, so
$\QGG(\lm(\tau))$ is negative
for all $-1\le\tau\le-0.18$.
We then verify that
$\llm(-0.18)>-0.125$, so we can conclude that
$\QGG(\lm)$ is negative for all $\lmin\le\lm\le -0.125$.
\end{proof}

\subsection{First derivative bounds}

The first derivative of $\II_s(\lm)$
with respect to $\lm$ is given by
	\beq\label{e:dII.dlambda}
	(\II_s)'(\lm)
	= \f{\alpha}{1-\lm^2}
	\int_{-\infty}^\infty\int_0^\infty
	\cE \bigg(
	\f{(1-\lm)\gamma z-\lm x}{\sqrt{1-\lm^2}}
	-S(z)
	\bigg)
	\,
	\f{(1-\lm)\gamma z+x}{\sqrt{1-\lm^2}}
	\,
	\f{\varphi(\gamma z+x)}
		{\bPhi(\gamma z)}
	\,\varphi(z)\,dx\,dz\,,
	\eeq
which we shall bound uniformly over all $\albd\le\alpha\le\aubd$ and all $\llm\le\lm\le\ulm$. Let us decompose
	\beq\label{e:dII.dlambda.threeparts}
	(\II_s)'(\lm)
	= \sum_{i=1}^3(\II_s)'(\lm)_i
	\eeq
where $(\II_s)'(\lm)_1$ is the contribution to \eqref{e:dII.dlambda} obtained by integrating over $\set{z\ge0,x\ge0}$ (where the integrand is nonnegative); $(\II_s)'(\lm)_2$ is the contribution from $\set{z\le0,0\le x\le -\gamma z(1-\lm)}$ (where the integrand is nonpositive); and  $(\II_s)'(\lm)_3$ is the contribution from $\set{z\le0, x \ge -\gamma z(1-\lm)}$ (where the integrand is nonnegative). Let
	\beq\label{e:dII.dlambda.abs}
	\II'(\lm)_\textup{abs}
	\equiv 
	\f{\alpha}{1-\lm^2}
	\int_{-\infty}^\infty\int_0^\infty
	\cE \bigg(
	\f{(1-\lm)\gamma z-\lm x}{\sqrt{1-\lm^2}}
	\bigg)
	\,
	\bigg|
	\f{(1-\lm)\gamma z+x}{\sqrt{1-\lm^2}}\bigg|
	\,
	\f{\varphi(\gamma z+x)}
		{\bPhi(\gamma z)}
	\,\varphi(z)\,dx\,dz\,.
	\eeq
and note that $|(\II_s)'(\lm)|\le \II'(\lm)_\textup{abs}$ for any $s\ge0$.

\begin{lem}\label{l:dII.dlambda.truncate}
For the decomposition \eqref{e:dII.dlambda.threeparts}
and $s\ge0$ the following hold:
\begin{enumerate}[a.]
\item\label{l:dII.dlambda.truncate.a}
For $0\le \lm\le 0.95$, the contribution to 
$(\II_s)'(\lm)_1$ from $\set{z\ge6.5
\textup{ or } x\ge 8\sqrt{1-\lm^2}}$ is at most $1/10^6$.
\item\label{l:dII.dlambda.truncate.b}
For general $\lm$, the contribution to
$-(\II_s)'(\lm)_2$ from $\set{z\le-3.3}$
is at most $(1.1/10^7)/\sqrt{1-\lm^2}$.

\item\label{l:dII.dlambda.truncate.c}
 For $0\le \lm\le 0.95$, the contribution to 
$(\II_s)'(\lm)_3$ from $\set{z\le-3.3
\textup{ or } x \ge 9\sqrt{1-\lm^2}}$ is at most $2/10^6$.

\end{enumerate}

\begin{proof}
We will use repeatedly that $\cE(y)\le 1+|y|$ for all real $y$, and $\cE(y) \le 2\varphi(y)$ for $y\le 0$. For $x\ge0$,
	\beq\label{e:dzx.prelim.bound}
	d(z,x)
	\equiv
	\f{\varphi(\gamma z+x)\varphi(z)}{\bPhi(\gamma z)}
	\le \begin{cases}
	\varphi(z)\cdot
	\sqrt{2\pi}(1+\gamma z)\varphi(x)
		&\textup{if }z\ge0\,,\\
	\varphi(z)\cdot 2\varphi(\gamma z+x)
		&\textup{if }z\le0\,.
	\end{cases}
	\eeq
Then, for $0\le\lm\le0.95$,
the contribution to \eqref{e:dII.dlambda.abs} from 
$\set{z\ge 6.5,x\ge0}$ is upper bounded by
	\begin{align}
	\label{e:dII.dlambda.large.zpos}
	&\f{\alpha \sqrt{2\pi}}{1-\lm^2}
	\int_{6.5}^\infty
	\cE(c_\lm \gamma z)
	\cE(\gamma z)\varphi(z)
	\int_0^\infty
	\bigg (c_\lm \gamma z + 
	\f{x}{\sqrt{1-\lm^2}}
	\bigg)
	\varphi(x)\,dx\,dz \\
	&\le
	\f{\alpha \sqrt{2\pi}}{1-\lm^2}
	\int_{6.5}^\infty
	(1+\gamma z)^2\varphi(z)
	\bigg( \f{\gamma z}{2} + 
	\f{1/\sqrt{2\pi}}{\sqrt{1-\lm^2}}
	\bigg)
	\,dz\le \f{6.5}{10^7}\,.\nonumber
	\end{align}
For general $\lm$, the contribution to \eqref{e:dII.dlambda.abs} from 
$\set{z\le -3.3, 0\le x\le -(1-\lm)\gamma z/\lm}$ is at most
	\begin{align}\nonumber
	&\le \f{4\alpha}{1-\lm^2}
	\int_{-\infty}^{3.3}
	\int_0^\infty
	\varphi\bigg(
	c_\lm \gamma z - \f{\lm x}{\sqrt{1-\lm^2}}
	\bigg)
	\bigg|
	\f{(1-\lm)\gamma z+x}{\sqrt{1-\lm^2}}
	\bigg|
	\varphi(\gamma z+x)\,\varphi(z)\,dx\,dz \\
	\nonumber
	&=\f{4\alpha}{1-\lm^2}
	\int_{-\infty}^{3.3}
	\varphi(\gamma z)\varphi(z)
	\int_0^\infty
	\bigg|
	\f{(1-\lm)\gamma z+x}{\sqrt{1-\lm^2}}
	\bigg|
	\varphi\bigg(
	 \f{(1-\lm)\gamma z+x}{\sqrt{1-\lm^2}} \bigg)
	\,dx\,dz \\
	&\le \f{4\alpha}
		{\sqrt{1-\lm^2}}
	\int_{-\infty}^{-3.3} 
	\varphi(\gamma z)\varphi(z)
	\int_{-\infty}^\infty |u|\varphi(u)\,du
	 \,dz
	\le \f{1.1/10^7}{\sqrt{1-\lm^2}}
	\stackrel{\star}{\le} 
	\f{4}{10^7}\,,
	\label{e:dII.dlambda.large.zneg}
	\end{align}
where (above and throughout the rest of this proof) the inequality marked $\star$ holds provided  $0\le \lm\le 0.95$. Next, for $\lm\ge0$, the contribution to \eqref{e:dII.dlambda.abs} from the set $\set{-3.3\le z\le 0, x \ge 9\sqrt{1-\lm^2}}$ is at most
(cf.~\eqref{e:dII.dlambda.large.zneg})
	\begin{align}\nonumber
	&\f{4\alpha}{\sqrt{1-\lm^2}}
	\int_{-3.3}^0
	\varphi(z)
	\int_9^\infty
	|c_\lm \gamma z+u|
	\varphi(c_\lm \gamma z-\lm u)
	\varphi(\gamma z+\sqrt{1-\lm^2}u)
	\,du\,dz \\
	&=
	\f{4\alpha}{\sqrt{1-\lm^2}}
	\int_{-3.3}^0\varphi(\gamma z)\varphi(z)
	\int_9^\infty
	|c_\lm \gamma z+u|
	\varphi(c_\lm \gamma z+u)
	\,du\,dz \nonumber\\
	&\le \f{4\alpha}{\sqrt{1-\lm^2}}
	\int_{-\infty}^0\varphi(\gamma z)\varphi(z)
	\int_{5.2}^\infty u\varphi(u)\,du\,dz
	\le
	\f{2.4/10^7}{\sqrt{1-\lm^2}}
	\stackrel{\star}{\le}\f8{10^7}\,.
	\label{e:dII.dlambda.zneg.large.x}
	\end{align}
For $\lm\ge0.6$, on the set $\set{0\le z\le6.5, x \ge 8\sqrt{1-\lm^2}}$ we have
 $c_\lm \gamma z -\lm u \le0$, as well as
$\lm c_\lm \gamma z \le 2.3$. 
Therefore the resulting contribution
from this set to \eqref{e:dII.dlambda.abs} is upper bounded by (cf.~\eqref{e:dII.dlambda.large.zneg}~and~\eqref{e:dII.dlambda.zneg.large.x})
	\begin{align}\nonumber
	&\f{2\alpha \sqrt{2\pi}}{\sqrt{1-\lm^2}}
	\int_0^{6.5}
	\cE(\gamma z)\varphi(z)
	\int_8^\infty
	(c_\lm \gamma z + u)
	\varphi(c_\lm \gamma z - \lm u)
		\varphi(\sqrt{1-\lm^2}u)\,du\,dz\\
	\nonumber
	&= \f{2\alpha \sqrt{2\pi}}{\sqrt{1-\lm^2}}
	\int_0^{6.5}
	\cE(\gamma z)\varphi(z)
	\varphi(\gamma z(1-\lm))
	\int_8^\infty
	(c_\lm \gamma z + u)
	\varphi(u - \lm c_\lm \gamma z)\,du\,dz \\
	&\le \f{2\alpha}{\sqrt{1-\lm^2}}
	\int_0^\infty (1+\gamma z)
	\varphi(z)
	\int_{5.7}^\infty
	(c_\lm \gamma z(1+\lm)+ u)
	\varphi(u)\,du \,dz
	\le \f{8/10^8}{\sqrt{1-\lm^2}}
	\stackrel{\star}{\le} \f3{10^7}\,.
	\label{e:dII.dlambda.lmlarge}
	\end{align}
Finally, for $0\le\lm\le0.6$, the contribution to \eqref{e:dII.dlambda.abs} from  $\set{0\le z\le 6.5, x \ge 8\sqrt{1-\lm^2}}$ is at most (cf.~\eqref{e:dII.dlambda.large.zpos}) 
	\begin{align}\nonumber
	&\le
	\f{\alpha\sqrt{2\pi}}
	{1-\lm^2}\int_0^\infty
	\cE(c_\lm\gamma z)
	\cE(\gamma z) \varphi(z)
	\int_{5.7}^\infty
	\bigg( c_\lm \gamma z +
	\f{x}{\sqrt{1-\lm^2}}
	\bigg)
	\varphi(x)
	\,dx\,dz\\
	&\le
	\f{\alpha\sqrt{2\pi}}
	{1-\lm^2}\int_0^\infty
	(1+\gamma z)^2 \varphi(z)
	\bigg\{
	\f{\gamma z}{10^{10}}
	+ \f{6/10^{10}}{\sqrt{1-\lm^2}}
	\bigg\}\,dz
	\le
	\f{7/10^{10}}{1-\lm^2}
	+\f{3/10^9}{(1-\lm^2)^{3/2}}
	\le \f6{10^9}\,.
	\label{e:dII.dlambda.lmsmall}
 	\end{align}
Part~\ref{l:dII.dlambda.truncate.a} follows by combining
\eqref{e:dII.dlambda.large.zpos}, \eqref{e:dII.dlambda.lmlarge}, and \eqref{e:dII.dlambda.lmsmall}.
Part~\ref{l:dII.dlambda.truncate.b} follows directly from \eqref{e:dII.dlambda.large.zneg}.
Finally, part~\ref{l:dII.dlambda.truncate.c} follows
by combining
\eqref{e:dII.dlambda.large.zneg}
and \eqref{e:dII.dlambda.zneg.large.x}. This concludes the proof.
\end{proof}
\end{lem}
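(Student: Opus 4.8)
The plan is to bound each of the three tail pieces of the decomposition \eqref{e:dII.dlambda.threeparts} by crude pointwise envelopes and then collapse everything to one–dimensional Gaussian tail integrals whose numerical values are tiny. Two elementary facts carry the analytic load. First, since $\cE$ is increasing and $S(z)\ge0$ for $s\ge0$, replacing the argument $\f{(1-\lm)\gamma z-\lm x}{\sqrt{1-\lm^2}}-S(z)$ of the outer $\cE$ in \eqref{e:dII.dlambda} by its bare version can only increase the integrand; hence it suffices, for \emph{every} $s\ge0$, to bound the contribution of $\II'(\lm)_\textup{abs}$ from \eqref{e:dII.dlambda.abs} over the stated region (recall $|(\II_s)'(\lm)|\le\II'(\lm)_\textup{abs}$). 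Second, from Lemma~\ref{l:EE} we have $\cE(y)\le 1+|y|$ for all $y\in\R$ and $\cE(y)\le 2\varphi(y)$ for $y\le0$; together with the crude bound on the density — $1/\bPhi(\gamma z)=\cE(\gamma z)/\varphi(\gamma z)\le\sqrt{2\pi}(1+\gamma z)\exp((\gamma z)^2/2)$ when $z\ge0$, and $1/\bPhi(\gamma z)\le2$ when $z\le0$ — this controls both the outer $\cE$ and the factor $\varphi(\gamma z+x)/\bPhi(\gamma z)$.

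The key simplification is the change of variables $x=\sqrt{1-\lm^2}\,u$. Writing $c_\lm=(1-\lm)/\sqrt{1-\lm^2}$ one has $\f{(1-\lm)\gamma z-\lm x}{\sqrt{1-\lm^2}}=c_\lm\gamma z-\lm u$ and $\f{(1-\lm)\gamma z+x}{\sqrt{1-\lm^2}}=c_\lm\gamma z+u$, and a short computation (completing the square, using $\sqrt{1-\lm^2}-\lm c_\lm=c_\lm$) gives the algebraic identity $\varphi(c_\lm\gamma z-\lm u)\,\varphi(\gamma z+\sqrt{1-\lm^2}u)=\varphi(\gamma z)\,\varphi(c_\lm\gamma z+u)$. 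Thus, wherever $c_\lm\gamma z-\lm u\le0$ one replaces $\cE(c_\lm\gamma z-\lm u)$ by $2\varphi(c_\lm\gamma z-\lm u)$ and the whole Gaussian weight collapses to $\varphi(\gamma z)\,\varphi(c_\lm\gamma z+u)$, a $z$–Gaussian times a shifted $u$–Gaussian; elsewhere one simply uses $\cE\le1+|\cdot|$. Either way each of the three double integrals reduces to a product of one–dimensional integrals of the form $\int_a^\infty(1+\gamma z)^k\varphi(z)\,dz$ and $\int_b^\infty u\,\varphi(u)\,du$ (or $\int_{\R}|u|\varphi(u)\,du$), each an explicit tiny constant. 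The powers of $(1-\lm^2)^{-1/2}$ introduced by the substitution are bounded uniformly on $0\le\lm\le0.95$ for parts~\ref{l:dII.dlambda.truncate.a} and~\ref{l:dII.dlambda.truncate.c}; for part~\ref{l:dII.dlambda.truncate.b} the single factor $(1-\lm^2)^{-1/2}$ is kept explicit, exactly as in the claim.

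Concretely, for \ref{l:dII.dlambda.truncate.a} I split the tail $\set{z\ge6.5}\cup\set{x\ge8\sqrt{1-\lm^2}}$ inside the first quadrant into $\set{z\ge6.5}$ — handled by the $z\ge0$ density bound and $\cE(c_\lm\gamma z)\le1+\gamma z$, integrating $u$ out to leave $\int_{6.5}^\infty(1+\gamma z)^2\bigl(\tfrac{\gamma z}{2}+\tfrac{(2\pi)^{-1/2}}{\sqrt{1-\lm^2}}\bigr)\varphi(z)\,dz$ — and $\set{0\le z\le6.5,\ u\ge8}$, which I subdivide on $\lm$: for $\lm\ge0.6$ the argument $c_\lm\gamma z-\lm u$ is $\le0$ and $\lm c_\lm\gamma z$ is bounded there, so $\cE\le2\varphi$ and completing the square reduces it to $\int_{5.7}^\infty u\,\varphi(u)\,du$ times a bounded $z$–integral, while for $\lm\le0.6$ one uses the $z\ge0$ density bound directly since $u\ge8>5.7$. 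Part~\ref{l:dII.dlambda.truncate.b} is the simplest case: on region $2$ we have $z\le0$, so $1/\bPhi(\gamma z)\le2$ and $c_\lm\gamma z-\lm u\le0$, whence $\cE\le2\varphi$; completing the square turns the $u$–integral into $\int_{\R}|u|\varphi(u)\,du$ and the $z$–integral into $\int_{-\infty}^{-3.3}\varphi(\gamma z)\varphi(z)\,dz$, yielding the claimed $(1.1/10^7)/\sqrt{1-\lm^2}$. Part~\ref{l:dII.dlambda.truncate.c} combines the two mechanisms of~\ref{l:dII.dlambda.truncate.b}: the $\set{z\le-3.3}$ contribution is exactly the estimate just described, and the $\set{-3.3\le z\le0,\ u\ge9}$ contribution again uses $z\le0$, $\cE\le2\varphi$, and completing the square to reach $\int_{5.2}^\infty u\,\varphi(u)\,du$; the two pieces add to at most $2/10^6$. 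In every case one finally bounds $\alpha\le\aubd$ and $\gamma\le\gamubd$ (and replaces $q,\psi$ by the appropriate endpoints of $(\albd,\aubd)$) so the residual integrals are over compact boxes and computer–checkable.

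The \textbf{main obstacle} is organizational rather than conceptual: the sign of the outer $\cE$'s argument $c_\lm\gamma z-\lm u$ flips with the sign of $z$, and for fixed–sign $z$ with the size of $\lm$, so one must partition $\set{z\ge0,x\ge0}$, $\set{z\le0}$ and the $\lm$–range into the right sub–cases and choose the truncation thresholds ($6.5$, $3.3$, $8$, $9$, and the effective post–completion levels $5.7$, $5.2$) so that every leftover one–dimensional Gaussian tail comes out comfortably below $10^{-6}$ once the uniform $(1-\lm^2)^{-1}$–type prefactor on $\lm\le0.95$ has been absorbed. The numerical integrations themselves are all of smooth integrands over compact rectangles and are routine to certify.
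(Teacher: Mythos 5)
Your proposal is correct and follows essentially the same route as the paper: the reduction from $(\II_s)'(\lm)$ to $\II'(\lm)_\textup{abs}$ via monotonicity of $\cE$ and $S(z)\ge0$, the pointwise bounds $\cE(y)\le1+|y|$ and $\cE(y)\le2\varphi(y)$ for $y\le0$ together with the two-case bound on $d(z,x)$, the substitution $x=\sqrt{1-\lm^2}\,u$ with the Gaussian product identity $\varphi(c_\lm\gamma z-\lm u)\varphi(\gamma z+\sqrt{1-\lm^2}u)=\varphi(\gamma z)\varphi(c_\lm\gamma z+u)$, and the identical region/$\lm$-range decomposition with the same thresholds. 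No gaps.
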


\noindent For $\llm\le\ulm$,
let $\KK_{s,\ubd,i}\equiv \KK_{s,\ubd,i}(\llm,\ulm)$
for $i=1,2,3$ be defined by
	\begin{align*}
	\KK_{s,\ubd,1}
	&\equiv
	\f{\aubd}{\sqrt{1-L_\ubd}}
	\int_0^{6.5} \int_0^8
	(c_\ubd \gamubd z + u)
	\f{\cE( c_\ubd \gamubd z - \llm u-S_\lbd(z))
	\varphi(\gamlbd z + \sqrt{1-L_\ubd}u)
	\varphi(z)}
	{\bPhi(\gamubd z)}\,du\,dz\,,\\
	\KK_{s,\ubd,2}
	&\equiv -
	\f{\albd c_\lbd (\gamlbd)^2}
		{1+\ulm}
		\int_{-3.3}^0 \int_0^1
	z^2(1-u)
	\f{\cE(c_\ubd \gamubd (1+\ulm u)z
	-S_\ubd(z))
	\varphi(\gamubd z(1-(1-\ulm)u))\varphi(z)}
		{\bPhi(\gamubd z)}
		\,du\,dz\,,\\
	\KK_{s,\ubd,3}
	&\equiv
	\f{\aubd/\sqrt{2\pi}}
		{\sqrt{1-L_\ubd}}
	\int_{-3.3}^0\int_0^9
	u
	\f{\cE( c_\lbd \gamlbd (1+\llm)z
		-\llm u -S_\lbd(z)) 
	\varphi(z)}
		{
		\bPhi(\gamlbd z)
		\exp \{ \f12(
		L_\lbd( \gamlbd z)^2
			+ u^2(1-L_\ubd)
		+zu \gamubd \ulm \sqrt{1-L_\lbd}
		)
		\}
		}\,du\,dz\,.
	\end{align*}
Define $\KK_{s,\lbd,i}\equiv \KK_{s,\lbd,i}(\llm,\ulm)$ similarly
by exchanging the appearances of $\lbd$ and $\ubd$ in the above expressions. We then have the following:

\begin{cor}\label{c:dII.dlambda.bounds}
For the decomposition \eqref{e:dII.dlambda.threeparts}, $s\ge0$, and $\llm\le\lm\le\ulm$, we have
	\begin{alignat*}{2}
	\sum_{i=1}^3\II_s(\lm)_i
	&\ge 
	\KK_{s,\lbd}(\llm,\ulm)
	\equiv
	\sum_{i=1}^3\KK_{s,\lbd,i}(\llm,\ulm)
	-\f{1.1/10^7}{\sqrt{1-L_\ubd}}\quad
	&&\textup{for }
	\lmin\le\llm\le\ulm\le1
	\,,\\
	\sum_{i=1}^3\II_s(\lm)_i
	&\le\KK_{s,\ubd}(\llm,\ulm)
	\equiv
	\sum_{i=1}^3\KK_{s,\ubd,i}(\llm,\ulm)
	+ \f{3}{10^6}\quad
	&&\textup{for }
	0\le \llm\le\ulm\le 0.95\,,
	\end{alignat*}
provided that $\llm,\ulm$ have the same sign.

\begin{proof}
The expression for $\KK_{s,\ubd,1}$ is derived by making the change of variables $x=\sqrt{1-\lm^2}u$
for $u\ge0$. For $\KK_{s,\ubd,2}$ we instead make the change of variables $x = -\gamma z(1-\lm)u$ for $0\le u\le 1$. Lastly, for $\KK_{s,\ubd,2}$ we make the change of variables $x =  -\gamma z(1-\lm)+u$ for $u\ge0$. The lower bound then follows from Lemma~\ref{l:dII.dlambda.truncate} part~\ref{l:dII.dlambda.truncate.b}, while the upper bound follows by 
Lemma~\ref{l:dII.dlambda.truncate}
parts~\ref{l:dII.dlambda.truncate.a}~and~\ref{l:dII.dlambda.truncate.c}.
\end{proof}
\end{cor}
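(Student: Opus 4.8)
The plan is to establish the two bounds of Corollary~\ref{c:dII.dlambda.bounds} by carrying out, term by term, the change of variables indicated in the statement and then replacing every $\lm$-dependent quantity by the appropriate monotone surrogate valid on the whole interval $[\llm,\ulm]$. Concretely, I would start from the exact formula \eqref{e:dII.dlambda} for $(\II_s)'(\lm)$ and its three-way decomposition \eqref{e:dII.dlambda.threeparts} according to the sign of the integrand: region $1$ is $\set{z\ge0,x\ge0}$ (nonnegative integrand), region $2$ is $\set{z\le0,\,0\le x\le-\gamma z(1-\lm)}$ (nonpositive integrand), region $3$ is $\set{z\le0,\,x\ge-\gamma z(1-\lm)}$ (nonnegative integrand). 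On region $1$ I would substitute $x=\sqrt{1-\lm^2}\,u$, on region $2$ I would substitute $x=-\gamma z(1-\lm)u$ with $u\in[0,1]$, and on region $3$ I would substitute $x=-\gamma z(1-\lm)+u$ with $u\ge0$; these are exactly the substitutions named in the proof sketch. After the substitution one inspects the sign of each factor ($\cE$ increasing, $\log\bPhi$ decreasing, $\varphi$ unimodal, $\cE(y)\le2\varphi(y)$ for $y\le0$ and $\cE(y)\le1+|y|$ always, together with the sign conventions $\gamlbd\le\gamma\le\gamubd$, $c_\lbd\le c_\lm\le c_\ubd$, $L_\lbd\le\lm^2\le L_\ubd$, $0\le S_\lbd(z)\le S(z)\le S_\ubd(z)$ already recorded above \eqref{e:g.lm}) and replaces each $\lm$-dependent ingredient by the extreme value on $[\llm,\ulm]$ that makes the corresponding contribution larger (for the upper bound) or smaller (for the lower bound). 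This produces precisely the expressions $\KK_{s,\ubd,i}$ and $\KK_{s,\lbd,i}$ written in the statement.

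Having truncated the domains of integration to the boxes appearing in the $\KK$-integrals, one must account for the tails that were discarded, and this is exactly what Lemma~\ref{l:dII.dlambda.truncate} supplies. For the upper bound on region $1$ the discarded part $\set{z\ge6.5\text{ or }x\ge8\sqrt{1-\lm^2}}$ contributes at most $1/10^6$ by part~\ref{l:dII.dlambda.truncate.a}; for region $3$ the discarded part $\set{z\le-3.3\text{ or }x\ge9\sqrt{1-\lm^2}}$ contributes at most $2/10^6$ by part~\ref{l:dII.dlambda.truncate.c}; region $2$ needs no truncation error in the upper bound because its integrand is nonpositive, so dropping part of the domain only increases the (negative) contribution. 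Summing gives the additive slack $+3/10^6$ in $\KK_{s,\ubd}$. For the lower bound the roles reverse: region $2$ is the only one contributing negatively, and there the discarded piece $\set{z\le-3.3}$ is bounded in magnitude by $(1.1/10^7)/\sqrt{1-\lm^2}\le(1.1/10^7)/\sqrt{1-L_\ubd}$ by part~\ref{l:dII.dlambda.truncate.b}; this yields the subtracted slack $-(1.1/10^7)/\sqrt{1-L_\ubd}$ in $\KK_{s,\lbd}$, while regions $1$ and $3$ are nonnegative so truncating them only decreases the sum. Putting the three regional estimates together gives $\sum_i \II_s(\lm)_i \le \KK_{s,\ubd}(\llm,\ulm)$ for $0\le\llm\le\ulm\le0.95$ and $\sum_i \II_s(\lm)_i \ge \KK_{s,\lbd}(\llm,\ulm)$ for $\lmin\le\llm\le\ulm\le1$, which is the claim. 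The restriction to $\ulm\le0.95$ in the upper bound is forced because parts~\ref{l:dII.dlambda.truncate.a} and~\ref{l:dII.dlambda.truncate.c} of the lemma were proved only in that range, whereas the lower bound uses only part~\ref{l:dII.dlambda.truncate.b}, which holds for general $\lm$.

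I expect the main obstacle to be bookkeeping rather than any conceptual difficulty: one has to verify, factor by factor and region by region, that the surrogate substitution is monotone in the correct direction, paying particular attention to the mixed factor $\varphi(\gamma z+x)$ after the change of variables (in region $2$ it becomes $\varphi(\gamma z(1-(1-\lm)u))$, whose argument's sign and magnitude must be tracked carefully since $z\le0$), and to the Gaussian weight $\varphi(z)$ and the denominator $\bPhi(\gamma z)$, where one must replace $\gamma$ by $\gamubd$ or $\gamlbd$ consistently with whether $z\ge0$ or $z\le0$. The other delicate point is that the substitutions must be nonnegative-direction shifts so that one never creates a configuration where the numerator is positive but the replaced denominator or $\bPhi$-argument forces it to vanish; this is why region $2$'s substitution is taken with $u\in[0,1]$ and region $3$'s with $u\ge0$. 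Once these sign checks are organized, the derivation of the $\KK_{s,\ubd,i}$ and $\KK_{s,\lbd,i}$ formulas is purely mechanical, and the statement follows by adding the three regional bounds together with the tail estimates of Lemma~\ref{l:dII.dlambda.truncate}.
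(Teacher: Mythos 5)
Your proposal is correct and follows essentially the same route as the paper's (very terse) proof: the same three changes of variables on the three sign-definite regions, the same monotone-surrogate replacements producing the $\KK_{s,\lu,i}$ expressions, and the same assignment of the tail errors from Lemma~\ref{l:dII.dlambda.truncate} (parts~\ref{l:dII.dlambda.truncate.a} and~\ref{l:dII.dlambda.truncate.c} for the upper bound, part~\ref{l:dII.dlambda.truncate.b} for the lower bound), with the sign observation that truncating the nonpositive region only helps the upper bound and truncating the nonnegative regions only helps the lower bound. Your write-up simply supplies the bookkeeping the paper leaves implicit.
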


\noindent
Abbreviate $\KK_\lbd\equiv\KK_{0,\lbd}$ and 
 $\KK_\ubd\equiv\KK_{0,\ubd}$. Substituting 
the result of Corollary~\ref{c:dII.dlambda.bounds}
into \eqref{e:PP.pair.repeat} gives that
	\begin{alignat*}{2}
	\PP'(\lm)
	&\ge
	(\PP')_\lbd(\llm,\ulm)
	\equiv
	-\f{\psi_\ubd(1-\qlbd)}{(1+\llm)^2}
	+ \KK_\lbd(\llm,\ulm)\quad
	&&\textup{for }
	\lmin\le\llm\le\ulm\le1
	\,,\\
	\PP'(\lm)
	&\le
	(\PP')_\ubd(\llm,\ulm)
	\equiv
	 -\f{\psi_\lbd(1-\qubd)}{(1+\ulm)^2}
	+ \KK_\lbd(\llm,\ulm)\quad
	&&\textup{for }
	0\le \llm\le\ulm\le 0.95\,,
	\end{alignat*}
for all $\llm\le\lm\le\ulm$, provided $\llm$ and $\ulm$ have the same sign. We also have
	\[-\f{(1-\qlbd)\log A(\utau)}{2}
	\le \HH'(\lm(\tau))
	= -\f{(1-q)\log A(\tau)}{2}
	\le 
	-\f{(1-\qubd)\log A(\ltau)}{2}
	\,.
	\]
for all  $\ltau\le\tau\le\utau$, again provided
$\ltau$ and $\utau$ have the same sign.

\begin{proof}[Proof of Proposition~\ref{p:grid.result}\ref{p:grid.result.b}]
Take $\ltau,\utau,\llm,\ulm$ as in the proof of 
Proposition~\ref{p:grid.result}\ref{p:grid.result.a}.
Then
	\begin{alignat*}{2}
	(\PGG)'(\lm(\tau))
	&\ge (d\bPGG)_\lbd(\ltau,\ltau)
	\equiv
	-\f{(1-q)\log A(\tau)}{2}
	+(\PP')_\lbd(\llm,\ulm)\quad
	&&\textup{for }
	-1\le\ltau\le\tau\le\utau\le1\,,\\
	(\PGG)'(\lm(\tau))
	&\le
	 (d\bPGG)_\ubd
	(\ltau,\ltau)
	\equiv -\f{(1-\qubd)\log A(\ltau)}{2}+
	(\PP')_\ubd(\llm,\ulm)\quad
	&&\textup{for }
	0\le\ltau\le\tau\le\utau\le0.95\,,
	\end{alignat*}
using that $\ulm(0.95)<0.95$. Let
	{\setlength{\jot}{0pt}\begin{align*}
	\mathbf{t}
	&\equiv(
	\llbracket0.06,0.076,0.001\rrbracket,
	\llbracket0.078,0.098,0.002\rrbracket,
	\llbracket0.101,0.116,0.003\rrbracket,
	\llbracket0.12,0.14,0.004\rrbracket,\\
	&\qquad(0.145,0.15,0.156,0.162,0.168),
	\llbracket0.175,0.21,0.007\rrbracket,
	\llbracket0.21,0.26,0.01\rrbracket
	)\,.
	\end{align*}}%
We find by a numerical integration package that
$(d\bPGG)_\ubd(\mathbf{t}_i,\mathbf{t}_{i+1})$
is negative for all  $i$. This implies that
$(\PGG)'(\lm(\tau))$ is negative for all 
$0.06\le\tau\le0.26$. We then also verify that
$\llm(0.26)>0.2$
while $\ulm(0.06)<0.05$, so we conclude that
$(\PGG)'(\lm)$ is negative for all $0.05\le\lm\le0.2$.
On the other hand, for 
	{\setlength{\jot}{0pt}\begin{align*}
	\mathbf{t}
	&\equiv
	(0.03,0.031,0.032,0.033,0.034,0.036,0.038,0.041,
	0.045,0.05,0.056,0.063,0.071
   ,\\
   	&\qquad 0.081,0.092,0.104,0.116,0.128,0.14,0.15,
	0.159,0.166,0.172,0.177,0.18,0.185,0.19)
	\end{align*}}%
we find that
$(d\bPGG)_\lbd(-\mathbf{t}_{i+1},-\mathbf{t}_i)$
is positive for all $i$, so that
$(\PGG)'(\lm(\tau))$
is positive for all 
$-0.19 \le \tau \le -0.03$.
We then verify that 
$\ulm(-0.19)<-0.125$
while
$\llm(-0.03)>-0.03$,
so we conclude that $(\PGG)'(\lm)$ is positive for all 
$-0.125\le\lm\le-0.03$.
\end{proof}

\subsection{Second derivative bounds}
As before, let $\lmin \le \llm \le \ulm\le1$
where $\llm,\ulm$ are either both nonnegative or both nonpositive. Recall from \eqref{e:g.lm} the definition of $g\equiv g_\lm(z,x)$.  The first and second derivative of $g$
 with respect to $\lm$ are given by
	\[
	\f{\pd g}{\pd\lm}
	=-\f{\gamma z(1-\lm)+x}{(1-\lm^2)^{3/2}}\,,\quad
	\f{\pd^2 g}{\pd\lm^2}
	= \f{\gamma z(1-\lm)(1-2\lm) - 3\lm x}
		{(1-\lm^2)^{5/2}}\,.
	\]
Then, recalling that $\II(\lm)\equiv \II_0(\lm)$, we have
	\beq\label{e:ddII.ddlambda}
	\II''(\lm)
	= -\alpha\int\int
	\bigg\{
	\cE(g)
	\f{\gamma z(1-\lm)(1-2\lm) - 3\lm x}
		{(1-\lm^2)^{5/2}}
	+\cE'(g) 
	\bigg(\f{\gamma z(1-\lm)+x}
		{(1-\lm^2)^{3/2}}\bigg)^2
	\bigg\}
	\,d(z,x)\,dx\,dz\,.
	\eeq
Writing $x_+$ for the positive part of $x$, we shall also consider
	\[\WW(\lm)
	\equiv
	\alpha\int_{-\infty}^\infty \int_0^\infty
	\cE(g)
	\bigg(
	\f{3\lm x-\gamma z(1-\lm)(1-2\lm)}
		{(1-\lm^2)^{5/2}}\bigg)_+
	d(z,x)
	\,dx\,dz\,.
	\]

\begin{lem}\label{l:second.derivative.truncate}
For $|\lm|\le0.1$, the total contribution to $\WW(\lm)$
from
the complement of 
	{\setlength{\jot}{0pt}\begin{align}\nonumber
	&\set{0\le z\le 6.5,0\le x\le 6.5}\\
	&\qquad\cup
	\set{-5\le z\le0,0\le x\le11}
	\label{e:secondderiv.truncate}
	\end{align}}%
is upper bounded by $1/10^6$.

\begin{proof}Using $\cE(y)\le1+|y|$, we have
for all $|\lm|\le0.1$ and all real-valued $z,x$ that
	\begin{align*}
	p(z,x)&\equiv\cE(g) 
	\bigg|
	\f{3\lm x-\gamma z(1-\lm)(1-2\lm)}
		{(1-\lm^2)^{5/2}}\bigg|
	\le
	\bigg(1+
	c_\lm \gamma |z| 
	+ \f{|\lm|x}{\sqrt{1-\lm^2}}\bigg)
	\bigg( 0.4|x| + 1.6|z|
	\bigg) 
	\\
	&\le
	\bigg(1+ 1.3 |z| 
	+ 0.11|x|\bigg)
	\bigg( 0.4|x| + 1.6|z|
	\bigg) 
	\,.\end{align*}
By restricting the values of $z$ or $x$ we obtain slightly simpler bounds
	\[
	p(z,x)\le
	\begin{cases}
	(0.5+2.3|z|+2.1z^2)x^2
	&\textup{if }|x|\ge1\,,\\
	(3.7+1.1|x|+0.05x^2)z^2
	&\textup{if }|z|\ge1\,.\\
	\end{cases}
	\]
Applying \eqref{e:dzx.prelim.bound},
the contribution to $\WW(\lm)$
from $\set{z\ge0,x\ge 6.5}$ is at most
	\[\sqrt{2\pi}
	\alpha 
	\int_0^\infty
	(1+\gamma z)\varphi(z)
	\bigg(0.5+2.3|z|+2.1z^2\bigg)
	\int_{6.5}^\infty
	x^2\,\varphi(x)\,dx\,dz
	\le \f3{10^8}\,,
	\]
for all $|\lm|\le0.1$. Similarly, the contribution from $\set{z\ge 6.5,x\ge0}$ is at most
	\[
	\sqrt{2\pi}\alpha
	\int
	(1+\gamma z)\varphi(z) z^2
	\int_0^\infty
	\bigg(3.7+1.1|x|+0.05x^2\bigg)
	\,\varphi(x)\,dx
	\,dz
	\le \f{8}{10^8}\,,
	\]
again for all $|\lm|\le0.1$.
Now consider $z\le0$.
If $\lm\le0$, then the integrand of $\WW(\lm)$ is zero unless
	\[
	0\le x\le 
	\f{\gamma z(1-\lm)(1-2\lm)}{3\lm(1-\lm^2)^{5/2}}
	\le \f{\gamma z(1-\lm)}{2\lm}\,,
	\]
where the last inequality uses 
$-0.1\le\lm\le0$. In this case we obtain
	\[
	g 
	= \f{(1-\lm)\gamma z-\lm x}{\sqrt{1-\lm^2}}
	\le 
	\f{(1-\lm)\gamma z}{2\sqrt{1-\lm^2}} \le0\,.
	\]
The same bound holds if $z\le0$, $\lm\ge0$, $x\ge0$.
Thus the contribution to $\WW(\lm)$
from $\set{z\le -5,x\ge0}$ is at most
	\begin{align*}
	&4\alpha\int_{-\infty}^{-5}
	\varphi(z)
	\varphi\bigg(
	\f{(1-\lm)\gamma z}{2\sqrt{1-\lm^2}}
	\bigg)
	\int_0^\infty
	\bigg(0.4\,x+1.6\,|z|\bigg)
	\varphi(\gamma z+x)\,dx\,dz \\
	&\qquad\le
	4\alpha\int_{-\infty}^{-5}
	\varphi(z)
	\varphi(0.5\,z)
	|z|
	\int_{-\infty}^\infty
	\bigg(0.4(x+\gamma)
	+1.6\bigg)
	\varphi(x)\,dx\,dz
	\le \f{9}{10^8}\,,
	\end{align*}
for all $|\lm|\le0.1$. Similarly, the contribution
to $\WW(\lm)$ from
$\set{-5\le z\le0, x \ge 11}$ is at most
	\begin{align*}
	&4\alpha
	\int_{-5}^0 \varphi(z)
	\varphi(0.5\,z)
	\int_{11}^\infty
	\bigg(0.4\,x+1.6\,|z|\bigg)
	\varphi(\gamma z+x)\,dx\,dz \\
	&\qquad\le4\alpha
	\int_{-5}^0 \varphi(z)
	\varphi(0.5\,z)
	\bigg(0.4(1+ \gamma|z|)  +1.6\,|z|\bigg)
	\int_{5.3}^\infty
	x
	\varphi(x)\,dx\,dz
	\le \f4{10^7}\,.
	\end{align*}
Combining these estimates gives the claimed bound.
\end{proof}
\end{lem}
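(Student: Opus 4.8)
\textbf{Proof plan for Lemma~\ref{l:second.derivative.truncate}.}
The goal is a crude but rigorous tail bound: the part of the integral defining $\WW(\lm)$ coming from outside the rectangle-union \eqref{e:secondderiv.truncate} is at most $1/10^6$, uniformly over $|\lm|\le 0.1$. The strategy is exactly the one already used throughout Section~\ref{s:grid}: replace the awkward integrand by an explicit Gaussian-type majorant, split the complement of \eqref{e:secondderiv.truncate} into a handful of ``far'' regions, and bound each contribution by an elementary integral that can be evaluated (or over-estimated) in closed form. Concretely, I would first record the pointwise estimate
	\[
	p(z,x)
	\equiv\cE(g)\bigg|\f{3\lm x-\gamma z(1-\lm)(1-2\lm)}{(1-\lm^2)^{5/2}}\bigg|
	\le \bigl(1+1.3|z|+0.11|x|\bigr)\bigl(0.4|x|+1.6|z|\bigr)
	\]
valid for all real $z,x$ and all $|\lm|\le0.1$, using $\cE(y)\le1+|y|$ together with the trivial bounds $c_\lm\gamma\le1.3$, $|\lm|/\sqrt{1-\lm^2}\le0.11$, $1\le1-\lm^2$, and $(1-\lm)(1-2\lm)/(1-\lm^2)^{5/2}\le1.6$, $3|\lm|/(1-\lm^2)^{5/2}\le0.4$ (all of these are elementary one-variable inequalities on $[-0.1,0.1]$, and $\gamma=\gamma(q)\le\gamubd$ since $q\in(\qlbd,\qubd)$ by Corollary~\ref{c:q.psi.bds}). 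From $p$ I would then extract the two simpler polynomial majorants $(0.5+2.3|z|+2.1z^2)x^2$ when $|x|\ge1$ and $(3.7+1.1|x|+0.05x^2)z^2$ when $|z|\ge1$, which are what make the subsequent one-dimensional integrals converge rapidly.

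The second step is the region decomposition. Writing $d(z,x)=\varphi(\gamma z+x)\varphi(z)/\bPhi(\gamma z)$, I use \eqref{e:dzx.prelim.bound}: for $z\ge0$, $d(z,x)\le\sqrt{2\pi}(1+\gamma z)\varphi(z)\varphi(x)$, while for $z\le0$, $d(z,x)\le2\varphi(z)\varphi(\gamma z+x)$. The complement of \eqref{e:secondderiv.truncate}, intersected with the domain $\{x\ge0\}$ of the $\WW$-integral, is covered by four pieces: (i) $\{z\ge0,\,x\ge6.5\}$; (ii) $\{z\ge6.5,\,x\ge0\}$; (iii) $\{z\le-5,\,x\ge0\}$; (iv) $\{-5\le z\le0,\,x\ge11\}$. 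For (i) and (ii) I plug in the $z\ge0$ majorant for $d$ and the polynomial bounds for $p$; the $x$-integral in (i) is a Gaussian tail $\int_{6.5}^\infty x^2\varphi(x)\,dx$ and the remaining $z$-integral is a finite Gaussian moment, giving a bound of order $10^{-8}$, and symmetrically for (ii). For (iii) and (iv), the key observation (already used in the lemma text) is that on $\{z\le0,\,x\ge0\}$ the argument $g=((1-\lm)\gamma z-\lm x)/\sqrt{1-\lm^2}$ of $\cE$ is nonpositive whenever the $\WW$-integrand is nonzero: if $\lm\le0$ the integrand vanishes unless $x\le\gamma z(1-\lm)(1-2\lm)/[3\lm(1-\lm^2)^{5/2}]\le\gamma z(1-\lm)/(2\lm)$, forcing $g\le(1-\lm)\gamma z/(2\sqrt{1-\lm^2})\le0$; if $\lm\ge0$ then $g\le0$ automatically for $z\le0,x\ge0$. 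Hence on these regions $\cE(g)$ contributes no growth in $z$ or $x$ at all, and in fact I can use $\varphi(\gamma z+x)\le\varphi(x)\cdot(\text{finite factor})$ or shift variables $u=\gamma z+x$; combined with the $z\le0$ majorant $2\varphi(z)\varphi((1-\lm)\gamma z/(2\sqrt{1-\lm^2}))\le2\varphi(z)\varphi(0.5z)$ (valid since $(1-\lm)/(2\sqrt{1-\lm^2})\ge0.5$ on $[-0.1,0.1]$), the $z$-integral $\int_{-\infty}^{-5}|z|\varphi(z)\varphi(0.5z)\,dz$ is a tiny Gaussian tail and the $x$-integral is a bounded Gaussian moment, giving $O(10^{-8})$ for (iii) and $O(10^{-7})$ for (iv). Summing the four bounds comfortably stays below $1/10^6$.

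The arithmetic in the last two paragraphs is genuinely routine Gaussian-tail bookkeeping, so I expect no conceptual obstacle there. The one place that deserves care — and hence the step I would flag as the ``main obstacle'' — is the sign analysis on the negative-$z$ regions: one must check that the restriction $x\le\gamma z(1-\lm)/(2\lm)$ (for $\lm<0$) really does make the integrand vanish \emph{outside} this range and that the resulting bound $g\le0$ is uniform over the whole range $|\lm|\le0.1$, $z\le0$, including the degenerate limit $\lm\to0$ where the constant $1/\lm$ blows up but the region it cuts out shrinks to all of $\{x\ge0\}$ in a compatible way. This requires tracking the sign of $3\lm x-\gamma z(1-\lm)(1-2\lm)$ carefully rather than just its magnitude. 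Everything else — the pointwise majorant $p$, the cover by four regions, and the final summation — is mechanical, and I would present it exactly in the telescoped form above, citing \eqref{e:dzx.prelim.bound} and $\cE(y)\le1+|y|$ (Lemma~\ref{l:EE}) as the only inputs.
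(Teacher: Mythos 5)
Your plan follows the paper's proof essentially verbatim: the same pointwise majorant $p(z,x)$ with the same constants, the same four-region cover of the complement, the same use of \eqref{e:dzx.prelim.bound}, the same sign analysis forcing $g\le0$ on the negative-$z$ regions (including the $\lm<0$ case you flag), and the same Gaussian-tail arithmetic. The only quibble is your parenthetical justification ``$(1-\lm)/(2\sqrt{1-\lm^2})\ge0.5$ on $[-0.1,0.1]$,'' which as stated is false at $\lm=0.1$ (the value is about $0.452$); the displayed inequality you actually invoke is nevertheless correct because the argument of $\varphi$ carries the additional factor $\gamma\ge\gamlbd>1.13$, exactly as in the paper.
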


\noindent 
Suppose $-0.1\le\llm\le\lm\le\ulm\le0.1$
where $\llm,\ulm$ have the same sign.
We now consider the double integral
\eqref{e:ddII.ddlambda} restricted to the region
\eqref{e:secondderiv.truncate}, and decompose it into four terms, which we denote $\II'(\lm)_i$ for $1\le i\le 4$. For each term we  obtain a bound
$\II'(\lm)_i \le \MM_{\ubd,i}\equiv
\MM_{\ubd,i}(\llm,\ulm)$ which holds for all
$\llm\le\lm\le\ulm$. We begin with
	\begin{align*}
	&\II''(\lm)_1
	\equiv - \alpha
	\int_0^{6.5}\int_0^{6.5}
	\gamma z
	\bigg\{
	\f{\cE'(g) 2x(1-\lm)}{(1-\lm^2)^3}
	+\cE(g) \f{(1-\lm)(1-2\lm)}{(1-\lm^2)^{5/2}}
	\bigg\}
	\,d(z,x)\,dx\,dz \\
	&\quad\le
	-\albd\int_0^{6.5}\int_0^{6.5}
	\gamlbd z
	\bigg\{
	\f{\cE'(g_\lbd) 2x(1-\ulm)}
		{(1-(\ulm)^2)^3}
	+\cE(g_\lbd) \f{(1-\ulm)
		(1-2\ulm)}{(1-(\ulm)^2)^{5/2}}
	\bigg\}
	\,d_\lbd(z,x)\,dx\,dz
	\equiv\MM_{\ubd,1}\,.
	\end{align*}
Next we let $\II''(\lm)_2$ be defined as $\II''(\lm)_1$, but integrating over the region $\set{-5\le z\le0,0\le x\le 11}$. Then
	\[\II''(\lm)_2
	\le -\aubd\int_{-5}^0\int_0^{11}
	\gamubd z
	\bigg\{
	\f{\cE'(g_\ubd) 2x(1-\llm)}{(1-(\llm)^2)^3}
	+\cE(g_\ubd)
	\f{(1-\llm)(1-2\llm)}{(1-(\llm)^2)^{5/2}}
	\bigg\}
	\,d_\ubd(z,x)\,dx\,dz
	\equiv\MM_{\ubd,2}\]
by similar considerations. Next we have
	\begin{align*}
	\II''(\lm)_3
	&\equiv -\alpha
	\int_{-5}^{6.5}\int_0^{11}
	\cE'(g) 
	\f{(\gamma z)^2 (1-\lm)^2 + x^2}{(1-\lm^2)^3}
	\,d(z,x)\,dx\,dz\\
	&\le -\albd 
	\int_{-5}^{6.5}\int_0^{11}
	\cE(g_\lbd)
	\bigg\{
	\f{(\gamlbd z)^2}{(1-\ulm)(1+\ulm)^3}
	+ \f{x^2}{(1-L_\lbd)^3}
	\bigg\}
	\,d_\lbd(z,x)\,dx\,dz
	\equiv \MM_{\ubd,3}\,.
	\end{align*}
The last term is given by
	\[\II''(\lm)_4
	= \f{3\alpha\lm}{(1-\lm^2)^{5/2}}
	\int_{-5}^{6.5}\int_0^{11}
	\cE(g) x \, d(z,x)\,dx\,dz\]
which we bound according to the sign of $\lm$:
	\[\II''(\lm)_4
	\le \MM_{\ubd,4}
	\equiv
	\begin{cases}\displaystyle
	\f{3\aubd\ulm}{(1-(\ulm)^2)^{5/2}}
	\int_{-5}^{6.5}\int_0^{11}
	\cE(g_\ubd) x \, d_\ubd(z,x)\,dx\,dz
	&\textup{for }\lm\ge0\,,\\
	\displaystyle
	\f{3\albd\ulm}{(1-(\ulm)^2)^{5/2}}
	\int_{-5}^{6.5}\int_0^{11}
	\cE(g_\lbd) x \, d_\lbd(z,x)\,dx\,dz
	&\textup{for }\lm\le0\,.
	\end{cases}\]
Combining these bounds and recalling Lemma~\ref{l:second.derivative.truncate} gives altogether
	\[\II''(\lm)\le \sum_{i=1}^4
	\MM_{\ubd,i}(\llm,\ulm)
	+ \f1{10^6}\equiv \MM_\ubd(\llm,\ulm)\,,\]
and substituting into \eqref{e:PP.pair.repeat} gives
	\beq\label{e:PP.second.derivative.ubd}
	\PP''(\lm)
	\le\f{2\psiubd(1-\qlbd)}{(1+\llm)^3}+
	\MM_\ubd(\llm,\ulm)\eeq
for all $\llm\le\lm\le\ulm$. We also have from \eqref{e:dHH.dlambda} that
	\beq\label{e:ddH.ddlambda.repeat}
	\HH''(\ell(A))
	=-\f{(1-q) A'(\ell(A))}{2 A}
	=-\f{(1-q) }{2 A \ell'(A)}\,.
	\eeq
Recall $H\equiv \psi^{1/2}z$ and $m\equiv \tnh H$.
It follows from
\eqref{e:def.ell.A.repeat} and
\eqref{e:deriv.D.A} that
	\[
	\ell'(A)
	= \int
	\f{(D_H)'(A)}{1-q}\,\varphi(z)\,dz
	= \int
	\f{2A(1-m^2)^2}{(1-q)\Delta(\Delta+1)^2}
	\,\varphi(z)\,dz\,,
	\]
where $\Delta\equiv\Delta(m,A)\equiv \sqrt{A^2(1-m)^2+m^2}$. Note for all $0\le A_\lbd\le A$ the quantity $\Delta(m,A)$ is lower bounded by $\Delta(m,A_\lbd)$. It follows that
	\[
	\ell'(A)
	\le\f1{10^{15}}+
	\int_{-9}^9
	\f{2A_\ubd (1-(\tnh y)^2)^2}
	{(1-\qubd)
	\Delta(\tnh y,A_\lbd)
	(\Delta(\tnh y,A_\lbd)
	+1)^2
	}\,\varphi\bigg(\f{y}{\sqrt{\psiubd}}\bigg)\,
	\f{dy}{\sqrt{\psilbd}}
	\equiv
	\LL_\ubd(A_\lbd,A_\ubd)\,.
	\]
Substituting into 
\eqref{e:ddH.ddlambda.repeat} gives
	\[
	\HH''(\ell(A))
	\le
	 -\f{(1-\qubd)/(2A_\ubd)}
		{\LL_\ubd(A_\lbd,
		A_\ubd)}
	\]
for all $A_\lbd \le A \le A_\ubd$. 

\begin{proof}[Proof of Proposition~\ref{p:grid.result}\ref{p:grid.result.c}]
Take $\ltau,\utau,\llm,\ulm$ as in the proof of 
Proposition~\ref{p:grid.result}
parts~\ref{p:grid.result.a}~and~\ref{p:grid.result.b}.
If we assume further that
$-0.1\le\llm\le\ulm\le0.1$, then it follows from the above calculations that
	\[
	(\PGG)''(\lm(\tau))
	\le (d^2\bPGG)_\ubd(\ltau,\utau)
	\equiv
	-\f{(1-\qubd)/(2A(\utau))}
		{\LL_\ubd(A(\ltau),
		A(\utau))}+
	\f{2\psiubd(1-\qlbd)}
		{(1+\llm)^3}
		+\MM_\ubd(\llm,\ulm)
	\]
for all $\ltau\le\tau\le\utau$. We then define
	{\setlength{\jot}{0pt}\begin{align*}
	\mathbf{t}
	&\equiv
	(-0.043,-0.039,-0.035,-0.03,-0.025,
	-0.019,-0.013,-0.007,\\
	&\qquad 0,0.007,0.015,0.024,
   0.033,0.043,0.054,0.066,0.078)
	\end{align*}}%
and evaluate
$(d^2\bPGG)_\ubd(\mathbf{t}_i,\mathbf{t}_{i+1})$ to be negative for all $i$, so that
$(\PGG)''(\lm(\tau))$ is negative for all
$-0.043 \le \tau \le 0.078$.
We then verify that 
$\ulm(-0.043) < -0.03$
and $\llm(0.078) > 0.05$,
so the claim is proved.
\end{proof}
 
\noindent Condition~\ref{c:G} follows by combining Propositions~\ref{p:HH.PP.near.lambda.one}~and~\ref{p:grid.result}.

\section{Estimates for special functions}\label{s:specialfns}

\begin{lem}\label{l:EE}
The function $\cE(x)\equiv\varphi(x)/\bPhi(x)$ satisfies the following for all $x\in\R$:
\begin{enumerate}[a.]
\item\label{l:EE.a} $\max\set{0,x}<\cE(x)<1+|x|
\equiv \bar{e}(|x|)$;
\item\label{l:EE.b} $\cE'(x)=\cE(x)(\cE(x)-x)\in(0,1)$;
\item\label{l:EE.c} $\cE''(x)=\cE(x)[(2\cE(x)-x)(\cE(x)-x)-1]\in(0,1)$.
\item\label{l:EE.d} $\cE^{(3)}(x)= -2(1-\cE'(x))\cE'(x)+(2\cE(x)-x)\cE''(x) \in (-1/2,13)$.
\end{enumerate}

\begin{proof}
Clearly $\cE$ is positive for all real $x$.
For $x\ge0$, a well-known gaussian tail bound  gives
	\beq\label{e:gaussian.tail}
	x < \cE(x)
	< x+\f1x\,,
	\eeq
so we obtain $\cE(x)>\max\set{0,x}$. The expressions for $\cE'(x)$ and $\cE''(x)$ are by direct calculation, and the bounds are obtained as follows. First, $\cE(x)>\max\set{0,x}$ implies $\cE'(x)>0$. Next, a gaussian random variable conditioned to be at least $x$ has mean $\cE(x)$ and variance $\usf{var}_x=1-\cE(x)(\cE(x)-x)$ (which must be positive). It follows that $\cE'(x)=1-\usf{var}_x<1$, so part~\ref{l:EE.b} is proved. The bound $\cE''(x)>0$ is proved by \cite{MR0054890}. On the other hand, the bound on $\cE'(x)$, together with the upper bound in \eqref{e:gaussian.tail}, gives for all $x\ge0$ that
	\beq\label{e:dEE.prelimubd}
	\cE''(x)
	= -\bigg(1-\cE'(x)\bigg)\cE(x)
	+\bigg(\cE(x)-x\bigg)\cE'(x)
	< \bigg(\cE(x)-x\bigg)\cE'(x)
	< \cE(x)-x < \f1x\,.
	\eeq
Since $\cE'<1$, the function $\cE(x)-x$
is decreasing in $x$, so for all $x\ge -0.45$ we have
	\[\cE''(x)
	\le \cE(x)- x \bigg|_{x=-0.45}
	< 1\,.
	\]
On the other hand, for all $x\le -0.45$, we have
	\beq\label{e:dEE.negative.bound}
	\cE''(x)
	< \bigg(\cE(x)-x\bigg)\cE'(x)
	= \bigg(\cE(x)-x\bigg)^2\cE(x)
	\le\f{(\cE(-0.45)+|x|)^2
		\varphi(x)}{\bPhi(-0.45)}
	\,.\eeq
We have by calculus that for any $k\ge0$,
	\beq\label{e:xk.varphi.sup.bound}
	\sup_{x\in\R} |x|^k\varphi(x)
	=
	\f1{\sqrt{2\pi}}
	\exp\bigg\{
	\sup_{x\ge0} \bigg(k\log x-\f{x^2}{2} \bigg)
	\bigg\}
	\le k\varphi(k^{1/2})\,.
	\eeq
Substituting \eqref{e:xk.varphi.sup.bound}
into \eqref{e:dEE.negative.bound} gives
$\cE''(x) \le 1$ for all $x\le -0.45$,
so part~\ref{l:EE.c} is proved. Next, for $x\ge1$, the upper bound of 
\eqref{e:gaussian.tail} gives $\cE(x) \le x+1$. On the other hand,
	{\setlength{\jot}{0pt}\begin{align*}
	\textup{for $x\le 0.3$,}\quad
	& \cE(x) \le \cE(0.3) < 1 \le |x|+1\,;\\
	\textup{for $0.3\le x\le 0.7$,}\quad
	& \cE(x) \le \cE(0.7) < 1.3 \le |x|+1\,;\\
	\textup{for $0.7\le x\le1$,}\quad
	& \cE(x) \le \cE(1) < 1.6 \le |x|+1\,.
	\end{align*}}%
This proves part~\ref{l:EE.a}. For part~\ref{l:EE.d} we calculate
	\[
	\cE^{(3)}(x)
	= -2\bigg(1-\cE'(x)\bigg)\cE'(x)
	+ \bigg( 2\cE(x)-x \bigg)\cE''(x)\,.\]
It follows from parts~\ref{l:EE.a}, \ref{l:EE.b}, and \ref{l:EE.c} that for all real $x$,
	\beq\label{e:dEEthird.sw}
	-\f12 \le 
	-2\bigg(1-\cE'(x)\bigg)\cE'(x)
	\le \cE^{(3)}(x)
	\le \bigg( 2\cE(x)-x \bigg)\cE''(x)\,.\eeq
For $x\le 0.42$, by similar considerations as for
\eqref{e:dEE.negative.bound}, and using \eqref{e:xk.varphi.sup.bound},
we find
	\beq\label{e:dEE.negative.bound.one}
	0 \le \bigg( 2\cE(x)-x \bigg)\cE''(x)
	\le \f{\varphi(x)(\cE(0.42)+|x|)^2}{\bPhi(0.42)}
	\bigg(2\cE(0.42)+|x|\bigg)
	\le 12\,.
	\eeq
On the other hand, for all $x\ge0$, we have from \eqref{e:gaussian.tail} that $0 < \cE(x)-x < 1/x$, so $0<2\cE(x)-x < x+2/x$ for all $x\ge0$. Combining with \eqref{e:dEE.prelimubd} gives
	\[
	0 < \cE''(x)
	\bigg(2\cE(x)-x\bigg)
	< \f1x\bigg(x+\f2x\bigg)
	= 1 + \f{2}{x^2}\,,
	\]
which is at most $13$ for all $x\ge 0.42$.
Combining with \eqref{e:dEE.negative.bound.one}
we see that the right-hand side of
\eqref{e:dEEthird.sw} is at most $13$ for all real $x$,
so part~\ref{l:EE.d} is proved.
\end{proof}\end{lem}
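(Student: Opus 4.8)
\textbf{Proof proposal for Lemma~\ref{l:EE}.} The plan is to prove the four statements in order, exploiting the algebraic identities $\cE'(x)=\cE(x)(\cE(x)-x)$ and the probabilistic interpretation of $\cE(x)$ as the mean of a standard gaussian conditioned to exceed $x$. Throughout I rely on the classical gaussian tail bound $x<\cE(x)<x+1/x$ valid for $x>0$, which gives immediately $\cE(x)>\max\{0,x\}$ for all $x$ (positivity of $\cE$ is clear for $x\le 0$).

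For part~\ref{l:EE.a}, the lower bound is already done. For the upper bound $\cE(x)<1+|x|$, I would treat $x\ge 1$ using the tail bound ($\cE(x)<x+1/x\le x+1=|x|+1$), and handle the remaining compact range $x\le 1$ by monotonicity: since $\cE$ is increasing (which follows once $\cE'>0$ is established in part~\ref{l:EE.b}), it suffices to check $\cE$ at a few break points $x=0.3,0.7,1$ and compare against $|x|+1$ on each subinterval. For part~\ref{l:EE.b}: the formula $\cE'(x)=\cE(x)(\cE(x)-x)$ is a one-line computation from $\cE=\varphi/\bPhi$ together with $\varphi'(x)=-x\varphi(x)$; positivity follows from $\cE(x)>\max\{0,x\}$; and the upper bound $\cE'(x)<1$ is the statement that the conditional variance $\usf{var}_x=1-\cE'(x)$ of a gaussian conditioned to be $\ge x$ is strictly positive, which is clear.

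For part~\ref{l:EE.c}: differentiate again to get $\cE''(x)=\cE(x)[(2\cE(x)-x)(\cE(x)-x)-1]$. Positivity $\cE''>0$ I would cite from \cite{MR0054890} (log-concavity / Sampford's inequality for $\bPhi$). For the upper bound $\cE''<1$, note first that $\cE''(x)<(\cE(x)-x)\cE'(x)<\cE(x)-x<1/x$ for $x\ge0$ by the tail bound, which already gives $\cE''<1$ for $x\ge1$; then since $\cE(x)-x$ is decreasing (as $\cE'<1$), for $x\ge -0.45$ it is dominated by its value at $x=-0.45$, which is $<1$; and for $x\le-0.45$ I would use $\cE''(x)<(\cE(x)-x)^2\cE(x)\le(\cE(-0.45)+|x|)^2\varphi(x)/\bPhi(-0.45)$ and bound the polynomial-times-gaussian factor via the elementary estimate $\sup_x|x|^k\varphi(x)\le k\varphi(\sqrt{k})$. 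Finally, for part~\ref{l:EE.d}: differentiating once more yields $\cE^{(3)}(x)=-2(1-\cE'(x))\cE'(x)+(2\cE(x)-x)\cE''(x)$. Since $\cE'\in(0,1)$ the first summand lies in $[-1/2,0]$, so $-1/2\le\cE^{(3)}\le(2\cE(x)-x)\cE''(x)$; this last product is nonnegative, and I bound it by $13$ by splitting at $x=0.42$: for $x\le0.42$ use the same polynomial-times-gaussian estimate as in part~\ref{l:EE.c} (together with $\cE''\le(\cE(x)-x)^2\cE(x)$), and for $x\ge0.42$ use $0<2\cE(x)-x<x+2/x$ and $0<\cE''(x)<1/x$ from the tail bound to get $(2\cE(x)-x)\cE''(x)<1+2/x^2\le 13$.

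The only genuinely nontrivial input is the inequality $\cE''>0$ (equivalently, that $\log\bPhi$ is concave with an appropriate second-order refinement), for which I invoke \cite{MR0054890} rather than reprove it; everything else is either a direct differentiation, the standard gaussian tail bound, monotonicity arguments, or the elementary sup-bound $\sup_x|x|^k\varphi(x)\le k\varphi(\sqrt k)$ (itself just calculus: the supremum of $k\log x-x^2/2$ is attained at $x=\sqrt k$). The main care needed is in organizing the case splits at the numerical break points so that the claimed numerical inequalities (e.g. $\cE(0.7)<1.3$, the bound $12$ in the $x\le0.42$ regime) are actually verified; these are routine to check but should be stated explicitly.
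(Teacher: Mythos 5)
Your proposal is correct and follows essentially the same route as the paper's own proof: the same Gaussian tail bound, the same probabilistic interpretation of $1-\cE'$ as a conditional variance, the same citation for $\cE''>0$, the same case splits at $x=-0.45$ (for part c) and $x=0.42$ (for part d), and the same sup-bound $\sup_x|x|^k\varphi(x)\le k\varphi(\sqrt k)$ for the negative-$x$ regime. No gaps; the only remaining work is the explicit numerical verification at the break points, which you have correctly flagged.
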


\begin{lem} 
It holds for all real $x$ that $|\cE^{(4)}(x)| \le 196$.

\begin{proof}
Taking Lemma~\ref{l:EE}\ref{l:EE.d}
and differentiating again gives
	\[
	\cE^{(4)}(x)
	=\cE''(x)\bigg(
	6\cE'(x)-3
	\bigg)
	+\cE^{(3)}(x)\bigg( 2\cE(x)-x\bigg)\,.\]
It follows from Lemma~\ref{l:EE}
and \eqref{e:dEEthird.sw} that for all real $x$ we have
	\beq\label{e:dEEfourth}
	-3 <-3 \cE''(x) <
	\cE^{(4)}(x) < 3\cE''(x)
	+ 
	\bigg( 2\cE(x)-x\bigg)
	\cE^{(3)}(x)
	< 3
	+ \bigg( 2\cE(x)-x\bigg)^2 \cE''(x)\,.
	\eeq
Next, similarly to \eqref{e:dEE.negative.bound} and \eqref{e:dEE.negative.bound.one}, we have for all $x\le1$ that
	\begin{align*}
	\bigg(2\cE(x)-x\bigg)^2\cE''(x)
	&\le \f{\varphi(x)}{\bPhi(1)}
	(\cE(1)+|x|)^2
	(2\cE(1)+|x|)^2
	\le193\,,
	\end{align*}
so the right-hand side of \eqref{e:dEEfourth} is at most $196$ for all $x\le1$. We next appeal to an improvement on \eqref{e:gaussian.tail} (see e.g.\ \cite{MR0167642}) which says that for all $x\ge0$,
	\beq\label{e:improved.gaussian.tail}
	x < x + \f1{x+2/x} < \cE(x)
	< x + \f1{x+2/(x+3/x)}
	< x+\f1x\,.
	\eeq
It follows from this that for all $x\ge0$ we have
	\[
	0 < u(x)
	\equiv x+\f1x-\cE(x) 
	< \f1x - \f1{x+2/x}
	= \f{2}{x(2+x^2)}
	\le
	\min\bigg\{ \f1x, \f{2}{x^3}\bigg\}\,.
	\]
Our previous calculation of $\cE'(x)$ can be rewritten in terms of $u(x)$ as
	\beq\label{e:dEE.tail.bound}
	0 < 1-\cE'(x)
	= \f{ x^3 u(x) - (1-x u(x))^2 }{x^2}
	\le x u(x) 
	<\min\bigg\{1, \f{2}{x^2}\bigg\}\,.
	\eeq
Next, it also follows from \eqref{e:improved.gaussian.tail} that for all $x\ge0$
we have
	\[0 < \tilde{u}(x) \equiv \cE(x)
	-\bigg(x + \f1{x+2/x}\bigg)
	< \f1{x+2/(x+3/x)} - \f1{x+2/x}
	= \f{6}{x(2+x^2)(5+x^2)}
	< \f{6}{x^5}\,.
	\]
Our previous calculation of $\cE''(x)$ can be rewritten as 
	\beq\label{e:dEEtwo.rewrite}
	\cE''(x)
	=\f{(x^3+3x+a(x))
	(b(x)+c(x))}{(x^2+2)^3}
	\eeq
where $a(x)\equiv(2+x^2) \tilde{u}(x)$,
$b(x)\equiv x^5 \tilde{u}(x)-4$, and
	\[c(x)\equiv
	2  \bigg(
	(3+2x^2)2x
	+ (2+x^2)^2 \tilde{u}(x)\bigg) \tilde{u}(x)\,.\]
For $x\ge1$, the bounds on $\tilde{u}(x)$ imply 
$0<a(x)<6/[x(5+x^2)]<1$, $-4< b(x) <2$, and
	\[0<c(x)
	< \f{24(1+2x^2)(6+6x+x^4)}{x^2(2+x^2)(5+x^2)^2}
	< \f{24(1+2x^2)}{x^2(5+x^2)}
	\le 12\,.\]
Substituting the bounds on $a(x),b(x),c(x)$ into \eqref{e:dEEtwo.rewrite} gives,
for all $x\ge1$,
	\beq\label{e:dEEtwo.tail.bound}
	0< \cE''(x)
	< \f{14 (x^3+3x+ 1) }{(x^2+2)^3}
	= \f{14  }{(x^2+2)^2}
	\bigg( x+ \f{x+ 1}{x^2+2}\bigg )
	\le \f{14 (x+2/3) }{(x^2+2)^2}
	\le \f{15}{(x^2+2)x}\,.
	\eeq
Substituting \eqref{e:dEE.tail.bound}~and~\eqref{e:dEEtwo.tail.bound} into
\eqref{e:dEEthird.sw} gives, for all $x\ge1$,
	\beq\label{e:dEEthree.tail.bound}
	-\min\bigg\{ \f12, \f{4}{x^2}\bigg\}
	\le 
	\cE^{(3)}(x)
	\le \bigg(2\cE(x)-x\bigg)\cE''(x)
	\le \f{15(x+2/x)}{(x^2+2)x}
	\le \f{15}{x^2}\,.
	\eeq
Substituting 
\eqref{e:dEEtwo.tail.bound}
and \eqref{e:dEEthree.tail.bound}
into \eqref{e:dEEfourth} gives, for all $x\ge1$,
	\[
	-\f{45}{(x^2+2)x}
	\le 
	\cE^{(4)}(x)
	\le \f{15}{x} \bigg( \f{3}{(x^2+2)}
	+ \f{(x^2+2)}{x^2}\bigg)
	\le \f{60}x \le 60\,,
	\]
concluding the proof.
\end{proof}
\end{lem}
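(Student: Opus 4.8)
\textbf{Proof plan for the bound $|\cE^{(4)}(x)|\le 196$.}
The strategy is to differentiate the formula for $\cE^{(3)}$ from Lemma~\ref{l:EE}\ref{l:EE.d} one more time, then bound the resulting expression on two ranges, $x\le 1$ and $x\ge 1$, just as in the proof of the preceding lemma. Differentiating $\cE^{(3)}(x)= -2(1-\cE'(x))\cE'(x)+(2\cE(x)-x)\cE''(x)$ and using $\cE''=(\cE')'$ gives the closed form
	\[\cE^{(4)}(x)=\cE''(x)\bigl(6\cE'(x)-3\bigr)+\cE^{(3)}(x)\bigl(2\cE(x)-x\bigr)\,.\]
For the lower bound, $\cE'\in(0,1)$ and $\cE''>0$ from Lemma~\ref{l:EE} give $6\cE'-3\in(-3,3)$, while $\cE^{(3)}>-1/2$ and $2\cE(x)-x>0$ (by Lemma~\ref{l:EE}\ref{l:EE.a}, since $2\cE(x)>2\max\{0,x\}\ge x$); combining yields $\cE^{(4)}(x)>-3\cE''(x)\ge-3$, using $\cE''<1$. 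For the crude upper bound valid for all $x\le1$, I would bound each factor by its value at the endpoint: $6\cE'-3<3$, and $\cE^{(3)}(x)\le(2\cE(x)-x)\cE''(x)$ by \eqref{e:dEEthird.sw}, so
	\[\cE^{(4)}(x)<3\cE''(x)+\bigl(2\cE(x)-x\bigr)^2\cE''(x)<3+\bigl(2\cE(x)-x\bigr)^2\cE''(x)\,.\]
On $x\le1$ one then mimics \eqref{e:dEE.negative.bound} and \eqref{e:dEE.negative.bound.one}: bound $\cE''(x)\le(\cE(x)-x)\cE'(x)\le(\cE(x)-x)^2\cE(x)\le(\cE(1)+|x|)^2\varphi(x)/\bPhi(1)$ (the function $\cE(x)-x$ is decreasing so it is $\le\cE(1)+|x|$ on $x\le1$), multiply by $(2\cE(x)-x)^2\le(2\cE(1)+|x|)^2$, and invoke the polynomial-times-Gaussian sup bound \eqref{e:xk.varphi.sup.bound} to get a numerical constant; the target is to check this is $\le 193$, whence $\cE^{(4)}(x)\le 196$ for $x\le1$.

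For $x\ge1$ the crude bound blows up, so I would use the sharpened tail estimates from the previous lemma, namely the refined inequality \eqref{e:improved.gaussian.tail} and the consequences \eqref{e:dEE.tail.bound}, \eqref{e:dEEtwo.tail.bound}, and \eqref{e:dEEthree.tail.bound} already derived there: $0<1-\cE'(x)\le\min\{1,2/x^2\}$, $0<\cE''(x)\le 15/((x^2+2)x)$, and $-\min\{1/2,4/x^2\}\le\cE^{(3)}(x)\le 15/x^2$, together with $2\cE(x)-x<x+2/x$ from \eqref{e:gaussian.tail}. Plugging these into the closed form for $\cE^{(4)}$ gives, for $x\ge1$,
	\[-\frac{45}{(x^2+2)x}\le\cE^{(4)}(x)\le\frac{15}{(x^2+2)x}\cdot 3+\frac{15}{x^2}\cdot\Bigl(x+\frac2x\Bigr)\le\frac{60}{x}\le 60\,,\]
where the left bound comes from $\cE^{(4)}>-3\cE''(x)$. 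So on $x\ge1$ the bound is comfortably better than $196$, and combining the two ranges finishes the proof.

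\textbf{Main obstacle.} The only real work is the estimate on $x\le 1$, where the naive bound via $(2\cE(x)-x)^2\cE''(x)$ must be shown numerically to stay below $193$; this requires carefully tracking the constants $\cE(1)<1.6$, $\bPhi(1)$, and optimizing the polynomial-times-Gaussian expression via \eqref{e:xk.varphi.sup.bound} (a product of terms of the form $k\varphi(\sqrt k)$), rather than any conceptual difficulty. The $x\ge1$ range is essentially mechanical once the tail expansions already proved in the excerpt are reused. One should also double-check the boundary case $x=1$ is covered by both arguments and that the constant $196$ (not a smaller round number) is what the $x\le1$ estimate actually delivers.
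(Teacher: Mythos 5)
Your proposal matches the paper's proof essentially step for step: the same closed form for $\cE^{(4)}$, the same split at $x=1$, the same polynomial-times-Gaussian bound $(2\cE(x)-x)^2\cE''(x)\le193$ on $x\le1$, and the same reuse of the refined tail estimates \eqref{e:dEEtwo.tail.bound} and \eqref{e:dEEthree.tail.bound} to get $|\cE^{(4)}(x)|\le60/x$ on $x\ge1$. The only (immaterial) difference is that you attribute the tail bounds \eqref{e:improved.gaussian.tail}--\eqref{e:dEEthree.tail.bound} to the preceding lemma, whereas the paper derives them inside this lemma's proof.
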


\noindent Throughout the following we write
	\[\E_\xi f(\nu)
	\equiv \int f(\nu)\varphi_\xi(\nu)\,d\nu\,,
	\]
so for instance $\E_\xi\nu=\cE(\xi)$.

\begin{lem}\label{l:E.xi} For all $\xi\in\R$ we have:
\begin{enumerate}[a.]
\item\label{l:E.xi.one} $\E_\xi|\nu| \le 1+|\xi|$;
\item\label{l:E.xi.two} $\E_\xi(\nu^2)=\xi\cE(\xi)+1 \le \xi^2+|\xi|+1$;
\item\label{l:E.xi.three} $\E_\xi|\nu|^3\le (2+\xi^2)(1+|\xi|)$.
\end{enumerate}

\begin{proof} The claim for $\E_\xi(\nu^2)$ follows by direct calculation together with the bound from Lemma~\ref{l:EE}\ref{l:EE.a}. If $\xi\ge0$ then $\E_\xi|\nu|=\E_\xi\nu=\cE(\xi)$, so again the claim follows by Lemma~\ref{l:EE}\ref{l:EE.a}. If $\xi\le0$ then
	\[
	\E_\xi|\nu|
	= \f1{\bPhi(\xi)}
	\bigg(2\int_0^{|\xi|} \nu\varphi(\nu)\,d\nu
	+\int_{|\xi|}^\infty \nu\varphi(\nu)\,d\nu\bigg)
	= \f{2\varphi(0)-\varphi(\xi)}{\bPhi(\xi)}\,,
	\]
which we show is less than one for all $\xi\le0$:
	{\setlength{\jot}{0pt}\begin{align*}
	\textup{for $\xi\le -1$,}\quad
	&\E_\xi|\nu|
	\le 2\varphi(0)/\bPhi(\xi)
	\le 2\varphi(0)/\bPhi(-1)<1\,,\\
	\textup{for $-1\le\xi\le-0.32$,}\quad
	&\E_\xi|\nu|
	\le 2\varphi(0)/\bPhi(-0.32) -\cE(-1) <1\,,\\
	\textup{for $-0.32\le\xi\le0$,}\quad
	&\E_\xi|\nu|
	\le 2\varphi(0)/\bPhi(0)-\cE(-.32)<1\,.
	\end{align*}}%
The claim for $\E_\xi|\nu|$ follows. Similarly, if $\xi\ge0$ then $\E_\xi|\nu|^3=(2+\xi^2)\cE(\xi)$, and the claimed bound follows by combining with Lemma~\ref{l:EE}\ref{l:EE.a}. If $\xi\le0$ then
	\[\E_\xi|\nu|^3
	=\f1{\bPhi(\xi)}
	\bigg(2\int_0^{|\xi|} \nu^3\varphi(\nu)\,d\nu
	+\int_{|\xi|}^\infty \nu^3 \varphi(\nu)\,d\nu\bigg)
	=\f{4\varphi(0) - (\xi^2+2)\varphi(\xi)}{\bPhi(\xi)}
	\]
which we show is less than two for all $\xi\le0$:
	{\setlength{\jot}{0pt}\begin{align*}
	\textup{for $\xi\le -0.9$,}\quad
	&\E_\xi|\nu|^3
	\le 4\varphi(0)/\bPhi(\xi)
	\le 4\varphi(0)/\bPhi(-0.9) < 2\,,\\
	\textup{for $-0.9\le\xi\le-0.3$,}\quad
	&\E_\xi|\nu|^3
	\le 4\varphi(0)/\bPhi(-0.3)
		-((-0.3)^2+2)\cE(-0.9)<2\,,\\
	\textup{for $-0.3\le\xi\le0$,}\quad
	&\E_\xi|\nu|^3
	\le 4\varphi(0)/\bPhi(0)
		-2\cE(-0.3) < 2\,.
	\end{align*}}%
The claim for $\E_\xi|\nu|^3$ follows. \end{proof}
\end{lem}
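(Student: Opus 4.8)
The plan is to split on the sign of $\xi$ and reduce everything to the closed-form moments of a truncated standard gaussian, then invoke the elementary bound $\cE(\xi)\le 1+|\xi|$ from Lemma~\ref{l:EE}\ref{l:EE.a}.

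\textbf{Case $\xi\ge0$.} Here $\nu\ge\xi\ge0$ under $\varphi_\xi$, so $|\nu|=\nu$ and it suffices to compute $\E_\xi\nu^k$ for $k=1,2,3$. Gaussian integration by parts (writing $\nu^k\varphi(\nu)=-\nu^{k-1}\varphi'(\nu)$ and integrating over $[\xi,\infty)$) gives the standard truncated-moment recursion $\E[\nu^k\Ind{\nu\ge\xi}]=\xi^{k-1}\varphi(\xi)+(k-1)\E[\nu^{k-2}\Ind{\nu\ge\xi}]$, hence after dividing by $\bPhi(\xi)$,
	\[\E_\xi\nu=\cE(\xi),\qquad
	\E_\xi\nu^2=1+\xi\cE(\xi),\qquad
	\E_\xi\nu^3=(2+\xi^2)\cE(\xi).\]
The second identity is exactly the claimed equality in part~\ref{l:E.xi.two}; substituting $\cE(\xi)\le1+\xi=1+|\xi|$ then yields $\E_\xi\nu\le1+|\xi|$, $\E_\xi\nu^2\le1+|\xi|+\xi^2$, and $\E_\xi\nu^3\le(2+\xi^2)(1+|\xi|)$, proving all three parts for $\xi\ge0$.

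\textbf{Case $\xi<0$.} The identity $\E_\xi\nu^2=1+\xi\cE(\xi)$ still holds by the same integration by parts (no truncation issue for an even power), and since $\xi\cE(\xi)<0$ we get $\E_\xi\nu^2\le1\le1+|\xi|+\xi^2$, giving part~\ref{l:E.xi.two}. For parts~\ref{l:E.xi.one} and~\ref{l:E.xi.three} I would split the range of $\nu$ at $0$: using $\int_0^a\nu\varphi=\varphi(0)-\varphi(a)$ and $\int_0^a\nu^3\varphi=2\varphi(0)-(a^2+2)\varphi(a)$ together with their tails $\int_a^\infty\nu\varphi=\varphi(a)$, $\int_a^\infty\nu^3\varphi=(a^2+2)\varphi(a)$, one obtains the closed forms
	\[\E_\xi|\nu|=\f{2\varphi(0)-\varphi(\xi)}{\bPhi(\xi)},\qquad
	\E_\xi|\nu|^3=\f{4\varphi(0)-(\xi^2+2)\varphi(\xi)}{\bPhi(\xi)}.\]
Since the targets satisfy $1+|\xi|\ge1$ and $(2+\xi^2)(1+|\xi|)\ge2$, it suffices to prove $\E_\xi|\nu|\le1$ and $\E_\xi|\nu|^3\le2$ for all $\xi\le0$. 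I would do this by a short finite case split: on the far left ($\xi\le-1$ for the first, $\xi\le-0.9$ for the second) drop the negative term in the numerator and use monotonicity of $\bPhi$; on each remaining bounded subinterval, lower bound $\bPhi(\xi)$ by its value at the left endpoint and lower bound the subtracted term by $\bPhi(\text{right endpoint})\,\cE(\text{left endpoint})$, reducing each case to a numerical inequality.

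The routine part is the closed-form moment computations. The only place needing care is the $\xi<0$ tail of parts~\ref{l:E.xi.one} and~\ref{l:E.xi.three}: there the coordinatewise bound $1+|\xi|$ is not achieved termwise, so one must instead recognize that the quantity is bounded by an absolute constant and verify this by a piecewise comparison — elementary, but requiring a correctly chosen partition of $(-\infty,0]$.
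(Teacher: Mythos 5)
Your proposal follows the paper's proof essentially step for step: the same closed-form truncated moments, the same use of $\cE(\xi)\le 1+|\xi|$ for $\xi\ge0$, and for $\xi\le0$ the same reduction to the absolute bounds $\E_\xi|\nu|\le1$ and $\E_\xi|\nu|^3\le2$ verified by a monotonicity-based partition of $(-\infty,0]$ (the paper uses the breakpoints $-1,-0.32$ and $-0.9,-0.3$ respectively). One trivial correction to your description: since $\bPhi$ is decreasing, the lower bound for $\bPhi(\xi)$ on a bounded subinterval comes from the \emph{right} endpoint, while the subtracted term $\cE(\xi)$ is increasing and so is lower bounded by its value at the \emph{left} endpoint — exactly as in the paper's displayed inequalities.
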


\vfill {\raggedright \bibliographystyle{alphaabbr}\bibliography{refs}}

\newcommand{\etalchar}[1]{$^{#1}$}
\begin{thebibliography}{BBC{\etalchar{+}}16}

\bibitem[AS64]{MR0167642}
M.~Abramowitz and I.~A. Stegun.
\newblock {\em Handbook of mathematical functions with formulas, graphs, and
  mathematical tables}, volume~55 of {\em National Bureau of Standards Applied
  Mathematics Series}.
\newblock Washington: US Govt. Print, 1964.

\bibitem[AT78]{AT1978stability}
{{A}lmeida, JRL de} and D.~J. Thouless.
\newblock Stability of the {S}herrington--{K}irkpatrick solution of a spin
  glass model.
\newblock {\em J. Phys. A}, 11(5):983, 1978.

\bibitem[Bar08]{MR2390935}
A.~Baricz.
\newblock Mills' ratio: monotonicity patterns and functional inequalities.
\newblock {\em J. Math. Anal. Appl.}, 340(2):1362--1370, 2008.

\bibitem[BBC{\etalchar{+}}16]{baldassi2016unreasonable}
C.~Baldassi, C.~Borgs, J.~T. Chayes, A.~Ingrosso, C.~Lucibello, L.~Saglietti,
  and R.~Zecchina.
\newblock Unreasonable effectiveness of learning neural networks: From
  accessible states and robust ensembles to basic algorithmic schemes.
\newblock {\em Proc. Natl. Acad. Sci.}, 113(48):E7655--E7662, 2016.

\bibitem[BIL{\etalchar{+}}16]{baldassi2016local}
C.~Baldassi, A.~Ingrosso, C.~Lucibello, L.~Saglietti, and R.~Zecchina.
\newblock Local entropy as a measure for sampling solutions in constraint
  satisfaction problems.
\newblock {\em J. Stat. Mech.}, 2016(2):023301, 2016.

\bibitem[BM11]{bayati2011dynamics}
M.~Bayati and A.~Montanari.
\newblock The dynamics of message passing on dense graphs, with applications to
  compressed sensing.
\newblock {\em IEEE Trans. Inform. Theory}, 57(2):764--785, 2011.

\bibitem[Bol14]{bolthausencmp}
E.~Bolthausen.
\newblock An iterative construction of solutions of the {TAP} equations for the
  {S}herrington--{K}irkpatrick model.
\newblock {\em Commun. Math. Phys.}, 325(1):333--366, 2014.

\bibitem[Bol18]{bolthausensk}
E.~Bolthausen.
\newblock A {M}orita type proof of the replica symmetric formula for {SK}.
\newblock Manuscript, 2018.

\bibitem[Bor16]{MR3632529}
A.~A. Borovkov.
\newblock Refinement and generalization of the integro-local {S}tone theorem
  for sums of random vectors.
\newblock {\em Teor. Veroyatn. Primen.}, 61(4):659--685, 2016.

\bibitem[Gar87]{gardner1987maximum}
E.~Gardner.
\newblock Maximum storage capacity in neural networks.
\newblock {\em Europhys. Lett.}, 4(4):481, 1987.

\bibitem[Gar88]{gardner1988space}
E.~Gardner.
\newblock The space of interactions in neural network models.
\newblock {\em J. Phys. A}, 21(1):257, 1988.

\bibitem[GD88]{gd1988optimal}
E.~Gardner and B.~Derrida.
\newblock Optimal storage properties of neural network models.
\newblock {\em J. Phys. A}, 21(1):271, 1988.

\bibitem[GT02]{MR1930572}
F.~Guerra and F.~L. Toninelli.
\newblock The thermodynamic limit in mean field spin glass models.
\newblock {\em Comm. Math. Phys.}, 230(1):71--79, 2002.

\bibitem[KM89]{krauthmezard1989}
W.~Krauth and M.~M{\'e}zard.
\newblock Storage capacity of memory networks with binary couplings.
\newblock {\em J. Physique}, 50(20):3057--3066, 1989.

\bibitem[KR98]{kim1998covering}
J.~H. Kim and J.~R. Roche.
\newblock Covering cubes by random half cubes, with applications to binary
  neural networks.
\newblock {\em J. Comput. Syst. Sci.}, 56(2):223--252, 1998.

\bibitem[M{\'e}z89]{mezard1989space}
M.~M{\'e}zard.
\newblock The space of interactions in neural networks: {G}ardner's computation
  with the cavity method.
\newblock {\em J. Phys. A}, 22(12):2181, 1989.

\bibitem[Pan13]{MR3052333}
D.~Panchenko.
\newblock {\em The {S}herrington--{K}irkpatrick model}.
\newblock Springer Monographs in Mathematics. Springer, New York, 2013.

\bibitem[RV16]{rush2016finite}
C.~Rush and R.~Venkataramanan.
\newblock Finite-sample analysis of approximate message passing.
\newblock In {\em Proc. ISIT 2016}, pages 755--759. IEEE, 2016.

\bibitem[Sam53]{MR0054890}
M.~R. Sampford.
\newblock Some inequalities on {M}ill's ratio and related functions.
\newblock {\em Ann. Math. Statistics}, 24:130--132, 1953.

\bibitem[SK75]{sk}
D.~Sherrington and S.~Kirkpatrick.
\newblock Solvable model of a spin-glass.
\newblock {\em Phys. Rev. Lett.}, 35(26):1792, 1975.

\bibitem[ST03]{shcherbina2003rigorous}
M.~Shcherbina and B.~Tirozzi.
\newblock Rigorous solution of the {G}ardner problem.
\newblock {\em Commun. Math. Phys.}, 234(3):383--422, 2003.

\bibitem[Sto13]{stojnic2013discrete}
M.~Stojnic.
\newblock Discrete perceptrons.
\newblock arXiv:1306.4375, 2013.

\bibitem[Tal99]{talagrand1999intersecting}
M.~Talagrand.
\newblock Intersecting random half cubes.
\newblock {\em Random Struct. Algor.}, 15(3-4):436--449, 1999.

\bibitem[Tal00]{talagrand2000intersecting}
M.~Talagrand.
\newblock Intersecting random half-spaces: toward the {G}ardner--{D}errida
  formula.
\newblock {\em Ann. Probab.}, pages 725--758, 2000.

\bibitem[Tal02]{MR1894111}
M.~Talagrand.
\newblock On the high temperature phase of the {S}herrington--{K}irkpatrick
  model.
\newblock {\em Ann. Probab.}, 30(1):364--381, 2002.

\bibitem[Tal06]{MR2195134}
M.~Talagrand.
\newblock The {P}arisi formula.
\newblock {\em Ann. of Math. (2)}, 163(1):221--263, 2006.

\bibitem[Tal11]{MR2731561}
M.~Talagrand.
\newblock {\em Mean field models for spin glasses. {V}olumes {I} and {II}},
  volume~54 of {\em Ergebnisse der Mathematik und ihrer Grenzgebiete. 3.
  Folge.}
\newblock Springer-Verlag, Berlin, 2011.
\newblock Basic examples.

\bibitem[TAP77]{tap1977solution}
D.~J. Thouless, P.~W. Anderson, and R.~G. Palmer.
\newblock Solution of `{S}olvable model of a spin glass'.
\newblock {\em Philos. Mag.}, 35(3):593--601, 1977.

\end{thebibliography}

\end{document}